\newcommand{\Z}{{\mathbb{Z}}} 
\newcommand{\Q}{{\mathbb{Q}}}  
\newcommand{\N}{{\mathbb{N}}}
\newcommand{\M}{{\mathfrak{M}}}
\newcommand{\F}{{\mathbb{F}}}
\newcommand{\lra}{\longrightarrow}
\newcommand{\Hom}{\mathrm{Hom}}
\newcommand{\cyc}{\mathrm{cyc}}
\newcommand{\La}{\Lambda}
\newtheorem{theorem}{Theorem}[section]
\newtheorem{proposition}[theorem]{Proposition}
\newtheorem{rem}[theorem]{Remark}
\newtheorem{lemma}[theorem]{Lemma}
\newtheorem{corollary}[theorem]{Corollary}
\newtheorem{defn}[theorem]{Definition}
\newtheorem{conj}[theorem]{Conjecture}
\newtheorem{example}[theorem]{Example}
\newtheorem*{maintheorem2}{Theorem 2 (Algebraic Functional Equation)}
\author{Somnath Jha, Tadashi Ochiai} \thanks{{\it MSC subject classification: Primary: 11R23, Secondary: 19A31, 16E20, 11G40, 20C07, 11G05, 11F67}\\
\\
The first author acknowledges the support of  SERB ECR Grant and SERB MATRICS grant. 
The second author is  partially  supported for this work by KAKENHI (Grant-in-Aid for Exploratory Research: Grant Number 24654004, 
Grant-in-Aid for Scientific Research \rm{(Red)}: Grant Number 26287005).}\address{Department of Mathematics and Statistics, Indian Institute of Technology Kanpur,  Kanpur 208016, India} \email{jhasom@iitk.ac.in} \address{Department of Mathematics, Graduate School of Science, Osaka University, Machikaneyama 1-1, Toyonaka, Osaka 5600043, Japan}\email{ochiai@math.sci.osaka-u.ac.jp} 
\begin{document}
\title{Control theorem and functional  equation of Selmer groups over $p$-adic Lie extensions}
\begin{abstract}  
Let $K_\infty$ be a $p$-adic Lie extension of a number field $K$ which fits into the setting of non-commutative Iwasawa theory 
formulated by Coates-Fukaya-Kato-Sujatha-Venjakob. 
For the first main result, we will prove the control theorem of Selmer group associated to a motive, which 
generalizes previous results by the second author and Greenberg. 
As an application of this control theorem, we prove 
the functional equation of the {}{dual} Selmer groups, which 
generalizes previous results by Greenberg, Perrin-Riou and Z\'{a}br\'{a}di. 
Especially, we generalize the result of Z\'{a}br\'{a}di for elliptic curves to general motives. 
Note that our proof is different from the proof of Z\'{a}br\'{a}di even 
in the case of elliptic curves. We also discuss the functional equation for the analytic $p$-adic $L$-functions 
{}{and check the compatibility with the functional equation of the {}{dual} Selmer groups}. 
\end{abstract}
\maketitle
\tableofcontents 
\section*{Introduction}
 Let us fix a prime number $p$ throughout the paper. 
Let $K$ be a number field and let $\mathcal{K}$ be a finite extension of $\mathbb{Q}_p$ whose ring of integers is  denoted by 
$\mathcal{O}$. For any $p$-adic Lie group $G$, we define $\Lambda (G):=  
\underset{U}{\varprojlim}~ 
\Z_p[G/U]$ to be the completed group ring where $U$ varies over open normal subgroups of $G$. We denote by $\Lambda_{\mathcal{O}} (G)$ 
the base extension $\Lambda (G) \otimes_{\mathbb{Z}_p} \mathcal{O}$. 
\par 
Let $A \cong (\mathcal{K} /\mathcal{O} )^{\oplus d}$ be a discrete Galois representation
of the absolute Galois group $G_K$ of $K$ and 
a $p$-adic Lie extension $K_\infty /K$. 
In the first part of the paper, we prove the control theorem of the Selmer group of $A$ over $K_\infty /K$. 
Control theorem was originally studied by Mazur \cite{mazur} for the behavior of the Selmer groups of ordinary elliptic curves when 
$K_\infty /K$ is the cyclotomic $\mathbb{Z}_p$-extension. 
The work was generalized in two directions: from elliptic curves to general Galois representations \cite{oc00} and 
from the cyclotomic $\mathbb{Z}_p$-extension to general non-commutative $p$-adic Lie-extensions \cite{gr2}.  
Here, we study a bi-product of these two different generalizations of \cite{oc00} and \cite{gr2}. 
To study the most general setting, we will prepare the notation and fix the setting below. Then, we will formulate and prove a control theorem  
with precise description of the kernel and the cokernel of the restriction map. 
\par 
First, we introduce error  terms {}{$E_0^A$ and $E_1^A$} which will appear in the control theorem and in the theorem of functional equation. 
Let $\mathcal{U}$ be the set of open subgroup of $G =\mathrm{Gal}(K_\infty /K )$. 
For each $U \in \mathcal{U}$, we denote by $K_U$ the fixed field $(K_\infty )^U $. We will denote by $u$ a prime in $K_U$ and by $v$ a prime in $K$.
We denote by $D_{U,u}$ (resp. $I_{U,u}$) the decomposition group (resp. inertia group) of $G_{K_U}$ at $u$. 
\par 
Let $\Gamma_{\mathrm{cyc}}$ be the Galois group  of the cyclotomic $\Z_p$-extension $ \mathrm{Gal} (K_\mathrm{cyc} /K)$ of $K$. 
For any continuous character $\rho: \Gamma_\mathrm{cyc} \rightarrow \mathbb{Z}_p^\times$, 
we denote by $A_\rho$ the twist $A\otimes \rho$ of the Galois representation $A$ by $\rho$. 
Let $P_U$ be the set of primes $u$ of $K_U$ such that the image of $I_{U,u} \subset G_{K_U} $ via $G_{K_U} \twoheadrightarrow U=\mathrm{Gal} (K_\infty /K_U)$ is infinite. 
{}{Under the assumption that $V= \varprojlim_n A_{p^n}  \otimes_{\mathbb{Z}_p} \mathbb{Q}_p$ satisfies 
the condition \rm{(Pan)} which will be stated in Definition \ref{s22ax}, }
we define error terms $E^{A_\rho}_{0,U}= 
\displaystyle{\bigoplus_{u\in P_U} }E^{A_\rho}_{U,u}$ and $E^{A_\rho}_{1,U}= 
\displaystyle{\bigoplus_{u\in P_U, u \nmid p}} E^{A_\rho}_{U,u}$ by putting: 

\begin{equation}\label{error-term-E}
E^{A_\rho}_{U,u} =
    \begin{cases}
     \mathrm{Ker} \left[ 
H^1 (I_{U,u} ,A_\rho )^{D_{U,u }} \longrightarrow H^1 (I_{\infty ,w} ,A_\rho )^{D_{\infty,w }} \right]   & \text{if }  u \nmid p.  \\
     \mathrm{Ker} \left[ 
H^1 (I_{U,u} ,A_\rho/F^+_v A_\rho )^{D_{U,u}} \longrightarrow H^1 (I_{\infty ,w} ,A_{\rho}/F^+_v A_{\rho} )^{D_{\infty,w }}
\right]       &  \text{if }  u \mid p 
    \end{cases}
  \end{equation}
{}{where $F^+_v A_\rho $ is the filtration induced by the filtration $F^+_v V_\rho = F^+_v V \otimes \rho $ given by the condition \rm{(Pan)} 
through the surjection $V_\rho \twoheadrightarrow A_\rho$. }
When the twisting  representation $\rho$ is trivial, 
we denote $E^{A_\rho}_{U,u}$, $E^{A_\rho}_{0,U}$ and $E^{A_\rho}_{1,U}$ by 
$E^{A}_{U,u}$, $E^{A}_{0,U}$ and $E^{A}_{1,U}$ respectively. 
We further define:
  \begin{equation}\label{e1ande2}
 E_0^{A_\rho}: = \underset{U}{\varprojlim} E^{A_\rho}_{0,U}= \underset{U}{\varprojlim} \underset{u \in P_U}{\oplus}E^{A_\rho}_{U,u},
 \end{equation}
\begin{equation}\label{e1ande23}
 E_1^{A_\rho}: = \underset{U}{\varprojlim} E^{A_\rho}_{1,U}= \underset{U}{\varprojlim} \underset{u \in P_U, u \nmid p}{\oplus}E^{A_\rho}_{U,u}.
  \end {equation}

Under the condition \rm{(Pan)} {}{ on $V= \varprojlim_n A_{p^n}  \otimes_{\mathbb{Z}_p} \mathbb{Q}_p$}, we will define the Selmer group $\mathrm{Sel}^{\mathrm{Gr}}_{A_\rho} (K_U )$ for $A$ over $K_U$ whose precise definition is recalled in 
Definition \ref{definition:Selmergroup}. We denote the inductive limit $\underset{U\in \mathcal{U}}{\varinjlim} \mathrm{Sel}^{\mathrm{Gr}}_{A_\rho} (K_U )$ by 
$\mathrm{Sel}^{\mathrm{Gr}}_{A_\rho} (K_\infty )$.  
We are interested in the following functorial restriction map:  
\begin{equation}\label{basic-res-map}
\mathrm{res}^A_{\rho,U} : \mathrm{Sel}^{\mathrm{Gr}}_{A_\rho} (K_U ) \longrightarrow \mathrm{Sel}^{\mathrm{Gr}}_{A_\rho} (K_\infty ) ^{G_{K_U}}. 
\end{equation}
and we have the following control theorem for these restriction maps: 
\begin{theorem}[{\bf Control Theorem}]\label{propisition:control_theorem}
Let $K_\infty$ be a $p$-adic Lie Galois extension of $K$ which fits into the setting $\mathrm{(G)}$, $\mathrm{(K)}$ of Definition \ref{definition:setting} 
$($see \S $\ref{s1})$. 
Let $V\cong \mathcal{K}^{\oplus d}$ be a $p$-adic representation of $G_K$ 
satisfying the condition $\mathrm{(Pan)}$ of Definition \ref{s22ax} $($see \S $\ref{s1})$. 
We fix a $G_K$-stable $\mathcal{O}$-lattice $T \subset V$ and we put $A:=V/T$. 
\par 
Then, the following statements hold for any continuous character $\rho: \Gamma_\mathrm{cyc} \rightarrow \mathbb{Z}_p^\times$ and 
for the restriction map $\mathrm{res}_{\rho,U}$: 
\begin{enumerate}
\item[\rm{(1)}] 
\begin{enumerate}
\item[\rm{(a)}]
 Assume the condition $\mathrm {(A)}$ of Definition \ref{set2b} $($see \S $\ref{s1})$ for $A = V/T$. Then the group $\mathrm{Ker} (\mathrm{res}^A_{\rho,U})$ is a finite group whose order is bounded independently of $U \in \mathcal{U}$. 
\item[\rm{(b)}]
Assume the condition $\mathrm{(Red)}$ of Definition \ref{definition:setting}. Then the group $\mathrm{Ker} (\mathrm{res}^A_{\rho,U})$ is a finite group 
when $U$ varies in $\mathcal{U}$. Further, the inverse limit $\varprojlim_U \mathrm{Ker} (\mathrm{res}^A_{\rho,U})$ is a finitely generated $\mathbb{Z}_p$-module.
\end{enumerate}
\item[\rm{(2)}] 
\begin{enumerate}
\item[\rm{(a)}]
Assume the condition $\mathrm {(A)}$ of Definition \ref{set2b}  
and the condition $(\mathrm{V}_q)$ of Definition \ref{s22ax}  for each $q$ ($\neq p$) dividing $P_U$. 
Assume one of the following situations 
\begin{enumerate}
\item[\rm{(i)}] The condition $(\mathrm{A}_p)$ of Definition \ref{set2b} holds.
\item[\rm{(ii)}] The condition $(\mathrm{Sol}_p)$ of Definition \ref{definition:setting} and the conditions $\mathrm{(Ord)}$ and $(\mathrm{V}_p)$ 
of Definition \ref{s22ax} hold.
\end{enumerate}
Then the group $\mathrm{Coker} (\mathrm{res}^A_{\rho,U})$ is a finite group when $U $ moves in $\mathcal{U}$. 
Further, we have a natural map $E^{A_\rho}_{0,U} \longrightarrow \mathrm{Coker} (\mathrm{res}^A_{\rho,U})$
whose kernel and cokernel are finite groups and their orders are bounded independently of $U \in \mathcal{U}$. 
\item[\rm{(b)}]
Assume the condition $\mathrm{(Red)}$ of Definition \ref{definition:setting} and the condition $(\mathrm{V}_q)$ of Definition \ref{s22ax} 
 for each $q $ ($\neq p$) dividing $P_U$. 
Assume one of the following situations 
\begin{enumerate}
\item[\rm{(i)}] The condition $(\mathrm{A}_p)$ of Definition \ref{set2b}  holds.
\item[\rm{(ii)}] The condition $(\mathrm{Sol}_p)$ of Definition \ref{definition:setting} and the conditions $\mathrm{(Ord)}$ and $(\mathrm{V}_p)$ 
of Definition \ref{s22ax} hold. 
\end{enumerate}
Then the group $\mathrm{Coker} (\mathrm{res}^A_{\rho,U})$ is a finite group when $U $ moves in $\mathcal{U}$. 
Further, we have a natural map $E_0^{A_\rho}=\varprojlim_U E^{A_\rho}_{0,U} \longrightarrow \varprojlim_U \mathrm{Coker} (\mathrm{res}^A_{\rho,U})$ 
whose kernel and cokernel are finitely generated $\mathbb{Z}_p$-modules. 
\end{enumerate}
\end{enumerate}
\end{theorem}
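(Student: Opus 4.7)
The plan is to run the standard ``fundamental diagram'' strategy going back to Mazur, refined by Greenberg and by the second author, adapted to the non-commutative $p$-adic Lie setting. Write the Selmer groups as kernels of global-to-local maps
\begin{equation*}
\lambda^{A}_{\rho,U}: H^{1}(G_{K_{U},S},A_{\rho}) \longrightarrow \mathcal{H}^{A}_{\rho,U}, \qquad \lambda^{A}_{\rho,\infty}: H^{1}(G_{K_{\infty},S},A_{\rho}) \longrightarrow \mathcal{H}^{A}_{\rho,\infty},
\end{equation*}
where $\mathcal{H}^{A}_{\rho,\bullet}$ is the product of local cohomology groups at the primes of $S$ modulo the Greenberg conditions (using the filtration $F^{+}_{v}A_{\rho}$ supplied by $\mathrm{(Pan)}$ at primes above $p$). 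Taking $U$-invariants of the row at level $K_{\infty}$ and comparing with the row at level $K_{U}$ produces a commutative square; a snake-lemma chase bounds $\mathrm{Ker}(\mathrm{res}^{A}_{\rho,U})$ and $\mathrm{Coker}(\mathrm{res}^{A}_{\rho,U})$ in terms of the kernels and cokernels of the global and local vertical restriction maps.

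For the global restriction, inflation-restriction identifies its kernel with $H^{1}(U,A_{\rho}^{G_{K_{\infty}}})$ and embeds its cokernel into $H^{2}(U,A_{\rho}^{G_{K_{\infty}}})$. Under $\mathrm{(A)}$ the module $A_{\rho}^{G_{K_{\infty}}}$ is finite of bounded order, so both groups are finite uniformly in $U$, giving the uniform bounds in $(1)(a)$ and the global input needed in $(2)(a)$. Under $\mathrm{(Red)}$ the module $A_{\rho}^{G_{K_{\infty}}}$ is controlled only by its reducible filtration; here I would pass to the inverse limit over $U$, exploit the $\Lambda(G)$-structure, and use a Nakayama/compactness argument to check that the resulting inverse limits are finitely generated over $\mathbb{Z}_{p}$, yielding $(1)(b)$ and the global input for $(2)(b)$.

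The local part is analyzed prime by prime. For $u \nmid p$ the Greenberg condition is the unramified submodule; applying inflation-restriction to the pair $(I_{U,u},D_{U,u})$ versus $(I_{\infty,w},D_{\infty,w})$ identifies the local kernel exactly with $E^{A_{\rho}}_{U,u}$ of \eqref{error-term-E}, the residual terms being finite under $\mathrm{(V}_{q}\mathrm{)}$. For $u\mid p$ one replaces $A_{\rho}$ by $A_{\rho}/F^{+}_{v}A_{\rho}$ and runs the same argument: under $(\mathrm{A}_{p})$ the local $G_{K_{\infty},w}$-invariants of $A_{\rho}/F^{+}_{v}A_{\rho}$ are finite and uniformly bounded, while under $(\mathrm{Sol}_{p})+\mathrm{(Ord)}+(\mathrm{V}_{p})$ one uses solvability of the decomposition subgroup of $G$ at $v$ to stratify it into a tower of $\mathbb{Z}_{p}$-quotients, handling each step via the ordinariness of the filtration. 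Reassembling the local contributions produces the asserted map $E^{A_{\rho}}_{0,U}\to\mathrm{Coker}(\mathrm{res}^{A}_{\rho,U})$ whose kernel and cokernel are finite (respectively finitely generated over $\mathbb{Z}_{p}$ in the limit) by the global and residual local estimates.

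The step I expect to be hardest is $(2)$ at primes $v \mid p$ under only $(\mathrm{Sol}_{p})+\mathrm{(Ord)}+(\mathrm{V}_{p})$, because the decomposition subgroup of $G$ at $v$ need not be one-dimensional and ordinariness alone must replace the Panchishkin almost-divisibility used in the $(\mathrm{A}_{p})$ case. My resolution is to peel off the solvable-length filtration of this decomposition subgroup one $\mathbb{Z}_{p}$-layer at a time, apply Hochschild-Serre at each stage with the ordinariness of $F^{+}_{v}$ providing the vanishing of the relevant $H^{i}(A_{\rho}/F^{+}_{v}A_{\rho})$ contributions, and then verify that the resulting finite bounds can be made uniform or, on passing to $\varprojlim_{U}$ in $(b)$, fit together via Mittag-Leffler using the finite generation supplied by $\mathrm{(Red)}$.
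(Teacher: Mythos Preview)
Your overall architecture matches the paper's proof: the fundamental diagram \eqref{equation:control}, the snake lemma reducing to $\mathrm{Ker}(\mathrm{res}'_{\rho,U})$, $\mathrm{Coker}(\mathrm{res}'_{\rho,U})$, $\mathrm{Ker}(\mathrm{res}''_{\rho,U})$, the inflation--restriction identification of the first two with $H^{1}(U,A_\rho^{G_{K_\infty}})$ and $H^{2}(U,A_\rho^{G_{K_\infty}})$, and the prime-by-prime analysis of the local kernel (with the solvable filtration of the decomposition group at $p$ in the $(\mathrm{Sol}_p)$ case) are all exactly what the paper does.

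There is, however, a genuine gap in your treatment of the $\mathrm{(Red)}$ case. You write that under $\mathrm{(Red)}$ the module $A_\rho^{G_{K_\infty}}$ is ``controlled only by its reducible filtration'' and propose a Nakayama/compactness argument on the $\Lambda(G)$-structure. But $\mathrm{(Red)}$ says nothing about the representation; it says that the Lie algebra $\mathfrak{g}=\mathrm{Lie}(G)$ is \emph{reductive}. The paper's mechanism is entirely different from what you sketch: one splits $(A_\rho)^{G_{K_\infty}}$ into its divisible part and its finite quotient, handles the finite piece as in case $\mathrm{(A)}$, and then for the divisible piece observes that the $\mathbb{Q}_p$-corank of $H^{i}(U,((A_\rho)^{G_{K_\infty}})_{\mathrm{div}})$ equals $\dim_{\mathbb{Q}_p} H^{i}(\mathfrak{g},V^{G_{K_\infty}})$, which \emph{vanishes} by the Hochschild--Serre theorem \cite[Thm~10]{hs} because $\mathfrak{g}$ is reductive and $V^{G_{K_\infty}}$ is semisimple. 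This Lie-algebra cohomology vanishing is the crucial input; without it your compactness argument cannot show finiteness of each $H^{i}(U,-)$, let alone finite $\mathbb{Z}_p$-generation of the inverse limit.

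A smaller point: even in case $\mathrm{(A)}$, the step ``$(A_\rho)^{G_{K_\infty}}$ is finite, hence $H^{i}(U,-)$ is finite \emph{uniformly in $U$}'' is not automatic. The paper isolates this as Lemma~\ref{lemma:greenberg_cohomology_bound} (due to Greenberg, ultimately relying on \cite{dsms}): for a $p$-adic Lie group $G$, the orders of $H^{1}(Z,\mathbb{F}_p)$ and $H^{2}(Z,\mathbb{F}_p)$ are bounded as $Z$ ranges over closed subgroups. Your sentence glosses over this; you should cite it explicitly, since it is used repeatedly (also in the local analysis at $p$ under $(\mathrm{A}_p)$).
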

The proof of Theorem \ref{propisition:control_theorem} will be given in Section \ref{s2}. 
\begin{rem} In a non-commutative $p$-adic Lie extension $K_\infty /K$, we need to calculate and estimate the growth of the kernel and cokernel of the natural restriction map in the Control Theorem carefully in the proof of the algebraic functional equation because, 
even if these kernel and cokernel  are finite at every intermediate field, they might not be universally bounded. Also, to calculate the contribution of the local terms in the Control Theorem, we need to keep in mind that the 
primes  over $p$ as well as the  primes  above $\ell \neq p$ can be infinitely ramified.
\end{rem}
In the latter part of the paper, we prove the algebraic functional equation (the functional equation of the Selmer group of $A$ over $K_\infty /K$) by applying Theorem 1. {}{Let us set $H:=  \mathrm{Gal} (K_\infty /K_\mathrm{cyc} )$ and}  
let $\mathfrak M_H(G)$ be the category of finitely generated $\La_{\mathcal{O}}(G)$-modules $M$ such that $M/M(p)$ are finitely generated over 
$\La_{\mathcal{O}}(H)$ where $M(p)$ is a union of all $p^n$-torsion subgroups $M[p^n]$ of $M$. We denote by $K_0 (\mathfrak M_H(G))$ the 
Grothendieck group of $\mathfrak M_H(G)$. For an object $M$ in $\mathfrak M_H(G)$, the class of $M$ in $K_0 (\mathfrak M_H(G))$ is denoted by $[M]$. 
Given a   $\La_{\mathcal{O}}(G)$-modules $M$,  $M^\iota$ is a  $\La_{\mathcal{O}}(G)$-module where $ M^\iota =M$ as an 
 $\mathcal O$-module but $G$ acts via  the involution $m.g = g^{-1}m$ for all $g\in G$.


\begin{theorem}[{\bf Algebraic Functional Equation}]\label{function-equation-thm} 
 Let $K_\infty/K$, $V$, $A$ be as in the setting of Control Theorem $($Theorem \ref{propisition:control_theorem}$)$. Let $\mathrm{Sel}^{\mathrm{Gr}}_A (K_\infty )^\vee $denote the {}{dual} Greenberg Selmer group of $A$ over $K_\infty$. 
Assume that $\mathrm{Sel}^{\mathrm{Gr}}_A (K_\infty )^\vee$ and $\mathrm{Sel}^{\mathrm{Gr}}_{A^\ast(1)} (K_\infty )^\vee $ are in $\mathfrak M_H(G)$ 
{}{where we set $A^\ast = \mathrm{Hom} (A , \mathcal{K} /\mathcal{O} ) \otimes_{\mathcal{O}} \mathcal{K} /\mathcal{O}$}. 
Assume that all of the following conditions are satisfied simultaneously.
\begin{enumerate}
\item For every continuous character $\rho: \Gamma_\mathrm{cyc} \rightarrow \mathbb{Z}_p^\times$, the groups $\mathrm{Ker} (\mathrm{res}^{A^*(1)}_{\rho, U})$ and $\mathrm{Coker} (\mathrm{res}^{A^*(1)}_{\rho, U})$ are  finite groups for each $U \in \mathcal{U}$.
\item Either one of the following two conditions are satisfied.

\begin{enumerate} 
\item The order of $\mathrm{Ker} (\mathrm{res}^{A^*(1)}_{U})$ is bounded independently of $U \in \mathcal{U}$. 
Further, we have a natural map $E^{A^*(1)}_{0,U} \longrightarrow \mathrm{Coker} (\mathrm{res}^{A^*(1)}_{U})$
whose kernel and cokernel are finite groups and their orders are bounded independently of $U \in \mathcal{U}$. 
\item The inverse limit $\varprojlim_U \mathrm{Ker} (\mathrm{res}^{A^*(1)}_{U})$ is a finitely generated $\mathbb{Z}_p$-module.
Further, we have a natural map $E_0^{A^*(1)} \longrightarrow \varprojlim_U \mathrm{Coker} (\mathrm{res}^{A^*(1)}_{U})$ 
whose kernel and cokernel are finitely generated $\mathbb{Z}_p$-modules.
Moreover, hypothesis   $\mathrm{(Van)}$  of Definition \ref{definition:setting} holds.
\end{enumerate}
\item Either the hypothesis (a) or the  hypothesis (b) of  Proposition \ref{proposition:vanishing_higher_ext}  holds.
\item Either the condition $(\mathrm{A}_p)$ of Definition \ref{set2b} or the condition  $(\mathrm{Van}_p)$ of Definition \ref{definition:setting} holds.
\end{enumerate}
Then we have the following equality in $K_0 (\mathfrak{M}_H (G))$: 
$$
\left[ \mathrm{Sel}^{\mathrm{Gr}}_A (K_\infty )^\vee  \right] + \Bigl[ 
E_1^{A^\ast(1)}\Bigr] = 
\left[\left( \mathrm{Sel}^{\mathrm{Gr}}_{A^\ast (1)} (K_\infty )^\vee \right) ^\iota \right] 
$$ 
where $E_1^A: = \underset{U}{\varprojlim}E^A_{1,U}$ is an exceptional term defined by the equation \eqref{e1ande23}. 
\end{theorem}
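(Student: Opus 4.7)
The plan is to combine global Poitou--Tate duality at each finite intermediate level $K_U$ with the Control Theorem (Theorem \ref{propisition:control_theorem}) applied to both $A$ and $A^*(1)$, and to read off the resulting equality in $K_0(\mathfrak{M}_H(G))$ by tracking error terms carefully.

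First, for each $U \in \mathcal{U}$, I would write down the Poitou--Tate nine-term exact sequence for the discrete module $A$ over $K_U$ with respect to an appropriate finite set $S$ of primes. Using local Tate duality together with the self-duality of the Greenberg filtration supplied by condition $(\mathrm{Pan})$---so that $F^+_v A^*(1)$ is the exact annihilator of $F^+_v A$ under the local pairing---the local conditions defining $\mathrm{Sel}^{\mathrm{Gr}}_A(K_U)$ and $\mathrm{Sel}^{\mathrm{Gr}}_{A^*(1)}(K_U)$ become exact orthogonal complements. This yields, over each $K_U$, a canonical four-term exact sequence
\[
0 \to \mathrm{Sel}^{\mathrm{Gr}}_A(K_U) \to H^1_S(K_U, A) \to \bigoplus_{v \in S} J_v(A/K_U) \to \mathrm{Sel}^{\mathrm{Gr}}_{A^*(1)}(K_U)^\vee \to 0
\]
where $J_v(A/K_U)$ denotes the appropriate local quotient.

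Next, I would pass to the limit. Using the Control Theorem for $A$, the class of $\varprojlim_U \mathrm{Sel}^{\mathrm{Gr}}_A(K_U)^\vee$ in $K_0(\mathfrak{M}_H(G))$ agrees with $[\mathrm{Sel}^{\mathrm{Gr}}_A(K_\infty)^\vee]$ modulo finitely generated $\mathbb{Z}_p$-contributions. Simultaneously, hypotheses (1) and (2) apply the Control Theorem to $A^*(1)$, so the inverse limit of the dual Selmer groups of $A^*(1)$ is identified with $(\mathrm{Sel}^{\mathrm{Gr}}_{A^*(1)}(K_\infty)^\vee)^\iota$; the involution $\iota$ enters naturally because the Pontryagin dual of a discrete $G$-module carries the contragredient $G$-action, which must be converted from a right to a left $\Lambda_{\mathcal{O}}(G)$-action. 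The middle local term then decomposes according to whether $v \in P_U$: primes $v \nmid p$ with finite inertia image disappear in the limit; primes $v \in P_U$ with $v \nmid p$ contribute exactly $E_1^{A^*(1)}$; primes above $p$ contribute a class which vanishes in $K_0(\mathfrak{M}_H(G))$ thanks to the ordinarity supplied by hypothesis (4), namely $(\mathrm{A}_p)$ or $(\mathrm{Van}_p)$.

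The main obstacle will be the failure of Pontryagin duality and inverse limits to commute naively with the four-term sequence: the derived functors $\mathrm{Ext}^i_{\Lambda_{\mathcal{O}}(G)}(-,\Lambda_{\mathcal{O}}(G))$ for $i \geq 1$ obstruct the comparison, and without controlling them one cannot pass from a termwise equality of inverse limits to an equality of classes in $K_0(\mathfrak{M}_H(G))$. This is precisely where hypothesis (3), invoking Proposition \ref{proposition:vanishing_higher_ext}, intervenes: it forces these higher Ext terms to vanish or to lie in the subcategory with trivial $K_0$-class. With this in hand, the additivity of the class function along the limiting exact sequence collects all contributions into
\[
[\mathrm{Sel}^{\mathrm{Gr}}_A(K_\infty)^\vee] + [E_1^{A^*(1)}] = [(\mathrm{Sel}^{\mathrm{Gr}}_{A^*(1)}(K_\infty)^\vee)^\iota]
\]
as required.
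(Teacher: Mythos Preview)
Your approach has two genuine gaps that prevent it from going through as written.

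First, the four-term Poitou--Tate sequence you write down has middle terms $H^1_S(K_U,A)$ and $\bigoplus_v J_v(A/K_U)$ whose limits (after dualizing) are \emph{not} objects of $\mathfrak{M}_H(G)$: the dual of $H^1_S(K_\infty,A)$ has positive $\Lambda_{\mathcal{O}}(G)$-rank by the global Euler--Poincar\'e formula, and the local pieces are of the same size. You therefore cannot read off the desired equality by additivity of $[-]$ in $K_0(\mathfrak{M}_H(G))$ along this sequence, and you offer no mechanism for the two large middle terms to cancel in $K_0$. You also invoke ``the Control Theorem for $A$'', but the hypotheses (1) and (2) of the theorem supply control only for $A^*(1)$; nothing in the statement lets you compare $\varprojlim_U \mathrm{Sel}^{\mathrm{Gr}}_A(K_U)^\vee$ with $\mathrm{Sel}^{\mathrm{Gr}}_A(K_\infty)^\vee$.

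Second, and more fundamentally, you are missing the key technical device the paper uses to avoid the large middle terms entirely: the \emph{twisting lemma} (Theorem \ref{mainin} and Corollary \ref{twist-summary}). Since $\mathrm{Sel}^{\mathrm{Gr}}_{A^*(1)}(K_\infty)^\vee \in \mathfrak{M}_H(G)$, there is a character $\rho$ of $\Gamma_{\mathrm{cyc}}$ for which $(\mathrm{Sel}^{\mathrm{Gr}}_{A_\rho^*(1)}(K_\infty)^\vee)_U$ and $H_1(U,\mathrm{Sel}^{\mathrm{Gr}}_{A_\rho^*(1)}(K_\infty)^\vee)$ are finite for every $U$. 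Together with hypothesis (1), this forces $\mathrm{Sel}^{\mathrm{Gr}}_{A_\rho^*(1)}(K_U)$ and hence $\mathrm{Sel}^{\mathrm{BK}}_{A_\rho^*(1)}(K_U)$ to be finite, so Flach's generalized Cassels--Tate pairing gives a \emph{perfect} duality $\mathrm{Sel}^{\mathrm{BK}}_{A_\rho}(K_U)^\vee \cong \mathrm{Sel}^{\mathrm{BK}}_{A_\rho^*(1)}(K_U)$ at every level. Composing with the restriction map and taking the limit over $U$ yields a map $(\mathrm{Sel}^{\mathrm{Gr}}_{A_\rho}(K_\infty)^\vee)^\iota \to \varprojlim_U (\mathrm{Sel}^{\mathrm{Gr}}_{A_\rho^*(1)}(K_\infty)^U)^\iota$, and the finiteness (via Lemmas \ref{jannsen-lemma} and \ref{base-change-ext1}) identifies the target with $\mathrm{Ext}^1_{\Lambda_{\mathcal{O}}(G)}(\mathrm{Sel}^{\mathrm{Gr}}_{A_\rho^*(1)}(K_\infty)^\vee,\Lambda_{\mathcal{O}}(G))$. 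Lemma \ref{lemma:triviality_of_C} handles the Bloch--Kato versus Greenberg discrepancy, and hypothesis (2) says the kernel and cokernel of this composite contribute exactly $[E_0^{A^*(1)}]$, which equals $[E_1^{A^*(1)}]$ by hypothesis (4) and Proposition \ref{cor-error-term-over-p}. Only at this final stage does hypothesis (3) enter: it gives $[\mathrm{Ext}^i]=0$ for $i\ge 2$, whence Proposition \ref{ext-groups-in-k0} yields $[\mathrm{Sel}^{\mathrm{Gr}}_{A^*(1)}(K_\infty)^\vee]=[\mathrm{Ext}^1(\mathrm{Sel}^{\mathrm{Gr}}_{A^*(1)}(K_\infty)^\vee,\Lambda_{\mathcal{O}}(G))]$. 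Untwisting by $\rho^{-1}$ and applying $\iota$ gives the statement. Your sketch locates the role of the $\mathrm{Ext}$-vanishing roughly correctly, but without the twisting step there is no finiteness, no Flach pairing, and no identification with $\mathrm{Ext}^1$.
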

A quite interesting phenomenon in the non-commutative case observed in the above theorem is that 
the functional equation is not really symmetric as in the commutative case and it has some error terms, which we call the exceptional divisor. 
The proof of Theorem \ref{function-equation-thm} will be given in Section \ref{s3}. 
\par 
{}{We denote by $\mathrm{Sel}^{\mathrm{BK}}_{A_\rho} (K_U )$ the Bloch-Kato Selmer group for $A$ over $K_U$ 
whose definition is similar to $\mathrm{Sel}^{\mathrm{Gr}}_{A_\rho} (K_U )$ replacing the local condition 
$H^1_{\mathrm{Gr}} (K_{U,u} ,A)$ of Definition \ref{definition:Selmergroup} by the local condition $H^1_{f} (K_{U,u} ,A)$ 
of Bloch-Kato given in \cite[Section 3]{bk}. We denote the inductive limit $\underset{U\in \mathcal{U}}{\varinjlim} \mathrm{Sel}^{\mathrm{BK}}_{A_\rho} (K_U )$ by 
$\mathrm{Sel}^{\mathrm{BK}}_{A_\rho} (K_\infty )$. It is known that $\mathrm{Sel}^{\mathrm{BK}}_{A_\rho} (K_U )$ (resp. $\mathrm{Sel}^{\mathrm{BK}}_{A_\rho} (K_\infty )$) is a subgroup of $\mathrm{Sel}^{\mathrm{Gr}}_{A_\rho} (K_U )$ (resp. $\mathrm{Sel}^{\mathrm{Gr}}_{A_\rho} (K_\infty )$). 
}
\begin{rem}[Algebraic Functional equation for Bloch-Kato Selmer group]\label{remark0.2}
Let us keep the hypotheses $($and the notation$)$ of Theorem \ref{function-equation-thm} above. 
 Then $\mathrm{Sel}^{\mathrm{BK}}_A (K_\infty )^\vee$ and $\mathrm{Sel}^{\mathrm{BK}}_{A^\ast(1)} (K_\infty )^\vee $ are in $\mathfrak M_H(G)$ and we have the following equality in $K_0 (\mathfrak{M}_H (G))$:

$$
\left[ \mathrm{Sel}^{\mathrm{BK}}_A (K_\infty )^\vee  \right] + \Bigl[ 
E_1^{A^\ast(1)}\Bigr] = 
\left[\left( \mathrm{Sel}^{\mathrm{BK}}_{A^\ast (1)} (K_\infty )^\vee \right) ^\iota \right] .
$$ 
This duality  for Bloch-Kato Selmer groups over $K_\infty$ follows from  the proof of Theorem \ref{function-equation-thm}. (see Section \ref{s3}).
\end{rem} 

\medskip

We give an example where all the conditions of Theorem \ref{propisition:control_theorem} and Theorem \ref{function-equation-thm}  are satisfied simultaneously. Also see Example \ref{example:51}.

\begin{example}\label{example:totalexample}
We consider the false-Tate curve extension $K_\infty = \mathbb{Q} (\mu_{p^\infty}, a^{1/p^\infty})$ appearing in Example \ref{example:2}. 

Let $f$ be {}{a normalized} eigen cuspform of weight $k\geq 2$ whose conductor is prime to $p$ and
assume that $a_p(f)$ is a $p$-adic unit as in Example \ref{example:2}(2). We take a lattice $T \subset V_f(j)$ 
{}{for an intger $j$ satisfying $1\leq j \leq k-1$} and put $A =T\otimes \mathbb{Q}_p /\mathbb{Z}_p$. 
Then, from the discussion of  Example \ref{example:firstone}(3), Example \ref{example:2}(3) and Example \ref{example2-2}, we have the following:

\begin{enumerate}
\item For any continuous character $\rho: \Gamma_\mathrm{cyc} \rightarrow \mathbb{Z}_p^\times$, $\mathrm{Ker} (\mathrm{res}^{A^\ast(1)}_{\rho,U})$ is a finite group whose order is bounded independently of $U \in \mathcal{U}$ {}{and} $\mathrm{Coker} (\mathrm{res}^{A^\ast(1)}_{\rho,U})$ is a finite group for any $U $. \par \par Further, the kernel and cokernel of the natural map $E_0^{A^\ast(1)} \longrightarrow \varprojlim_U \mathrm{Coker} (\mathrm{res}^{A^\ast(1)}_{U})$ are  finite groups of bounded order, independent of $U$.
\item Moreover, if $\mathrm{Sel}^{\mathrm{Gr}}_A (K_\infty )^\vee$ and $\mathrm{Sel}^{\mathrm{Gr}}_{A^\ast(1)} (K_\infty )^\vee $ are in $\mathfrak M_H(G)$, then 
$$
\left[ \mathrm{Sel}^{\mathrm{Gr}}_A (K_\infty )^\vee  \right] + \Bigl[ 
E_1^{A^\ast(1)}\Bigr] = 
\left[\left( \mathrm{Sel}^{\mathrm{Gr}}_{A^\ast (1)} (K_\infty )^\vee \right) ^\iota \right]  
\text{ in } K_0 (\mathfrak{M}_H (G)) $$ 
where $E_1^A$ is defined in \eqref{e1ande2}. 
\item Whenever $\mathrm{Sel}^{\mathrm{Gr}}_A (\Q(\mu_{p^\infty} ))^\vee$ is a finitely generated $\Z_p$-module, then  
$\mathrm{Sel}^{\mathrm{Gr}}_A (K_\infty )^\vee$ and $\mathrm{Sel}^{\mathrm{Gr}}_{A^\ast(1)} (K_\infty )^\vee $ are in $ \mathfrak M_H(G).$
\end{enumerate}
\end{example}

\begin{rem}[Historical comments]
Our result is a generalization of \cite{gr}, \cite{pr} and that of \cite{zab}, \cite{z2} and  \cite{bz}. The last three papers establish 
functional equation of Selmer groups of $p$-ordinary elliptic curves over certain $p$-adic Lie extensions  
$K_\infty /K$ including false-Tate extensions and $GL_2$-extensions. We allow more general Galois representations and 
more general $p$-adic Lie extensions compared to the above mentioned articles. There is also a related work of Hsieh \cite{mlh} on algebraic functional equation over CM fields.
\end{rem}
\begin{rem}[Technical comments]
The method of proof in our work is different from preceding results \cite{zab}, \cite{z2} and   \cite{bz} even in the case of Selmer groups 
for elliptic curves which were already studied by them. 
In their works, functional equations are proved {}{as follows}: 
for each intermediate extension $k$ of $K_\infty /K $ which is finite over $K$, 
we have the algebraic functional equation over the cyclotomic $\mathbb{Z}_p$-extension $k_{\cyc}$, obtained by Greenberg and Perrin-Riou. Then they take the limit when $k$ varies and  obtain the desired functional equation over $K_\infty$.   
\par 
In our method, in some sense, the argument becomes simpler. 
For each finite intermediate extension $k$ of $K_\infty /K $, 
we have a natural duality pairing of Selmer group thanks to the global duality theorems of Galois cohomology 
and this natural pairing is perfect assuming that these Selmer groups are of finite order \cite{flach}.  
By a technique called a twisting lemma developed in our previous article with Z\'{a}br\'{a}di \cite{jo}, 
we are reduced to the situation where Selmer groups are of finite order at every finite intermediate extension $k$ of $K_\infty /K$. 
Under this situation of finiteness, we can just take the limit of this pairing with respect to $k$ with help of 
Control theorem of Selmer group (Theorem \ref{propisition:control_theorem}) and we obtain the desired result for Theorem \ref{function-equation-thm}.
\end{rem}
In the philosophy of Iwasawa theory, the characteristic class of the Selmer group is expected to be an algebraic counterpart of 
the analytic $p$-adic $L$-function whose existence is hypothetical in general. Iwasawa Main Conjecture predicts 
an equality in $K_0 (\mathfrak{M}_H (G))$ between the characteristic class of the Selmer group and the class defined by the analytic $p$-adic $L$-function. 
\par

{}{Now, let us consider a specific situation, where we study the Selmer group and $p$-adic $L$-function associated to a normalized,   $p$-ordinary eigen cuspform $f$ of even weight $k\geq 2$ and level $\Gamma_0 (N)$ 
such that  $N$ is square-free and the conductor $N_f$ of $f$ is not divisible by $p$ as discussed in Example 3 of Section \ref{s5}.  We take $V=V_{f,p}(\frac{k}{2})$,  the $p$-adic Galois representation for $f$ at the crtical value $k/2$ and  take a lattice $T$ of $V=V_{f,p}(\tfrac{k}{2})$ and we set $A =T\otimes \mathbb{Q}_p /\mathbb{Z}_p$. Now consider a  `false-Tate curve' extension $K_\infty = \mathbb{Q} (\mu_{p^\infty}, a^{1/p^\infty})$ for some  $p$-power free natural number $a$ (see Example \ref{example:2}).  
Let $\mathcal{L}_p(V_{f,p}(\tfrac{k}{2}))$ be the analytic $p$-adic $L$-function, whose existence is conjectural. 
 At every Artin representation 
$\eta$ of $\mathrm{Gal}(K_\infty /\mathbb{Q})$, it makes sense to evaluate $\mathcal{L}_p(V_{f,p}(\tfrac{k}{2}))$ at $\eta$  but 
the evaluated value $\eta(\mathcal{L}_p(V_{f,p}(\tfrac{k}{2})))$ is only well-defined modulo multiplication by a $p$-adic unit. Similarly, for any $M$ in $\mathfrak M_H(G)$,   $\eta([M])$  makes sense up to a $p$-adic unit. (see Section \ref{s6}.)} 
\par 
In Appendix A, we discuss a general conjectural framework concerning the conjectural existence and the interpolation property 
for expected $p$-adic $L$-functions (see \eqref{equation:interpolation_property}) associated to general $p$-ordinary motives. As for the interpolation property satisfied by 
$\mathcal{L}_p(V_{f,p}(\tfrac{k}{2}))$, we refer the reader to the conjectural interpolation in \eqref{p-adiclfn-ellipticcurve}.
\begin{theorem}[{\bf Compatibility between the algebraic side and the analytic side}]\label{theorem3}
Let us assume the setting above of 
{}{an ordinary normalized eigen elliptic cuspform $f$ of even weight $k\geq 2$ and square-free level $\Gamma_0 (N)$}  with a false-Tate extension $K_\infty /\mathbb{Q}$. 
Assume that a conjectural $p$-adic $L$-function {}{$\mathcal{L}_p(V_{f,p}(\tfrac{k}{2})) \in K_1(\La_\mathcal{O}(G)_{{S^*}})$} with the interpolation property \eqref{p-adiclfn-ellipticcurve} exists. 
Then for any Artin representation $\eta$ of $\mathrm{Gal}(K_\infty /\mathbb{Q})$ we have the equality 
\begin{equation}\label{equation:evaluated_functional_equation1}
\frac{\eta \left( \left[ \mathrm{Sel}^{\mathrm{BK}}_A (K_\infty )^\vee  \right]\right) }{\eta \left( 
\left[ \left( \mathrm{Sel}^{\mathrm{BK}}_{A} (K_\infty )^\vee \right) ^\iota \right] \right)} 
={}{\frac{\eta \left( \mathcal{L}_p(V_{f,p}(\tfrac{k}{2})) \right)}{\eta \left( \mathcal{L}_p(V_{f,p}(\tfrac{k}{2}))^\iota\right)}}
\end{equation}
modulo multiplication by a $p$-adic unit. 
\end{theorem}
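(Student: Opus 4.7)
The plan is to compare the algebraic and analytic sides separately and reduce the equality \eqref{equation:evaluated_functional_equation1} to a local matching statement. The algebraic functional equation of Theorem \ref{function-equation-thm}, in the Bloch-Kato form recorded in Remark \ref{remark0.2}, combined with self-duality $V\simeq V^\ast(1)$ of the central critical twist, will yield a closed form for the left-hand ratio. The conjectural interpolation formula \eqref{p-adiclfn-ellipticcurve} together with the complex functional equation of $L(f\otimes\eta,s)$ will yield a closed form for the right-hand ratio. The two expressions will then be matched prime by prime.

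On the algebraic side, I would first observe that since $f$ has trivial nebentypus, $\det V_{f,p}$ is, up to a finite character, a power of the cyclotomic character, so $V=V_{f,p}(k/2)$ satisfies $V^\ast(1)\simeq V$. Choosing $T$ compatibly, $A^\ast(1)\simeq A$ up to a finite group, which is trivial in $K_0(\mathfrak M_H(G))$. Remark \ref{remark0.2} then gives
\begin{equation*}
\bigl[\mathrm{Sel}^{\mathrm{BK}}_A(K_\infty)^\vee\bigr]+\bigl[E_1^A\bigr]=\bigl[(\mathrm{Sel}^{\mathrm{BK}}_A(K_\infty)^\vee)^\iota\bigr]
\end{equation*}
in $K_0(\mathfrak M_H(G))$, and evaluating at $\eta$ yields
\begin{equation*}
\frac{\eta\bigl([\mathrm{Sel}^{\mathrm{BK}}_A(K_\infty)^\vee]\bigr)}{\eta\bigl([(\mathrm{Sel}^{\mathrm{BK}}_A(K_\infty)^\vee)^\iota]\bigr)}=\eta\bigl([E_1^A]\bigr)^{-1}
\end{equation*}
modulo $p$-adic units. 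Hence it suffices to identify $\eta([E_1^A])^{-1}$ with the analytic ratio on the right-hand side of \eqref{equation:evaluated_functional_equation1}.

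On the analytic side, the involution $\iota$ satisfies $\eta(x^\iota)=\eta^\ast(x)$, where $\eta^\ast$ denotes the contragredient Artin representation. Substituting the interpolation formula \eqref{p-adiclfn-ellipticcurve} at $\eta$ and at $\eta^\ast$ and forming the ratio, the Deligne periods cancel (by self-duality of $V$ combined with Artin formalism for Artin representations of the solvable false-Tate group), and the modified Euler factors at $p$ cancel because $a_p(f)\in\mathcal O^\times$. What remains is
\begin{equation*}
\frac{L(f\otimes\eta,k/2)}{L(f\otimes\eta^\ast,k/2)},
\end{equation*}
which by the complex functional equation at the self-dual centre equals the global root number $\varepsilon(f\otimes\eta,k/2)=\prod_\ell\varepsilon_\ell(f\otimes\eta,k/2)$, a product of local $\varepsilon$-factors.

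The remaining task is to verify $\eta([E_1^A])^{-1}\equiv \prod_\ell\varepsilon_\ell(f\otimes\eta,k/2)$ modulo $p$-adic units, and this is where I expect the main obstacle to lie. For $\ell\nmid pNa$ both sides are trivial. The prime $p$ is excluded from $E_1^A$ by construction, and its analytic contribution is a unit because of $p$-ordinariness. The remaining primes, dividing $N_f$ or ramifying in the false-Tate tower, are handled one at a time: unwinding $E^A_{U,u}$ via local Tate duality identifies $\eta$ applied to the local piece at $\ell$ with a local $L$-factor of $V\otimes\eta$ at $\ell$, which then matches $\varepsilon_\ell(f\otimes\eta,k/2)$ up to units by the Deligne--Langlands recipe for local $\varepsilon$-factors of ordinary filtered representations. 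The delicate content of this step is a non-abelian analogue of the local computations of Greenberg and Perrin-Riou: one must explicitly evaluate $\eta$ on the inverse limit $\varprojlim_U\bigoplus_{u\in P_U,\,u\nmid p}E^A_{U,u}$ and identify it prime-by-prime with the local epsilon factor of $f\otimes\eta$ at the central critical point. Once this local match is in place, combining the three steps yields \eqref{equation:evaluated_functional_equation1}.
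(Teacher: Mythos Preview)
Your overall architecture is correct and matches the paper: both sides of \eqref{equation:evaluated_functional_equation1} are to be shown equal to $\eta([E_1^A])^{-1}$ modulo units, the algebraic side via Theorem~\ref{function-equation-thm}/Remark~\ref{remark0.2} together with self-duality $V\simeq V^\ast(1)$, and the analytic side via the interpolation property \eqref{p-adiclfn-ellipticcurve} and the complex functional equation. The problem is in your analytic computation and in the target of the final matching step.

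The interpolation formula \eqref{p-adiclfn-ellipticcurve} involves the \emph{incomplete} $L$-value $L_P(f,\eta,\tfrac{k}{2})$, with Euler factors at all primes in $P=P_0\cup\{p\}$ removed. When you form the ratio $\eta(\mathcal{L}_p)/\eta(\mathcal{L}_p^\iota)$, the periods cancel because $d_\pm(\eta)=d_\pm(\eta^\ast)$ (no self-duality or Artin formalism is needed), and the $p$-factors $P_p(\eta,\alpha_p^{-1})$, $P_p(\eta,\beta_p^{-1})$, $P_p(f,\eta,p^{-k/2})$ are equal to $1$ for nontrivial $\eta$ not because $a_p(f)$ is a unit but because $W_{\eta}^{I_p}=0$ in the false-Tate tower. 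After these cancellations you are left with $\dfrac{L_P(f,\eta^\ast,\tfrac{k}{2})}{L_P(f,\eta,\tfrac{k}{2})}$, not $\dfrac{L(f,\eta,\tfrac{k}{2})}{L(f,\eta^\ast,\tfrac{k}{2})}$. Passing from $L_P$ to $L$ reintroduces exactly the factors $\displaystyle\prod_{q\in P_0}\dfrac{P_q(f,\eta^\ast,q^{-k/2})}{P_q(f,\eta,q^{-k/2})}$, and only then does the complex functional equation convert the remaining $L$-ratio into the root number $\omega(f,\eta)$, which is the part that is a $p$-adic unit. So the analytic ratio equals, up to units, this product of ratios of \emph{Euler factors} at $q\in P_0$; this is the content of Proposition~\ref{772}.

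This changes the target of your last step entirely. You propose to match $\eta([E_1^A])^{-1}$ with $\prod_\ell\varepsilon_\ell(f\otimes\eta,\tfrac{k}{2})$, but that product is the global root number, a $p$-adic unit; if this held then $[E_1^A]$ would evaluate to a unit at every $\eta$ and would be trivial in $K_0(\mathfrak M_H(G))$, which it is not. What actually has to be shown (and what the paper proves as Proposition~\ref{proposition:coincidence_evaluation_errorterm}) is
\[
\eta(\xi_{E_1^A})\;=\;\prod_{q\in P_0}\frac{P_q(f,\eta,q^{-k/2})}{P_q(f,\eta^\ast,q^{-k/2})}\quad\text{(mod units)},
\]
a ratio of local $L$-factors, not $\varepsilon$-factors. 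The proof uses the explicit description $E_1^A\cong\bigoplus_{q\in P_1\cup P_2}\mathrm{Ind}^G_{G_q}T(-1)$ from Section~\ref{s5}, together with an explicit evaluation of characteristic elements of rank-one unramified $G_q$-modules at the representations $\psi$ and $\theta_n\psi$ (Lemma~\ref{lemma3003} and the induced-representation computation following it). No comparison with local $\varepsilon$-factors via ``Deligne--Langlands'' enters.
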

Theorem \ref{theorem3} is restated at Theorem \ref{1100} and we prove it there. A key point of the proof is to calculate 
the evaluations of the exceptional divisor at every Artin character $\eta$ of $\mathrm{Gal}(K_\infty /\mathbb{Q})$ in this 
specific situation. Recall that Iwasawa Main Conjecture mentioned above (see Conjecture \ref{773} for details)  predicts equalities 
$[ \mathrm{Sel}^{\mathrm{BK}}_A (K_\infty )^\vee  ] =\delta\big(\mathcal{L}_p(V_{f,p}(\tfrac{k}{2}))\big)$ and 
$\big[ \big( \mathrm{Sel}^{\mathrm{BK}}_{A^\ast (1)} (K_\infty )^\vee \big) ^\iota \big]= \delta\big(\mathcal{L}_p(V_{f,p}(\tfrac{k}{2})\big)^\iota)$ 
{}{ in $K_0(\mathfrak M_H(G))$ where $\delta=\delta_G$ is the standard map 
$K_1(\La_{\mathcal{O}}(G)_{S^*})\lra K_0(\La_{\mathcal{O}}(G), \La_{\mathcal{O}}(G)_{S^*})= K_0(\mathfrak M_H(G))$ 
(see \eqref{fundamental-exact-k-theory3})}. 
Thus the equality in \eqref{equation:evaluated_functional_equation1} holds true  
if Iwasawa Main Conjecture holds true. 
A remarkable point of Theorem \ref{theorem3} is that we prove \eqref{equation:evaluated_functional_equation1} without assuming 
Iwasawa Main Conjecture, thus supporting the validity of Iwasawa Main Conjecture. 
\\ 
\\ 
{\bf Outline of the paper:} 
\par 
In Section \ref{s1}, we fix the setting of our work. We list up some conditions on 
$p$-adic Lie extensions $K_\infty /K$ and on its Galois group $\mathrm{Gal} (K_\infty /K)$. 
We also list up some conditions on Galois representations $V$ with geometric origin with which we define the Selmer group. 
For these conditions, we give some examples to explain when they are satisfied and when they are not. 
\par 
In Section \ref{s2}, we prove Control Theorem (Theorem \ref{propisition:control_theorem}). We remark that Control theorem in non-commutative case 
is more complicated than its commutative counterpart. Sometimes it happens that kernels and cokernels are finite 
but unbounded. 
\par 
As remarked after Theorem \ref{function-equation-thm}, 
the exceptional divisor plays an important role in the statement of the algebraic functional equation. 
In Section \ref{s5}, we calculate some examples of the exceptional divisor. 
Using general results obtained in previous section, we determine 
the exceptional divisor for some non-commutative extensions  where $V$ is associated to an elliptic curve or a modular form. We also prove that 
the exceptional divisor is trivial when the extension $K_\infty /K$ is commutative.   
\par 
In Section \ref{sext}, we prove the vanishing of some higher extension groups associated to Selmer groups in the category 
$\mathfrak M_H(G)$. Main result of the section is Proposition \ref{proposition:vanishing_higher_ext} and we use some criterion 
of Ardakov and Wadsley for the triviality of the class of a module of finite cardinality in certain Grothendieck groups.   
\par 
In Section \ref{s3}, we finally give the proof of the algebraic functional equation (Theorem \ref{function-equation-thm}) using the vanishing result of 
the previous section. 
\par 
In Section \ref{s6}, we first prove the analytic functional equation for the conjectural analytic $p$-adic $L$-function 
of  an elliptic curve over the false-Tate curve extension. Then we prove that the algebraic functional equation 
is compatible with the analytic functional equation, as predicted by the Iwasawa Main Conjecture. 
The most important technical point is to calculate  
of the evaluation of the error terms on the algebraic functional equation at every Artin representation of 
$\mathrm{Gal}(K_\infty/\Q)$ and to check that they match with the `error terms '
of the analytic functional equation, given by the Euler factors at certain bad primes. 
\par 
In Appendix \ref{appendix}, we first recall some basic conjectures on the complex $L$-function associated to 
a motive $M$. Then we formulate the conjecture on the existence of analytic $p$-adic $L$-function $\mathcal L_p(V)$ in a non-commutative situation when $V$ is associated to a $p$-ordinary motive. 
\\ 
\\
{\bf Notation:} 
Given a  left  $\La_{\mathcal{O}}(G)$-modules $M$,  $M^\iota$ is a right $\La_{\mathcal{O}}(G)$-module where $ M^\iota =M$ as an 
 $\mathcal O$-module but $G$ acts via  the involution $m.g = g^{-1}m$ for all $g\in G$. We extend this action of $G$,  $\mathcal O$-linearly to make $M^\iota$ a right $\La_{\mathcal{O}}(G)$-module. {}{For a discrete module $A$ which is isomorphic to a direct sum of a finite copies of 
 $\mathcal{K} /\mathcal{O}$, we set $A^\ast = \mathrm{Hom} (A , \mathcal{K} /\mathcal{O} ) \otimes_{\mathcal{O}} \mathcal{K} /\mathcal{O}$. 
For a finite dimensional $\mathcal{K}$-vector space $V$, we denote its $\mathcal{K}$-linear dual by $V^\ast$. For a discrete $\mathcal{O}$-module $ M$, 
we denote the Pontryagin dual Hom$_{\mathrm{cont}}(M, \Q_p /\Z_p)$ by $M^\vee$.} 
\par Unless otherwise specified, all modules over $\La_{\mathcal{O}}(G)$ are considered as left modules.  However, given a left $\La_{\mathcal{O}}(G)$-module $M$, we view Ext$^i_{\La_{\mathcal{O}}(G)}(M, \La_{\mathcal{O}}(G))$ as a right $\La_{\mathcal{O}}(G)$-module and Ext$^i_{\La_{\mathcal{O}}(G)}(M^\iota, \La_{\mathcal{O}}(G))$ again as a left $\La_{\mathcal{O}}(G)$-module. 
\par  
Note that, there are two different categories for $\mathfrak M_H(G)$; that of left $\La_{\mathcal{O}}(G)$-modules and that of right $\La_{\mathcal{O}}(G)$-modules. By abuse of notation, we denote both to them by the same notation $\mathfrak M_H(G)$. 
The abelian group $K_0(\mathfrak M_H(G))$ for the category $\mathfrak M_H(G)$ for left $\La_{\mathcal{O}}(G)$-modules and the category $\mathfrak M_H(G)$ for right $\La_{\mathcal{O}}(G)$-modules 
are canonically identified. This last fact is verified by the isomorphism 
$K_0(\mathfrak M_H(G)) \cong K_1({\La_\mathcal O(G)}_{S^*})/{\text{Image}(K_1({\La_\mathcal O(G)}))}$ (see \eqref{fundamental-exact-k-theory3} for details) and this  is implicitly used in Theorem \ref{function-equation-thm} and Proposition \ref{ext-groups-in-k0}.
\\ 
\\ 
{\bf Acknowledgement:} 
\\ 
Some parts of this work progressed a lot during the stay of the authors at TIFR Mumbai
and at ISI Kolkata. We express our thanks for the hospitality of TIFR 
and ISI Kolkata.   We also thank Gergely Z\'{a}br\'{a}di for  discussions. {}{We also thank the referee for comments and suggestions which helped us to improve the article.}
\section{Setting}\label{s1}
We fix the setting of our global Galois extension.  We denote by  $\chi_{\mathrm{cyc}}: \mathrm{G}_\Q \lra \Z_p^\times$, 
the $p$-adic cyclotomic character.
\begin{defn}\label{definition:setting}
We consider the following conditions for a compact $p$-adic Lie group $G$ and a Galois extension $K_\infty /K$ of $K$: 
\begin{enumerate}
\item[\rm{(K)}]
Only  finitely many primes of $K$ are  ramified in the extension $K_\infty$ of $K$ and $\mathrm{Gal} (K_\infty /K)$ 
is isomorphic to $G$. The group $G$ does not have any elements of order $p$. 
\end{enumerate}
\begin{enumerate}
\item[\rm{(G)}] 
The cyclotomic $\Z_p$-extension $K_\mathrm{cyc}$ of $K$ is contained in $K_\infty$ so that $H: = \mathrm{Gal} (K_\infty/K_\mathrm{cyc})$ is a closed subgroup of $G$ and  $G/H$ is isomorphic to 
$\Gamma_{\mathrm{cyc}}= \mathrm{Gal} (K_\mathrm{cyc} /K)$.
\item[\rm{(Red)}] 
The Lie algebra $\mathrm{Lie} (G)$ attached to the $p$-adic Lie group $G$ by Lazard is reductive.
\item[\rm{(Van)}] \label{zp0}
Let $G$ and $H$ be as in the condition \rm{(G)} above. Then $[\Z_p] = 0 \text{ in }K_0(\M_H(G)).$
\end{enumerate}
Under the condition (K), let $\mathcal{U}$ be the set of all open subgroups of $G$ and we denote by $K_U$ 
the fixed subfield of $K_\infty$ by $U \in\mathcal{U}$. For every finite prime $v$ of $K$, we denote the decomposition group of $G$ at 
$v$ by $G_v$, 
We will denote by $u$ a prime in $K_U$ lying above $v$.  
\begin{enumerate}
 \item[\rm{(Van$_p$)}] \label{zpp0}
Let $G$ and $H$ be as in the condition \rm{(G)} above. For any prime $v$ in $K$ dividing $p$, set $H_v : = H \cap G_v$. Then $[\Z_p] = 0 \text{ in }K_0\big(\M_{H_v}(G_v)\big).$
\end{enumerate} 
\end{defn}

\begin{rem}
An example of infinite extension $K_\infty /K$ whose Galois group is isomorphic to a compact $p$-adic 
Lie group is often obtained as a trivializing extension of $p$-adic representation of $G_K$ which has geometric origin.  
For such an extension $K_\infty /K$, the statement of the condition $\mathrm{(K)}$ saying that only finitely many prime ramifies, is known by proper-smooth base change theorem of etale cohomology. 
The statement of the condition $\mathrm{(K)}$ saying that $\mathrm{Gal}(K_\infty/K)$ is a $p$-adic Lie group, 
is known by the fact that a closed subgroup 
of $GL_d (\mathbb{Z}_p)$ is a compact $p$-adic Lie group.  In many arithmetic situations (see Examples \ref{example:2}, \ref{example:3} and \ref{example:4}) the condition that $\mathrm{Gal}(K_\infty/K)$  has no element of order $p$ is also satisfied.
\par There are some examples of $K_\infty /K$ which do not have geometric origin. 
For example, a false-Tate extension $K (\mu_{p^\infty} ,a^{1/p^\infty})$ seems to be not obtained as the trivializing extension of 
a $p$-adic Galois representation of geometric origin. But, 
it is also easy to see that false-Tate curve extensions satisfy the condition $\mathrm{(K)}$. 
\end{rem}
We then introduce some conditions for the setting of our $p$-adic Galois representation of the absolute Galois group $G_K$ of $K$. 

\begin{defn}\label{s22ax}
We consider the following conditions for a $p$-adic representation $V \cong \mathcal{K}^{\oplus d}$ on which we have a continuous and $\mathcal{K}$-linear action of the absolute Galois group $G_K$:  
\begin{enumerate} 
\item[\rm{(Geo)}] The representation $V$ is geometric in the sense that is is obtained by $p$-adic realization of certain pure motive of homogeneous weight over $K$.
\item[\rm{(Pan)}] For any prime $v$ of $K$ over $p$, $V$ restricted to  $G_{K_v}$ is a Hodge-Tate representation which has a $G_{K_v}$-stable  $\mathcal{K}$-subspace $F^+_v V \subset V$ such that the following isomorphisms hold: 
\begin{small}{\begin{align*}
& F^+_v V \otimes_{\mathcal{K}} \mathbb{C}_p \cong \underset{n\geq 1 }{\oplus}\mathbb{C}_p (n)^{\oplus r_n }, \\ 
& (V/F^+_v V) \otimes_{\mathcal{K}} \mathbb{C}_p \cong \underset{n\leq 0 }{\oplus}\mathbb{C}_p (n)^{\oplus r_n } ,
\end{align*}}\end{small}
where $r_n$ are non-negative integers almost all of which are zero.  
\item[\rm{(Ord)}] For every prime $v$ of $K$ dividing $p$, the action $\rho \vert_{G_{K_v}}$ is quasi-ordinary in the sense of \cite[Definition 3.1]{oc}. That is, we have a $G_{K_v}$-stable decreasing filtration $\mathrm{Fil}^i_v V \subset V$ such that the action of $G_{K_v}$ on $(\mathrm{Fil}^i_v V /\mathrm{Fil}^{i+1}_v V )\otimes \chi^{-i}_{\mathrm{cyc}}$ is trivial on an open subgroup of the inertia subgroup $I_v$.  
\item[$(\mathrm{V}_p)$] Assume the condition $\mathrm{(Ord)}
$. For every prime $v$ of $K$ dividing $p$, the generalized eigenvalues of the Frobenius element in $G_{K_v} /I_v$ 
acting on $((\mathrm{Fil}^i_v V /\mathrm{Fil}^{i+1}_v V )\otimes \chi^{-i}_{\mathrm{cyc}})^{I_v}$ are not roots of unity. 
\item[$(\mathrm{V}_q )$] For every prime $v$ of $K$ dividing a prime number $q$   $ \neq p$, and for any open subgroup $W_v \subset G_{K_v}$, we have $V^{W_v}=V^\ast (1)^{W_v} =0$.
\end{enumerate}
\end{defn}

\begin{rem}
The condition $\mathrm{(Ord)}$ for $V$ implies the condition $ \mathrm{(Pan)}$ for $V$ by setting $\mathrm{F}^+_v:= \mathrm{Fil}^1_v$.
\end{rem}
{}{
Under the assumption that the extension extension $K_\infty /K$ satisfies the condition $ \mathrm{(K)}$ and 
the $p$-adic representation $V$ satisfies the condition $\mathrm{(Ord)}$, we consider the following condition:  
\begin{enumerate}
\item[$(\mathrm{Sol}_p)$]\label{conditionsolp}
(a) For every prime $v$ of $K$ dividing $p$ and for every open normal subgroup $U$ of $G$, $U_u :=U \cap G_v$ is a solvable $p$-adic Lie group. 
That is, $G_v$ is expressed as a successive extension 
of $p$-adic Lie group of dimension one.\\
(b) Moreover, for each open subgroup $U$ of $G$, for each prime $u$ of $K_U=(K_\infty)^U $ dividing $p$ and for each $i>0$ and integer $j$, we have 
\begin{equation}\label{equation:conjugation}
\Hom (I^{(i)}_{U,u}/ I^{(i+1)}_{U,u}, \mathrm{Fil}^{j} _v V /  \mathrm{Fil}^{j+1} _v V )
^{D_{U,u}}=0 
\end{equation} 
 where $D_{U,u}$ and $I_{U,u}$ are respectively the decomposition subgroup and the inertia subgroup of  $G_{K_U}$   and $D_{U,u}$ acts on $I^{(i)}_{U,u}/ I^{(i+1)}_{U,u}$ 
by conjugation.  
 \end{enumerate}
}

In the example below, let us discuss the validity of the conditions for some Galois representations $V$ listed {}{in Definition \ref{s22ax}}. 
\begin{example}\label{example:firstone}
\begin{enumerate}
\item[{\rm (1)}] 
Let $B$ be an abelian variety of dimension $d$ over $K$ and we define a $p$-adic 
Galois representation $V \cong \mathbb{Q}^{\oplus d}_p$ to be $V=T_p B \otimes_{\mathbb{Z}_p} \mathbb{Q}_p$ 
where $T_p B$ is the $p$-Tate module of $B$. By definition, $V$ satisfies the condition $\mathrm{(Geo)}$.  
The condition $\mathrm{(Pan)}$ is satisfied if $B$ has semistable reduction and the abelian variety 
part of the special fiber $B_v$ of the N\'{e}ron model is ordinary for any prime $v$ over $p$.  
The condition $(\mathrm{V}_p)$ is satisfied if and only if $B$ satisfies the condition $\mathrm{(Pan)}$ and $B$ has good reduction at every prime $v$ of 
$K$ dividing $p$. The condition $(\mathrm{V}_q)$ for $q \neq p$, holds always true for  this example. 
\item[{\rm (2)}] 
Let $f$ be a normalized eigen elliptic cuspform of weight $k\geq 2$ and level $\Gamma_1 (N)$ for a natural number $N$ divisible 
by $p$. Let $\mathcal{K}$ be a finite extension $\mathbb{Q}_p (\{ a_n (f)\})$ of $\mathbb{Q}_p$ obtained by adjoining Fourier coefficients of $f$. By Deligne and Shimura, 
we have a continuous $p$-adic Galois representation $V_f \cong \mathcal{K}^{\oplus 2}$ of $G_{\mathbb{Q}}$ 
which is unramified outside $N\infty$ 
on which the trace of the action of $\mathrm{Frob}_q$ for $q \nmid N $ is equal to $a_q (f)$. 
We take $V$ to be an appropriate Tate twist $V_f (j)$ of $V_f$. 
By the work of Scholl, $V$ satisfies the condition $\mathrm{(Geo)}$. The condition $\mathrm{(Pan)}$ is always satisfied for $j\leq 0$ and for 
$j\geq k$. For $1\leq j \leq k-1$, 
The condition $\mathrm{(Pan)}$ is satisfied if $a_p (f) \in \mathcal{K}$ is a 
$p$-adic unit $($see \cite[Thm 2.1.4]{wiles}$)$. 
The condition $(\mathrm{V}_p)$ is satisfied if $V$ satisfies the condition $\mathrm{(Pan)}$ and the local automorphic representation $\pi_{f,p}$ 
associated to $f$ at $p$ is an unramified principal series representation. 
The condition $(\mathrm{V}_q)$, for $q \neq p$, holds always true for $1\leq j \leq k-1$. 
\item[{\rm (3)}] 
 Let $B$ be an elliptic curve over $\Q$ with good ordinary reduction at a prime $p$. For an odd integer $d\geq 1$, let $V_d$ be the $d$-th symmetric power of $V_p B$. The critical Tate twists $V_d (j)$ are listed in  \cite[Lemma 3.3]{oc00}, for every $d \in \N$. In particular, when $d$ is odd, then $V_d(\frac{-d+1}{2})$ is the only critical twist. 
 Accordingly, we set $V$ to be the critical Tate twist $V_d(\frac{-d+1}{2})$, where $d$ is odd. As explained in the proof of \cite[Prop. 3.4]{oc00}, the representation $V$ satisfies $\mathrm{(Ord)}$  and hence the $\mathrm{(Pan)}$ condition. Let $q $ be an integer prime to $p$. The elliptic curve $B$ has either potentially good reduction or potentially 
multiplicative reduction at $q$.
When $B$ has potentially good reduction at $q$, the eigenvalues of the action of frobenius element at a prime $v\vert q$ on $V$ has complex absolute value $\neq 1$. Hence we have $\mathrm{V}^{G_{\mathbb{Q}_q}}=0$ for $V=V_d(j)$. When $B$ has potentially multiplicative reduction at $q$, there is a 
prime $v \vert q$ such that $V^{I_v}$ is of dimension one. The eigenvalues of the action of frobenius element at $v$ on $V^{I_v}$ 
has complex absolute value $\neq 1$. 
Hence we also have $\mathrm{V}^{G_{\mathbb{Q}_q}}=0$ for $V=V_d(j)$. Thus the condition $(\mathrm{V}_q)$ holds for $V=V_d(j)$.

\end{enumerate}
\end{example}
We introduce some conditions concerning both the setting of our $p$-adic Galois representation $V$ and the setting of our $p$-adic Lie extension $K_\infty$. When $V$ satisfies (Pan), we define $F^+_v A$ and  $A/ F^+_v A$ to be the image of  $F^+_v V$ and   $V/ F^+_v V$, respectively  in $A=V/T$.
\begin{defn}\label{set2b}
Let $K_\infty$ a fixed $p$-adic Lie extension of $K$. 
We consider the following conditions for a $p$-adic representation $V \cong \mathcal{K}^{\oplus d}$ on which we have a continuous and $\mathcal{K}$-linear action of the absolute Galois group $G_K$:  
\begin{enumerate} 
\item[\rm{(A)}]
The groups $A^{G_{K_\infty}}$ and $(A^\ast)^{G_{K_\infty}}$ are finite. 
\item[$(\mathrm{A}_p)$]
Assume the condition $\mathrm{(Pan)}$ {}{of Definition \ref{definition:setting}}. 
For every prime $v$ of $K$ dividing $p$ and for every prime $w$ of $K_\infty$ over $v$, the groups $A^{G_{K_{\infty ,w}}}$, 
$(A^\ast )^{G_{K_{\infty ,w}}}$, $(A/ F^+_v A )^{G_{K_{\infty ,w}}}$, $(A^*/ F^+_v A^* )^{G_{K_{\infty ,w}}}$ are finite. 
For every prime $v$ of $K$ dividing $p$, we have $H^1_f (K_v ,A)=H^1_g (K_v ,A)$ where $H^1_f$ and $H^1_g$ is the local condition defined in 
\cite[\S 3]{bk}.
\end{enumerate}
\end{defn}
In the example below, let us discuss the validity of the conditions for some Galois representations $V$ listed in Definition \ref{s22ax} restricting ourselves to a case where $G=\mathrm{Gal} (K_\infty /K)$ is a semidirect product $\mathbb{Z}_p$ by $\mathbb{Z}_p$. 

\begin{example}\label{example:2}
Let us consider the case 
$K=\Q$ and $K_\infty = \underset{n \geq 1}{\cup} K(\mu_{p^n} ,\sqrt[p^n]{a})$ where 
$a$ is a $p$-power free integer which is prime to $p$. Any prime $q$ of $\Q$ dividing  $ap$  is finitely decomposed in $K_\infty /\Q$  
and the decomposition group $G_q$ of $G$ is open in $G$. (see \cite[Lemma 3.9]{hv}.)
The extension $\mathrm{Gal} (K_\infty /K)$ satisfies {\rm (K)}, {\rm (G)}  {}{of Definition \ref{definition:setting}} and {\rm (Sol$_p$){\color{red}(a)}} {}{of Definition \ref{s22ax}}, but 
does not satisfy {\rm (Red)}  {}{of Definition \ref{definition:setting}}. 
\begin{enumerate}
\item[{\rm (1)}]
Let $B$ be an elliptic curve over $\Q$ which is good, ordinary  at $p$. We set $T= T_p B$, $V=T \otimes \mathbb{Q}_p$, $A=T \otimes \mathbb{Q}_p /\mathbb{Z}_p$. Then $G_{\Q_p}$ acts on $A/F^+_v A$ by a certain unramified character $\alpha$ of infinite order. 
The group $(A/F^+_v A )^{G_{K_{\infty ,w}}}$ is finite for every prime $w$ of $K_{\infty}$ dividing $p$ since 
the residue field of $K_{\infty ,w}$ is finite. 
Also, $G_{\Q_p}$ acts on $F^+_v A$ by $\chi_{\mathrm{cyc}}\alpha^{-1}$. Since 
$K_{\infty ,w}$ contains $\Q(\mu_{p^\infty})$ for every prime $w$ of $K_\infty$, $(F^+_v A )^{G_{K_{\infty ,w}}}$ is finite by the same reason as 
$(A/F^+_v A )^{G_{K_{\infty ,w}}}$. Hence we have $\# A ^{G_{K_{\infty ,w}}} < \infty$. 
We thus verified the first statement of the condition  {\rm  $(\mathrm{A}_p)$} {}{of Definition \ref{set2b}}. We can also verify that the second statement of the condition {\rm  $(\mathrm{A}_p)$} 
holds true by using results on the comparisons of $H^1_f (\Q_p ,A)$ and $H^1_g (\Q_p ,A)$ in \cite[\S 3]{bk}. Since we have 
$ A ^{G_{K_{\infty}}} \subset A ^{G_{K_{\infty ,w}}} $, we conclude $\# A ^{G_{K_{\infty }}} < \infty$, which implies 
the condition {\rm (A)} {of Definition \ref{set2b}}. 
\item[{\rm (2)}]
Let $f$ be a normalized eigen elliptic cuspform of weight $k\geq 2$ and level $\Gamma_1 (N)$ for a natural number $N$ divisible 
by $p$ such that $a_p(f)$ is a $p$-adic unit. 
The Galois representation $V_f$ associated to $f$ satisfies the condition {\rm (Pan)}. If we take a lattice $T \subset V_f$ and 
put $A =T\otimes \mathbb{Q}_p /\mathbb{Z}_p$, the action of $G_{K_v}$ on $F^+_v A$ and $A/F^+_v A$ is explicitly known by \cite[Thm 2.1.4]{wiles}. 
By a similar argument as in (1), we can verify the  the condition {\rm  $(\mathrm{A}_p)$} as well as the condition {\rm (A)} {}{of  Definition \ref{set2b}}
if we assume that the conductor of $f$ is not divisible by $p$. 
\item[{\rm (3)}]  Let us again consider the example $V_d$ with $d$ odd, positive as in the setting of Example \ref{example:firstone}(3) and  $V=V_d (j)$ 
be a Tate twist. We take a Galois stable lattice $T$ of $V$ and put $A=V/T$. Note that the weight of $V$ is odd, hence in particular non-zero. Since the residue field of $K_{\infty ,w}$ is finite, by a similar argument as in  this Example \ref{example:2}(1), we verify that $A^{G_{K_\infty,w}}$ and $(A/{F_v^+A})^{G_{K_\infty,w}}$ are both finite for $w\vert p$ and $H^1_f(\Q_p,A) =H^1_g(\Q_p,A)$.  Thus {\rm (A$_p$)} and  the condition  {\rm (A)}  {}{of Definition \ref{set2b}} both hold.

\end{enumerate}
Now, for either one of {\rm (1)}, {\rm (2)}or {\rm (3)} above, it is important to twist by a continuous character $\rho: \Gamma_\mathrm{cyc} \longrightarrow \mathbb{Z}^\times_p$ as given in Introduction.  
We note that we have $(A_\rho /F^+_v A_\rho )^{G_{K_{\infty ,w}}}\cong (A/F^+_v A )^{G_{K_{\infty ,w}}} \otimes \rho$ and 
$A_\rho ^{G_{K_{\infty }}}\cong (A ^{G_{K_{\infty }}})\otimes \rho$ thanks to the condition {\rm (G)} of Definition \ref{definition:setting}. 
Hence, for any continuous character $\rho: \Gamma_\mathrm{cyc} \longrightarrow \mathbb{Z}^\times_p$, the conditions {}{$(\mathrm{A}_p)$} and {\rm (A)} {}{of Definition \ref{set2b}} 
also hold true for $A_\rho$ in all of {\rm (1)}, {\rm (2)} and {\rm (3)} above. 
\end{example}

\begin{example}\label{example:3}
Let $K$ be an arbitrary number field and $K_\infty = \underset{n \geq 1}{\cup} K(E_{p^n})$ where 
$E$ is a non-CM elliptic curve over $K$. 
The group $G=\mathrm{Gal}(K_{\infty} /K) $ is known to be an open subgroup of $GL_2 (\mathbb{Z}_p)$ and 
the extension $K_\infty /K$ always satisfies  {\rm (G)} of Definition \ref{definition:setting} and also satisfies {\rm (K)} of Definition \ref{definition:setting} for $p \geq 5$. We have different situations according to the reduction of $E$ at $p$. 
\begin{enumerate}
\item[{\rm (S)}]
Suppose that $E$ has good supersingular reduction at a prime $v$ over $p$. Then decomposition group $G_v$ is open subgroup of $G$. 
Hence the extension $K_\infty /K$ satisfies {\rm (Red)}. 
Let us take a Galois representation $A$ of $G_K$ as in either  Example \ref{example:2} (1)  or Example \ref{example:2} (2). 
In this case, it is known that the residue field at the primes of $K$ over $p$ does not extend in $K_\infty /K$. 
Hence we verify the conditions $(\mathrm{A}_p)$ and {\rm (A)} {}{of Definition \ref{set2b}} by the argument as in the previous example. 
\par Moreover, for any continuous character $\rho: \Gamma_\mathrm{cyc} \longrightarrow \mathbb{Z}^\times_p$, the conditions $(\mathrm{A}_p)$ and {\rm (A)} 
also hold true for $A_\rho$ in both cases of Example  \ref{example:2} above.
\item[{\rm (O)}]
Suppose that $E$ has ordinary reduction at a prime $v$ over $p$. The decomposition group $G_v$ of $G$ is contained in a Borel subgroup of $G$. 
Hence the extension $K_\infty /K$ satisfies  {\rm (Sol$_p$){(a)}} of Definition \ref{s22ax}. 
Let us take a Galois representation {}{$A=B_{p^\infty}$} of $G_K$  {}{for 
another elliptic curve $B$ over $\Q$ as in Example \ref{example:2} (1)}. 
In this case, the condition $(\mathrm{A}_p)$ of Definition \ref{set2b} does not hold. The condition {\rm (A)} {}{of Definition \ref{set2b} is} satisfied if $B$ is non-CM and 
not isogenous to $E$. 
\par 
Let $v$ be a prime over $p$ of $K$. 
In this case, the action of $G_{\mathbb{Q}_p}$ on graded pieces on 
{}{$V =T_p B \otimes \mathbb{Q}_p $} are given by characters $\eta , \eta'$ 
such that $\eta'$ is unramified and $\eta \eta'$ is equal to the 
$p$-adic cyclotomic character. For every prime $w$ over $p$ of $K_\infty$, 
we have successive extensions 
$K_w =K^{(0)} \subset K^{(1)} \subset K^{(2)} \subset K_{\infty ,w} ,$
where $K^{(i)}$ is the field corresponding to a successive filtration 
$D^{(0)} \supset D^{(1)} \supset D^{(2)} \supset D^{(3)}$ of $G_v$ which exists according to {\rm (Sol$_p$){(a)}}. 
We can take $K^{(i)}$ so that $K^{(1)}/K^{(0)}$ is an unramified isogenous
$\mathbb{Z}_p$-extension, $K^{(2)}/K^{(1)}$ is the local cyclotomic 
$\mathbb{Z}_p$-extension. In this case, $I^{(0)}/I^{(1)}$ is finite and 
the conjugate action of the decomposition group $G_{K_v}$ 
on $I^{(1)}/I^{(2)}$ is trivial since $K^{(2)}$ is defined on $K_v$.
The conjugate action of the decomposition group $G_{K_v}$ on 
$I^{(2)}/I^{(3)}$ is given by $\eta (\eta')^{-1}$.  
\par 
Now, if we take $V =T_p B \otimes \mathbb{Q}_p$ with $B$ a non CM 
elliptic curve which is isogenous to $E$, the action of 
$G_{K_v}$ on $\mathrm{Fil}^0 V/\mathrm{Fil}^1 V$ and $\mathrm{Fil}^1 V/\mathrm{Fil}^2 V$ are given by characters $\eta$ and $\eta'$ respectively. 
Now, since the value of the unramified character $\eta'$ is not a root of unity, we have 
\begin{equation*}
\Hom (I^{(i)}_{U,v}/ I^{(i+1)}_{U,v}, \mathrm{Fil}^{j} _v V /  \mathrm{Fil}^{j+1} _v V )
^{D_{U,v}}=0 
\end{equation*} 
for every $i=0,1,2$ and $j=0,1$, which verifies  {\rm (Sol$_p$){(b)}} of Definition \ref{s22ax}. 
\end{enumerate}
 
\end{example}

\begin{example}\label{example:4}
Let $F$ be a totally real field of degree $d$ over $\Q$ and $K$ be a CM field which is a quadratic extension of $F$. 
Let us consider a  Galois extension $K_\infty$ of $K$ with $G= \mathrm{Gal}(K_\infty/K) \cong  \Z_p^{\oplus d+1}.$ The group $G$ is abelian and satisfies {\rm (K)}, {\rm (G)}, {\rm (Red)}  in Definition \ref{definition:setting}. 
Let $E$ be an elliptic curve defined over $\Q$ with good ordinary reduction at all primes above $p$ in $K$. Then the condition $(V_p)$ of Definition \ref{s22ax} is satisfied. 
\par 
If the condition $(\mathrm{A}_p)$ $($respectively {\rm (A)}$)$  of Definition \ref{set2b} holds true for {}{$A= B_{p^\infty}$}, then  for any continuous character $\rho: \Gamma_\mathrm{cyc} \longrightarrow \mathbb{Z}^\times_p$, the condition $(\mathrm{A}_p )$ $($resp. {\rm (A)}$)$ also holds true for $A_\rho$.  Moreover $(V_p)$ of Definition \ref{s22ax} holds for $A_\rho$ as well. 
\par 
Though the condition $(\mathrm{A}_p)$ for {}{$A = B_{p^\infty}$} of Definition \ref{set2b} was always satisfied for the cyclotomic $\Z_p$ extension of $K$, 
the condition $(\mathrm{A}_p)$ for {}{$A = B_{p^\infty}$} may not be satisfied for $\Z_p^{\oplus {d+1}}$-extension $K_\infty$ of $K$. However the condition $(\mathrm{Sol}_p)$ in Definition  \ref{s22ax} is satisfied in our situation. To check  {\rm (Sol$_p$){(b)}}, we note that the action of the decomposition subgroup of $K_v$ on successive quotients of inertia subgroups by conjugation is trivial for a prime $v$ in $K$ above $p$ in this case. 
Hence we have  \begin{equation*}
\Hom (I^{(i)}_{U,v}/ I^{(i+1)}_{U,v},  V /  \mathrm{Fil}^{1} _v V )
^{D_{U,v}}= \Hom (I^{(i)}_{U,v}/ I^{(i+1)}_{U,v},  \mathrm{Fil}^{1} _v V  )^{D_{U,v}}=0
\end{equation*} 
that for {}{$V= T_pB \otimes \Q_p$} and for every $i$ and for every $U$ 
by a similar argument as in Example \ref{example:3}. 
\end{example}

We end examples on this section by explicitly citing another example where Theorem 1 and Theorem 2  hold.
\begin{example}\label{example:51}

We consider the false-Tate curve extension $K_\infty = \mathbb{Q} (\mu_{p^\infty}, a^{1/p^\infty})$ appearing in Example \ref{example:2}. 
Let us again consider the example of the $d$-th symmetric power $V_d$ with $d$ odd positive as in the setting of Example \ref{example:firstone}(3) and let $V$ 
be the unique critical twist $V_d (j)$ where $j= \frac{-d+1}{2}$. 
Set and $A=V/T$ as in the setting of Example \ref{example:firstone}(3) and Example \ref{example:2}(3). Then, by discussions in Example \ref{example:firstone}(3), Example \ref{example:2}(3) and Example \ref{example2-2}, we deduce

\begin{enumerate}
\item For any continuous character $\rho: \Gamma_\mathrm{cyc} \rightarrow \mathbb{Z}_p^\times$, $\mathrm{Ker} (\mathrm{res}^{A^\ast(1)}_{\rho,U})$ is a finite group whose order is bounded independently of $U \in \mathcal{U}$.  $\mathrm{Coker} (\mathrm{res}^{A^\ast(1)}_{\rho,U})$ is a finite group for any $U $. \par  \noindent Further, the kernel and cokernel of the natural map $E_0^{A^\ast(1)} \longrightarrow \varprojlim_U \mathrm{Coker} (\mathrm{res}^{A^\ast(1)}_{U})$  are  finite groups of bounded order, independent of $U$.
\item Moreover, if $\mathrm{Sel}^{\mathrm{Gr}}_A (K_\infty )^\vee$ is  in $\mathfrak M_H(G)$, then 
$$
\left[ \mathrm{Sel}^{\mathrm{Gr}}_A (K_\infty )^\vee  \right] + \Bigl[ 
E_1^{A^\ast(1)}\Bigr] = 
\left[\left( \mathrm{Sel}^{\mathrm{Gr}}_{A^\ast (1)} (K_\infty )^\vee \right) ^\iota \right]  
\text{ in } K_0 (\mathfrak{M}_H (G)) $$ 
where $E_1^A$ is defined in \eqref{e1ande2}. 
\item Whenever $\mathrm{Sel}^{\mathrm{Gr}}_A (\Q(\mu_{p^\infty} ))^\vee$ is a finitely generated $\Z_p$-module, then  
$\mathrm{Sel}^{\mathrm{Gr}}_A (K_\infty )^\vee$ is  in $ \mathfrak M_H(G).$
\end{enumerate}

\end{example}
Now, we define a Selmer group. Let us fix a $G_K$-stable $\mathcal{O}$-lattice $T$ of $V$ and denote $V/T$ by $A$. 
\begin{defn}\label{definition:Selmergroup}
Let $K_\infty$ be a $p$-adic Lie Galois extension of $K$ which fits into the setting $\mathrm{(G)}$ and $\mathrm{(K)}$ 
of Definition \ref{definition:setting}. 
Let $V\cong \mathcal{K}^{\oplus d}$ be a $p$-adic representation of $G_K$ 
satisfying the conditions $\mathrm{(Geo)}$ and $\mathrm{(Pan)}$ of Definition \ref{s22ax}. 
We fix a $G_K$-stable $\mathcal{O}$-lattice $T \subset V$ and we put $A:=V/T$. 
Let $\Sigma$ be a finite set of primes of $K$ which contain 
all primes of $K$ dividing $p \infty$, all primes of $K$ where the representation $V$ is ramified and 
all primes of $K$ where the infinite extension $K_\infty/K$ is ramified. For every $U \in\mathcal{U}$, 
we denote by $\Sigma_U$ (respectively  $\Sigma^{(\infty )}_U$) the set of primes of $K_U$ which are above primes of $K$ in $\Sigma$ (respectively  the set of infinite primes of $K_U$). 
\begin{enumerate}
\item 
For prime $u \in \Sigma_U \setminus \Sigma^{(\infty )}_U $, we define the local condition 
$H^1_{\mathrm{Gr}} (K_{U,u} ,A)$ as follows. 
\\ 
If $u \in \Sigma_U \setminus \Sigma^{(\infty )}_U$ does not divide $p$, we define $H^1_{\mathrm{Gr}} (K_{U,u} ,A)$ to be: 
$$
H^1_{\mathrm{Gr}} (K_{U,u} ,A) = \mathrm{Ker} 
\left[ H^1 (K_{U,u} ,A) \longrightarrow H^1 (K^{\mathrm{ur}}_{U,u} ,A)^{\mathrm{Gal} (K^{\mathrm{ur}}_{U,u} /K_{U,u})} \right]. 
$$ 
If $u \in \Sigma^{(p )}_U $, we define $H^1_{\mathrm{Gr}} (K_{U,u} ,A)$ to be: 
$$
H^1_{\mathrm{Gr}} (K_{U,u} ,A) = \mathrm{Ker} 
\left[ H^1 (K_{U,u} ,A) \longrightarrow H^1 (K^{\mathrm{ur}}_{U,u} ,A/\mathrm{F}^+_v A )^{\mathrm{Gal} (K^{\mathrm{ur}}_{U,u} /K_{U,u})} \right]. 
$$ 
\item 
We define the Greenberg's Selmer group $\mathrm{Sel}^{\mathrm{Gr}}_A (K_U)$ as follows. 
$$
\mathrm{Sel}^{\mathrm{Gr}}_A (K_U ) =\mathrm{Ker} \left[ H^1 (K_{\Sigma} /K_U  , A) \longrightarrow 
\displaystyle{\prod_{u \in \Sigma_U \setminus \Sigma^{(\infty )}_U} \frac{H^1 (K_{U,u} ,A)}
{H^1_{\mathrm{Gr}} (K_{U,u},A)}}
\right].
$$ 
\end{enumerate}
\end{defn}

\section{Proof of the control theorem}\label{s2}
In this section, we prove the control theorem of Selmer group in a non-commutative $p$-adic Lie extension (Theorem \ref{propisition:control_theorem}). 
\begin{proof}[Proof of Theorem \ref{propisition:control_theorem}]
To ease the burden of notation, throughout the proof of this theorem, we will write $\mathrm{res}_{\rho ,U}:=\mathrm{res}^A_{\rho ,U}$. 
Let $v$ be a  prime of $K$ in $ \Sigma$ and $u$ be a prime of $K_U$ in $\Sigma_U$ such that $u \mid v$. Also $w$ will denote a prime in $K_\infty$ such that $w \mid u \mid v$. We denote a decomposition subgroup of $G$ at $v$ by $G_v \subset G$. 
For each $v \in \Sigma$ and for each $U \in \mathcal{U}$, we denote $U \cap G_v$ 
by $U_u$. 
\\ 
{}{[General strategy for both the assertion (1)  and the assertion (2)]}
\par 
We have the following commutative diagram: 
\begin{equation}\label{equation:control}
\small 
\text{
$
\begin{CD}
0 @>>> 
\mathrm{Sel}^{\mathrm{Gr}}_{A_\rho } (K_U ) @>>> H^1 (K_{\Sigma} /K_U  , A_\rho) 
@>{\mathrm{loc}_{\rho, U}}>> 
\displaystyle{\prod_{u \in \Sigma_U  } \frac{H^1 (K_{U, u} ,A_\rho)}
{H^1_{\mathrm{Gr}} (K_{U, u} ,A_\rho )}} \\ 
@. @V{\mathrm{res}_{\rho ,U}}VV @V{\mathrm{res}'_{\rho ,U}}VV @VV{\mathrm{res}''_{\rho ,U}}V \\ 
0 @>>> \mathrm{Sel}^{\mathrm{Gr}}_{A_\rho} (K_\infty )^{G_{K_U }} @>>> H^1 (K_{\Sigma} /K_\infty  , A_\rho)^{G_{K_U}}
@>>{\mathrm{loc}_{\rho, \infty } }>
 \left(
\displaystyle{\prod_{w \in \Sigma_\infty }}  \frac{H^1 (K_{\infty,w} ,A_\rho)}
{H^1_{\mathrm{Gr}} (K_{\infty,w} ,A_\rho )}\right)^{G_{K_U}}.\\  
\end{CD}
$
}
\normalsize 
\end{equation}
By the snake lemma applied to the above diagram, 
the group $\mathrm{Ker} (\mathrm{res}_{\rho ,U})$ is a subgroup of $\mathrm{Ker} (\mathrm{res}'_{\rho ,U})$. 
For each $U\in \mathcal{U}$, if the group $\mathrm{Ker} (\mathrm{res}'_{\rho ,U})$ is a finite group, 
$\mathrm{Ker} (\mathrm{res}_{\rho ,U})$ is a finite group whose order is bounded by $\# \mathrm{Ker} (\mathrm{res}'_{\rho ,U})$. 
Thus, when the inverse limit $\varprojlim_U \mathrm{Ker} (\mathrm{res}'_{\rho,U})$ is a finitely generated $\mathbb{Z}_p$-module, 
so is the group $\varprojlim_U \mathrm{Ker} (\mathrm{res}_{\rho,U})$. When the oder of the group $\mathrm{Ker} (\mathrm{res}'_{\rho,U})$ 
is bounded independently of $U \in \mathcal{U}$, so is the order of $\mathrm{Ker} (\mathrm{res}_{\rho,U})$. 
\par 
Also, by the snake lemma applied to the above diagram, 
the group $\mathrm{Coker} (\mathrm{res}_{\rho ,U})$ is an extension of $\mathrm{Coker} (\mathrm{res}'_{\rho ,U})$
by a subquotient of $\mathrm{Ker} (\mathrm{res}''_{\rho ,U})$. 
Under the assumption 
that $ \mathrm{Sel}^{\mathrm{Gr}}_{A^\ast (1)} (K_\infty )^\vee $ is a finitely generated 
torsion module over $\mathcal{O} [[\mathrm{Gal} (K_\infty /K) ]]$, the map 
$\underset{U}{\varinjlim} ( \mathrm{loc}_{\rho ,U})$ is known to be surjective. We omit the proof of this surjectivity, 
which follows from the global duality theorem of Galois cohomology and some standard arguments and we refer to 
Lemma 4.11 and Corollary 4.12 of \cite{oc06} for the argument of this technique for Greenberg type Selmer group. 
Hence, in order to have the desired conclusion on $\mathrm{Coker} (\mathrm{res}_{\rho ,U})$, it suffices 
to study $\mathrm{Coker} (\mathrm{res}'_{\rho ,U} )$ and $\mathrm{Ker}(\mathrm{res}''_{\rho ,U})$. 
\par 
Thus, from now on, we concentrate on the study of $\mathrm{Ker} (\mathrm{res}'_{\rho ,U} )$, 
$\mathrm{Coker} (\mathrm{res}'_{\rho ,U} )$ and 
$\mathrm{Ker}(\mathrm{res}''_{\rho ,U})$. It is sufficient to show the following statements to complete the proof of 
the theorem. 
\begin{enumerate} 
\item[{}{($\alpha$)}] 
For any $U \in \mathcal{U}$, $\mathrm{Ker} (\mathrm{res}'_{\rho ,U} )$ is a finite group 
whose order is uniformly bounded when $U$ varies in $\mathcal{U}$ in the case (A) in Definition \ref{set2b} and 
$\mathrm{Ker} (\mathrm{res}'_{\rho ,U} )$ is a finite group when $U$ moves in $\mathcal{U}$ in the case \rm{(Red)} in Definition \ref{definition:setting}. Moreover, in the case \rm{(Red)},  $\varprojlim_U \mathrm{Ker} (\mathrm{res}'_{\rho ,U} )$  is a finitely generated $\Z_p$-module.
\item[{}{($\beta$)}] 
For any $U \in \mathcal{U}$, $\mathrm{Coker} (\mathrm{res}'_{\rho ,U} )$ is a finite group 
whose order is uniformly bounded when $U$ varies in $\mathcal{U}$ in the case (A) and 
$\mathrm{Coker} (\mathrm{res}'_{\rho ,U} )$ is a finite group when $U$ moves in $\mathcal{U}$ in the case \rm{(Red)}.  Moreover, in the case \rm{(Red)},  $\varprojlim_U \mathrm{Coker} (\mathrm{res}'_{\rho ,U} )$  is a finitely generated $\Z_p$-module.
\item[{}{($\gamma$)}] 
For any $U \in \mathcal{U}$, $\mathrm{Ker} (\mathrm{res}''_{\rho ,U} )$ is a finite group 
which is isomorphic to {}{($E^{A_\rho}_{0,U} )$}  modulo uniformly bounded errors when $U$ varies in $\mathcal{U}$. 
\end{enumerate} 
\ \\ 
{}{[Argument for the points ($\alpha$) and ($\beta$)]}
\par  
Firstly, by the Inflation-Restriction sequence, we have 
\begin{equation}\label{equation:1res'}
\mathrm{Ker} (\mathrm{res}'_{\rho ,U}) \cong H^1 (K_\infty /K_U , (A\otimes \rho )^{G_{K_\infty}}) \quad \text{ and}
\end{equation}
\begin{equation}\label{equation:2res'}
\mathrm{Coker} (\mathrm{res}'_{\rho ,U}) \cong H^2 (K_\infty /K_U , (A\otimes \rho )^{G_{K_\infty}}). \ \ \quad  
\end{equation}

Suppose that we are in the case (A). 
Let us consider the order $d_i (U) := \# H^i (U ,\mathbb{Z}/p\mathbb{Z})$. Since $(A\otimes \rho )^{G_{K_\infty}}$ 
is a finite $p$-group and $U$ is a pro-$p$ group, $U$-module 
$(A\otimes \rho )^{G_{K_\infty}}$ is obtained by taking successive extensions of 
$\mathbb{Z} /p\mathbb{Z}$ for $\mathrm{ord}_p \# (A\otimes \rho )^{G_{K_\infty}}$ times. 
Hence we have $\# H^i (U ,(A\otimes \rho )^{G_{K_\infty}})= d_i (U) \cdot \mathrm{ord}_p \# (A\otimes \rho )^{G_{K_\infty}}$ for 
$U \in \mathcal{U}$ and $i \in \{ 1,2 \}$. 
\par 
Recall the following lemma from \cite[Lemma 2.1]{gr2}.  
The proof can be found in \cite{gr2} and it essentially follows from the results of \cite{dsms}.
\begin{lemma}\label{lemma:greenberg_cohomology_bound}
Let $G$ be a $p$-adic Lie group of dimension $d$. 
Then  $H^1 (Z,\mathbb{Z} /p\mathbb{Z} ) \text{ and } H^2 (Z,\mathbb{Z} /p\mathbb{Z} )$ are finite and their 
orders are bounded when $Z$ varies in closed subgroups of $G$. 
%
%
%
\end{lemma}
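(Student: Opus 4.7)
The plan is to reduce the bound to closed subgroups of a fixed open uniform pro-$p$ subgroup of $G$ via the Hochschild-Serre spectral sequence, and then to invoke the structure theory of compact $p$-adic analytic groups as developed by Dixon, du Sautoy, Mann and Segal (DDMS).

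\textbf{Step 1 (reduction to pro-$p$ closed subgroups).} By DDMS (Theorem 8.32), $G$ contains an open normal uniform pro-$p$ subgroup $G_0$ of dimension $d$. Set $N := [G : G_0]$, a fixed finite number. For any closed subgroup $Z \leq G$, let $Z_0 := Z \cap G_0$; then $Z/Z_0 \hookrightarrow G/G_0$ has order at most $N$. The Lyndon-Hochschild-Serre spectral sequence
\[
E_2^{p,q} = H^p\bigl(Z/Z_0,\, H^q(Z_0, \mathbb{F}_p)\bigr) \Rightarrow H^{p+q}(Z, \mathbb{F}_p)
\]
exhibits $H^i(Z, \mathbb{F}_p)$ as an iterated extension of subquotients of the $E_2^{p,q}$ with $p+q = i$. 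Since $|Z/Z_0| \leq N$ is bounded, each $|H^p(Z/Z_0, M)|$ is bounded by an explicit function of $N$ and $|M|$ (via the bar resolution). Thus it suffices to bound $|H^q(Z_0, \mathbb{F}_p)|$ uniformly in $Z_0$ for $q \in \{0, 1, 2\}$.

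\textbf{Step 2 (bounding $H^1(Z_0, \mathbb{F}_p)$).} Since $Z_0$ is a closed subgroup of the uniform pro-$p$ group $G_0$, $Z_0$ is itself a torsion-free compact $p$-adic analytic pro-$p$ group of dimension $r \leq d$. A theorem of Lubotzky-Mann (cf. DDMS Chapter 3) bounds the minimal number of topological generators $d(Z_0)$ by the rank of the ambient group, yielding $d(Z_0) \leq d$. Hence the Frattini quotient $Z_0 / Z_0^p [Z_0, Z_0]$ has $\mathbb{F}_p$-dimension at most $d$, and
\[
\dim_{\mathbb{F}_p} H^1(Z_0, \mathbb{F}_p) \leq d.
\]

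\textbf{Step 3 (bounding $H^2(Z_0, \mathbb{F}_p)$).} Pass to an open uniform pro-$p$ subgroup $Z_1 \leq Z_0$. For uniform $Z_1$ of dimension $r \leq d$, Lazard's theorem identifies $H^*(Z_1, \mathbb{F}_p)$ with the exterior algebra on $H^1(Z_1, \mathbb{F}_p)$, so $\dim_{\mathbb{F}_p} H^i(Z_1, \mathbb{F}_p) = \binom{r}{i} \leq \binom{d}{i}$. A second application of Hochschild-Serre to the finite quotient $Z_0/Z_1$ then bounds $H^2(Z_0, \mathbb{F}_p)$ once $[Z_0 : Z_1]$ is controlled.

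\textbf{Main obstacle.} The delicate step is to ensure that $Z_1$ can be chosen with index $[Z_0 : Z_1]$ bounded independently of $Z_0$. The natural choice is $Z_1 := Z_0 \cap G_0^{p^k}$ for a fixed $k$ (with $k \geq 2$ if $p = 2$), using that $G_0^{p^k}$ is an open characteristic uniform subgroup of $G_0$ whose intersection with a closed subgroup $Z_0$ remains uniform in $Z_0$ of full dimension $\dim Z_0$, by the powerful-filtration machinery of DDMS. This yields $[Z_0 : Z_1] \leq [G_0 : G_0^{p^k}] = p^{kd}$, a bound independent of $Z_0$. Assembling Steps 1-3 then produces an explicit bound on $|H^i(Z, \mathbb{F}_p)|$ depending only on $d$ and $N$, proving the lemma.
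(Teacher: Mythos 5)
The paper does not prove this lemma itself: it is quoted from \cite[Lemma 2.1]{gr2}, with the proof deferred to Greenberg's paper and to \cite{dsms}. Your overall architecture --- pass to an open normal uniform subgroup $G_0$, reduce via Hochschild--Serre to closed subgroups $Z_0$ of $G_0$, bound $H^1$ by the rank, and bound $H^2$ by Lazard's identification of $H^*(U,\mathbb{F}_p)$ with the exterior algebra on $H^1(U,\mathbb{F}_p)$ for uniform $U$ together with a second Hochschild--Serre argument --- is exactly the standard \cite{dsms}-based route that Greenberg follows, and your Steps 1 and 2 are correct.

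The gap is in your resolution of the ``main obstacle'' in Step 3: for a fixed $k$ the subgroup $Z_1:=Z_0\cap G_0^{p^k}$ need not be uniform, nor even powerful. Concretely, let $G_0=\exp(L)$, where $L=\mathbb{Z}_p e_1\oplus\mathbb{Z}_p e_2\oplus\mathbb{Z}_p e_3$ is the powerful Lie lattice with $[e_1,e_2]=pe_3$ and $e_3$ central (the first congruence subgroup of the integral Heisenberg group, a uniform group of dimension $3$ for $p$ odd), and let $Z_0$ be the closed subgroup topologically generated by $\exp(p^k e_1)$ and $\exp(p^k e_2)$. Then $\log Z_0=p^k\mathbb{Z}_p e_1+p^k\mathbb{Z}_p e_2+p^{2k+1}\mathbb{Z}_p e_3\subseteq p^kL$, so $Z_0\subseteq G_0^{p^k}$ and hence $Z_1=Z_0$; but $[Z_0,Z_0]$ contains $\exp(p^{2k+1}e_3)$, whereas $Z_0^p=\exp(p\log Z_0)$ meets the centre only in $\exp(p^{2k+2}\mathbb{Z}_p e_3)$, so $[Z_0,Z_0]\not\subseteq Z_0^p$ and $Z_0$ is not powerful (it does have full dimension $3$, so the parenthetical in your claim does not rescue it). Since this works for every $k$, no fixed $k$ can do the job. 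The statement you need is nevertheless true, but the correct input from \cite{dsms} is different: every pro-$p$ group of rank $r$ contains a characteristic open \emph{powerful} subgroup whose index is bounded by a function of $r$ alone (the pro-$p$ form of \cite[Theorem 2.13]{dsms}), and a finitely generated powerful pro-$p$ group is uniform if and only if it is torsion free (\cite[Theorem 4.5]{dsms}). Since $Z_0\subseteq G_0$ is torsion free of rank at most $d$, this yields an open characteristic uniform subgroup $Z_1\trianglelefteq Z_0$ with $[Z_0:Z_1]$ bounded in terms of $d$ only, after which your Lazard plus Hochschild--Serre assembly goes through.
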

By Lemma \ref{lemma:greenberg_cohomology_bound}, the number $d_i (U)$ is finite and bounded when $U \in \mathcal{U}$ varies. Thus, 
by \eqref{equation:1res'} and \eqref{equation:2res'}, we showed that $\mathrm{Ker} (\mathrm{res}'_{\rho ,U})$ and $\mathrm{Coker} (\mathrm{res}'_{\rho ,U})$ 
are finite groups whose orders are bounded independently of $U \in \mathcal{U}$ when we are in the case (A). 
\par 
Suppose that we are in the case \rm{(Red)}. Recall that we have the following exact sequence 
$$
0 \longrightarrow ((A\otimes \rho )^{G_{K_\infty}})_\mathrm{div} \longrightarrow (A\otimes \rho )^{G_{K_\infty}}
\longrightarrow (A\otimes \rho )^{G_{K_\infty}} /((A\otimes \rho )^{G_{K_\infty}})_\mathrm{div} \longrightarrow 0.
$$ 
Taking a part of the long exact sequence of Galois cohomology associated to this short exact sequence, we have the following exact sequence 
\begin{multline}\label{equation:exacst_sequence_for_res'}
H^i  (K_\infty /K_U ,((A\otimes \rho )^{G_{K_\infty}})_\mathrm{div} ) \longrightarrow 
H^i (K_\infty /K_U , (A\otimes \rho )^{G_{K_\infty}}) 
\\ \longrightarrow 
H^i (K_\infty /K_U , (A\otimes \rho )^{G_{K_\infty}} /((A\otimes \rho )^{G_{K_\infty}})_\mathrm{div}) 
\end{multline}
for $i=1,2$. 
In order to show the desired statements for $\mathrm{Ker} (\mathrm{res}'_{\rho ,U})$, $\mathrm{Coker} (\mathrm{res}'_{\rho ,U})$, 
$\varprojlim_U\mathrm{Ker} (\mathrm{res}'_{\rho ,U})$ and $\varprojlim_U \mathrm{Coker} (\mathrm{res}'_{\rho ,U})$ in the case (Red), 
it suffices to show similar statements for groups in the left end and the right end of \eqref{equation:exacst_sequence_for_res'}. 
\par 
Since the group $(A\otimes \rho )^{G_{K_\infty}} /((A\otimes \rho )^{G_{K_\infty}})_\mathrm{div}$ is a finite $p$-group, we 
can show that the group $H^i (K_\infty /K_U , (A\otimes \rho )^{G_{K_\infty}} /((A\otimes \rho )^{G_{K_\infty}})_\mathrm{div})$ 
in the right end is finite and bounded by the same argument using Lemma \ref{lemma:greenberg_cohomology_bound} as the case (A) above. 
Let us handle the group $H^i  (K_\infty /K_U ,((A\otimes \rho )^{G_{K_\infty}})_\mathrm{div} )$ 
of the left end in \eqref{equation:exacst_sequence_for_res'}. 
By taking a part of the long exact sequence of Galois cohomology associated to the short exact sequence
$$
0 \longrightarrow ((A\otimes \rho )^{G_{K_\infty}})_\mathrm{div}[p] \longrightarrow ((A\otimes \rho )^{G_{K_\infty}})_\mathrm{div}
\longrightarrow ((A\otimes \rho )^{G_{K_\infty}})_\mathrm{div} \longrightarrow 0 , 
$$ 
we have an surjection $H^i  (K_\infty /K_U ,((A\otimes \rho )^{G_{K_\infty}})_\mathrm{div}[p] ) \twoheadrightarrow 
H^i  (K_\infty /K_U ,((A\otimes \rho )^{G_{K_\infty}})_\mathrm{div})[p]
$ for $i=1,2$. 
Since the argument using Lemma \ref{lemma:greenberg_cohomology_bound} as in the case (A) also works for $H^i  (K_\infty /K_U ,((A\otimes \rho )^{G_{K_\infty}})_\mathrm{div}[p] )$, 
we prove that the group $H^i  (K_\infty /K_U ,((A\otimes \rho )^{G_{K_\infty}})_\mathrm{div})[p] $ 
is finite and bounded when $U$ varies in $\mathcal{U}$. 
\par 
The $\mathbb{Z}_p$-rank the cofree part of $H^i (K_\infty /K_U ,((A\otimes \rho )^{G_{K_\infty}})_\mathrm{div})$ 
is equal to the $\mathbb{Q}_p$-rank of the Lie algebra cohomology 
$H^i (\mathfrak{g},V^{G_{K_\infty}})$ where $\mathfrak{g}$ is the Lie algebra of the $p$-adic Lie group $G$. 
Since $\mathfrak{g}$ is reductive by the condition (Red) and $V^{G_{K_\infty}}$ is semisimple as a representation of $\mathfrak{g}$, 
we have $H^i (\mathfrak{g},V^{G_{K_\infty}}) =0$ by \cite[Thm 10]{hs}. 
This shows that $H^i  (K_\infty /K_U ,((A\otimes \rho )^{G_{K_\infty}})_\mathrm{div})$ is finite for $i=1,2$ and 
that $\varprojlim_U  H^i  (K_\infty /K_U ,((A\otimes \rho )^{G_{K_\infty}})_\mathrm{div})$ is a finitely generated $\mathbb{Z}_p$-module 
for $i=1,2$. 
\\ 
{}{[Argument for the point ($\gamma$)\ :\ contribution outside $p$]}
\par  
Let us calculate $\mathrm{Ker} (\mathrm{res}''_{\rho ,U})$ when $U$ varies. First, we take a prime $u \in \Sigma_U$ which is not over $p$ and we calculate the local contribution to $\mathrm{Ker} (\mathrm{res}''_{\rho ,U})$ at $u$.   
Since we have an isomorphism 
\begin{equation}\label{equation:h^1modh1gr_nonp}
\dfrac{H^1 (K_{U,u} , A_\rho )}{H^1_{\mathrm{Gr}} (K_{U,u}  , A_\rho)} \cong H^1 (I_{U,u} ,A_\rho)^{D_{U,u}} ,
\end{equation}
it suffices to calculate 
\begin{equation}\label{equation:restriction_local_nonp}
E^{A_\rho}_{U,u}:=\mathrm{Ker}\left[ H^1 (I_{U,u} ,A_\rho )^{D_{U,u}} \longrightarrow H^1 (I_{\infty ,w} ,A_\rho )^{D_{\infty,w }}\right] 
\end{equation}
where $w $ is a prime of $K_{\infty}$ which is over $u$. 
\par
We note that, if $u$ is not in the set $P_U$ defined in Introduction, 
$E^{A_\rho}_{U,u}$ is finite and bounded when $U$ varies since 
we have $I_{U,u} =I_{\infty ,w}$ for sufficiently small $U$. 
From now on, we assume that $u \nmid p$ is in the set $P_U$. 
\par 
In order to bound the group $E^{A_\rho}_{U,u}$, 
we consider the following commutative diagram: 
\begin{equation}\label{equation:CDoutsidep}
\begin{CD}
H^1 (D_{U,u} ,A_\rho ) @>>> H^1 (D_{\infty ,w} ,A_\rho  ) \\ 
@VVV @VVV \\ 
H^1 (I_{U,u} ,A_\rho  )^{D_{U,v }} @>>> H^1 (I_{\infty ,w} ,A_\rho  )^{D_{\infty,w }} . \\ 
\end{CD}
\end{equation}
We calculate the kernels and the cokernels of the maps 
in the diagram \eqref{equation:CDoutsidep} by 
Hochschild-Serre spectral sequence. 
First, the group $E^{A_\rho}_{U,u}$ is the kernel of the lower horizontal map of the diagram by definition. 
On the other hand, the kernel of the upper horizontal map 
of the diagram is isomorphic to $H^1 (U_u , (A_\rho )^{D_{\infty,w }})$ where $U_u =\mathrm{Gal}(K_{\infty ,w}/K_{U,v})$. 
The kernel of the left vertical map of the diagram \eqref{equation:CDoutsidep} is 
$H^1 (D_{U,u}/I_{U,u}, (A_\rho )^{I_{U,u}})$. 
The kernel of the right vertical map of the diagram \eqref{equation:CDoutsidep} is 
$H^1 (D_{\infty,w}/I_{\infty,w},(A_\rho )^{I_{\infty,w}})$, which is 
trivial since the profinite group $D_{\infty,w}/I_{\infty,w}$ has no pro-$p$-part but the coefficient $(A_\rho )^{I_{\infty,w}}$ is $p$-primary. 
\par 
Thus we have the following exact sequence 
\begin{equation}\label{equation:boundE}
0 \longrightarrow H^1 (D_{U,u}/I_{U,u},(A_\rho )^{I_{U,u}}) 
\longrightarrow H^1 (U_u , (A_\rho )^{D_{\infty ,w}}) 
\longrightarrow E^{A_\rho}_{U,u} 
\longrightarrow 0 . 
\end{equation}
Now, we have the following lemma. 
\begin{lemma}
Assume that the condition $(V_q)$ of Definition \ref{s22ax} holds for a prime $q$ not dividing $p$. Then the group $E^{A_\rho}_{U,u}$ is finite for every open subgroup 
$U$ of $G$ and for every prime $u$ of $K_U$ which does not divide $p$. 
\end{lemma}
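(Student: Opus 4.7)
My plan is to exploit the exact sequence
\[
0 \longrightarrow H^1(D_{U,u}/I_{U,u}, (A_\rho)^{I_{U,u}}) \longrightarrow H^1(U_u, (A_\rho)^{D_{\infty,w}}) \longrightarrow E^{A_\rho}_{U,u} \longrightarrow 0
\]
just derived above, which presents $E^{A_\rho}_{U,u}$ as a quotient of $H^1(U_u, (A_\rho)^{D_{\infty,w}})$. The claim therefore reduces to showing this middle cohomology group is finite, and I would split the work into two tasks: (i) show that the coefficient module $(A_\rho)^{D_{\infty,w}}$ is finite, and (ii) deduce finiteness of $H^1$ of the $p$-adic Lie group $U_u$ with finite $p$-primary coefficients.

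For step (i), the key observation is that condition $\mathrm{(G)}$ gives $K_\mathrm{cyc} \subset K_\infty$, so $\rho$ (viewed as a character of $G_K$ via $G_K \twoheadrightarrow \Gamma_\mathrm{cyc}$) restricts trivially to $G_{K_\infty}$ and in particular to $D_{\infty,w}$. Consequently $(A_\rho)^{D_{\infty,w}}$ coincides with $A^{D_{\infty,w}}$ as an abelian group, and its finiteness is equivalent to the vanishing of its divisible part, i.e.\ to $V^{D_{\infty,w}} = 0$. Identifying $D_{\infty,w}$ with $G_{K_{\infty,w}}$ inside $G_{K_v}$, one may write $D_{\infty,w} = \bigcap_{U \in \mathcal{U}} G_{K_{U,u}}$, where each $G_{K_{U,u}}$ is open in $G_{K_v}$ because $[K_U:K]$ is finite. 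Since $V$ is finite-dimensional over $\mathcal{K}$, the descending family $\{V^{G_{K_{U,u}}}\}_U$ stabilizes at some $U_0$, giving $V^{D_{\infty,w}} = V^{G_{K_{U_0,u}}}$, and condition $(\mathrm{V}_q)$ applied to the open subgroup $G_{K_{U_0,u}} \subset G_{K_v}$ then yields $V^{G_{K_{U_0,u}}} = 0$.

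For step (ii), the group $U_u = U \cap G_v$ is closed in $G$ and, by condition $\mathrm{(K)}$, has no element of order $p$. Since $(A_\rho)^{D_{\infty,w}}$ is a finite $p$-primary abelian group, I would perform a d\'evissage along its composition series to reduce the finiteness of $H^1(U_u, (A_\rho)^{D_{\infty,w}})$ to that of $H^1(U_u, \mathbb{F}_p)$, which is provided by Lemma~\ref{lemma:greenberg_cohomology_bound}. Combining (i) and (ii) then concludes the argument.

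The main delicate point will be the bookkeeping of which ambient group each decomposition subgroup belongs to, together with the observation that $\rho$ is trivial on the part of $G_K$ lying over $K_\mathrm{cyc}$; once those identifications are set out explicitly, the remainder is essentially formal.
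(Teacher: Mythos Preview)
Your step (i) contains a genuine gap. The stabilization argument shows that the increasing family $\{V^{G_{K_{U,u}}}\}_U$ eventually becomes constant, but its stable value is $\bigcup_U V^{G_{K_{U,u}}}$, not $V^{D_{\infty,w}}$. The inclusion $\bigcup_U V^{G_{K_{U,u}}}\subset V^{D_{\infty,w}}$ is clear, but the reverse inclusion can fail: $D_{\infty,w}=\bigcap_U G_{K_{U,u}}$ is typically \emph{not} open in $G_{K_v}$, and condition $(\mathrm V_q)$ says nothing about invariants under closed non-open subgroups. A vector fixed by $D_{\infty,w}$ need not be fixed by any single $G_{K_{U,u}}$.

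The paper itself furnishes counterexamples. In the false-Tate setting of Section~\ref{s5} with $A=B_{p^\infty}$ for an elliptic curve $B$, condition $(\mathrm V_q)$ holds for every $q\neq p$ (Example~\ref{example:firstone}(1)); nevertheless, for primes $u$ above $q\in P_2$ one has $A^{D_{\infty,w}}=B_{p^\infty}(K_{\infty,w})=B_{p^\infty}$, which is infinite (see the discussion leading to \eqref{equation:calculationP_2}). Thus $(A_\rho)^{D_{\infty,w}}$ is not finite in general, and your d\'evissage in step (ii) cannot be carried out.

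The paper's proof avoids this issue entirely by never attempting to bound the coefficient module $(A_\rho)^{D_{\infty,w}}$. Instead it uses the inflation--restriction sequence \eqref{equation:boundE2} to embed $H^1(U_u,(A_\rho)^{D_{\infty,w}})$ into $H^1(D_{U,u},A_\rho)$, and shows the latter is finite via the local Euler--Poincar\'e characteristic formula for a local field of residue characteristic $\neq p$, once $H^0(D_{U,u},A_\rho)$ and $H^2(D_{U,u},A_\rho)$ are known to be finite from $(\mathrm V_q)$ and local Tate duality. That route requires finiteness only at the finite level $K_{U,u}$, where $(\mathrm V_q)$ genuinely applies.
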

\begin{proof}
Note that since the kernel of Artin representation $\rho$ is open in $G_K$, 
the condition $(V_q)$ of Definition \ref{s22ax} holds for $A$ if and only if 
the condition $(V_q)$ holds for $A_\rho$. 
By the condition $(V_q)$ and by the local Tata duality, 
$H^0 (D_{U,u} ,A_\rho  )$ and $H^2 (D_{U,u} ,A_\rho  )$ is 
are finite groups. 
Then, by Euler-Poincar\'{e} characteristic formula, 
the group $H^1 (D_{U,u} ,A_\rho )$ must be finite. Since 
we have $U_u = D_{U,u} /D_{\infty ,w}$ by definition, we have 
an Inflation-Restriction exact sequence:  
\begin{equation}\label{equation:boundE2} 
0 \longrightarrow 
H^1 (U_u , (A_\rho)^{D_{\infty ,w}}) 
\longrightarrow 
H^1 (D_{U,u} ,A_\rho )
\longrightarrow 
H^1 (D_{\infty ,w} ,A_\rho) ^{D_{U,u}}
\end{equation} 
Thus $H^1 (U_u , (A_\rho)^{D_{\infty ,w}})$ is finite. 
By \eqref{equation:boundE}, this implies that $E^{A_\rho}_{U,u}$ is finite. 
This completes the proof. 
\end{proof}
\ \\ 
{}{[Argument for the point ($\gamma$)\ :\ contribution over $p$]}
\par  
Next, we take a prime $u \in P_U$ which is over $p$ and we will show that the local contribution to $\mathrm{Ker} (\mathrm{res}''_{\rho ,U})$ from 
all primes $u$ of $K_{U}$ dividing $p$ is finite 
assuming either  the condition $(\mathrm{A}_p)$ of Definition \ref{set2b} or the condition (Sol$_p$) of Definition \ref{s22ax}.  

We choose a prime $u \in \Sigma_U$ which is over $p$. Since we have an isomorphism 
\begin{equation}\label{equation:h^1modh1gr_p}
\dfrac{H^1 (K_{U,u} , A_\rho )}{H^1_{\mathrm{Gr}} (K_{U,u}  , A_\rho)} \cong H^1 (I_{U,u} ,A_\rho /F^+_v A_\rho )^{D_{U,u}} ,
\end{equation}
it suffices to calculate 
the kernel of 
\begin{equation}\label{equation:restriction_local_p}
H^1 (I_{U,u} ,A_\rho /F^+_v A_\rho )^{D_{U,u}} \longrightarrow H^1 (I_{\infty ,w} ,A_\rho /F^+_v A_\rho )^{D_{\infty,w }}
\end{equation}
where $w $ is a prime of $K_{\infty}$ which is over $u\mid v$. 
\\ 
{\bf Case $(\mathrm{A}_p)$:} Let us consider the following commutative diagram:
$$ 
\begin{CD}
H^1 (D_{U,u} ,A_\rho /F^+_v A_\rho ) @>>> H^1 (D_{\infty ,w} ,A_\rho /F^+_v A_\rho ) \\ 
@VVV @VVV \\ 
H^1 (I_{U,u} ,A_\rho /F^+_v A_\rho )^{D_{U,u }} @>>> H^1 (I_{\infty ,w} ,A_\rho /F^+_v A_\rho )^{D_{\infty,w }} . \\ 
\end{CD}
$$ 
The vertical homomorphisms of the diagram are surjective by Inflation-Restriction sequence and by the fact that 
$D_{U,u}/I_{U,u}$ and $D_{\infty,w }/I_{\infty,w }$ are pro-cyclic groups. 
By the condition $(\mathrm{A}_p)$ of Definition \ref{set2b}, the kernel of the left vertical map is finite and bounded independently of $U \in \mathcal{U}$.  
Hence, the kernel of \eqref{equation:restriction_local_p} is finite and bounded independently of $U \in \mathcal{U}$
if and only if the kernel of the map \eqref{equation:restriction_local_p_2} below is finite and bounded independently 
of $U \in \mathcal{U}$: 
\begin{equation}\label{equation:restriction_local_p_2}
H^1 (D_{U,u} ,A_\rho /F^+_v A_\rho ) \longrightarrow H^1 (D_{\infty ,w} ,A_\rho /F^+_v A_\rho ) . 
\end{equation}
By definition, $D_{U,u}/D_{\infty ,w}$ is isomorphic to the decomposition group $U_u$ of $U \subset G$ and 
$D_{\infty ,w}$ is identified with $G_{K_{\infty ,w}}$. 
The kernel of the map \eqref{equation:restriction_local_p_2} is hence isomorphic to $H^1 (U_u , (A_\rho /F^+_v A_\rho )^{G_{K_{\infty ,w}}})$ by 
Inflation-Restriction sequence.  
Since the group $(A_\rho /F^+_v A_\rho )^{G_{K_{\infty ,w}}}$ is finite by the condition $(\mathrm{A}_p)$, 
we prove that $H^1 (U_u,  (A_\rho /F^+_v A_\rho )^{G_{K_{\infty ,w}}})$ is finite and bounded independently 
of $U \in \mathcal{U}$ similarly as the earlier argument using Lemma \ref{lemma:greenberg_cohomology_bound}. 
\\ 
{\bf Case (Sol$_p$):} In this case, let us consider a decreasing filtration on $U_u =\mathrm{Gal}(K_{\infty ,w}/K_{U,u})$ starting from 
$U^{ (0)}_u =U_u $: 
$$
U^{ (0)}_u \supset U^{ (1)}_u\supset \cdots \supset U^{ (r-1)}_u \supset U^{ (r)}_u=0 
$$ 
such that $U^{(i)}_u/U^{(i+1)}_u$ is isomorphic to $\mathbb{Z}_p$ 
up to an extension by finite groups for $i= 0, \ldots , r-1$, 
which is required by the condition (Sol$_p$) of Definition \ref{s22ax}. 
By our hypothesis that $G$ has no element of order $p$, order of these  finite groups are prime to $p$, which do not contribute to the cohomology with $p$-primary coefficients. 
Via the surjection $D^{ (0)}_{U,u} :=D_{U,u} \twoheadrightarrow U_v$, the pull-back of the above filtration induces the following filtration: 
\begin{equation}
D^{ (0)}_{U,u} \supset D^{ (1)}_{U,u} \supset \cdots \supset D^{ (r-1)}_{U,u}  \supset D^{ (r)}_{U,u} =D_{\infty ,w}  
\end{equation}
such that we have $D^{(i)}_{U,u} /D^{(i+1)}_{U,u}  \cong U^{(i)}_u/U^{(i+1)}_u$ for $i= 0, \ldots , r-1$. Via the injection 
$I_{U,u} \hookrightarrow D_{U,u}$, the pull-back of this filtration induces 
the following filtration:
\begin{equation}\label{equation:filtration_on_inertia1}
I^{ (0)}_{U,u}   \supset I^{ (1)}_{U,u} \supset \cdots \supset I^{ (r-1)}_{U,u}  \supset I^{ (r)}_{U,u}=I_{\infty ,w}  
\end{equation}
such that we have $I^{(i)}_{U,u} /I^{(i+1)}_{U,u} \cong D^{(i)}_{U,u} /D^{(i+1)}_{U,u}  $ for $i= 1, \ldots , r-1$.  
As is explained below, only the initial graded piece $I^{(0)}_{U,u} /I^{(1)}_{U,u}$ depends on the situation, either a finite group of order prime to $p$ or isomorphic to $\mathbb{Z}_p$ 
up to an extension by finite groups. 
For the first case where $I_{U,u}$ is of finite index in $D_{U,u}$, we have $I^{(0)}_{U,u} /I^{(1)}_{U,u}$ is isomorphic to $\mathbb{Z}_p$ 
up to an extension by finite groups. 
For the second case where $I_{U,u}$ is of infinite index in $D_{U,u}$, 
$I^{(0)}_{U,u} /I^{(1)}_{U,u}$ is finite group of order prime to $p$. 
Note that the kernel of \eqref{equation:restriction_local_p} 
is finite and bounded when $U \in \mathcal{U}$ varies if the kernel of 
\begin{equation}\label{sucsseccive_extension1}
H^1 (I^{(i)}_{U,u} ,A_\rho /F^+_v A_\rho )^{D_{U,v}} \longrightarrow H^1 (I^{(i+1)}_{U,u} ,A_\rho /F^+_v A_\rho )^{D_{U,u}}
\end{equation} 
is finite and bounded when $U \in \mathcal{U}$ varies for each $i$. By Inflation-Restriction sequence, 
the kernel of \eqref{sucsseccive_extension1} is isomorphic to 
\begin{equation}
H^1 (I^{(i)}_{U,u} /I^{(i+1)}_{U,u} , (A_\rho /F^+_v A_\rho )^{I_{U,u}^{(i+1)}} )^{D_{U,u}}. 
\end{equation}
It is clear that the condition \rm{(Ord)} (resp. $(\mathrm{V}_p)$) in  Definition \ref{s22ax} holds true for $A$ if and only if it holds true for $A_\rho$. 
By the conditions \rm{(Ord)} and $(\mathrm{V}_p)$, 
$A_\rho /F^+_v A_\rho $ is a successive extension of the representation of type 
$(\mathcal{K}/\mathcal{O}) (\alpha \chi^m_{\mathrm{cyc}} \psi)$ where $\alpha$ is an unramified character of $G_{K,v}$, $m$ an integer, 
$\psi$ a finite order character of $I_{U,u}$. 
\par 
Let us discuss the case $i=0$, 
where we only need to discuss the first case where $I_{U,u}$ is of 
finite index in $D_{U,u}$ since $I^{(0)}/I^{(1)}$ is a finite group of order prime to $n$ for the second case. 
Since $I^{(0)}_{U,u} /I^{(1)}_{U,u}$ is the Galois group of the local cyclotomic $\mathbb{Z}_p$-extension of $K_{U,u}$ in this case, 
we have $(A_\rho /F^+_v A_\rho )^{I_{U,u}^{(i+1)}}=A_\rho /F^+_v A_\rho $ for every $i\geq 0$. 
Let us study the group $H^1 (I^{(i)}_{U,u} /I^{(i+1)}_{U,u} , (A_\rho /F^+_v A_\rho ) )^{D_{U,u}}$ for $i=0$. 
By looking at graded pieces, we will study the group 
\begin{equation}
H^1 (I^{(0)}_{U,u} /I^{(1)}_{U,u} , (\mathcal{K}/\mathcal{O}) (\alpha \chi^m_{\mathrm{cyc}} \psi) )^{D_{U,u}}
\end{equation}
If $m\not=0$, the pro-cyclic group $I^{(0)}_{U,u} /I^{(1)}_{U,u}$ acts non-trivially on 
$ (\mathcal{K}/\mathcal{O}) (\alpha \chi^m_{\mathrm{cyc}} \psi)$ and hence 
$H^1 (I^{(0)}_{U,u} /I^{(1)}_{U,u} , (\mathcal{K}/\mathcal{O}) (\alpha \chi^m_{\mathrm{cyc}} \psi) )^{D_{U,u}}=0$. 
If $m=0$, $I^{(0)}_{U,u} /I^{(1)}_{U,u}$ acts trivially on 
$ (\mathcal{K}/\mathcal{O}) (\alpha  \psi)$ and hence we have 
$$H^1 (I^{(0)}_{U,u} /I^{(1)}_{U,u} , (\mathcal{K}/\mathcal{O}) (\alpha  \psi) )^{D_{U,u}}=
\mathrm{Hom} (I^{(0)}_{U,u} /I^{(1)}_{U,u} , (\mathcal{K}/\mathcal{O}) (\alpha  \psi) )^{D_{U,u}}.
$$ 
The action of $D_{U,u}$ on $I^{(0)}_{U,u} /I^{(1)}_{U,u}$ is a conjugate action. 
Since the action of $D_{U,u}$ on $I^{(0)}_{U,u} /I^{(1)}_{U,u}$ is trivial, the $D_{U,u}$-invariant part of this group is finite and bounded 
{}{when $U \in \mathcal{U}$ varies} by 
the assumption (V$_p$) in Definition \ref{s22ax}. We thus prove that,
{}{for $i=0$,} the kernel of the map in \eqref{sucsseccive_extension1} is finite and bounded when 
$U \in \mathcal{U}$ varies . 
\par 
Next, we discuss the case $i>0$. The condition \eqref{equation:conjugation} implies that, {}{ for every $i$ and $j$,} 
$\Hom (I^{(i)}_{U,u}/ I^{(i+1)}_{U,u}, \mathrm{Fil}^{j} _v A /  \mathrm{Fil}^{j+1} _v A )
^{D_{U,u}}$ is finite {}{and bounded when $U \in \mathcal{U}$ varies}. Hence, {}{ for every $i$,} 
$\Hom (I^{(i)}_{U,u}/ I^{(i+1)}_{U,u}, A /F^+_v A )
^{D_{U,u}}$ is finite {}{and bounded when $U \in \mathcal{U}$ varies}. 
Since the kernel of the Artin representation $\rho$ is open in $G_K$, 
this implies that the group 
$$
\mathrm{Hom} (I^{(i)}_{U,u} /I^{(i+1)}_{U,u} , 
A_\rho /F^+_v A_\rho )^{D_{U,u}}.
$$
is finite {}{and bounded when $U \in \mathcal{U}$ varies}. This completes the proof of the control theorem. 
\end{proof}
\begin{rem}
Only the part of the condition $\rm({A}_{\it p})$ of Definition \ref{set2b}, stating the groups $(A/ F^+_v A )^{G_{K_{\infty ,w}}}$ are finite, is used in the proof of control theorem.
\end{rem}
 {}{
In \eqref{e1ande2} and \eqref{e1ande23}, 
we defined terms  $E_0^A$ and  $E_1^A$ which contributes to the functional equation of Theorem \ref{function-equation-thm}. 
Later, we will calculate these terms in some specific cases. At the moment, we will show that these two terms represent the same class in $K_0(\mathfrak M_H(G))$ under suitable hypotheses. 


\begin{proposition}\label{cor-error-term-over-p}
We keep the hypotheses and setting of Theorem \ref{propisition:control_theorem} (Control Theorem). 
Assume further that either $\rm({A}_{\it p})$ of Definition \ref{set2b} or  $ \rm{(Van_{\it p})}$ of Definition  \ref{definition:setting} is satisfied. Then we have $[E_0^A]=[E_1^A]$ in $K_0(\mathfrak M_H(G))$. 

\end{proposition}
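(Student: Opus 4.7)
The plan is to isolate the contribution at primes above $p$, reduce the question to a local statement at each such prime, and then treat the hypotheses $(\mathrm{A}_p)$ and $(\mathrm{Van}_p)$ separately. First I will split off the $p$-part: at each level $U$ the direct sum decomposition $E^A_{0,U} = E^A_{1,U} \oplus \bigoplus_{u\in P_U,\,u\mid p} E^A_{U,u}$ is preserved by the $G/U$-action (which permutes primes above a given prime of $K$ and so preserves the condition $u\mid p$), so taking the inverse limit yields a decomposition of $\Lambda_\mathcal{O}(G)$-modules
\[E_0^A = E_1^A \oplus F^A, \qquad F^A := \varprojlim_U \bigoplus_{u\in P_U,\,u\mid p} E^A_{U,u}.\]
Hence $[E_0^A] - [E_1^A] = [F^A]$ in $K_0(\mathfrak{M}_H(G))$, and the proposition is equivalent to $[F^A] = 0$. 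Splitting further over the finite set of primes $v\mid p$ of $K$ gives $F^A = \bigoplus_{v\mid p} F^A_v$, so it suffices to verify $[F^A_v] = 0$ for each $v$.

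Next I will pass to the local Grothendieck group. Fixing a prime $w$ of $K_\infty$ above $v$, the primes of $K_U$ above $v$ correspond to $U$-orbits on $G/G_v$, and this gives an isomorphism $F_v^A \cong \Lambda_\mathcal{O}(G) \widehat{\otimes}_{\Lambda_\mathcal{O}(G_v)} \mathcal{E}_v^A$ of $\Lambda_\mathcal{O}(G)$-modules, where $\mathcal{E}_v^A := \varprojlim_U E^A_{U,u(U)}$ is viewed as a $\Lambda_\mathcal{O}(G_v)$-module and $u(U)$ denotes the prime of $K_U$ below $w$. Since compact induction is exact and carries $\mathfrak{M}_{H_v}(G_v)$ into $\mathfrak{M}_H(G)$, it descends to a well-defined map on Grothendieck groups sending $[\mathcal{E}_v^A]$ to $[F_v^A]$. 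The task is thus reduced to showing $[\mathcal{E}_v^A] = 0$ in $K_0(\mathfrak{M}_{H_v}(G_v))$.

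Under $(\mathrm{A}_p)$ I will exploit the fact, extracted from the proof of Theorem~\ref{propisition:control_theorem}, that $E^A_{U,u}$ for $u\mid p$ is a subquotient of $H^1(U_u, M)$ with $U_u = D_{U,u}/D_{\infty,w}$ and $M = (A/F_v^+ A)^{G_{K_{\infty,w}}}$ a fixed finite abelian $p$-group. As $U$ runs through progressively smaller open subgroups, the open subgroup $U_u$ of $G_v$ shrinks to $1$, eventually acting trivially on $M$; then $H^1(U_u,M) = \mathrm{Hom}^{\mathrm{cts}}(U_u,M)$ and the restriction transition maps become multiplication-by-$p$ on these finite $p$-groups of homomorphisms, hence eventually zero. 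Thus $\varprojlim_U H^1(U_u,M) = 0$, and a Mittag--Leffler argument transports the vanishing to the subquotient system, giving $\mathcal{E}_v^A = 0$. Under $(\mathrm{Van}_p)$ I can no longer assert $\mathcal{E}_v^A = 0$, but it is a compact, $p$-primary $\Lambda_\mathcal{O}(G_v)$-module lying in $\mathfrak{M}_{H_v}(G_v)$; applying the short exact sequence $0 \to \mathbb{Z}_p \xrightarrow{p} \mathbb{Z}_p \to \mathbb{F}_p \to 0$ together with $[\mathbb{Z}_p] = 0$ in $K_0(\mathfrak{M}_{H_v}(G_v))$ gives $[\mathbb{F}_p] = 0$, and a d\'evissage of $\mathcal{E}_v^A$ through its $p$-power filtration then yields $[\mathcal{E}_v^A] = 0$.

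The main obstacle I anticipate is justifying the induction step rigorously: one needs to confirm that the completed tensor product $\Lambda_\mathcal{O}(G) \widehat{\otimes}_{\Lambda_\mathcal{O}(G_v)}(-)$ really lies in $\mathfrak{M}_H(G)$ and that the induced map $K_0(\mathfrak{M}_{H_v}(G_v)) \to K_0(\mathfrak{M}_H(G))$ is well-defined, since $G_v$ need not be open in $G$. A secondary technical point is the Mittag--Leffler argument in the $(\mathrm{A}_p)$ case, where care is needed to descend the vanishing from the containing system $H^1(U_u,M)$ to the subquotient system defining $\mathcal{E}_v^A$ without introducing higher $\varprojlim^1$ obstructions.
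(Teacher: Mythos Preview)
Your reduction strategy---splitting off the contribution at primes above $p$, writing $F^A_v$ as $\Lambda_{\mathcal O}(G)\otimes_{\Lambda_{\mathcal O}(G_v)}\mathcal E^A_v$, and reducing to $[\mathcal E^A_v]=0$ in $K_0(\mathfrak M_{H_v}(G_v))$---is exactly the paper's approach. The problem is the endgame in both cases.

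Under $(\mathrm A_p)$ your claim that $\mathcal E^A_v=0$ as a module is false. The transition maps in $\varprojlim_U E^A_{U,u}$ are corestriction (norm) maps, not restriction: for $U'\subset U$ one has $U'_{u'}\subset U_u$ and the map goes $H^1(U'_{u'},M)\to H^1(U_u,M)$. In the model situation $U_u\cong\mathbb Z_p$, $U'_{u'}=pU_u$, $M$ finite with trivial action, corestriction is the identity on $M$ (it is restriction that is multiplication by $p$), so the inverse limit is $M$, not $0$. In general $\mathcal E^A_v$ is a nonzero finite $p$-group. The paper's argument is different: the Control Theorem gives a uniform bound on $\#E^A_{U,u}$, hence $\mathcal E^A_v$ (the paper's $M^A_v$) is finite, and then Proposition~\ref{h-level-vanishing}(a) (any $\Lambda_{\mathcal O}(G_v)$-module of finite cardinality has class $0$, since $G_v$ has a $\mathbb Z_p$-quotient) yields $[\mathcal E^A_v]=0$. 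Your Mittag--Leffler argument should be replaced by this.

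Under $(\mathrm{Van}_p)$ the phrase ``$p$-primary'' is misleading: $\mathcal E^A_v$ is an inverse limit of finite $p$-groups, hence a compact $\mathbb Z_p$-module, but typically not $p$-torsion (it can contain copies of $\mathbb Z_p$), so a d\'evissage through the $p$-power filtration does not terminate. What the Control Theorem actually gives (in case 2(b)) is that $\mathcal E^A_v$ is finitely generated over $\mathbb Z_p$; one then uses the $G_v$-stable exact sequence $0\to(\mathcal E^A_v)_{\mathrm{tors}}\to\mathcal E^A_v\to\mathcal E^A_v/(\mathcal E^A_v)_{\mathrm{tors}}\to 0$, kills the finite torsion part via Proposition~\ref{h-level-vanishing}(a), and handles the free-$\mathbb Z_p$ quotient using the hypothesis $[\mathbb Z_p]=0$.
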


\begin{proof}   Recall that $P_U$ is the set of primes $u$ of $K_U$ such that the image of $I_{U,u} \subset G_{K_U} $ via $G_{K_U} \twoheadrightarrow U=\mathrm{Gal} (K_\infty /K_U)$ is infinite by the equation \eqref{error-term-E}. Recall also that we defined the terms $E_0^A$ and $E_1^A$ as follows: 
 $$
 E_0^A: = \underset{U}{\varprojlim} E^{A}_{0,U} = \underset{U}{\varprojlim} \underset{u \in P_U}{\oplus}E^{A}_{U,u}
 \text{ and }E_1^A: = \underset{U}{\varprojlim} E^{A}_{1,U} = \underset{U}{\varprojlim} \underset{u \in P_U, u \nmid p}{\oplus}E^{A}_{U,u}.
 $$ Let us fix a prime $v$ in $K$ such that  $v \mid p$. Let $U$ be an open subgroup of $G$ and $u$ a prime in $K_U$ dividing $v$. 
By the definition of $P_U$, we have $u \in P_U$, $v \in P_G$. 
\par 
First, let us assume that $\rm({A}_{\it p})$ is satisfied. In this case, the proof of control theorem shows that $E^{A}_{U,u}$ is finite and uniformly bounded independent of $U$ and $u$. Using the induced module,  we derive $\varprojlim_U E^{A}_{U,u} \cong \La_{\mathcal{O}}(G) \otimes_{\La_{\mathcal{O}}(G_{v})} M^{A}_{v}$, where $M^{A}_{v}$ is a  finite  module  of $p$-power cardinality. If $v$ is finitely decomposed in the extension $K_\infty$ i.e. if  $[G:G_v] < \infty$, then we have $\La_{\mathcal{O}}(G) \otimes_{\La_{\mathcal{O}}(G_{v})} M^{A}_{v}$ is finite and uniformly bounded. Hence $[\varprojlim_U E^{A}_{U,u}]= [\La_{\mathcal{O}}(G) \otimes_{\La_{\mathcal{O}}(G_{v})} M^{A}_{v}] =0$ in $K_0\big(\mathfrak M_H(G)\big)$ by Lemma \ref{h-level-vanishing} (a). On the other hand, if $v$ is infinitely decomposed in $K_\infty$ i.e. if $G_v$ is of infinite index in $G$, 
then, we have $[M^{A}_{v}] =0 $ in $K_0\big(\mathfrak M_{H_{v}}(G_{v})\big)$ by Lemma \ref{h-level-vanishing}(a) where $H_{v} = H\cap G_{v}$. As  
 $\La(G) \otimes_{\La(G_{v})} -$ is a well-defined map from $\M_{H_{v}}(G_{v}) \lra \M_H(G)$. Hence  we deduce that $[\La_{\mathcal{O}}(G) \otimes_{\La_{\mathcal{O}}(G_{v})} M^{A}_{v}] =[\varprojlim_U E^{A}_{U,u}]= 0$ in $K_0\big(\mathfrak M_H(G)\big)$. Thus from the definition of $E_0^A, E_1^A$,  we deduce that $[E_0^A]=[E_1^A]$  in $K_0(\mathfrak M_H(G))$. \\
\par 
Next, let us assume that $\rm({A}_{\it p})$ is not satisfied but the assumption $ \rm{(Van_{\it p})}$ is satisfied. Using the same notation as above,  we have $\varprojlim_U E^{A}_{U,u} \cong \La_{\mathcal{O}}(G) \otimes_{\La_{\mathcal{O}}(G_{v})} M^{A}_{v}$, where $M^{A}_{v}$ is now a finitely generated $\Z_p$-module. However by the assumption $ \rm{(Van_{\it p})}$ and Lemma \ref{h-level-vanishing} (a), we have $[M^{A}_{v}] =0 $ in $K_0\big(\mathfrak M_{H_{v}}(G_{v})\big)$. Thus, by an argument similar to the above case, we deduce that $[\varprojlim_U E^{A}_{U,u}] =0$ when (i) $v$ is finitely decomposed in $K_\infty$ and also  when (ii) $v$ is infinitely decomposed in $K_\infty$ thanks to the hypothesis   $ \rm{(Van_{\it p})}$. Hence once again from the definition of $E_0^A, E_1^A$, we deduce that $[E_0^A]=[E_1^A]$  in $K_0(\mathfrak M_H(G))$. This completes the proof of the proposition.
\end{proof}}


\section{Examples of the error term of the algebraic functional equation}\label{s5}
In this section, we calculate the exceptional divisor $\Bigl[ 
\varprojlim_U E^{A}_{0,U} \Bigr] 
= \Bigl[ \varprojlim_{U} \underset{v \in P_U}{\oplus} E^{A}_{U,u} \Bigr] $ in the setting of Example \ref{example:2}, Example \ref{example:3} and Example \ref{example:4}.
\par 
{\it Example $1$: }  First, we consider the case of Example \ref{example:2} i.e. $K_\infty = \underset{n \geq 1}{\cup} \mathbb{Q}(\mu_{p^n} ,\sqrt[p^n]{a})$, $K=\Q$ and 
{}{$A= B_{p^\infty}$ where} {}{$B$} is an elliptic curve defined over $\mathbb{Q}$ with good ordinary reduction at $p$. Recall that all primes of $\mathbb{Q}$ are finitely decomposed in 
the false-Tate extension $K_\infty$ as stated in Example \ref{example:2}.  Note that in this case  the condition $\rm({A}_{\it p})$ is satisfied. Hence we only need to calculate  error terms corresponding primes not dividing $p$, as by Proposition \ref{cor-error-term-over-p} we have, $[E_0^A] =[E_1^A]$ in $K_0(\mathfrak M_H(G))$ where $E_1^{A}: = \underset{U}{\varprojlim} E^{A}_{1,U}= \underset{U}{\varprojlim} \underset{u \in P_U, u \nmid p}{\oplus}E^{A}_{U,u}.$ Following notation of \cite{hv}, we define 
\begin{align*}
& P_0 = \{  \text{primes }  q  \text{ in } \Q:  q \mid a  \text{ but } q \nmid p  \},
\\
& P_1 =   \{  q \in P_0 : \text{{}{$B$} has split multiplicative reduction above $q$ 
over $\mathbb{Q}(\mu_{p^\infty})$}   \}, \\ 
& P_2 = 
 \{  q \in P_0:  \text{{}{$B$} has good  reduction at } q \text{ and  {}{$B_{p^\infty}(K_v) \neq 0$}} \}. 
\end{align*}
In addition, for each $U$ and $ i =0, 1,2$, we define $P_{i,U}$ to be the set of primes of $K_U$ dividing $P_i$. 
Note that we have $
P_0 = P_G \setminus \{p\}$ and   
$P_{0,U} =P_U \setminus \{  \text{primes $u$ in } K_U: u \mid p \} $ by definition 
and by \cite[Lemma 3.9]{hv}. 
Recall that we have 
\begin{equation}
\Bigl[ 
\varprojlim_U E^{A}_{1,U} \Bigr] 
= \Bigl[ \varprojlim_{U} \underset{u \in P_U, u \nmid p}{\oplus} E^{A}_{U,u} \Bigr] 
= \Bigl[\underset{v \in P_0}{\oplus} \mathrm{Ind}_{G_v}^G  \varprojlim_{U}  \underset{u \in P_U, u \mid v}{\oplus} E^{A}_{U,u} \Bigr] . 
\end{equation}
Hence, it suffices to calculate $\Bigl[ \varprojlim_{U}   \underset{u \in P_U, u \mid v}{\oplus} E^{A}_{U,u}  \Bigr] $. 
We also define 
\begin{equation*}
P_{00} = \{  q  \in P_0 :  A^{D_{\infty,w}} \not= 0 
\text{ for any prime $w$ of $K_\infty$ over $q$} \}.
\end{equation*}
By the exact sequence \eqref{equation:boundE}, we have 
\begin{equation}\label{JCHdncjkbcC}
E^A_{U,u}=0 
\text{ for each prime $u \in P_{0,U} \setminus P_{00,U} $}. 
\end{equation}
\cite[Proposition 5.1(iii)]{hm2} proves that 
$P_{00,U} =P_{1,U} \cup P_{2,U}$.  
\par 
For a prime $u \in P_{2,U}$, the inertia group $I_{U,u}$ acts trivially on 
{}{$A$}  and the eigenvalues of Frobenius element in 
$D_{U,u}/I_{U,u}$ are non-trivial. 
Hence the first term $H^1 (D_{U,u}/I_{U,u},A^{I_{U,u}})$ of the exact sequence \eqref{equation:boundE} is zero and we have 
 $E^A_{U,u} \cong 
H^1 (U_u , A^{D_{\infty ,w}}) $. Since we have   
{}{$A^{D_{\infty,w}} = B_{p^\infty}(K_{\infty,w}) = B_{p^\infty} $} $($see \cite[Proposition 5.1(i)]{hm2}$)$, 
we obtain $E^A_{U,u} \cong 
H^1 (U_u , A ) $. By a similar argument, the restriction map of Galois cohomology induces an isomorphism $H^1 (U_u , A ) \cong  H^1 (I_{U,u} ,A)^{D_{U,u}}$. Since $I_{U,u}$ acts trivially on $A$, we have 
$H^1 (I_{U,u} ,A)^{D_{U,u}} \cong \Hom ( I_{U,u} ,A)^{D_{U,u}}$. 
Thus we obtained 
$$
E^A_{U,u} 
 \cong \Hom ( I_{U,u} ,A)^{D_{U,u}}.
$$  
Note that, the conjugate action of $D_{U,u}$ on $I_{U,u}$ is given by 
the $p$-adic cyclotomic character. Hence we have 
{}{
\begin{equation}\label{equation:calculationP_2}
\underset{q \in P_2}{\bigoplus}  \varprojlim_{U} \bigoplus_{\substack{u\in P_{2,U} \\ 
u \vert q }} E^{A}_{U,u} \cong \underset{q \in P_2}{\bigoplus} \varprojlim_{U}\bigoplus_{\substack{u\in P_{2,U} \\ 
u \vert q }}  B_{p^\infty} (-1)^{D_{U,u}}  \cong \underset{q \in P_2}{\bigoplus} \mathrm{Ind}^{G}_{G_q} 
\mathrm T_p B(-1 ) .
\end{equation}   }
\par 
For a prime $q \in P_{1,U}$, we have  {}{$A^{D_{\infty,w}} = B_{p^\infty}(K_{\infty,w}) =B_{p^\infty}$} by a similar argument as the previous case. Since $D_{\infty,w}$ has no non-trivial pro-$p$ quotient, 
the third term of \eqref{equation:boundE2} is zero. 
Hence \eqref{equation:boundE} gives the following exact sequence: 
\begin{equation}\label{equation:boundE3}
0 \longrightarrow H^1 (D_{U,u}/I_{U,u},H^0 ( {I_{U,u}} ,A) ) 
\longrightarrow H^1 (D_{U,u} , A ) 
\longrightarrow E^{A}_{U,u} 
\longrightarrow 0 . 
\end{equation}
Since our representation {}{$A= B_{p^\infty}$} satisfies 
the condition $(V_q)$  of  Definition \ref{s22ax} for any prime $q \neq p$, 
$H^0 (D_{U,u} , A)$ and $H^2 (D_{U,u} , A)$ are finite. 
Thus we have 
\begin{equation}\label{equation:boundE4}
H^1 (D_{U, u} , A ) \cong H^1 (D_{U,u} , T )^\vee 
\cong H^0 (D_{U,u} , A)^\vee 
\end{equation}
Putting these together, we can calculate the inverse limit of the middle term of \eqref{equation:boundE3}, 
{}{
\begin{equation}\label{equation:calculationP_1}
 \underset{q \in P_1}{\bigoplus}  \varprojlim_{U}\bigoplus_{\substack{u\in P_{1,U} \\ 
u \vert q }} H^0 (D_{U,u} , A)^\vee 
 \cong \underset{q \in P_1}{\bigoplus} \varprojlim_{U}\bigoplus_{\substack{u\in P_{1,U} \\
u \vert q }} B_{p^\infty}(K_{U,u})^\vee 
 \cong \underset{q \in P_1}{\bigoplus} \mathrm{Ind}^{G}_{G_q}  T_pB(-1) .
\end{equation}   
}
On the other hand, we calculate the projective limit of the left most term of \eqref{equation:boundE3} 
with respect to the corestriction maps when $U$ is changing:
$$
 \underset{q \in P_1}{\bigoplus} \varprojlim_{U} \bigoplus_{\substack{u\in P_{1,U} \\ 
u \vert q }}H^
1 (D_{U,u}/I_{U,u}, A^ {I_{U,u}} ) .
$$
By assumption, {}{$B$} has split multiplicative reduction above $q$ 
over $\mathbb{Q}(\mu_{p^\infty})$. Hence, for any sufficiently small $U$, {}{$B$} has split multiplicative reduction above $q$ 
over $K_U$. Since we calculate the projective limit of some modules related to $U$, we may and we do assume without loss of generality, that  
{}{$B$} has split multiplicative reduction above $q$ over $K_U$ for any $U$ below. 
Since $q \in P_1$, the action of $G_q$ on $A$ has a non-split filtration 
$$ 
0\lra F_q^+A \lra A \lra A/{F_q^+A} \lra 0
$$
where $F_q^+ A $ is cofree $\mathbb{Z}_p$-module of corank one on which 
$G_q$ acts by the $p$-adic cyclotomic character $\chi_{cyc}$. 
Taking the $I_{U,u}$-invariant of the above short exact sequence, we obtain 
\begin{equation}\label{gyiuwehdknd}
 0\lra F_q^+A \lra A^{I_{U,u}} \lra F_{U,u} \lra 0, 
\end{equation} where $F_{U,u}$  is finite. 
Since $F_q^+A =\Q_p/{\Z_p}(1)$  as $G_q$-module, we have 
$H^1(D_{U,u}/I_{U,u}, F_q^+A)=0$. 
We also have $H^2(D_{U,u}/I_{U,u}, F_q^+A)=0$ since $D_{U,u}/I_{U,u}$ is infinite procyclic. 
From equation \eqref{gyiuwehdknd}, we obtain 
$$
H^1(D_{U,u}/I_{U,u}, A^{I_{U,u}})\cong  H^1(D_{U,u}/I_{U,u}, F_{U,u})\cong \mathrm{Hom}(
D_{U,u}/I_{U,u}, F_{U,u}). 
$$ 
The last isomorphism is true as $D_{U,u}/I_{U,u}$ acts trivially on {}{$A/{F_q^+A}=\tilde{B}_{p^\infty}$}. 
Recall that $A$ is isomorphic to $\left(\overline{\mathbb{Q}}^\times_q /q^{\mathbb{Z}}\right)[p^\infty ]$ and 
{}{$B_{p^n}$} fits into the exact sequence 
{}{
$$
0 \longrightarrow \mu_{p^n}  \longrightarrow B _{p^n}  \longrightarrow \mathbb{Z} /p^n \mathbb{Z}  \longrightarrow 0.
$$
}
When $K_{U,u}$ is equal to $k_n=\mathbb{Q}_q (\mu_{p^n},a^{1/{p^n}})$ we denote $F_{U,u}$ by $F_n$, which is a subgroup of the quotient $\mathbb{Z} /p^n \mathbb{Z}$. There exists a constant $\delta$ such that the order of $F_n$ is $p^{n -\delta} $ when $n$ is sufficiently large. 
The degree of the extension $k_{n+1}/k_n$ is $p^2$ and the norm map for the extension $k_{n+1}/k_n$ maps $F_{n+1}$ to a subgroup of order 
$p^2 \# F_{n+1}$ in $F_n$. From this observation, we see that see that  \begin{equation}\label{jzbcdda;lpkkjhh2}
 \underset{q \in P_1}{\bigoplus} \varprojlim_{U}\bigoplus_{\substack{u\in P_{1,U} \\ 
u \vert q }} H^1 (D_{U,u}/I_{U,u}, A^ {I_{U,u}} ) \cong  \underset{q \in P_1}{\bigoplus}  \varprojlim_{U} \bigoplus_{\substack{u\in P_{1,U} \\ 
u \vert q }}F_{U,u} \cong  0 .
\end{equation}
Using \eqref{jzbcdda;lpkkjhh2} in \eqref{equation:boundE3}, the  contribution  of the split multiplicative primes  to the error term is given by
{}{
\begin{equation}\label{ekjjefnbhjgyf}
\underset{q \in P_1}{\bigoplus} \varprojlim_{U} \bigoplus_{\substack{u\in P_{1,U} \\ 
u \vert q }} E^{A}_{U,u} \cong  \underset{q \in P_1}{\bigoplus} \mathrm{Ind}^{G}_{G_q}  T_p B(-1)
\end{equation}}
\par 
By \eqref{JCHdncjkbcC}, \eqref{equation:calculationP_2} and 
\eqref{equation:calculationP_1}, we have the following description of error terms in  $K_0(\mathfrak M_H(G)):$
{}{
$$
[E_0^A] =
[E_1^A] \cong  
\Bigl[ 
\underset{q \in P_1\cup P_2}{\bigoplus} \mathrm{Ind}^{G}_{G_q}   T_pB(-1)\Bigr] .
$$
}
\par 
{}{In this case of Example \ref{example:2}(1), our error term is exactly the same error term as that of \cite[Theorem 6.2, Equation (6.29)]{zab}. Note in this case, $B$ has good ordinary reduction at $p$.  Then, using Coates-Greenberg's theory of deeply ramified extension \cite[Proposition 4.3 and  Proposition 4.8]{cg}, it follows that 
$\mathrm{Sel}^{\mathrm{Gr}}_{A } (K_\infty )  =S_{p^\infty}(B/K_\infty)$, where $S_{p^\infty}(B/K_\infty)$ is the classical Selmer group of $B$ over $K_\infty$.  Thus our result is consistent with that of \cite{zab}.} 

\medskip

\par {\it Example $2$: } Next, let us consider the case of Example \ref{example:3} (O) i.e. $K_\infty = \underset{n \geq 1}{\cup} \mathbb{Q}(E_{p^n})$ where 
$E$ is a non-CM elliptic curve over $\mathbb{Q}$ and $V =T_p B \otimes \mathbb{Q}_p$
is the Galois representation associated to an elliptic curve $B$. {}{We assume that $B$ is isogenous to $E$}. 
As in the previous example, we define 
\begin{align*}
& P_0 = \{  \text{primes }  q :  q \neq p \text{ and the $q$-adic valuation of {}{$j(B)$} is negative}  \},
\\
& P_1 =   \{ q \in P_0 : \text{{}{$B$}  has split multiplicative reduction  above $q$ 
over $\mathbb{Q}(\mu_{p^\infty})$}\}.
\end{align*} 
The primes in $P_0$ are precisely those prime where {}{$B$} has potentially multiplicative reduction. Thus $P_0$ is nothing but primes not divisible by $p$ at which the inertia subgroup of $\mathrm{Gal}(K_\infty /\mathbb{Q})$ is 
infinite. Note that there is no analogue of $P_2$ of the false-Tate case of Example 1 in this case. 
\par 
We note that primes in $P_1$ are not finitely decomposed in this example. However, the arguments and conclusions of the false-Tate case above for a prime of $q$ in $P_1$ remain valid even if $q$ is infinitely decomposed. Thus, following the same argument as in the false-Tate case above, we deduce the following description of error terms in  $K_0(\mathfrak M_H(G)):$
{}{
$$
[E_1^A] \cong \Bigl[ 
\underset{q \in P_1}{\bigoplus} \mathrm{Ind}^{G}_{G_q}  T_p B(-1) \Bigr]. 
$$
}
 Note by Example \ref{rmk33}(5), the condition $(\mathrm{Van}_p)$ of Definition \ref{definition:setting} is satisfied in this case. Hence by Proposition \ref{cor-error-term-over-p}, the error term is given by 
 {}{
$$[E_0^A] =
[E_1^A] \cong \Bigl[ 
\underset{q \in P_1}{\bigoplus} \mathrm{Ind}^{G}_{G_q}  T_p B(-1) \Bigr]. 
$$
}
{}{In this case of Example \ref{example:3}(O), our error term  above is the same error term as that of \cite[Theorem 5.2, Equation (5.23)]{z2}. Since {}{$B$} has good, ordinary reduction at $p$, again using \cite[Proposition 4.3 and Proposition 4.8]{cg}, 
it follows that  $\mathrm{Sel}^{\mathrm{Gr}}_{A } (K_\infty )$   is the same as the  classical Selmer group $S_{p^\infty}(B/K_\infty)$.  Hence, our result is consistent with that of \cite{z2}.} 

\medskip

\par {\it Example $3$: } Let $f$ be a normalized eigen elliptic cuspform of even weight $k\geq 2$ and level $\Gamma_0 (N)$ such that  $N$ is square-free and the conductor $N_f$ of $f$ is not divisible by $p$. Let us assume that $a_p(f)$ is a $p$-adic unit.  {}{We define $V$ to be the Tate-twist 
$V_f(\frac{k}{2})$ and} we take a lattice $T \subset V$. We set $A =T\otimes \mathbb{Q}_p /\mathbb{Z}_p$. 
Let $K_\infty/\Q$ be the false-Tate curve extension as in Example 1 above. In this case, we define 
\begin{align*}
& P_0 =\{q \text{ prime in } \Q: q \neq p, \ q \mid a \}, \\ 
& P_{00} = \{  q  \in P_0 :  A^{D_{\infty,w}} \not= 0 
\text{ for any prime $w$ of $K_\infty$ over $q$} \}.
\end{align*}
By the same reason as \eqref{JCHdncjkbcC}, we have 
\begin{equation}\label{JCHdncjkbcC3}
E^A_{U,u}=0 
\text{ for each prime $u \in P_{0,U} \setminus P_{00,U} $}. 
\end{equation}
In order to give the analogues of $P_1$ and $P_2$ of Example 1, 
we prepare some notation. Let us denote by $\pi_{f,q}$ the local automorphic representation at the prime $q$ associated to $f$. Since we assume that 
the Nebentypus character of $f$ is trivial and the conductor $N_f$ of $f$ is square-free, 
when  prime $q$ divides  $N_f$, $\pi_{f,q}$ is a special representation and we have $\pi_{f,q} \cong \pi 
(\delta \vert \ \vert^{\frac{k}{2}-1} , \delta \vert \ \vert^{\frac{k}{2}})$ where 
$\delta$ is a unramified quadratic character of $\Q^\times_q$ and $\vert \ \vert$ 
is the valuation on $\Q_q$. Now, we define 
\begin{align*}
& P_1 =   \{  q \in P_0 : q \vert N_f  \text{  and the quadratic character $\delta$ associated to $\pi_{f,q}$ is trivial on $\Q_q (\mu_p)^\times$}   \}, \\ 
& P_2 = 
 \{  q \in P_0: q\nmid N_f  
  \text{  and $A^{D_{\infty,w}} \not= 0 $ for any prime $w$ of $K_\infty$ over $q$ }  \}. 
\end{align*}
By definition, for any sufficiently small $U$, $\delta$ is  trivial  regarded as a Galois character of $D_{U,u}$. 
 Since we calculate the projective limit of some modules related to $U$, we may and we do assume without loss of generality, that  
$\delta$ is  trivial  regarded as a Galois character of $D_{U,u}$ for any $U$ below. 
For $q \in P_1$, the action of any open subgroup $D_{U,u}$ of $G_q$ on $A$ has a non-split filtration 
$$ 
0\lra F_q^+A \lra A \lra A/{F_q^+A} \lra 0
$$
where $F_q^+ A $ (resp.  $A/F_q^+A$) is a cofree $\mathbb{Z}_p$-module of corank one on which 
$D_{U,u}$ acts by the $p$-adic cyclotomic character $\chi^{\frac{k}{2} }_{\mathrm{cyc}}$ (resp. $\chi^{\frac{k}{2} -1}_{\mathrm{cyc}}$ ). 
Hence, we have 
$P_{00,U} =P_{1,U} \cup P_{2,U}$. 
By a similar argument as in Example 1, we prove the same type of results as 
\eqref{equation:calculationP_2} and 
\eqref{equation:calculationP_1}. 
Hence, the error term is given by 
{}{
$$
[E_0^{A}]= [E_1^{A}]  =[\underset{q\in P_{1} \cup P_{2}}{\bigoplus} \text{Ind}_{G_q}^G T (-1)] \text{ in } K_0(\mathfrak M_H(G)).
$$
}
%

\par {\it Example $4$: }Let us discuss  commutative examples. Let us keep the same setting as Example \ref{example:4} where $G$ is isomorphic to $\mathbb{Z}^{d+1}_p$ and 
$A$ is isomorphic to $E_{p^\infty}$ with $E$ an elliptic curve with good ordinary reduction at all primes of $K$ above $p$. 
Now we have two cases regrading the decomposition subgroup $G_v$ of $G$ at a prime $v$ in $K$ dividing $p$. Note as $K_\cyc \subset K_\infty$, we necessarily have dimension of $G_v$, as a $p$-adic Lie group, is at least $1$.

In the first case, if $K_v$ is equal to $\mathbb{Q}_p$, then for a prime $w$ in $K_\infty$ over $w$, $E_{p^\infty}(K_{\infty,w})= E_{p^\infty}(K_{\cyc,w}) \subset E_{p^\infty}(K_{v}(\mu_{p^\infty}))$ is finite by Imai's theorem \cite{i} (Note that Imai's theorem can be applied since we assume that $E$ has good reduction at $v$). Thus in this case condition (A$_p$) is satisfied.
 
On the other hand, if $K_v$ is a non-trivial extension of $\mathbb{Q}_p$, then the dimension of $H_v:= H \cap G_v$ is at least $1$. Thus $\Z_p[[G_v]] \cong \Z_p[[T_1, T_2, ..., T_r]]$ for some $r$ with $r \geq 2$. Thus $\Z_p$ is a pseudo-null $\Z_p[[G_v]]$-module and consequently, $\mathrm{(Van}_p)$ hypothesis is satisfied. 

Hence, by Proposition \ref{cor-error-term-over-p}, we have $[E_0^A] =[E_1^A]$ in $K_0(\mathfrak M_H(G))$ 
no matter how $K_v$ is equal to $\mathbb{Q}_p$ or not. 
Also, since $\Z_p^{d+1}$ extension of a number field is unramified outside $p$, we have $E^A_{U,u}=0$ for sufficiently small $U$ any $u \in K_U$ with $u \nmid p$. It follows that $E_1^A =0.$ Thus the functional equation takes the expected simple form $$[\mathrm{Sel}^{\mathrm{Gr}}_{A}(K_\infty)^\vee ] = [\big(\mathrm{Sel}^{\mathrm{Gr}}_{A}(K_\infty)^\vee\big)^\iota] \quad \text{ in } K_0 (\mathfrak{M}_H (G)).$$

\section{Higher extension groups of Selmer groups}\label{sext}
In this section, we study the higher extension groups 
$\mathrm{Ext}^i_{\Lambda (G)} ( \mathrm{Sel}^{\mathrm{Gr}}_A (K_\infty )^\vee  , \Lambda (G) )$, which appear in the proof of our functional equation given in the next section. Recall $G $ has a closed normal subgroup $H$ such that $G/H \cong \Gamma \cong \Z_p$. We prepare a few lemmas and a proposition in the beginning which is used later.

\begin{lemma}\label{fp-0-in-sl2zp}
For any  $X \in \mathrm{SL}_2(\Z_p)$, its centralizer 
$$
C_{\mathrm{SL}_2(\Z_p)}(X) = \{Y \in \mathrm{SL}_2(\Z_p)| YXY^{-1} =X\}
$$ is infinite.  Consequently, the dimension of $C_{\mathrm{SL}_2(\Z_p)}(X)$ as $p$-adic Lie group is $\geq 1$.

\end{lemma}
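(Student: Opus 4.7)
\medskip
\noindent
\textbf{Proof proposal.} The plan is to exhibit an explicit one-parameter subgroup of the centralizer for every $X$, and then deduce the dimension statement.

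First, I would dispose of the trivial case: if $X = \pm I$ is a scalar matrix, then $C_{\mathrm{SL}_2(\Z_p)}(X) = \mathrm{SL}_2(\Z_p)$, which is manifestly infinite and of dimension $3$ as a $p$-adic Lie group. So suppose from now on that $X$ is not scalar. The key observation is that
$$Y := 2X - \mathrm{tr}(X)\, I \in M_2(\Z_p)$$
is a nonzero traceless element (nonzero because $X$ is not scalar) and therefore lies in $\mathfrak{sl}_2(\Z_p)$, and it commutes with $X$ since it is a $\Z_p$-linear combination of $I$ and $X$.

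Next, I would choose an integer $N$ large enough so that $p^N Y$ lies in the domain of convergence of the $p$-adic matrix exponential (any $N \geq 1$ works when $p$ is odd, and $N \geq 2$ works when $p = 2$). Then the assignment $n \mapsto \exp(n\, p^N Y)$ defines a continuous homomorphism $\Z_p \to M_2(\Z_p)$. Because $Y$ is traceless, $\det \exp(n\, p^N Y) = \exp(n\, p^N \mathrm{tr}(Y)) = 1$, so this map lands in $\mathrm{SL}_2(\Z_p)$, and it lies in the centralizer of $X$ since $Y$ commutes with $X$. As $Y \neq 0$, the image is an infinite closed subgroup of $C_{\mathrm{SL}_2(\Z_p)}(X)$, proving the first assertion.

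Finally, for the consequence on dimension: $C_{\mathrm{SL}_2(\Z_p)}(X)$ is a closed subgroup of the compact $p$-adic Lie group $\mathrm{SL}_2(\Z_p)$, and is therefore itself a compact $p$-adic Lie group. Since a $p$-adic Lie group of dimension $0$ is discrete, and a discrete compact group is finite, infiniteness forces the dimension to be at least $1$. Alternatively, the explicit one-parameter subgroup constructed above already shows that the Lie algebra of $C_{\mathrm{SL}_2(\Z_p)}(X)$ contains the line $\Q_p \cdot Y$. The argument is entirely elementary; the only minor technical point is the standard control of the convergence radius of the $p$-adic exponential, which is not really an obstacle.
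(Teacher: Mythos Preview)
Your proof is correct. It differs from the paper's argument in a way worth noting: the paper splits into the two cases ``$X$ diagonal'' versus ``$X$ not diagonal''. In the diagonal case it exhibits the diagonal torus in the centralizer; in the non-diagonal case it builds, for each $n$, the element $Y_n = \det(I+p^nX)^{-1}(I+p^nX)^2 \in \mathrm{SL}_2(\Z_p)$ and checks by inspecting off-diagonal entries that the $Y_n$ are pairwise distinct. Your argument instead splits into ``$X$ scalar'' versus ``$X$ not scalar'' and, in the second case, uses the $p$-adic exponential applied to the traceless element $Y = 2X - \mathrm{tr}(X)I$ to produce a genuine one-parameter subgroup inside the centralizer. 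Both approaches exploit the same underlying fact (polynomials in $X$ commute with $X$), but yours is more uniform and directly yields a copy of $\Z_p$ in the centralizer, which makes the dimension consequence immediate from the Lie algebra; the paper's version is slightly more hands-on and avoids any appeal to the exponential series, at the cost of a less natural case division and a somewhat ad hoc infiniteness check.
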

\begin{proof} Take any matrix $X$ in $\mathrm{SL}_2(\Z_p)$. For
a diagonal matrix  \begin{small}{$X= \begin{pmatrix} a  & 0 \\ 0  & a^{-1}  \end{pmatrix}\in \mathrm{SL}_2(\Z_p)$}\end{small}, we have  
$\begin{small}{\left\{ \begin{pmatrix} \lambda  & 0 \\ 0  & \lambda^{-1}  \end{pmatrix} \middle| \lambda \in \Z^\times_p \right\}}\end{small} \subset C_{\mathrm{SL}_2(\Z_p)}(X)$, which shows that $C_{\mathrm{SL}_2(\Z_p)}(X)$ is infinite 
in this case. 
Next, we consider the case where $X = \begin{pmatrix} a  & b \\ c  & d  \end{pmatrix} \in \mathrm{SL}_2(\Z_p)$ is not a diagonal matrix, that is, 
at least one of $b $ or $c$ is nonzero. Then for each $n$ consider 
\begin{small}{$$
X_n: = I_{2\times 2} +p^nX = \begin{pmatrix} 1+p^na  & p^nb \\ p^nc  & 1+p^nd  \end{pmatrix}.$$}\end{small} Then $\mathrm{det}(X_n) \in 1+p\Z_p$ and  set $Y_n= \dfrac{1}{\mathrm{det}(X_n)}X^2_n \in \mathrm{SL}_2(\Z_p)$. 
For all $n$, we have $Y_nX =XY_n$ and we have the presentation 
\begin{small}{$$Y_n = \begin{pmatrix} \dfrac{(1+p^na)^2 +p^{2n}bc}{\mathrm{det}(X_n)} & \dfrac{p^nb(2+p^na+p^nd)}{\mathrm{det}(X_n)} \\ \dfrac{p^nc(2+p^na+p^nd)}{\mathrm{det}(X_n)}  & \dfrac{(1+p^nd)^2 + p^{2n}bc}{\mathrm{det}(X_n)}  \end{pmatrix}.
$$}\end{small} 
Since the $p$-adic valuation of at least one of the non-diagonal entries of $Y_n$ goes to infinity as $n$ goes to infinity, the set $\{Y_n\}_{n \in \N}$ is an infinite subset of $\mathrm{SL}_2(\Z_p)$. This completes the proof.  
\end{proof}

Let $\varpi$ be a uniformizer of $\mathcal{O}$. 
For any $p$-adic Lie group $G$, we define $\Omega (G):=  
\underset{U}{\varprojlim}~ 
\mathcal{O} /(\varpi)  [G/U]$ to be the completed group ring where $U$ varies over open normal subgroups of $G$. We denote by $K_0 (\Omega(G))$ the Grothendieck group of the ring $\Omega(G )$.

\begin{proposition}\label{h-level-vanishing}
(a) Let $G$ be a compact $p$-adic Lie group G without any element of order $p$  such that $G$ has a quotient $\Gamma$ isomorphic to $\mathbb{Z}_p$.  Let  $M$  be a $\La_{\mathcal{O}}(G)$-module which is of finite cardinality. 
Then, $[M] =0 $ in $K_0(\M_H(G))$.
\par (b) Let $H$ be a $p$-adic Lie group without any element of order $p$. Assume that either one of the following two conditions hold: 
 \begin{enumerate}
 \item[\rm{(i)}] The centralizer of every element in $H$ is infinite. 
 \item[\rm{(ii)}] The group $H$ is pro-$p$.
 \end{enumerate}
Then we have $[\mathcal{O} /(\varpi) ] =0 $ in $K_0(\Omega(H))$. 
\end{proposition}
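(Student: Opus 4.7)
The plan is to prove parts (a) and (b) separately, in each case reducing to residue-field coefficients and then exploiting a vanishing criterion for the class of the trivial module.

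For part (a), my first step is to filter $M$ by the uniformizer, $M \supset \varpi M \supset \varpi^2 M \supset \cdots \supset 0$; the successive quotients are finite $\Omega(G)$-modules, so by additivity in $K_0(\M_H(G))$ the proof reduces to the case of a finite $\Omega(G)$-module. A finite $\Omega(G)$-module is a successive extension of simple modules, each factoring through a finite quotient of $G$. Using that $G$ has no element of order $p$, a standard $p$-Sylow reduction further cuts down to the case $M = \mathcal{O}/(\varpi)$ with trivial $G$-action. I then invoke \cite[Lemma~2.2]{bz}, which establishes $[\mathcal{O}/(\varpi)] = 0$ in $K_0(\Omega(G))$ from the exact sequence $0 \to \Omega(G) \xrightarrow{\tilde{\gamma}-1} \Omega(G) \to \Omega(G)/(\tilde{\gamma}-1)\Omega(G) \to 0$, where $\tilde{\gamma} \in G$ is any lift of a topological generator of $\Gamma \cong \Z_p$. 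Finally I transport this equality along the natural homomorphism $K_0(\Omega(G)) \to K_0(\M_H(G))$ recorded in \cite[\S 1.4 and \S 5.5]{aw2}, concluding $[M] = 0$ in $K_0(\M_H(G))$.

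For part (b), I treat the two cases separately. In case (ii), $H$ is torsion-free pro-$p$ of positive $p$-adic Lie dimension $d$, so $\Omega(H)$ is Auslander-regular of global dimension $d$ and the trivial module $\mathcal{O}/(\varpi)$ admits a finite free resolution; on a uniform open subgroup $H' \subseteq H$ one has the Koszul/Chevalley-Eilenberg complex of length $d$ with $i$-th term of rank $\binom{d}{i}$, giving Euler characteristic $\sum_{i=0}^d (-1)^i \binom{d}{i} = 0$ and hence $[\mathcal{O}/(\varpi)] = 0$ in $K_0(\Omega(H'))$, after which a transfer argument using that $\Omega(H)$ is free of finite rank over $\Omega(H')$ propagates the vanishing to $K_0(\Omega(H))$. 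In case (i), the infinite-centralizer hypothesis forces $H$ to be infinite, hence of positive $p$-adic Lie dimension. I plan to invoke the Ardakov-Wadsley criterion from \cite[\S 5]{aw2} characterizing the vanishing of $[\mathcal{O}/(\varpi)]$ in $K_0(\Omega(H))$: informally, vanishing holds once no finite-dimensional irreducible $\Omega(H)$-module is supported at an isolated conjugacy class, and the infinite-centralizer hypothesis guarantees exactly this, since the stabiliser of every element under conjugation has positive $p$-adic Lie dimension.

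The hard part will be case (i) of part (b). Although the infinite-centralizer condition provides, for each $h \in H$, a closed subgroup $Z_H(h)$ of positive dimension containing $h$ (and hence, after passing to a suitable quotient, an essentially $\Z_p$-worth of elements commuting with $h$), these subgroups vary with $h$ and in general do not assemble into a single normal subgroup of $H$ with $\Z_p$-quotient to which the method of part (a) could be applied directly. The preceding Lemma \ref{fp-0-in-sl2zp} provides exactly the concrete input needed for the intended application to open subgroups of $\mathrm{SL}_2(\Z_p)$, and the plan is to combine it with the Ardakov-Wadsley framework to extract the required global vanishing from the uniform local existence of infinite commuting $\Z_p$-subgroups.
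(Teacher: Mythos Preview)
Your strategy matches the paper's in essentials: for (a), reduce to $\mathcal{O}/(\varpi)$, show its class vanishes in $K_0(\Omega(G))$, and transport along $K_0(\Omega(G)) \cong K_0(\mathcal{D}) \to K_0(\M_H(G))$ via \cite{aw2}; for (b)(i), invoke the Ardakov--Wadsley/Serre centralizer criterion; for (b)(ii), exhibit a finite free resolution.

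Two points deserve correction. First, the exact sequence you display in (a) does not prove what you claim: $\Omega(G)/(\tilde\gamma-1)\Omega(G)$ is not $\mathcal{O}/(\varpi)$ unless $G\cong\Z_p$. The paper (and \cite[Lemma~2.2]{bz}) instead uses the $\Omega(G)$-module $\mathcal{O}/(\varpi)[[\Gamma]]$, pulled back along $G\twoheadrightarrow\Gamma$, together with the sequence $0\to\mathcal{O}/(\varpi)[[\Gamma]]\xrightarrow{\gamma-1}\mathcal{O}/(\varpi)[[\Gamma]]\to\mathcal{O}/(\varpi)\to 0$; since $\Omega(G)$ has finite global dimension (no $p$-torsion in $G$), $\mathcal{O}/(\varpi)[[\Gamma]]$ defines a class in $K_0(\Omega(G))$ and the sequence forces $[\mathcal{O}/(\varpi)]=0$. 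Second, for (b)(i) the relevant reference is \cite{aw} (their first paper, \S1.2--1.3), not \cite{aw2}. For (b)(ii) your Koszul-and-transfer argument is valid but more elaborate than needed: the paper simply observes that for pro-$p$ $H$ the ring $\Omega(H)$ is local, so $K_0(\Omega(H))\cong\Z$ and any finitely generated torsion module---in particular $\mathcal{O}/(\varpi)$---has class zero.
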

{\it Proof of Part (a)}: 
This statement is proved in \cite[Lemma 2.2]{bz}. 
However, we will give a detailed proof. 
Since any $\La_{\mathcal{O}} (G)$-module of finite cardinality is isomorphic to 
a successive extension of the $\La_{\mathcal{O}} (G)$-module $\mathcal{O}/(\varpi )$, 
we reduce the proof to the case $M= \mathcal{O}/(\varpi )$. As $G$ has no element of order $p$, $\Omega(G)$ has finite global dimension \cite[Proposition(e) \S 3.3]{aw2}. It follows that every finitely generated $\Omega(G)$-module can be represented in the Grothendieck group $K_0(\Omega(G))$. In particular, $[\mathcal{O}/(\varpi )[[\Gamma]]] \in K_0(\Omega(G))$. Thus $[\mathcal{O}/(\varpi )] =0$ in $K_0(\Omega(G))$. Further,  \cite[\S 5.5]{aw2} shows 
that $K_0(\Omega(G)) \cong K_0(\mathcal D)$, where $\mathcal D$ denote the category of all finitely generated $p$-torsion $\La (G)$-modules. The result follows by considering the  natural homomorphism (see  \cite[\S 1.4]{aw2}) from $K_0(\mathcal D) \lra K_0(\M_H(G))$.

\begin{proof}[Proof of part (b)] First, we discuss the case with the condition (i). 
Using works of Serre \cite{se}, it is shown in \cite[\S 1.2 \& \S1.3, Page 32]{aw} that   $[\mathcal{O}/(\varpi )] =0 $ in $K_0(\Omega(H))$ if  the dimension (as a $p$-adic Lie group) of the centralizer of every element of $H$ is at least one.  Thus we deduce $[\mathcal{O}/(\varpi ) ] =0 $ in $K_0(\Omega(H))$ in the first case. 
\par 
Next, we discuss the case with the condition (ii). 
If $H$ is also pro-$p$, then Grothendieck group $K_0(\Omega(H))$ is isomorphic to $\Z$ and the class of any finitely generated torsion $\Omega(H)$-module in $K_0(\Omega(H))$ is zero. In particular, we obtain $[\mathcal{O}/(\varpi ) ] = 0 $. 
\end{proof}

By using Lemma \ref{fp-0-in-sl2zp} and Proposition \ref{h-level-vanishing}, 
we immediately obtain the following corollary. 
\begin{corollary}\label{fp-0-criterion}
Let $H = \mathrm{SL}_2(\Z_p)$. Then we have $[\mathcal{O}/(\varpi ) ] =0$ in $K_0(\Omega(H)).$  
\end{corollary}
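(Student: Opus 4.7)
The plan is to deduce the corollary essentially as a direct application of the two preceding results. First, I would verify the hypothesis of Proposition~\ref{h-level-vanishing}(b)(i) for $H = \mathrm{SL}_2(\Z_p)$: namely, that the centralizer in $H$ of every element has positive dimension as a $p$-adic Lie subgroup. Pick $X \in H$. By Lemma~\ref{fp-0-in-sl2zp}, the centralizer $C_H(X)$ is infinite; this is exactly where the explicit constructions (diagonal one-parameter subgroups in the semisimple case, and the families $Y_n$ constructed from $I + p^n X$ in the non-diagonal case) pay off.

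The second step is to note that $C_H(X)$ is a closed subgroup of $H$, being the preimage of the identity under the continuous map $Y \mapsto YXY^{-1}X^{-1}$. A closed subgroup of a $p$-adic Lie group is itself a $p$-adic Lie group, and any such group that is infinite must have strictly positive dimension (a zero-dimensional compact $p$-adic Lie group is finite). Hence $\dim C_H(X) \geq 1$ for every $X \in H$, which is precisely condition~(i) of Proposition~\ref{h-level-vanishing}(b).

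Finally, under the standing assumption that $H$ has no element of order $p$ (which holds for $\mathrm{SL}_2(\Z_p)$ when $p \geq 5$, and otherwise after passing to a suitable congruence subgroup consistent with condition~$\mathrm{(K)}$ of Definition~\ref{definition:setting}), Proposition~\ref{h-level-vanishing}(b)(i) applies verbatim and yields $[\mathcal{O}/(\varpi)] = 0$ in $K_0(\Omega(H))$, as required. There is no real obstacle here beyond having the centralizer computation of Lemma~\ref{fp-0-in-sl2zp} in hand; all of the deep input (Serre's argument that $[\F_p] = 0$ in the Grothendieck group whenever all centralizers have positive Lie-algebraic dimension) is already absorbed into Proposition~\ref{h-level-vanishing}(b).
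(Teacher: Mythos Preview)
Your proposal is correct and follows essentially the same route as the paper, which simply says the corollary is immediate from Lemma~\ref{fp-0-in-sl2zp} and Proposition~\ref{h-level-vanishing}. Note that your second step is already absorbed into the statement of Lemma~\ref{fp-0-in-sl2zp} itself (it records that the centralizer has dimension $\geq 1$), and your caveat about elements of order $p$ is appropriate: the paper leaves this implicit, relying on the standing hypothesis~$\mathrm{(K)}$ (so effectively $p\geq 5$ in the $\mathrm{SL}_2(\Z_p)$ case).
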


\begin{rem}
 In  \cite[Example 9.6]{aw2}, they considered a pro-Dihedral group $H = \Z_p \rtimes \Z/{2\Z} $ and showed that  $[\mathcal{O}/(\varpi ) ] \neq 0 $ in $K_0(\Omega(H))$.
\end{rem}

\begin{lemma}\label{jannsen-lemma} \cite[Lemma 2.4]{ja}
Let $G$ be a $p$-adic Lie group and $U$ be an open subgroup of $G$. Then the restriction map induces a functorial isomorphism of $\La_{\mathcal{O}}(U)$-modules, $$\mathrm{Ext}_{\La_{\mathcal{O}}(G)}^r(M, \La_{\mathcal{O}}(G)) \cong  \mathrm{Ext}_{\La_{\mathcal{O}}(U)}^r(M, \La_{\mathcal{O}}(U)) $$ for every $\La_{\mathcal{O}}(G)$-module $M$. 
\end{lemma}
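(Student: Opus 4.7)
The plan is to reduce the statement to a finite-index version of Shapiro's lemma (Frobenius reciprocity) together with a projective-resolution argument. The starting observation is that, since $U$ is open in $G$, the quotient $G/U$ is finite; choosing a system of coset representatives $\{g_1,\dots,g_n\}$ one sees that $\Lambda_{\mathcal{O}}(G)=\bigoplus_{i=1}^n g_i\,\Lambda_{\mathcal{O}}(U)$ is free of finite rank both as a left and as a right $\Lambda_{\mathcal{O}}(U)$-module. Consequently, any projective (in particular, free) $\Lambda_{\mathcal{O}}(G)$-module is also projective as a $\Lambda_{\mathcal{O}}(U)$-module.

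Next I would handle the case $r=0$. Since $\Lambda_{\mathcal{O}}(G)$ is $\Lambda_{\mathcal{O}}(U)$-free of finite rank, the natural map
$$\Lambda_{\mathcal{O}}(G)\;\longrightarrow\;\mathrm{Hom}_{\Lambda_{\mathcal{O}}(U)}\!\left(\Lambda_{\mathcal{O}}(G),\Lambda_{\mathcal{O}}(U)\right)$$
sending $x\in\Lambda_{\mathcal{O}}(G)$ to the $\Lambda_{\mathcal{O}}(U)$-linear map $y\mapsto \pi_U(yx)$ (where $\pi_U$ projects onto the coset $U$ of the decomposition above) is an isomorphism of left $\Lambda_{\mathcal{O}}(G)$-modules. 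Applying $\mathrm{Hom}_{\Lambda_{\mathcal{O}}(G)}(M,-)$ and using the standard Hom-tensor adjunction
$$\mathrm{Hom}_{\Lambda_{\mathcal{O}}(G)}\!\bigl(M,\mathrm{Hom}_{\Lambda_{\mathcal{O}}(U)}(\Lambda_{\mathcal{O}}(G),\Lambda_{\mathcal{O}}(U))\bigr)\;\cong\;\mathrm{Hom}_{\Lambda_{\mathcal{O}}(U)}(M,\Lambda_{\mathcal{O}}(U)),$$
I obtain the desired isomorphism for $r=0$, and unwinding the construction shows that it is induced by restriction of scalars along $\Lambda_{\mathcal{O}}(U)\hookrightarrow\Lambda_{\mathcal{O}}(G)$; functoriality in $M$ is automatic.

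To pass to higher $r$, I would choose a projective resolution $P_\bullet\to M$ over $\Lambda_{\mathcal{O}}(G)$. By the first paragraph, $P_\bullet$ is simultaneously a projective resolution of $M$ over $\Lambda_{\mathcal{O}}(U)$. Applying $\mathrm{Hom}_{\Lambda_{\mathcal{O}}(G)}(-,\Lambda_{\mathcal{O}}(G))$ and $\mathrm{Hom}_{\Lambda_{\mathcal{O}}(U)}(-,\Lambda_{\mathcal{O}}(U))$ termwise yields two cochain complexes, and the $r=0$ isomorphism (which is natural in its first argument) furnishes an isomorphism of cochain complexes over $\Lambda_{\mathcal{O}}(U)$. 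Taking cohomology gives the isomorphism
$$\mathrm{Ext}^{r}_{\Lambda_{\mathcal{O}}(G)}(M,\Lambda_{\mathcal{O}}(G))\;\cong\;\mathrm{Ext}^{r}_{\Lambda_{\mathcal{O}}(U)}(M,\Lambda_{\mathcal{O}}(U))$$
for every $r\ge 0$, with the expected $\Lambda_{\mathcal{O}}(U)$-module structure and the expected functoriality in $M$.

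The only delicate point is really bookkeeping of the actions: one must verify that the $\Lambda_{\mathcal{O}}(U)$-module structure on the left-hand side (inherited from the right $\Lambda_{\mathcal{O}}(G)$-action on $\Lambda_{\mathcal{O}}(G)$, then restricted) matches the one on the right-hand side under the adjunction. This is straightforward once the explicit isomorphism $\Lambda_{\mathcal{O}}(G)\cong\mathrm{Hom}_{\Lambda_{\mathcal{O}}(U)}(\Lambda_{\mathcal{O}}(G),\Lambda_{\mathcal{O}}(U))$ is written down, but it is the step where one has to be most careful. No deep input beyond the finiteness of $[G:U]$ and the Hom-tensor adjunction is required; this is why the result holds for an arbitrary $\Lambda_{\mathcal{O}}(G)$-module $M$, without any hypothesis on $G$ beyond being a $p$-adic Lie group.
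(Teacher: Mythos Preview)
Your argument is correct and is precisely the standard proof: freeness of $\Lambda_{\mathcal{O}}(G)$ over $\Lambda_{\mathcal{O}}(U)$, the bimodule isomorphism $\Lambda_{\mathcal{O}}(G)\cong\mathrm{Hom}_{\Lambda_{\mathcal{O}}(U)}(\Lambda_{\mathcal{O}}(G),\Lambda_{\mathcal{O}}(U))$ via the coset projection, adjunction for $r=0$, and passage to a common projective resolution for higher $r$. The paper does not supply its own proof but simply quotes \cite[Lemma~2.4]{ja}, whose proof is exactly this Shapiro/Frobenius-reciprocity argument, so your approach coincides with the intended one.
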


Let $ P_{H \cap U }$ be the set of primes in $K_UK_\cyc = K_\infty^{H\cap U}$ lying above $P_U$ and let $D_{H \cap U ,\tilde{u}}$ (resp. $I_{H \cap U ,\tilde{u}} $) denote the decomposition (resp. inertia subgroup) of $H\cap U =\mathrm{Gal}(K_\infty/K_UK_\cyc)$ at $\tilde{u}$ where $\tilde{u}$ is a prime in $K_UK_\cyc$ over $u$. Set
\begin{equation}\label{cyclotomic-level-error-term}
E^{A^\ast(1)}_{H \cap U ,
\tilde{u}} = 
\begin{cases} \mathrm{Ker} \left[ 
H^1 (I_{H \cap U ,\tilde{u}} ,A^\ast(1) )^{D_{H \cap U ,\tilde{u} }} \longrightarrow H^1 (I_{\infty ,w} ,A^\ast(1))^{D_{\infty,w }} \right] & \text{if }  u \nmid p.  \\
\mathrm{Ker} \left[ 
H^1 (I_{H \cap U ,\tilde{u}} ,\frac{A^\ast(1)}{F^+_v A^\ast(1)} )^{D_{H \cap U ,\tilde{u} }} \longrightarrow H^1 (I_{\infty ,w} ,\frac{A^\ast(1)}{F^+_v A^\ast(1)})^{D_{\infty,w }} \right] & \text{if }  u \mid p.
\end{cases}
\end{equation}
\par 
Recall that, for a finitely generated $\mathcal O[[\Gamma_U]]$ 
module  $M$, we denote the maximal $\mathcal O[[\Gamma_U]]$ pseudonull (i.e. finite) submodule of $M$ by $M_\mathrm{null}$.  Then we have the following lemma.
\begin{lemma}\label{ext2-van}
 
 Let $U$ vary over the open normal subgroups $G$, then 
 \begin{small}{$$\Big[\underset{U}{\varprojlim}\Big( \bigoplus_{\tilde{u}\in P_{H \cap U }} (E^{A^\ast(1)}_{H \cap U ,\tilde{u}})^\vee\Big)_\mathrm{null}\Big]  = 0  \text{ in } K_0(\mathfrak M_H(G)),$$}\end{small} if either the condition $(A_p)$ or the condition $(\mathrm{Van}_p)$ is satisfied. 
\end{lemma}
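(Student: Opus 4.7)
The strategy is to run the argument of Proposition~\ref{cor-error-term-over-p} at the cyclotomic level $K_\infty^{H\cap U}=K_UK_\mathrm{cyc}$, combined with an extra step of extracting the pseudo-null submodule.

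First I would decompose by primes of $K$: every $\tilde u\in P_{H\cap U}$ lies above a unique $v\in P_G$, and after fixing a place $w$ of $K_\infty$ above $v$ the primes $\tilde u\mid v$ are indexed by $(H\cap U)\backslash G/G_v$. Arguing via Shapiro's lemma exactly as in the proof of Proposition~\ref{cor-error-term-over-p}, the contribution at $v$ in the inverse limit over $U$ takes the induced form
$$
\varprojlim_U \bigoplus_{\tilde u\mid v} (E^{A^\ast(1)}_{H\cap U,\tilde u})^\vee \;\cong\; \La_{\mathcal{O}}(G)\otimes_{\La_{\mathcal{O}}(G_v)} N^{A^\ast(1)}_v
$$
for a $\La_{\mathcal{O}}(G_v)$-module $N^{A^\ast(1)}_v$ computed from the inertia cohomology of $A^\ast(1)$ (respectively of $A^\ast(1)/F^+_v A^\ast(1)$ when $v\mid p$) along the tower $K_\infty^{H\cap U}\subset K_\infty$. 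The Hochschild--Serre analysis from the proof of Theorem~\ref{propisition:control_theorem} then shows that under $(A_p)$ each $E^{A^\ast(1)}_{H\cap U,\tilde u}$ is finite and uniformly bounded, so $N^{A^\ast(1)}_v$ has finite cardinality, whereas under $(\mathrm{Van}_p)$ it is finitely generated over $\mathbb{Z}_p$.

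Next I would extract the pseudo-null submodule and conclude via the vanishing results of the present section. Under $(A_p)$ the whole module at each finite level is already finite, hence equals its own pseudo-null submodule, and the inverse limit of pseudo-null parts agrees with $\La_{\mathcal{O}}(G)\otimes_{\La_{\mathcal{O}}(G_v)} N^{A^\ast(1)}_v$. Under $(\mathrm{Van}_p)$ one must check that taking the finite $\mathcal{O}[[\Gamma_U]]$-submodule commutes, up to a negligible error, with the induced-module description; this is achieved by comparing the finite-level corestriction maps using the uniform bounds from Theorem~\ref{propisition:control_theorem}, and again yields a module of the same induced shape. When $v$ is finitely decomposed in $K_\infty/K$, this induced module is of finite cardinality (resp.\ finitely generated over $\mathbb{Z}_p$), so its class in $K_0(\mathfrak M_H(G))$ vanishes by Proposition~\ref{h-level-vanishing}(a), supplemented by $(\mathrm{Van}_p)$ in the second case. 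When $v$ is infinitely decomposed, $G_v/H_v$ is still an open subgroup of $\Gamma_\mathrm{cyc}\cong\mathbb{Z}_p$, so Proposition~\ref{h-level-vanishing}(a) applied to $G_v$ (combined in the $(\mathrm{Van}_p)$ case with that hypothesis) gives $[N^{A^\ast(1)}_v]=0$ in $K_0(\mathfrak M_{H_v}(G_v))$; the exact induction functor $\La_{\mathcal{O}}(G)\otimes_{\La_{\mathcal{O}}(G_v)}(-)$ then transfers this vanishing to $K_0(\mathfrak M_H(G))$, exactly as exploited at the end of the proof of Proposition~\ref{cor-error-term-over-p}. Summing over $v\in P_G$ concludes the proof.

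The main obstacle will be the compatibility between the pseudo-null-submodule operation and the Shapiro-type identification of the local contribution as an induced module: one must track the transition maps among the $E^{A^\ast(1)}_{H\cap U,\tilde u}$ for varying $U$ carefully enough to rule out anomalous growth of the pseudo-null parts in the inverse limit, relying on the uniform bounds proved in Theorem~\ref{propisition:control_theorem}.
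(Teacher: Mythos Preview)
Your overall strategy---decompose by primes of $K$, realize each local contribution as an induced module $\La_{\mathcal O}(G)\otimes_{\La_{\mathcal O}(G_v)}N_v$, and then kill the class via Proposition~\ref{h-level-vanishing}(a) (supplemented by $(\mathrm{Van}_p)$ when $N_v$ is only finitely generated over $\mathbb Z_p$)---is exactly the paper's approach. But there is a genuine gap in how you handle the primes $\tilde u\nmid p$.

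You assert that ``under $(A_p)$ each $E^{A^\ast(1)}_{H\cap U,\tilde u}$ is finite and uniformly bounded'' by invoking the Hochschild--Serre analysis of Theorem~\ref{propisition:control_theorem}. For $\tilde u\nmid p$ that finiteness in Theorem~\ref{propisition:control_theorem} rests on the condition $(\mathrm V_q)$, which is \emph{not} among the hypotheses of the present lemma; and $(A_p)$ itself says nothing about primes away from $p$. So for $\tilde u\nmid p$ the module $E^{A^\ast(1)}_{H\cap U,\tilde u}$ may well be infinite (for instance when $(A^\ast(1))^{G_{K_{\infty,w}}}$ has a nontrivial divisible part), and your argument does not go through as stated.

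The paper supplies the missing idea, and it is specific to the cyclotomic level rather than a reuse of Theorem~\ref{propisition:control_theorem}. Since $K_UK_\mathrm{cyc}$ already contains the cyclotomic $\mathbb Z_p$-extension, for $\tilde u\nmid p$ the local group $D_{H\cap U,\tilde u}/D_{\infty,w}$ has pro-$p$ part either trivial or isomorphic to $\mathbb Z_p$ (the unramified $\mathbb Z_p$-part has been exhausted, so only the tame $\mathbb Z_p$ can remain). In the $\mathbb Z_p$ case the surjection
\[
H^1\bigl(D_{H\cap U,\tilde u}/D_{\infty,w},\,(A^\ast(1))^{G_{K_{\infty,w}}}\bigr)\ \twoheadrightarrow\ E^{A^\ast(1)}_{H\cap U,\tilde u}
\]
shows that the largest finite quotient of $E^{A^\ast(1)}_{H\cap U,\tilde u}$---equivalently, $(E^{A^\ast(1)}_{H\cap U,\tilde u})^\vee_{\mathrm{null}}$---is bounded by the fixed finite group $(A^\ast(1))^{G_{K_{\infty,w}}}/\bigl((A^\ast(1))^{G_{K_{\infty,w}}}\bigr)_{\mathrm{div}}$, uniformly in $U$ and $\tilde u$. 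This bounds the pseudo-null piece directly, without ever asserting that $E^{A^\ast(1)}_{H\cap U,\tilde u}$ is finite, and without $(A_p)$ or $(\mathrm{Van}_p)$. Those two hypotheses are needed, as you correctly outline, only for the primes $\tilde u\mid p$.
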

\begin{proof} 
First, we discuss $E^{A^\ast(1)}_{H \cap U ,\tilde{u}}$ for $\tilde{u} \nmid p$. 
We study  the following diagram. 
\begin{equation}\label{equation:CDoutsidepH}
\begin{CD}
H^1 (D_{H \cap U ,\tilde u} ,A^\ast(1)) @>>> H^1 (D_{\infty ,w} ,A^\ast(1)  ) \\ 
@VVV @VVV \\ 
H^1 (I_{H \cap U , \tilde u} ,A^\ast(1)  )^{D_{H \cap U,\tilde u }} @>>> H^1 (I_{\infty ,w} ,A^\ast(1)  )^{D_{\infty,w }} . \\ 
\end{CD}
\end{equation}
By the same reason as that we gave on the diagram \eqref{equation:CDoutsidep}, 
the left vertical homomorphism of the diagram are surjective and 
the right vertical homomorphism of the diagram is isomorphic. 
Since the group $E^{A^\ast(1)}_{H \cap U ,
\tilde{u}}$ is the kernel of the lower horizontal homomorphism of the 
diagram, we have a surjection map 
\begin{equation}\label{equation:l-part_surjection_to_E}
\mathrm{Ker} [H^1 (D_{H \cap U ,\tilde u} ,A^\ast(1)) \longrightarrow  H^1 (D_{\infty ,w} ,A^\ast(1)  )]
\twoheadrightarrow 
E^{A^\ast(1)}_{H \cap U ,
\tilde{u}}, 
\end{equation}
applying the snake lemma to the diagram 
\eqref{equation:CDoutsidepH}. 
By the Inflation-Restriction sequence, 
we have 
\begin{small}{\begin{equation}\label{equation:E_l-part}
H^1 (D_{H \cap U ,\tilde u}/ D_{\infty ,w}, (A^\ast(1))^{G_{K_{\infty ,w}}})
\cong 
\mathrm{Ker} [H^1 (D_{H \cap U ,\tilde u} ,A^\ast(1)) \longrightarrow  H^1 (D_{\infty ,w} ,A^\ast(1)  )]. 
\end{equation}}\end{small}
By the fact that $v$ is not dividing $p$ and by the hypothesis that 
$G$ has no element of order $p$, for any prime $\tilde u$ of $K_UK_\cyc$ above $v$, the group $D_{H \cap U ,\tilde u}/ D_{\infty ,w}$ is either a finite group of order prime to $p$ or 
isomorphic to a product of $\mathbb{Z}_p$ and a finite group of order prime to $p$. In the former case, the group \eqref{equation:E_l-part} is trivial and there is nothing to discuss.  
In the latter case, the largest finite quotient of the group \eqref{equation:E_l-part} is a quotient of a finite group 
$(A^\ast(1))^{G_{K_{\infty ,w}}}
/\bigl((A^\ast(1))^{G_{K_{\infty ,w}}} \bigr)_{\mathrm{div}}$. By \eqref{equation:E_l-part}, $(E^{A^\ast(1)}_{H \cap U ,\tilde{u}})^\vee_\mathrm{null}$ is a finite group for 
any prime $v$ of $K$ not dividing $p$ and for any prime $\tilde u$ of $K_UK_\cyc$ above $v$. 
Let $v$ be a prime of $K$ not dividing $p$. Then, 
$(E^{A^\ast(1)}_{H \cap U ,\tilde{u}})^\vee_\mathrm{null}$ and $(E^{A^\ast(1)}_{H \cap U ,{\tilde{u}}'})^\vee_\mathrm{null}$ are isomorphic to 
each other for two different primes $\tilde u$ and ${\tilde u}'$ of $K_UK_\cyc$ above $v$. 
We conclude that 
\begin{equation}\label{080921091}
(E^{A^\ast(1)}_{H \cap U ,\tilde{u}})^\vee_\mathrm{null} \text{ is finite and uniformly bounded independently of $U$ and $\tilde{u}$}.
 \end{equation}
Thus, we get the following isomorphism 
by using induced module and by \eqref{080921091} 
\begin{small}{$$
\underset{U}{\varprojlim}\Big(\bigoplus_{\tilde{u}\in P_{H \cap U }, \tilde{u} \nmid p} (E^{A^\ast(1)}_{H \cap U ,\tilde{u}})^\vee\Big)_\mathrm{null} \cong \displaystyle{\bigoplus_v} \La_{\mathcal O}(G) \otimes _{\La_{\mathcal O}(G_v)} M_v
$$}\end{small}
where $v$ runs through primes of $K$ such that the inertia subgroup of $G_v$ is infinite and $M_v$ a  $\La_{\mathcal O}(G_v)$-module of finite cardinality. 
\par Recall the cyclotomic $\Z_p$ extension $K_\cyc$ is finitely decomposed at every prime of $K$; hence $\La(G) \otimes_{\La(G_{v})} -$ is a well-defined map from $\M_{H_{v}}(G_{v}) \lra \M_H(G)$ for $v \in P_G$, $v \nmid p$. Then applying  Proposition \ref{h-level-vanishing}(a), we obtain
\begin{small}{\begin{equation}\label{080921092}
\Big[\underset{U}{\varprojlim}\Big(\bigoplus_{\tilde{u}\in P_{H \cap U }, \tilde{u} \nmid p} (E^{A^\ast(1)}_{H \cap U ,\tilde{u}})^\vee\Big)_\mathrm{null}\Big]  = [\La_{\mathcal O}(G) \otimes _{\La_{\mathcal O}(G_v)} M_v]=0 \text{ in }K_0(\mathfrak M_H(G)) 
\end{equation}}\end{small}
 
\par 
 Next, we discuss $E^{A^\ast(1)}_{H \cap U ,
\tilde{u}}$ for $\tilde{u} \mid p$. Recall that we have 
\begin{equation}\label{sep1901}
E^{A^\ast(1)}_{H \cap U ,
\tilde{u}} \cong H^1 \big(I_{H \cap U, \tilde u}/{I_{\infty, w}}  ,(A^\ast(1)/{F^+_v A^\ast(1)})^{I_{\infty,w}}  \big)^{D_{H \cap U,\tilde u }}
\end{equation} 
by definition. Let us consider the following diagram. 
\begin{equation}\label{cyclotomic-error-estimate-over-p}
\begin{CD}
H^1 \big(D_{H \cap U ,\tilde u} ,A^\ast(1)/{F^+_v A^\ast(1)}\big) @>>> H^1 \big(D_{\infty ,w} , A^\ast(1)/{F^+_v A^\ast(1)}  \big) \\ 
@VVV @VVV \\ 
H^1 \big(I_{H \cap U , \tilde u} ,A^\ast(1)/{F^+_v A^\ast(1)}  \big)^{D_{H \cap U,u }} @>>> H^1 \big(I_{\infty ,w} ,A^\ast(1)/{F^+_v A^\ast(1)}  \big)^{D_{\infty,w }} . \\ 
\end{CD}
\end{equation}

Since the groups $D_{\infty ,w}/I_{\infty ,w}$ and $D_{H \cap U ,\tilde u}/I_{H \cap U , \tilde u}$ are procyclic group and thus have cohomological dimension one,  
the left vertical homomorphism and 
the right vertical homomorphism of the diagram are surjective. 
Since the group $E^{A^\ast(1)}_{H \cap U ,
\tilde{u}}$ is the kernel of the lower horizontal homomorphism of the 
diagram, we have a map 
\begin{equation}\label{equation:p-part_surjection_to_E}
\mathrm{Ker} [H^1 (D_{H \cap U ,\tilde u} ,A^\ast(1)/{F^+_v A^\ast(1)}) \longrightarrow  H^1 (D_{\infty ,w} ,A^\ast(1)/{F^+_v A^\ast(1)}  )]
\rightarrow 
E^{A^\ast(1)}_{H \cap U ,
\tilde{u}}, 
\end{equation}
whose cokernel is a subquotient of the kernel of the right vertical homomorphism of \eqref{cyclotomic-error-estimate-over-p}
by applying the snake lemma to the diagram 
\eqref{cyclotomic-error-estimate-over-p}. 
We note that the kernel of the right vertical homomorphism of \eqref{cyclotomic-error-estimate-over-p} is a finite group 
for any prime $w$ of $K_\infty$ dividing $p$ if the condition $(\mathrm{A}_p)$ is satisfied. 
By the Inflation-Restriction sequence, 
we have  
\begin{small}{\begin{multline}\label{equation:E_p-part}
H^1 (D_{H \cap U ,\tilde u}/ D_{\infty ,w}, (A^\ast(1)/{F^+_v A^\ast(1)})^{G_{K_{\infty ,w}}})
\\ \cong 
\mathrm{Ker} [H^1 (D_{H \cap U ,\tilde u} ,A^\ast(1)/{F^+_v A^\ast(1)} ) \longrightarrow  H^1 (D_{\infty ,w} ,A^\ast(1)/{F^+_v A^\ast(1)}  )]. 
\end{multline}}\end{small}
We note that this group is a finite group for any prime $v$ of $K$ dividing $p$ and for any prime $\tilde u$ (resp. $w$) of 
$K_UK_\cyc$ (resp. $K_\infty$) above $v$ if the condition $(\mathrm{A}_p)$ is satisfied. 
Let $v$ be a prime of $K$ dividing $p$. Then, this group for $\tilde u$ and  $w$ dividing $v$ and group for ${\tilde u}'$ and  $w'$ dividing 
$v$ are isomorphic to each other. This group is finite and uniformly bounded independently of $U$, $\tilde u$ and $w$ 
if the condition $(\mathrm{A}_p)$ is satisfied.
By applying these facts to  \eqref{equation:p-part_surjection_to_E}, we conclude that 
\begin{equation}\label{080920193}
(E^{A^\ast(1)}_{H \cap U ,\tilde{u}})_\mathrm{null} \text{ is finite and uniformly bounded independently of $U$, $\tilde u$ and $w$}
\end{equation}
if the condition $(\mathrm{A}_p)$ is satisfied.
 
\par As in the previous case, using induced modules, we can deduce 
 from \eqref{080920193} that 
 \begin{small}{$$\underset{U}{\varprojlim}~\Big(\bigoplus_{\tilde{u}\in P_{H \cap U }, \tilde{u} \mid p} (E^{A^\ast(1)}_{H \cap U ,\tilde{u}})^\vee\Big)_\mathrm{null} \cong  \displaystyle{\bigoplus_v}  \La_{\mathcal O}(G) \otimes _{\La_{\mathcal O}(G_v)} M_v
 $$}\end{small}
where $v$ runs through primes of $K$ dividing $p$ and $M_v$ is a $\La_{\mathcal O}(G_v)$-module of finite cardinality for each $v$. 
Once again, since $\La(G) \otimes_{\La(G_{v})} -$ is a well-defined map from $\M_{H_{v}}(G_{v}) \lra \M_H(G)$ for $v \mid p$,  it follows  Proposition \ref{h-level-vanishing}(a) that
\begin{small}{\begin{equation}\label{080921095}
\Big[\underset{U}{\varprojlim}\Big(\bigoplus_{\tilde{u}\in P_{H \cap U }, \tilde{u} \mid p} (E^{A^\ast(1)}_{H \cap U ,\tilde{u}})^\vee\Big)_\mathrm{null}\Big]  = 0 \text{ in }K_0(\mathfrak M_H(G)) 
\end{equation}}\end{small}
When the condition $(\mathrm{A}_p)$ is not satisfied, $(E^{A^\ast(1)}_{H \cap U ,\tilde{u}})$ might not be a finite group. 
However, $\Big(\bigoplus_{\tilde{u}\in P_{H \cap U }, \tilde{u} \nmid p}(E^{A^\ast(1)}_{H \cap U ,\tilde{u}})^\vee\Big)_\mathrm{null}$ is finite 
for each $U$, $\tilde u$ and $w$ and  
$
\underset{U}{\varprojlim}\big(\underset{\tilde{u}\in P_{H \cap U }, \tilde{u} \mid v}  \bigoplus(E^{A^\ast(1)}_{H \cap U ,\tilde{u}})^\vee\big)_\mathrm{null}$ is expressed as  $ \displaystyle{\bigoplus_v} \La_{\mathcal O}(G) \otimes _{\La_{\mathcal O}(G_v)} M_v$ 
where $v$ runs through primes of $K$ dividing $p$ and $M_v$ is a finitely generated $\mathbb{Z}_p$-module. Hence by Proposition \ref{h-level-vanishing} and by the condition (Van$_p$), the same conclusion as in \eqref{080921095}  continues to hold. This completes the proof of the lemma.
\end{proof}

\begin{proposition}\label{proposition:vanishing_higher_ext}
Let us assume that  $\mathrm{Sel}^{\mathrm{Gr}}_{A^\ast(1)}(K_\infty )^\vee \in \M_H(G)$  and either the condition $\mathrm{(a)}$ or the condition 
$\mathrm{(b)}$ below holds. 
\begin{enumerate} 
\item[(a)] The following two conditions are satisfied simultaneously.
\begin{enumerate}
\item[(i)]
The condition $\mathrm{(A)}$ in Definition \ref{set2b} is satisfied. 

\item[(ii)]
For each  $n \geq 1 $, the cohomology group $H_n (H\cap U ,\mathrm{Sel}^{\mathrm{Gr}}_{A^\ast (1)} (K_\infty )^\vee  )$ is finite 
for any $U \in \mathcal{U}$ and the order of this cohomology group is bounded independently of $U \in \mathcal{U}$. 

\end{enumerate}
\item[(b)] The following two  conditions are satisfied simultaneously.
\begin{enumerate}
\item[(i)] The assumption  $\mathrm{ (Van)}$ in Definition \ref{definition:setting} holds. 
\item[(ii)] For each  $n \geq 1$, the cohomology group $H_n (H\cap U ,\mathrm{Sel}^{\mathrm{Gr}}_{A^\ast (1)} (K_\infty )^\vee  )$ is finite for any $U \in \mathcal{U}$ and  $\underset{U}{\varprojlim}H_n (H\cap U ,\mathrm{Sel}^{\mathrm{Gr}}_{A^\ast (1)} (K_\infty )^\vee  )$ is a finitely generated $\Z_p$-module.
\end{enumerate}
\end{enumerate}

Then we have $[\mathrm{Ext}^n_{\Lambda_\mathcal O (G)} ( \mathrm{Sel}^{\mathrm{Gr}}_{A^\ast (1)} (K_\infty )^\vee  , \Lambda_\mathcal O (G) )] = 0 $  in 
$K_0 (\mathfrak{M}_H (G))$  for any $n\geq 3$.  

\par
We continue to assume that $\mathrm{Sel}^{\mathrm{Gr}}_{A^\ast(1)}(K_\infty )^\vee $ is in $ \M_H(G)$ and that either the condition $\mathrm{(a)}$ or the condition 
$\mathrm{(b)}$ above holds.  In addition, we also assume  either the condition $(\mathrm{A}_p)$ of Definition \ref{set2b} or the condition  $\mathrm{(Van}_p)$ of Definition \ref{definition:setting} holds.

Then we have $[\mathrm{Ext}^2_{\Lambda_\mathcal O (G)} ( \mathrm{Sel}^{\mathrm{Gr}}_{A^\ast (1)} (K_\infty )^\vee  , \Lambda_\mathcal O (G) )] = 0 $  in 
$K_0 (\mathfrak{M}_H (G))$. 

\end{proposition}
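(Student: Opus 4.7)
Write $S := \mathrm{Sel}^{\mathrm{Gr}}_{A^\ast(1)}(K_\infty)^\vee$ and $\mathcal{E}^n(S) := \mathrm{Ext}^n_{\Lambda_\mathcal O(G)}(S, \Lambda_\mathcal O(G))$. The plan is to reduce the computation of $\mathcal{E}^n(S)$ to the cyclotomic level where Poitou--Tate duality makes the structure transparent, and then to identify all surviving contributions with objects whose class in $K_0(\mathfrak M_H(G))$ is already known to be trivial. By Lemma \ref{jannsen-lemma} we may replace $\Lambda_\mathcal O(G)$ by $\Lambda_\mathcal O(U)$ for any $U \in \mathcal U$. Applying the Hochschild--Serre spectral sequence to $1 \to H \cap U \to U \to U/(H\cap U) \to 1$ (noting $U/(H\cap U) \cong \Z_p$), the problem reduces to computing the Ext groups of $S$ over $\Lambda_\mathcal O(H\cap U)$ and then taking a compatible limit as $U$ shrinks.

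The key input is Jannsen's duality formula, which, together with the global Poitou--Tate exact sequence applied to the Greenberg Selmer group of $A^\ast(1)$ over $K_U K_{\mathrm{cyc}}$, expresses $\mathcal{E}^n(S)$ (up to a degree shift) in terms of:
\begin{enumerate}
\item[(i)] the homology groups $H_{n-1}(H\cap U, S)$ coming from the global Galois cohomology,
\item[(ii)] local contributions at the primes in $P_{H\cap U}$ of the form $\bigoplus_{\tilde u \in P_{H\cap U}} (E^{A^\ast(1)}_{H\cap U, \tilde u})^\vee$ defined in \eqref{cyclotomic-level-error-term}, arising from the Greenberg local conditions.
\end{enumerate}
Under the hypothesis $(a)(\mathrm{ii})$ (uniform boundedness of the orders of $H_n(H\cap U, S)$) one applies Proposition \ref{h-level-vanishing}(a) to conclude that the corresponding classes vanish in $K_0(\mathfrak M_H(G))$. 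Under the hypothesis $(b)(\mathrm{ii})$ (finiteness for each $U$ with finitely generated $\Z_p$ inverse limit) one combines Proposition \ref{h-level-vanishing}(a) with the extra assumption $\mathrm{(Van)}$, which ensures that finitely generated free $\Z_p$-contributions also represent the zero class in $K_0(\mathfrak M_H(G))$.

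For $n \geq 3$ the local contributions in (ii) automatically vanish, since the strict $p$-cohomological dimension of the local Galois groups is at most $2$; so only the homological contribution (i) is present, and it is killed by the preceding paragraph. For $n = 2$ the local contribution (ii) is genuinely present. Passing to the inverse limit over $U$, the relevant term is exactly
\[
\Big[\,\underset{U}{\varprojlim}\Big(\bigoplus_{\tilde u \in P_{H\cap U}} (E^{A^\ast(1)}_{H\cap U,\tilde u})^\vee\Big)_{\mathrm{null}}\,\Big],
\]
which is controlled precisely by Lemma \ref{ext2-van} under either $(A_p)$ or $\mathrm{(Van}_p)$; this is why the case $n = 2$ requires the additional hypothesis in the statement.

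The main obstacle I expect is the bookkeeping for $n=2$: namely, isolating cleanly, inside the noncommutative Hochschild--Serre spectral sequence, the exact local term that appears as an error in the Poitou--Tate sequence and matching it with the groups $E^{A^\ast(1)}_{H\cap U,\tilde u}$ of \eqref{cyclotomic-level-error-term}. The other terms in the spectral sequence must be checked to be either trivial in $K_0(\mathfrak M_H(G))$ (by Proposition \ref{h-level-vanishing}(a) applied to their finite/$\Z_p$-finitely generated pieces) or absorbed into the homological term $H_n(H\cap U, S)$ controlled by the hypotheses. Once these identifications are made, the vanishing in $K_0(\mathfrak M_H(G))$ for both $n\geq 3$ and $n=2$ follows simultaneously.
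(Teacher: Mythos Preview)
Your overall direction is right --- reduce to the cyclotomic level, feed in the finiteness of $H_n(H\cap U,S)$, and for $n=2$ invoke Lemma \ref{ext2-van} --- but the mechanism you describe does not match what is actually true, and several steps would not go through as written.

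\textbf{The spectral sequence.} You propose Hochschild--Serre on $1\to H\cap U\to U\to \Gamma\to 1$ to ``reduce to Ext over $\Lambda_{\mathcal O}(H\cap U)$''. The paper goes the other way. Setting $H_U=H\cap U$ and $\widetilde\Gamma_U=G/H_U$, it uses the change-of-rings Grothendieck spectral sequence
\[
E_2^{s,t}=\mathrm{Ext}^s_{\mathcal O[[\widetilde\Gamma_U]]}\bigl(H_t(H_U,S),\,\mathcal O[[\widetilde\Gamma_U]]\bigr)\ \Longrightarrow\ \mathrm{Ext}^{s+t}_{\Lambda_{\mathcal O}(G)}\bigl(S,\,\mathcal O[[\widetilde\Gamma_U]]\bigr),
\]
together with $\mathcal E^n(S)\cong\varprojlim_U\mathrm{Ext}^n_{\Lambda_{\mathcal O}(G)}(S,\mathcal O[[\widetilde\Gamma_U]])$. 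Since $\mathcal O[[\Gamma_U]]$ is regular of Krull dimension $2$ one has $E_2^{s,t}=0$ for $s\ge 3$, and since $S\in\mathfrak M_H(G)$ forces $H_t(H_U,S)$ to be $\mathcal O[[\Gamma]]$-torsion one has $E_2^{0,t}=0$. Only $s=1,2$ survive, giving a short exact sequence with ends $E_2^{1,n-1}$ and $E_2^{2,n-2}$.

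\textbf{The case $n\ge 3$.} There are \emph{no} local terms here. For $n\ge 3$ both $n-1$ and $n-2$ are $\ge 1$, so both $E_2$-terms are Ext groups of the finite modules $H_t(H_U,S)$, $t\ge 1$, hence are themselves finite of bounded order (case (a)) or have finitely generated $\Z_p$ inverse limit (case (b)). Proposition \ref{h-level-vanishing}(a), respectively hypothesis $\mathrm{(Van)}$, then kills the class. Your appeal to ``strict $p$-cohomological dimension $\le 2$ of local Galois groups'' is not the operative reason and does not enter.

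\textbf{The case $n=2$.} Here $E_2^{1,1}=0$ outright, since $H_1(H_U,S)$ is finite and $\mathrm{Ext}^1$ of a finite module over the $2$-dimensional regular ring $\mathcal O[[\Gamma_U]]$ vanishes. The whole content is
\[
E_2^{2,0}=\mathrm{Ext}^2_{\mathcal O[[\Gamma_U]]}\bigl(S_{H_U},\,\mathcal O[[\Gamma_U]]\bigr)\cong\bigl(S_{H_U}\bigr)_{\mathrm{null}},
\]
the maximal finite $\mathcal O[[\Gamma_U]]$-submodule of $S_{H_U}$. This is \emph{not} obtained from ``Jannsen duality plus Poitou--Tate'' as a single formula. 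One now compares $S_{H_U}$ with $\mathrm{Sel}^{\mathrm{Gr}}_{A^\ast(1)}(K_UK_{\mathrm{cyc}})^\vee$ via the dual of the restriction map $\phi^\vee_{U,\mathrm{cyc}}$: the finite submodule of the target is bounded by $(A^\ast(1))^{G_{K_\infty}}$ via the classical cyclotomic control theorem (using (A) in case (a), or $\mathrm{(Van)}$ in case (b)), while $\mathrm{Ker}(\phi^\vee_{U,\mathrm{cyc}})$ sits between $H^2(H\cap U,(A^{\ast}(1))^{G_{K_\infty}})^\vee$ and the dual local restriction kernels. Only at this point do the groups $E^{A^\ast(1)}_{H\cap U,\tilde u}$ appear, and only their pseudo-null part matters; that is precisely the input of Lemma \ref{ext2-van}, which is where $(\mathrm A_p)$ or $\mathrm{(Van}_p)$ is used.

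So the two genuine gaps are: (i) the correct spectral sequence and the identification $E_2^{2,0}=(S_{H_U})_{\mathrm{null}}$; and (ii) the passage from $(S_{H_U})_{\mathrm{null}}$ to the local error terms via the cyclotomic control map, rather than via a direct Poitou--Tate decomposition of $\mathcal E^n(S)$ which does not exist in the form you describe.
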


\begin{proof} 

We will first prove that $[\mathrm{Ext}^n_{\Lambda_\mathcal O (G)} ( \mathrm{Sel}^{\mathrm{Gr}}_{A^\ast (1)} (K_\infty )^\vee  , \Lambda_\mathcal O (G) )] = 0 $  in 
$K_0 (\mathfrak{M}_H (G))$  for any $n\geq 3$. 

Let $d$ be the dimension of $G$ as a $p$-adic Lie group. 
Then the global dimension of $\La_\mathcal O(G)$ is equal to $d+1$. 
\par 

Set $H_U = H \cap U$ and $\widetilde{\Gamma}_U = G/H_U= \mathrm{Gal}(K_\infty^{H_U}/K)$. There is a Grothendieck spectral sequence 
\begin{small}{\begin{equation*}
E_2^{s,t} = \text{Ext}^s_{{\mathcal{O}}[[{\widetilde{\Gamma}_U}]]}\big( H_t(H_U, \mathrm{Sel}^{\mathrm{Gr}}_{A^\ast (1)} (K_\infty )^\vee), O
[[{\widetilde{\Gamma}_U}]]\big)
\Rightarrow E_\infty^{s,t}=
\text{Ext}^{s+t}_{\La_{\mathcal{O}}(G)}\big (\mathrm{Sel}^{\mathrm{Gr}}_{A^\ast (1)} (K_\infty )^\vee, {\mathcal{O}}[[{\widetilde{\Gamma}_U}]] \big). 
\end{equation*}}\end{small}
Also, for any $n$, we have 
\begin{equation}\label{prtyuvbw}
\text{Ext}^{n}_{\La_{\mathcal{O}}(G)}\big (\mathrm{Sel}^{\mathrm{Gr}}_{A^\ast (1)} (K_\infty )^\vee, {\mathcal{O}}[[{G}]] \big) \cong \underset{U}{\varprojlim}~ \mathrm{Ext}^{n}_{\La_{\mathcal{O}}(G)}\big (\mathrm{Sel}^{\mathrm{Gr}}_{A^\ast (1)} (K_\infty )^\vee, {\mathcal{O}}[[{\widetilde{\Gamma}_U}]] \big).
\end{equation}
Since $K_\infty^{H_U} = K_\infty^U K_\infty ^H = K_UK_\cyc$, we have $[K_\infty^{H_U}: K_\cyc] < \infty $ for any $U$. Thus we have $[\widetilde{\Gamma}_U:\Gamma_U
] < \infty$ where $\Gamma_U: = \text{Gal}(K_UK_\cyc/K_U)$. By Lemma  \ref{jannsen-lemma}, we have,  
$$
\mathrm{Ext}^n_{{\mathcal{O}}[[{\widetilde{\Gamma}_U}]]}\big( M, {\mathcal{O}}[[{\widetilde{\Gamma}_U}]]\big) \cong \mathrm{Ext}^n_{{\mathcal{O}}[[{\Gamma_U}]]}\big( M, {\mathcal{O}}[[\Gamma_U]]\big) 
$$ 
for any ${\mathcal{O}}[[\widetilde{\Gamma}_U]]$-module $M$. 
We have $E_2^{s,t} =0 $ for $s \geq 3$ and for any $t$
since $ {\mathcal{O}}[[\Gamma_U]]$ is a regular local ring of Krull dimension two. 
We have that $H_t (H_U, \mathrm{Sel}^{\mathrm{Gr}}_{A^\ast (1)} (K_\infty )^\vee)$ is a torsion ${\mathcal{O}}[[\Gamma]]$-module for any $t$ 
thanks to \cite[Lemma 3.1]{cfksv} and our assumption that $\mathrm{Sel}^{\mathrm{Gr}}_{A^\ast (1)} (K_\infty )^\vee$ is in $\M_H(G)$. 
Thus we obtain, $E_2^{0,t} =0 $ for any $t$. Putting these together, 
we have  $E_2^{s,t} =0 $ unless $s=1, 2$. 
\par 
Consequently, we obtain  $E_2^{1,t} =E_\infty^{1,t}$ and $E_2^{2,t} =E_\infty^{2,t}$ 
for any $t$.
This implies the following exact sequence for any $n$: 
\begin{equation}\label{breaking-ext-group}
0  \longrightarrow 
E_2^{1,n-1}
 \longrightarrow
\text{Ext}^{n}_{\La_{\mathcal{O}}(G)}\big (\mathrm{Sel}^{\mathrm{Gr}}_{A^\ast (1)} (K_\infty )^\vee, {\mathcal{O}}[[{\widetilde{\Gamma}_U}]] \big)
\longrightarrow E^{2,n-2}_2 \longrightarrow 0 
\end{equation}
Now  we assume the condition (a) of this proposition. Then 
 $\text{Ext}^{n}_{\La_{\mathcal{O}}(G)}\big (\mathrm{Sel}^{\mathrm{Gr}}_{A^\ast (1)} (K_\infty )^\vee, {\mathcal{O}}[[{\widetilde{\Gamma}_U}]] \big)$ is finite and uniformly bounded independent of $U$ for any $n \geq 3$ by our assumption (a)(ii) in this Proposition \ref{proposition:vanishing_higher_ext}. 
Now applying Lemma \ref{h-level-vanishing}(a), from equation \eqref{prtyuvbw}, we obtain $[\text{Ext}^{n}_{\La_{\mathcal{O}}(G)}\big (\mathrm{Sel}^{\mathrm{Gr}}_{A^\ast (1)} (K_\infty )^\vee, \La_{\mathcal{O}}(G) \big)] =0$ in $K_0(\M_H(G))$ for any $n\geq 3$. 
\par 

On the other hand, if we have the assumption (b), then using the hypothesis (b)(ii), we get 
 $\mathrm{Ext}^{n}_{\La_{\mathcal{O}}(G)}\big (\mathrm{Sel}^{\mathrm{Gr}}_{A^\ast (1)} (K_\infty )^\vee, \La_{\mathcal{O}}(G) \big)$ is a finitely generated $\Z_p$-module for $n \geq 2$. Thus we are done by assumption (Van) in hypothesis (b)(i).
 \par 
 
Thus it only remains to show $[\text{Ext}^{2}_{\La_{\mathcal{O}}(G)}\big (\mathrm{Sel}^{\mathrm{Gr}}_{A^\ast (1)} (K_\infty )^\vee, \La_{\mathcal{O}}(G) \big)] =0 $ in $K_0(\M_H(G))$ to complete  the proof of the Proposition \ref{proposition:vanishing_higher_ext}.
\par
By our assumption,  $H_1(H_U, \mathrm{Sel}^{\mathrm{Gr}}_{A^\ast (1)} (K_\infty )^\vee)$ is finite whether we are in case (a) or (b); and hence  $E_2^{1,1} =0$. 
On the other hand,
\begin{equation}\label{calculationa2}
E_2^{2,0}=\text{Ext}^2_{{\mathcal{O}}[[{\Gamma_U}]]}\big(  \mathrm{Sel}^{\mathrm{Gr}}_{A^\ast(1)} (K_\infty )^\vee_{H_U}, {\mathcal{O}}[[{\Gamma_U}]]\big) \cong  \big( \mathrm{Sel}^{\mathrm{Gr}}_{A^*(1)} (K_\infty )^\vee_{H_U}\big)
_{\mathrm{null}}, 
\end{equation}
 the maximal ${\mathcal{O}}[[{\Gamma_U}]]$ pseudo-null (i.e. finite) submodule of $ \mathrm{Sel}^{\mathrm{Gr}}_{A^*(1)} (K_\infty )^\vee_{H_U}$. 
\par 
By \eqref{prtyuvbw},  \eqref{breaking-ext-group} and \eqref{calculationa2}, 
we obtain  
 $$
 [\text{Ext}^{2}_{\La_{\mathcal{O}}(G)}\big (\mathrm{Sel}^{\mathrm{Gr}}_{A^\ast (1)} (K_\infty )^\vee, \La_{\mathcal{O}}(G) \big)] =[ \underset{U}{\varprojlim}~ 
 \big( \mathrm{Sel}^{\mathrm{Gr}}_{A^*(1)} (K_\infty )^\vee_{H_U}\big)
 _{\mathrm{null}}].
 $$
We fix an open subgroup $U \subset G$ as above for the moment. Then, we have a natural restriction map:  
\begin{equation}\label{cyclotomic-discrete-map}
 \mathrm{Sel}^{\mathrm{Gr}}_{A^*(1)} (K_UK_\cyc ) \lra  \mathrm{Sel}^{\mathrm{Gr}}_{A^*(1)} (K_\infty )^{H\cap U}.
\end{equation} 
Further, by applying the Pontryagin dual functor to \eqref{cyclotomic-discrete-map}, we get the map $\phi^\vee_{U,\cyc}$
\begin{equation}\label{cyc-control-map}
 \mathrm{Sel}^{\mathrm{Gr}}_{A^*(1)} (K_\infty )^\vee_{H_U} \stackrel{\phi^\vee_{U,\cyc}}{\lra}  \mathrm{Sel}^{\mathrm{Gr}}_{A^*(1)} (K_UK_\cyc )^\vee.
 \end{equation}
Next, using \eqref{cyc-control-map}, it suffices to show  
\begin{equation}\label{cyc-control-map-consequence}
[\underset{U}{\varprojlim}~ 
\mathrm{Ker}(\phi^\vee_{U,\cyc})_{\mathrm{null}}]  =0 
\text{ and } [\underset{U}{\varprojlim}~ 
 \mathrm{Sel}^{\mathrm{Gr}}_{A^*(1)} (K_UK_\cyc)^\vee_{\mathrm{null}}] =0 
\text{ in } K_0(\M_H(G)).
\end{equation}
Taking direct limit  of the natural restriction map in the diagram \eqref{equation:control} over all open normal subgroups $W$ of $G$ with  $H \cap U \subset W \subset U$  and then using a snake lemma on that diagram,  we obtain an exact sequence 
\begin{equation}\label{justify-finite-submodule-part}
H^2(H\cap U, A^{G_{K_\infty}})^\vee \lra \mathrm{Ker}(\phi^{\vee} _{U,\cyc}) \lra \underset{W}{\varprojlim}~{\mathrm{res}''_W}^\vee, 
\end{equation}
where $\mathrm{res}''_W:= \mathrm{res}''_{\textbf{1},W}$ is the local restriction map defined in diagram \eqref{equation:control} with trivial $\rho$. 
\par 

We now assume the hypothesis (a) in this proposition. In addition, we also assume either $(\mathrm{A}_p)$  or  $\mathrm{(Van}_p)$ holds.
\par
 First we consider $ \mathrm{Sel}^{\mathrm{Gr}}_{A^*(1)} (K_UK_\cyc)^\vee _{\mathrm{null}}$. By a result of \cite{hm}, 
$$
 \mathrm{Sel}^{\mathrm{Gr}}_{A^*(1)}(K_UK_\cyc)^\vee_{\mathrm{null}} \cong \underset{n}{\varprojlim}~ \text{Ker}\big( \mathrm{Sel}^{\mathrm{Gr}}_{A^*(1)} ((K_UK_\cyc)^{\Gamma^{p^n}_U})\lra  \mathrm{Sel}^{\mathrm{Gr}}_{A^*(1)} (K_UK_\cyc )^{\Gamma^{p^n}_U}\big). 
$$ 
Then by the proof of cyclotomic control theorem (see for example \cite{oc}), 
the cardinality of the finite group $ \mathrm{Sel}^{\mathrm{Gr}}_{A^*(1)}(K_UK_\cyc)^\vee _{\mathrm{null}} $ is uniformly bounded independently of $U$ by the cardinality of the module $\big(A^*(1)\big)^{G_{K_\infty}}$, which is finite by the hypothesis (A) in condition (a)(i) of this proposition. Also, thanks to the hypothesis (A), the cardinality of $H^2(H\cap U, A^{G_{K_\infty}})^\vee$ is finite and uniformly bounded independently of $U$ following the proof of  Theorem 1. 
\par
Using this in equation \eqref{cyc-control-map-consequence}, 
it remains to establish $\underset{U}{\varprojlim}\Big(\underset{W}{\varprojlim}~{\mathrm{res}''_W}^\vee\Big)_\mathrm{null}$ is uniformly bounded independently of $U$ to complete the proof of the proposition in this case.  Recall the notation $\Big(\underset{\tilde{u}\in P_{H \cap U }}  \bigoplus{E^{A^\ast(1)}_{H \cap U ,\tilde{u}}}^\vee\Big)_\mathrm{null}$ from Lemma \ref{ext2-van}. Now we further assume condition (A$_p$) holds. Notice that $\underset{U}{\varprojlim}\Big(\underset{W}{\varprojlim}~{\mathrm{res}''_W}^\vee\Big)_\mathrm{null}$ differs from $\underset{U}{\varprojlim}\Big(\underset{\tilde{u}\in P_{H \cap U }}  \bigoplus{E^{A^\ast(1)}_{H \cap U ,\tilde{u}}}^\vee\Big)_\mathrm{null}$ by a finite module whose order is uniformly bounded independent of $U$.   Then applying Lemma \ref{ext2-van},  it follows that $\Big[\underset{U}{\varprojlim}\Big(\underset{\tilde{u}\in P_{H \cap U }}  \bigoplus{E^{A^\ast(1)}_{H \cap U ,\tilde{u}}}^\vee\Big)_\mathrm{null}\Big]=0 $ in $K_0(\M_H(G))$.

\par 
 The proof for the case with the assumption (b) works  similarly with suitable modifications as follows:
 
 \par  In this case,  we will get $\underset{U}{\varprojlim}~ 
 \mathrm{Sel}^{\mathrm{Gr}}_{A^*(1)} (K_UK_\cyc)^\vee_{\mathrm{null}}$ and $\underset{U}{\varprojlim}~ H^2(H\cap U, A^{G_{K_\infty}})^\vee$ are finitely generated $\Z_p$-modules. Thus their class will vanish in $K_0$ group by the hypothesis (Van) in condition (b)(i). Finally, thanks to the additional hypothesis (Van$_p$), in this case by applying Lemma \ref{ext2-van}, we again deduce $\Big[\underset{U}{\varprojlim}\Big(\underset{W}{\varprojlim}~{E^{A^{\ast}(1)}_W}^\vee\Big)_\mathrm{null}\Big]  =\Big[\underset{U}{\varprojlim}\Big(\underset{\tilde{u}\in P_{H \cap U }}  \bigoplus{E^{A^\ast(1)}_{H \cap U ,\tilde{u}}}^\vee\Big)_\mathrm{null}\Big]=0 $ in $K_0(\M_H(G))$.
 
\par This shows that $[\text{Ext}^{2}_{\La_{\mathcal{O}}(G)}\big (\mathrm{Sel}^{\mathrm{Gr}}_{A^\ast (1)} (K_\infty )^\vee, \La_{\mathcal{O}}(G) \big)] =0 $ and completes the proof of the proposition. 
\end{proof}

\begin{example}\label{rmk33} 
We discuss the hypotheses $\mathrm{(Van)}$ for $K_0(\M_H(G))$ and $\mathrm{(Van}_p)$ for  $K_0(\M_{H_v}(G_v))$ of Definition \ref{definition:setting}. Recall that  $G = \mathrm{Gal}(K_\infty/K)$,  $H = \mathrm{Gal}(K_\infty/K_\cyc)$ and for a prime $v$ in $K$ above $p$, $G_v$ is the decomposition subgroup of $G$ at $p$ and $H_v: = H \cap G_v$.

\begin{enumerate}
\item For the false-Tate curve extension in Example \ref{example:2}, $[\Z_p] \neq 0$ in $K_0(\M_H(G))$ (\cite{zab}). 
 
\item Let $G= H \times \Gamma$ be a $p$-adic Lie group with $H$ as in Proposition \ref{h-level-vanishing}(b). Then we have $[\Z_p] = 0 $ in $K_0(\M_H(G))$. We give a brief explanation of this.  
By our choice of $H$ from Proposition \ref{h-level-vanishing}(b), we have $[\F_p ]=0$ in $K_0(\Omega (H))$.  Note that there is an isomorphism between $K_0(\La(H))$ and $K_0(\Omega(H))$ \cite[Lemma 4.1]{cfksv} via the natural map sending $[M]$ to $[M/pM]$. Thus we deduce that $[\Z_p] =0 $ in $ K_0(\La(H))$. As $H$ has no element of order $p$, $\La(H)$ has finite global dimension. Thus any finitely generated $\La(H)$-module defines a class in $K_0(\La(H))$. As $G = H \times \Gamma$, any finitely generated $\La(H)$-module belongs to the category $\M_H(G)$. Hence there is a natural map from $K_0(\La(H))$ to $K_0(\M_H(G))$ sending the class of a finitely generated $\La(H)$-module $M$ to $[M]$. Using the composite map from $K_0(\Omega(H)) \lra K_0(\M_H(G))$ sending $[\F_p]$ to $[\Z_p]$, the result follows. 
\item Let $G = \mathrm{GL}_2(\Z_p)$. Set $H = \widetilde{\mathrm{SL}_2(\Z_p)} :=\{ g \in  \mathrm{GL}_2(\Z_p) \mid \mathrm{det}(g)^{p-1} =1 \}$. Then we have $[\Z_p] =0 $ in $K_0(\M_H(G))$.
We follow the proof of  \cite[Proposition 4.2]{z2} and provide some details. Note that we have the homomorphism $\mathrm{GL}_2(\Z_p) \stackrel{\theta}{\lra} 1+p\Z_p$ where  $\theta(g ) = \mathrm{det}(g)^{p-1}$. Then $\mathrm{ker}(\theta)=\widetilde{\mathrm{SL}_2(\Z_p)}$. As $p$ is odd, 
any $\lambda \in 1+p\Z_p$ has a unique $2(p-1)$-th root $\sqrt[2(p-1)]{\lambda }$ exists in $1+p\Z_p$.  Now we have  a section of $\theta$ given by $ 1+p\Z_p \stackrel{\psi}{\lra} G$ where  $\psi(\lambda) = \begin{small}{\begin{pmatrix} \sqrt[p-1]{\lambda}  & 0 \\ 0  & \sqrt[p-1]{\lambda}  \end{pmatrix}}\end{small}$. Using this we also see that  $\theta $ is surjective and $G/H=\mathrm{GL}_2(\Z_p)/\widetilde{\mathrm{SL}_2(\Z_p)} \cong 1+p\Z_p$. Note that by definition of $\psi$, $\Gamma := \psi(1+p\Z_p)$ is a normal subgroup of $\mathrm{GL}_2(\Z_p)$ with $\Gamma \cap \widetilde{\mathrm{SL}_2(\Z_p)} = I_{2,2}$. Thus we have shown that $\mathrm{GL}_2(\Z_p)=\{ g \in  \mathrm{GL}_2(\Z_p) \mid \mathrm{det}(g)^{p-1} =1 \} \times \Gamma \cong \widetilde {\mathrm{SL}_2(\Z_p)} \times ~ 1+ p\Z_p.$
\par It is a general fact that for any $p$-adic Lie group $H$ without an element of order $p$, the  dimension of the centralizer (as a $p$-adic Lie group) of an element $x \in H$  is the same as the $\Q_p$-vector space dimension of $\{g\in \mathrm{Lie}(H) \mid \mathrm{Ad}(x)(g) =g \}$ \cite[Proof of Lemma 8.6]{aw}. (cf. {\it loc. cit.} for the definition of  $\mathrm{Lie}(H)$ and $\mathrm{Ad}(-)$). We use this fact for $\widetilde {\mathrm{SL}_2(\Z_p)} $  and keep in mind that $\mathrm{SL}_2(\Z_p)$ is a finite index subgroup of $\widetilde {\mathrm{SL}_2(\Z_p)} $. By Lemma \ref{fp-0-in-sl2zp} and Corollary \ref{fp-0-criterion}, we deduce $[\F_p] =0$ in $K_0(\Omega(\widetilde{\mathrm{SL}_2(\Z_p)}))$. Finally using  Example \ref{rmk33}(2), we deduce that $[\Z_p] =0 $ in $K_0\Big(\M_{\widetilde{\mathrm{SL}_2(\Z_p)}}\big(\mathrm{GL}_2(\Z_p)\big)\Big)$. 

\item
Let us consider the case in Example \ref{example:3}, Case \rm{(O)} with $E$ a non-CM elliptic curve isogenous to $B$ with good ordinary reduction at all primes above $p$ in $K$; for the extension $K_\infty  = K(E_{p^\infty})$ of a number field $K $ with $G = \mathrm{Gal}(K_\infty/K)$ and $H:= \mathrm{Gal}\big(K(E_{p^\infty})/K_\cyc\big)$, the assumption (Van) is satisfied in $K_0(\M_H(G))$. 

In this case, by Serre's open image theorem $G$ is of finite index in $\mathrm{GL}_2(\Z_p)$ and also $H $ has a finite index subgroup which is of finite index in $\widetilde{{\mathrm{SL}_2(\Z_p)}}.$ Thus for any module $M\in \M_{\widetilde{\mathrm{SL}_2(\Z_p)}}\big(\mathrm{GL}_2(\Z_p)\big)$, the map $K_0\Big(\M_{\widetilde{\mathrm{SL}_2(\Z_p)}}\big(\mathrm{GL}_2(\Z_p)\big)\Big) \lra K_0\big(\M_H(G)\big)$ sending $[M] \lra [M]$ is well-defined. Taking $M=\Z_p$, the assumption \rm{(Van)} is satisfied in $K_0(\M_H(G))$ from the discussion in Example \ref{rmk33}(3).

  \item Let us again consider the case in Example \ref{example:3}, Case (O) with $E$ a non-CM elliptic curve isogenous to $B$ with good ordinary reduction at all primes above $p$ in $K$. We have the extension $K_\infty  = K(E_{p^\infty})$ of a number field $K $ with $G = \mathrm{Gal}(K_\infty/K)$ and $H:= \mathrm{Gal}\big(K(E_{p^\infty})/K_\cyc\big)$. For any prime $v$ in $K$ above $p$ let $G_{v}$ be the decomposition subgroup of $G$ at $v$ and set $H_v : = H \cap G_{v}$. Then $[\Z_p] =0 $ in $K_0\big(\M_{H_{v}}(G_{v})\big)$ for every prime $v$ in $K$ above $p$. This can be explained by a suitable modification of the  Example \ref{rmk33}(3) as follows. 
  
  As $E$ has good ordinary reduction at $v$ by Serre's work (see for example \cite[Page 196]{ch}) $G_{v}$ is a $3$-dimensional $p$-adic Lie group which is a finite index subgroup of the Borel subgroup $B$ of all upper triangular matrices in $\mathrm{GL}_2(\Z_p)$. Thus restricting the map $\theta$ in Example \ref{rmk33}(3) to $B$ and noting that determinant of a matrix in $B$ is given by the cyclotomic character, we see $\theta{|_B} $ is  still surjective on $1+ p\Z_p$. Moreover, the section $\psi: 1+p\Z_p \lra B$ works as $B$ contains all diagonal matrices. As $G_{v}$ and $H_{v}$ are respectively finite index subgroups of $B$ and 
 $\begin{small}{\left\{ \begin{pmatrix} a  & b \\ 0  & d \end{pmatrix}  \in \mathrm{GL}_2(\Z_p) \ \middle| \ \mathrm{det} \begin{pmatrix} a  & b \\ 0  & d \end{pmatrix}^{p-1} =1 \right\}}\end{small}$, we deduce $[\Z_p] =0 $ in $K_0\big(\M_{H_{v}}(G_{v})\big)$ following the discussions in Example \ref{rmk33}(3) and Example \ref{rmk33}(4).
  
  In this case, we further deduce $[\La(G) \otimes_{\La(G_{v})} \Z_p] =0$ in $K_0(\M_H(G))$ for every prime $v$ in $K$ above $p$ as 
  $\La(G) \otimes_{\La(G_{v})} -$ is a well-defined map from $\M_{H_v}(G_v) \lra \M_H(G)$. Thus the statement  follows from the above discussion.
\end{enumerate}
\end{example}

\begin{rem}\label{motivation-for-non-vanish}
Proposition  \ref{proposition:vanishing_higher_ext} is  a generalization of  \cite[Proposition 3.6]{bz}. We explain our natural motivation for the assumption in this proposition  which is not mentioned in the existing literature. See also Remark \ref{false-tate-pseudo-null}.

Let $A= E_{p^\infty}$, where $E$ is an elliptic curve over $\Q$ with good ordinary reduction at $p$. The condition that $H_i (H\cap U ,\mathrm{Sel}^{\mathrm{Gr}}_{A^\ast (1)} (K_\infty )^\vee  )$ is finite for any $U \in \mathcal{U}$ reflects the fact that  conjectural $p$-adic $L$-function has no poles. 

More precisely, let $\mathcal{L}_p(V_pE)$ be the conjectural $p$-adic $L$-function of $E$ over $K_\infty$ (see Appendix \ref{appendix} and Section \ref{s6}). Then one would like to ensure the evaluation of the $p$-adic $L$-function at any Artin representation $\eta$ of $\mathrm{Gal}(K_\infty/\Q)$ has the property $\eta(\mathcal{L}_p(V_pE)) \neq \infty$. 

Assume that $\mathrm{Sel}^{\mathrm{Gr}}_{A^\ast (1)} (K_\infty )^\vee \in \M_H(G)$  and let $\xi_{\mathrm{Sel}^{\mathrm{Gr}}_{E_{p^\infty}} (K_\infty )^\vee} \in K_1(\La_{\mathcal{O}}(G)_{S^*})$ be the pre-image of $[\mathrm{Sel}^{\mathrm{Gr}}_{A^\ast (1)} (K_\infty )^\vee] \in K_0(\M_H(G))$ under the natural connecting map $K_1(\La_{\mathcal{O}}(G)_{S^*})\stackrel{\delta}{\lra}  K_0(\M_H(G)) \rightarrow 0.$ (see section \ref{s6}). Then $H_i (H\cap U ,\mathrm{Sel}^{\mathrm{Gr}}_{A^\ast (1)} (K_\infty )^\vee)$ is finite for any $U \in \mathcal{U}$ ensures $\eta(\xi_{\mathrm{Sel}^{\mathrm{Gr}}_{E_{p^\infty}} (K_\infty )^\vee}) \neq \infty$ for any Artin representation $\rho$ as above \cite[Theorem 3.8]{cfksv}.  By Iwasawa Main conjecture,  (see  Conjecture \ref{773}) $\delta(\mathcal{L}_p(V_pE))= [\mathrm{Sel}^{\mathrm{Gr}}_{A^\ast (1)} (K_\infty )^\vee)]$ in $K_0(\M_H(G))$. Thus our hypothesis will conjecturally ensure that  $\eta(\mathcal{L}_p(V_pE)) \neq \infty$, as claimed. 
\end{rem}
\begin{example}\label{example2-2}
We continue here with the set up and notation of Example \ref{example:2} related to the false-Tate curve extension. Let $A$ be as given in Case (1), (2) or (3) of Example \ref{example:firstone}.   If $\mathrm{Sel}^{\mathrm{Gr}}_{A}( \Q(\mu_{p^\infty}))^\vee $ is a 
$($finitely generated$)$ torsion $\La(\Gamma)$-module then $\mathrm{Sel}^{\mathrm{Gr}}_{A}(K_\infty)^\vee $  is a $($finitely generated$)$ torsion $\La(G)$-module $($cf.  \cite[second proof of Theorem 2.8]{hv}$)$.   The  proof of \cite[Lemma 2.5]{css} easily extends in all of our cases and we deduce $H_1(H\cap U,  \mathrm{Sel}^{\mathrm{Gr}}_{A}(K_\infty)^\vee ) =0 $. Also as $H$ has $p$-cohomological dimension $=1$,  $H_i(H\cap U, -)$ also vanish for $i \geq 2$. 

Thus whenever $\mathrm{Sel}^{\mathrm{Gr}}_{A}(K_\infty)^\vee \in \mathfrak M_H(G)$, then all the conditions of  Proposition  \ref{proposition:vanishing_higher_ext}(a)  are satisfied and we deduce $[\mathrm{Ext}^i_{\Lambda (G)} ( \mathrm{Sel}^{\mathrm{Gr}}_{A^\ast (1)} (K_\infty )^\vee  , \Lambda (G) )] = 0 $  in 
$K_0 (\mathfrak{M}_H (G))$  for every $i\geq 2$.

Moreover, if  we assume $\mathrm{Sel}^{\mathrm{Gr}}_{A}(\Q(\mu_{p^\infty}))^\vee$ is a finitely generated $\Z_p$-module, then $\mathrm{Sel}^{\mathrm{Gr}}_{A}(K_\infty)^\vee$ is finitely generated over $\La(H)$ and in particular  $ \in \mathfrak M_H(G)$ (cf. \cite[Theorem 3.1(i)]{hv}). 
\end{example}

\begin{rem}\label{false-tate-pseudo-null}
In the false-Tate situation of Example \ref{example2-2}, if  we assume $\mathrm{Sel}^{\mathrm{Gr}}_{A}(K_\infty)^\vee$ has no non-zero $\La(G)$ pseudo-null submodule, we have $[\mathrm{Ext}^i_{\Lambda (G)} ( \mathrm{Sel}^{\mathrm{Gr}}_{A^\ast (1)} (K_\infty )^\vee  , \Lambda (G) )] =0$ 
in $K_0(\M_H(G))$ even without invoking Proposition  \ref{proposition:vanishing_higher_ext}.  
\par 
To prove this, we set $E^i(-): =\mathrm{Ext}^i_{\Lambda (G)}(-, \Lambda (G))$. 
\begin{enumerate}
\item[\rm{(1)}] 
We have $E^i(-) =0 $ for every $i >3$ since the 
$p$-cohomological dimension of $G$ is $2$. 
\end{enumerate}
Also, the following results are known by \cite{ve}: 
\begin{enumerate}
\item[\rm{(2)}] 
We have $E^j(E^i(-)) =0$ when $j <i$ $($\cite[Proposition 3.5(iii)(a)]{ve}$)$. 
\item[\rm{(3)}] 
For a $\La(G)$-module $N$ with no non-zero pseudo-null submodule, we have 
 $E^i(E^i(N))=0$ for every $i \geq 2$ $($\cite[Proposition 3.5(i)(c)]{ve}$)$. 
\end{enumerate}
Combining these three facts above, for a $\La(G)$-module $N$ with no non-zero pseudo-null submodule, we have $E^i(E^3(N)) =0 $ for every $i \geq 0$ and $E^i(E^2(N)) =0$ for every $i \neq 3$. Hence we deduce $E^3(N) $ 
and  $E^2(N)$ are both finite  \cite[Proposition 2.6(5)]{v1}.  Consequently, we have $ [E^2(N)] =0$ and $ [E^3(N)]=0$  in $K_0(\mathfrak M_H(G))$ by Lemma \ref{h-level-vanishing}(a). 
\par
Thus, whenever the maximal pseudo-null submodule of the 
$\La(G)$-module 
$\mathrm{Sel}^{\mathrm{Gr}}_{A}(K_\infty)^\vee$ is trivial, 
we can apply the above argument to $N =  \mathrm{Sel}^{\mathrm{Gr}}_{A}(K_\infty)^\vee$ to obtain 
$$
[\mathrm{Ext}^i_{\Lambda (G)} ( \mathrm{Sel}^{\mathrm{Gr}}_{A} (K_\infty )^\vee  , \Lambda (G))] = 0
\text{ in $K_0(\mathfrak M_H(G))$ for every $i \geq 2$.}
$$  
We remark that \cite[Theorem 2.6(ii), Theorem 3.1(ii)]{hv} discussess 
a variety of  assumptions under which 
 maximal pseudo-null submodule of the $\La(G)$-module $\mathrm{Sel}^{\mathrm{Gr}}_{A}(K_\infty)^\vee$ is trivial.  
\par 
However, it seems that this argument based on the non-existence of pseudo-null submodules will work only in such  special $p$-adic Lie extensions like false-Tate curve extension and 
it can not be generalized to higher Ext groups of the Selmer group associated to a general $p$-adic Lie group $G$ with a large $p$-cohomological dimension. 
In a general situation, we need Proposition  \ref{proposition:vanishing_higher_ext}. 
\end{rem}

\begin{example}

Next we discuss the vanishing of higher ext groups for the Selmer groups considered in Example \ref{example:3}. We carry the same setting from Example \ref{example:3}. Let us fix $p \geq 5$ and assume we are in the Case {\rm (O)} and take $E = B$, a non-CM elliptic curve over $\Q$ with good ordinary reduction at $p$. Here  $A= E_{p^\infty}$.   When  $\mathrm{Sel}^{\mathrm{Gr}}_{A}(K_\cyc)^\vee$ is torsion over $\La(G)$,  it follows that $H_i(H\cap U,  \mathrm{Sel}^{\mathrm{Gr}}_{A}(K_\infty)^\vee ) =0 $ for every $i \geq 1$ \cite[Lemma 2.5 and Remark 2.6]{css}.  Then all the conditions of  Proposition  \ref{proposition:vanishing_higher_ext}(b)  are satisfied and we deduce $[\mathrm{Ext}^i_{\Lambda (G)} ( \mathrm{Sel}^{\mathrm{Gr}}_{A^\ast (1)} (K_\infty )^\vee  , \Lambda (G) )] = 0 $  in 
$K_0 (\mathfrak{M}_H (G))$  for every $i\geq 2$. 
\par
In this setting, it is conjectured in  \cite[Conjecture 5.1]{cfksv} that  $\mathrm{Sel}^{\mathrm{Gr}}_{A}(K_\infty)^\vee  \in \mathfrak M_H(G)$  and in particular it is a finitely generated torsion $\La(G)$-module.
\par
Whenever \  $\mathrm{Sel}^{\mathrm{Gr}}_{A}(K_\cyc)^\vee$ is a finitely generated $\Z_p$-module, then it follows that \  $\mathrm{Sel}^{\mathrm{Gr}}_{A}(K_\infty)^\vee$ is finitely generated over $\La(H)$ and in particular  $ \in \mathfrak M_H(G)$ \cite[Proposition 5.6]{cfksv}. 

Note that, even in Case {\rm (S)}, the proof of \cite[Proposition 5.6]{cfksv} extends and we can deduce $\mathrm{Sel}^{\mathrm{Gr}}_{A}(K_\infty)^\vee  \in \mathfrak M_H(G)$ if $\mathrm{Sel}^{\mathrm{Gr}}_{A}(K_\cyc)^\vee  $ is finitely generated over $\Z_p$.
\end{example}
\begin{example}
Let us keep the same setting as Example \ref{example:4} where $G$ is isomorphic to $\mathbb{Z}^{d+1}_p$ and 
$A$ is isomorphic to $E_{p^\infty}$ for $E$ an elliptic curve with good ordinary reduction at all primes of $K$ above $p$. 
\par 
For any $M \in \mathfrak M_H(G)$, $\mathrm{Ext}^i_{\La(G)}(M, \La(G))$ is a pseudonull $\La(G)$-module for every $i \geq 2$ $($see \cite[\S 3]{ve}$)$. 
Then, since $\La (G)$ is commutative and $\mathrm{Ext}^i_{\La(G)}(M, \La(G))$ is a pseudonull $\La (G)$-module, we have $[\mathrm{Ext}^i_{\La(G)}(M, \La(G))] =0$ in $K_0(\mathfrak M_H(G))$ for every $i \geq 2$. Recall that we have $A\cong A^\ast(1)$ and hence 
$\mathrm{Sel}^{\mathrm{Gr}}_{A} (K_\infty )^\vee \cong \mathrm{Sel}^{\mathrm{Gr}}_{A^\ast (1)} (K_\infty )^\vee$. 
Thus, if we know $\mathrm{Sel}^{\mathrm{Gr}}_{A} (K_\infty )^\vee$   $\in \mathfrak M_H(G)$, then we deduce $[\mathrm{Ext}^i_{\Lambda (G)} ( \mathrm{Sel}^{\mathrm{Gr}}_{A} (K_\infty )^\vee  , \Lambda (G) )] = 0 $  in 
$K_0 (\mathfrak{M}_H (G))$  for every $i\geq 2$.   
\par 
By the argument of the proof  of Lemma \ref{ext2-van}, we can show that the kernel of the map  
$$\big(\mathrm{Sel}^{\mathrm{Gr}}_{A} (K_\infty )^\vee \big)_H  \lra  \mathrm{Sel}^{\mathrm{Gr}}_{A} (K_\cyc )^\vee$$ is a finite generated $\Z_p$-module. Then, by Nakayama's lemma, we have $\mathrm{Sel}^{\mathrm{Gr}}_{A} (K_\infty )^\vee \in \mathfrak M_H(G)$ 
if $\mathrm{Sel}^{\mathrm{Gr}}_{A} (K_\cyc )^\vee$ is a finitely generated $\Z_p$-module.
\end{example}

\section{Proof of the algebraic functional equation (Theorem \ref{function-equation-thm})}\label{s3}

In this section, we will prove the functional equation of the Selmer group. We will use the control theorem and the vanishing of the higher extension groups of the Selmer group as discussed respectively in Section \ref{s2} and Section \ref{sext}.  We first discuss a few results which will go into the proof of Theorem \ref{function-equation-thm}.

\par Recall that $\mathfrak M_H(G)$ is the category of finitely generated $\La_{\mathcal{O}}(G)$-module such that $M/M(p)$ is finitely generated over $\La_{\mathcal{O}}(H)$. Since $\mathrm{Sel}^{\mathrm{BK}}_{A_\rho} (K_U)$ is a subgroup of $\mathrm{Sel}^{\mathrm{Gr}}_{A_\rho} (K_U)$, we define a module $C^{A_\rho}_U$
 as follows: 
\begin{small}{\begin{equation}\label{qwer23397684}
C^{A_\rho}_U =\mathrm{Coker}
\left[ 
\mathrm{Sel}^{\mathrm{BK}}_{A_\rho} (K_U ) \lra \mathrm{Sel}^{\mathrm{Gr}}_{A_\rho} (K_U )
\right].
 \end{equation}}\end{small}
\begin{lemma}\label{lemma:triviality_of_C}
Assume either the condition $(\mathrm{A}_p)$ of Definition \ref{set2b}
or the condition {\rm (Van$_p$)} of Definition \ref{definition:setting}. 
Then, we have 
$
\Big[ 
\big(\underset{U}{\varinjlim}C^{A_\rho}_U\big)^\vee 
\Big] 
=0 $ and 
$\Big[ 
\underset{U}{\varprojlim}C^{A_\rho}_U
\Big] 
=0 $ in $K_0(\M_H(G))$.
\end{lemma}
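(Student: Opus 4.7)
My plan is to realize $C^{A_\rho}_U$ as controlled entirely by local data at primes of $K_U$ above $p$, and then to replay the inductive-module vanishing argument from Proposition \ref{cor-error-term-over-p}. First I would compare the two Selmer groups: both sit inside $H^1(K_\Sigma/K_U, A_\rho)$ as kernels of certain localization maps, with local conditions that agree up to finite uniformly bounded pieces at primes $u \nmid p$ (in both cases the local condition is essentially the unramified condition). A snake-lemma argument applied to the two defining diagrams will produce, up to a uniformly bounded finite error at primes away from $p$, an injection
$$
C^{A_\rho}_U \hookrightarrow \bigoplus_{u \in \Sigma_U,\ u \mid p} \frac{H^1_{\mathrm{Gr}}(K_{U,u}, A_\rho)}{H^1_f(K_{U,u}, A_\rho)}.
$$

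The main local computation is to bound the quotient $H^1_{\mathrm{Gr}}(K_{U,u}, A_\rho)/H^1_f(K_{U,u}, A_\rho)$ at primes $u \mid p$. Under $(\mathrm{A}_p)$ of Definition \ref{set2b} we are given $H^1_f(K_v, A) = H^1_g(K_v, A)$, and the standard comparison of Bloch-Kato subgroups with Greenberg's local condition from \cite[\S 3]{bk} then expresses this quotient as a subquotient of $G_{K_{U,u}}$-invariants of $A_\rho/F^+_v A_\rho$ and of $F^+_v A_\rho^\ast(1)$; both are finite by $(\mathrm{A}_p)$, with cardinality bounded uniformly in $U$ and $u$ by exactly the same argument used in the proof of Theorem \ref{propisition:control_theorem}. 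Under $(\mathrm{Van}_p)$ of Definition \ref{definition:setting} the local quotient need not be finite, but the same description combined with local duality shows that it is at worst a finitely generated $\Z_p$-module.

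Next I would take the inverse limit and group primes of $\Sigma_U$ lying over a fixed $v \mid p$ (they form a $G/G_v$-orbit), so that Shapiro's lemma for induced modules produces
$$
\varprojlim_U C^{A_\rho}_U \hookrightarrow \bigoplus_{v \mid p} \La_{\mathcal{O}}(G) \otimes_{\La_{\mathcal{O}}(G_v)} M^{A_\rho}_v,
$$
where each $M^{A_\rho}_v$ is of finite cardinality under $(\mathrm{A}_p)$ and finitely generated over $\Z_p$ under $(\mathrm{Van}_p)$. When $v$ is finitely decomposed in $K_\infty/K$, Proposition \ref{h-level-vanishing}(a) (applied to the finite module $M^{A_\rho}_v$) gives directly that $[\La_{\mathcal{O}}(G) \otimes_{\La_{\mathcal{O}}(G_v)} M^{A_\rho}_v] = 0$ in $K_0(\M_H(G))$. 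When $v$ is infinitely decomposed, one first shows $[M^{A_\rho}_v] = 0$ in $K_0(\M_{H_v}(G_v))$ (via Proposition \ref{h-level-vanishing}(a) in the finite case and $(\mathrm{Van}_p)$ together with Proposition \ref{h-level-vanishing}(a) in the $\Z_p$-finite case), and then transfers along the well-defined functor $\La(G) \otimes_{\La(G_v)} - \colon \M_{H_v}(G_v) \to \M_H(G)$, exactly as in the proof of Proposition \ref{cor-error-term-over-p}. The analogous Pontryagin-dual statement for the direct system $\varinjlim_U C^{A_\rho}_U$ follows from the same local data, giving the second vanishing.

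The main obstacle I anticipate is the first step: pinning down the precise relationship between $H^1_{\mathrm{Gr}}$ and $H^1_f$ at primes above $p$ carefully enough that the uniform bounds from $(\mathrm{A}_p)$ (respectively the finite $\Z_p$-rank from $(\mathrm{Van}_p)$) pass to the quotient without loss. The use of the hypothesis $H^1_f = H^1_g$ built into $(\mathrm{A}_p)$ should be crucial here, since the natural comparison is between $H^1_{\mathrm{Gr}}$ and $H^1_g$. Once the local structure at $p$ is controlled, the global $K_0$-theoretic vanishing is a formal consequence of the induced-module mechanism already established in Proposition \ref{cor-error-term-over-p}.
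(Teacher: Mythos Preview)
Your proposal is correct and follows essentially the same route as the paper. Both arguments reduce the comparison $\mathrm{Sel}^{\mathrm{BK}}$ versus $\mathrm{Sel}^{\mathrm{Gr}}$ to local discrepancies, bound those at primes above $p$ via $(\mathrm{A}_p)$ (or get finite $\mathbb{Z}_p$-generation under $(\mathrm{Van}_p)$), package the limit as a finite sum of induced modules $\Lambda_{\mathcal{O}}(G)\otimes_{\Lambda_{\mathcal{O}}(G_v)}M_v$, and conclude by Proposition~\ref{h-level-vanishing}(a) exactly as in Proposition~\ref{cor-error-term-over-p}. Two minor points where the paper is more explicit: it invokes \cite[Theorem~3]{flach} and \cite[Proposition~4.2]{oc00} to write $C^{A_\rho}_U$ as a \emph{subquotient} (not an injection target) of an explicit local sum, which at $u\mid p$ has \emph{three} pieces $((A^\ast_\rho(1))^{D_u})^\vee \oplus (A_\rho/F^+_u A_\rho)^{D_u}\oplus (F^+_u A^\ast_\rho(1))^{D_u}$ and at $u\nmid p$ contributes the finite group $((A^\ast_\rho(1))^{I_u})_{D_u}/(\cdot)_{\mathrm{div}}$; and it treats $\varinjlim_U$ and $\varprojlim_U$ separately, because the first of the three $p$-local pieces behaves asymmetrically in the two limits (its injective limit vanishes under $(\mathrm{A}_p)$, while it survives in the other direction). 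Your final sentence glosses over this asymmetry, but the repair is routine once you have the explicit local description.
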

\begin{proof} 
The comparison between the Selmer group of Bloch-Kato and the Selmer group of Greenberg 
is known by \cite[Theorem 3]{flach} and \cite[Proposition 4.2]{oc00}. Recall the sequence 
$$
0 \lra \mathrm{Sel}^{\mathrm{BK}}_{A_\rho} (K_U ) \lra \mathrm{Sel}^{\mathrm{Gr}}_{A_\rho} (K_U ) \lra C^{A_\rho}_U \lra 0.
$$
By the proof of \cite[Proposition 4.2]{oc00}, $C^{A_\rho}_U$ is a finite group which is subquotient of 
\begin{small}{\begin{multline}\label{equation:comparisonerror1}
\displaystyle{
\bigoplus_{u\vert p} 
\Bigl( (
( A^\ast_\rho  (1))^{D_{u}})^\vee \oplus 
(A_\rho / F^+_u A_\rho )^{D_u}
\oplus 
( F^+_u A^\ast_\rho (1))^{D_{u}}
\Bigr) }
\\ 
\displaystyle{
\oplus 
\bigoplus_{u\nmid p} \left( \left.  (
( A^\ast_\rho  (1))^{I_{u}})_{D_u }
\right/ 
((
( A^\ast_\rho  (1))^{I_{u}})_{D_u })_{\mathrm{div}} 
\right) }
\end{multline}}\end{small}
where $u$ runs through the set of primes of $K_U$ over $p$ 
on the first line and $u$ runs through the set of primes of $K_U$ away from $p$ on the second line. 
\eqref{equation:comparisonerror1} is reformulated as follows 
\begin{small}{\begin{multline}\label{equation:comparisonerror2}
\displaystyle{
\bigoplus_{v\vert p} 
\mathrm{Ind}^{{G_{v,U}}}_{G/U}
\Bigl\{ 
\left( 
\left( 
( A^\ast_\rho  (1))^{D_{u(v)}}
\right)^\vee 
\right) 
\oplus 
\left( 
(A_\rho / F^+_{u(v)} A_\rho )^{D_{u(v)}}
\right) 
\oplus 
\left( 
( F^+_{u(v)} A^\ast_\rho (1))^{D_{u(v)}}
\right) 
\Bigr\} }
\\ 
\displaystyle{
\oplus 
\bigoplus_{v\nmid p} 
\left. 
\mathrm{Ind}^{{G_{v,U}}}_{G/U} \left\{ 
(
( A^\ast_\rho  (1))^{I_{u(v)}})_{D_{u(v)} }
\right/ 
((
( A^\ast_\rho  (1))^{I_{u(v)}})_{D_{u(v)} })_{\mathrm{div}} 
\right\} 
}
\end{multline}}\end{small}
where $v$ runs through the set of primes of $K$ over $p$ 
on the first line and $v$ runs through the set of primes of $K$ away from $p$ on the second line. For each $v$, we choose a prime $u(v)$ of 
$K_U$ which is over $v$ and we denote by $G_{v,U}$ 
the image of the decomposition group $G_v \subset G$ into $G/U$. 
The term \eqref{equation:comparisonerror2} is independent of the choices 
of $u(v)$'s. 
Hence, $(C_\infty^{A_\rho})^\vee :=(\underset{U}{\varinjlim} C^{A_\rho}_U)^\vee$ is a subquotient of the module the Pontryagin dual of the injective limit of \eqref{equation:comparisonerror2} with respect to 
open normal subgroups $U$ of $G$. 
Note that the injective limit of 
$\left( 
( A^\ast_\rho  (1))^{D_{u(v)}} 
\right)^\vee $ is the Pontryagin dual of the inverse limit 
of $
( A^\ast_\rho  (1))^{D_{u(v)}}$, which is trivial since the order of the group 
$( A^\ast_\rho  (1))^{D_{u(v)}}$ is finite and uniformly bounded 
by the assumption $(\mathrm{A}_p)$. 
\par 
Hence the module the Pontryagin dual of the injective limit of \eqref{equation:comparisonerror2} with respect to 
open normal subgroups $U$ of $G$ is as follows 
\begin{small}{\begin{multline}\label{equation:injectivelimit_first_assertion}
\displaystyle{
\Big( 
\bigoplus_{v\vert p} 
\mathrm{Ind}^{G_v}_G 
 \Bigl\{ 
\left( 
(A_\rho / F^+_{w(v)} A_\rho )^{D_{w(v)}}
\right) 
\oplus 
\left( 
( F^+_{w(v)} A^\ast_\rho (1))^{D_{w(v)}}
\right) 
\Bigr\} \Big) ^\vee }
\\ 
\displaystyle{
\oplus 
\bigoplus_{v\nmid p} \left( 
 \mathrm{Ind}^{G_v}_G 
 \left\{ 
\left.  (
( A^\ast_\rho  (1))^{I_{w(v)}})_{D_{w(v)} }
\right/ 
((
( A^\ast_\rho  (1))^{I_{w(v)}})_{D_{w(v)} })_{\mathrm{div}} 
\right\} 
\right)^\vee }
\end{multline}}\end{small}
where $v$ runs through the set of primes of $K$ over $p$ 
on the first factor and $v$ runs through the set of primes of $K$ away from $p$ on the second factor. Inside the sums, 
$w=w(v)$ is a prime of $K_\infty$ which is over $v$ for each $v$. 
Note the the groups does not depend on the choices of $w=w(v)$. 
\par 
By the condition $(\mathrm{A}_p)$, the group 
$
(A_\rho / F^+_{w(v)} A_\rho )^{D_{w(v)}}
\oplus 
( F^+_{w(v)} A^\ast_\rho (1))^{D_{w(v)}}$ 
is finite for each place $v \vert p$ of $K$. 
By applying Lemma \ref{h-level-vanishing}(a) to this $G_v$-module of finite 
cardinality, we obtain  
\begin{small}{$$
\left[ 
 \left( 
(A_\rho / F^+_{w(v)} A_\rho )^{D_{w(v)}}
\oplus 
( F^+_{w(v)} A^\ast_\rho (1))^{D_{w(v)}} \right)^\vee 
\right] =0 
$$}\end{small}
in $K_0(\M_{H_v}(G_v))$. Hence we have 
\begin{small}{\begin{equation*}
\Big[ \displaystyle{
\Big( 
\bigoplus_{v\vert p} 
\mathrm{Ind}^{G_v}_G 
 \Bigl\{ 
\left( 
(A_\rho / F^+_{w(v)} A_\rho )^{D_{w(v)}}
\right) 
\oplus 
\left( 
( F^+_{w(v)} A^\ast_\rho (1))^{D_{w(v)}}
\right) 
\Bigr\} \Big) ^\vee } 
\Big] =0 
\end{equation*}}\end{small}
in $K_0(\M_{H}(G))$. 
It is not difficult to see that the group 
$\left.  (
( A^\ast_\rho  (1))^{I_{w(v)}})_{D_{w(v)} }
\right/ 
((
( A^\ast_\rho  (1))^{I_{w(v)}})_{D_{w(v)} })_{\mathrm{div}} $ 
is finite for each place $v \nmid p$ of $K$ and the group is trivial 
if the action of $I_v$ on $A_\rho$ is trivial.  
Hence, by using Lemma \ref{h-level-vanishing}(a) to these $G_v$-modules of finite 
cardinality, we obtain 
\begin{small}{$$
\Big[ 
\Big( 
\bigoplus_{v\nmid p} \mathrm{Ind}^{G_v}_G  
\left\{ \left.  
(
( A^\ast_\rho  (1))^{I_{w(v)}})_{D_{w(v)} }
\right/ 
((
( A^\ast_\rho  (1))^{I_{w(v)}})_{D_{w(v)} })_{\mathrm{div}} 
\right\} \Big)^\vee  
\Big] =0 
$$}\end{small}
in $K_0(\M_{H}(G))$. This complete the proof for 
$
\Big[ 
\big(\underset{U}{\varinjlim}C^{A_\rho}_U\big)^\vee 
\Big] 
=0 $. 
\par 
As for $\underset{U}{\varprojlim} C^{A_\rho}_U$, 
it is isomorphic to 
$\Big( \underset{U}{\varinjlim} \big(C^{A_\rho}_U\big)^\vee 
\Big)^\vee$. 
Note that the module $(C^{A_\rho}_U)^\vee$ is a subquotient of 
\begin{small}{\begin{multline}\label{equation:comparisonerror_dual2}
\displaystyle{
\bigoplus_{v\vert p} 
\mathrm{Ind}^{{G_{v,U}}}_{G/U}
\Bigl\{ 
( A^\ast_\rho  (1))^{D_{u(v)}}
\oplus 
\left( 
(A_\rho / F^+_{u(v)} A_\rho )^{D_{u(v)}}
\right)^\vee  
\oplus 
\left( 
( F^+_{u(v)} A^\ast_\rho (1))^{D_{u(v)}}
\right)^\vee  
\Bigr\} }
\\ 
\displaystyle{
\oplus 
\bigoplus_{v\nmid p} 
\left. 
\mathrm{Ind}^{{G_{v,U}}}_{G/U} 
\left\{ 
\left( 
(
( A^\ast_\rho  (1))^{I_{u(v)}})_{D_{u(v)} }
\right/ 
((
( A^\ast_\rho  (1))^{I_{u(v)}})_{D_{u(v)} })_{\mathrm{div}} 
\right)^\vee 
\right\} 
}
\end{multline}}\end{small}
Under the assumption $(\mathrm{A}_p)$, a similar discussion as 
the discussion given before \eqref{equation:injectivelimit_first_assertion} 
implies that $\underset{U}{\varprojlim} C^{A_\rho}_U$ is a subquotient 
of 
\begin{small}{\begin{equation}\label{equation:injectivelimit_second_assertion}
\Big( 
\bigoplus_{v\vert p} 
\mathrm{Ind}^{G_v}_G 
 \left\{ 
( A^\ast_\rho  (1))^{D_{w(v)}}
\right\} 
\Big)^\vee,
\end{equation}}\end{small}
where $v$ runs through the set of primes of $K$ over $p$ and 
$w=w(v)$ is a prime of $K_\infty$ which is over $v$ for each $v$. 
Since $( A^\ast_\rho  (1))^{D_{w(v)}}$ is finite for each $v$ by 
the assumption $(\mathrm{A}_p)$, the same discussion as 
the case of $\Big[ 
\big(\underset{U}{\varinjlim}C^{A_\rho}_U\big)^\vee 
\Big] $ implies that 
\begin{small}{$$
\Big[
\Big( 
\bigoplus_{v\vert p} 
\mathrm{Ind}^{G_v}_G 
 \left\{ 
( A^\ast_\rho  (1))^{D_{w(v)}}
\right\} 
\Big)^\vee  
\Big] =0 
$$}\end{small}
in $K_0(\M_{H}(G))$. 
This completes the proof for the second statement $\Big[ 
\underset{U}{\varprojlim}C^{A_\rho}_U
\Big] 
=0 $ in $K_0(\M_H(G))$ under the condition $(\mathrm{A}_p)$. 
\par 
Finally we remark that, when $(\mathrm{A}_p)$ does not hold, the groups 
$( A^\ast_\rho  (1))^{D_{w(v)}}$, 
$(A_\rho / F^+_{w(v)} A_\rho )^{D_{w(v)}}$, 
$( F^+_{w(v)} A^\ast_\rho (1))^{D_{w(v)}}$ for primes 
of $K$ over $p$ and 
the groups 
\begin{small}{$\left(
( A^\ast_\rho  (1))^{I_{w(v)}})_{D_{w(v)} }
\right/ 
((
( A^\ast_\rho  (1))^{I_{w(v)}})_{D_{w(v)} })_{\mathrm{div}} $}\end{small}
for primes 
of $K$ away from $p$ are a successive extension of finite groups and 
a copies of $\mathbb{Q}_p/\mathbb{Z}_p$.    
If we assume {\rm (Van$_p$)}, then
$
\Big[ 
\Big( 
\bigoplus_{v\vert p} 
\mathrm{Ind}^{G_v}_G 
 \left\{ 
\mathbb{Q}_p /\mathbb{Z}_p 
\right\} 
\Big)^\vee  
\Big] =0 
$
in $K_0(\M_{H}(G))$. Hence, 
even in the case where $(\mathrm{A}_p)$ does not hold, 
the same argument applies as far as the assumption {\rm (Van$_p$)} 
holds. This completes the proof. 
\end{proof}
\par
Next, we recall  the following theorem from \cite{jo}: 
\begin{theorem}[Main Theorem of \cite{jo}]\label{mainin}
Let $G$ be a compact $p$-adic Lie group and let $H$ 
be its closed subgroup such that $G/H$ is isomorphic to $\Gamma$.  
Let $M$ be a $\La_{\mathcal{O}}(G)$-module which is finitely generated over $\La_{\mathcal{O}}(H)$. 

Then there exists a continuous character $\rho : G \lra \Z_p^\times$ such that $M(\rho)_U$ 
is finite for every open normal subgroup $U$ of $G$. 
\end{theorem}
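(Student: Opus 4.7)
The plan is to reduce the finiteness of $M(\rho)_U$ to an eigenvalue condition at each open normal $U$, and then produce $\rho$ by an uncountability argument on the space of continuous characters of $\Gamma = G/H$.

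First I would observe that for every open normal subgroup $U \subset G$, the intersection $H \cap U$ is open in $H$, so $H/(H \cap U)$ is a finite group. Consequently the $(H \cap U)$-coinvariants $M_{H \cap U}$ is a finitely generated $\mathcal{O}[H/(H \cap U)]$-module, in particular a finitely generated $\mathcal{O}$-module. Writing $\Gamma_U := U/(H \cap U)$, which naturally embeds as an open subgroup of $\Gamma \cong \mathbb{Z}_p$ of some index $p^{n_U}$, the $U$-coinvariants satisfy $M_U = (M_{H \cap U})_{\Gamma_U}$ and are themselves finitely generated over $\mathcal{O}$. Hence ``$M_U$ is finite'' is equivalent to the vanishing of its $\mathcal{K}$-rank, which translates to the assertion that a topological generator $\gamma_U := \gamma^{p^{n_U}}$ of $\Gamma_U$ acts on the finite-dimensional $\overline{\mathcal{K}}$-vector space $N_U := M_{H \cap U} \otimes_{\mathcal{O}} \overline{\mathcal{K}}$ without $1$ as an eigenvalue.

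Next I would analyze the effect of twisting. I choose $\rho$ to factor through the quotient $G \twoheadrightarrow \Gamma$, so that $\rho$ is specified by $c := \rho(\gamma) \in 1 + p\mathbb{Z}_p$ for $p$ odd (the case $p=2$ is similar, using $1 + 4\mathbb{Z}_2$). Then $\gamma_U$ acts on $N_U(\rho) := N_U \otimes \rho$ with eigenvalues $c^{p^{n_U}}\lambda$ as $\lambda$ ranges over the eigenvalues of $\gamma^{p^{n_U}}$ on $N_U$, so $M(\rho)_U$ is finite precisely when $c^{p^{n_U}} \neq \lambda^{-1}$ for every such $\lambda$. Because the $p$-adic logarithm identifies the $p^{n_U}$-th power map on $1 + p\mathbb{Z}_p$ with multiplication by $p^{n_U}$ on $\mathbb{Z}_p$, which is injective, each eigenvalue excludes at most one value of $c$, and hence the ``bad set'' $B_U \subset 1 + p\mathbb{Z}_p$ for a given $U$ is finite of cardinality at most $\dim_{\overline{\mathcal{K}}} N_U$. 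The proof then concludes with a counting step: the compact $p$-adic Lie group $G$ is second countable and therefore has only countably many open normal subgroups, so the total bad set $B := \bigcup_U B_U$ is a countable union of finite sets, hence countable; since $1 + p\mathbb{Z}_p \cong \mathbb{Z}_p$ is uncountable, any $c \in (1 + p\mathbb{Z}_p) \setminus B$ yields the desired $\rho$ on $\Gamma$, pulled back to $G$ via the quotient map.

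The hard part, and the reason one cannot simply read $\rho$ off from a single finite-dimensional representation, is that the dimensions $\dim_{\overline{\mathcal{K}}} N_U$ grow without bound as $U$ shrinks (roughly like $[H : H \cap U]$ times the number of $\Lambda_{\mathcal{O}}(H)$-generators of $M$), and the eigenvalues of $\gamma^{p^{n_U}}$ on $N_U$ for different $U$ are not related in any clean global manner. The countable-union argument above sidesteps this by fixing $U$ before counting eigenvalues: finite-dimensionality of each individual $N_U$, a direct consequence of the finite generation of $M$ over $\Lambda_{\mathcal{O}}(H)$, combined with the countability of the indexing set of open normal subgroups of $G$, is exactly what is needed, and no uniform control on the $N_U$ across $U$ is required.
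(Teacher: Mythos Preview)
The paper does not contain its own proof of this statement; it is simply recalled with the sentence ``Next, we recall the following theorem from [jo]'' and attributed to \cite{jo}. So there is nothing in the present paper to compare against line by line.

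That said, your argument is correct and is in fact essentially the strategy of \cite{jo}. The key steps --- (i) using finite generation of $M$ over $\Lambda_{\mathcal{O}}(H)$ to see that each $M_{H\cap U}\otimes_{\mathcal{O}}\mathcal{K}$ is a finite-dimensional $\Gamma_U$-representation, (ii) translating finiteness of $M(\rho)_U$ into the avoidance of finitely many eigenvalues of $\gamma_U$, and (iii) exploiting that a compact $p$-adic Lie group has only countably many open normal subgroups while $1+p\mathbb{Z}_p$ is uncountable --- are exactly the ingredients. Your verification that $U/(H\cap U)\hookrightarrow G/H$ is an isomorphism onto an open subgroup, and that twisting by a character factoring through $\Gamma$ leaves the $(H\cap U)$-coinvariants unchanged as $\mathcal{O}$-modules while multiplying the $\gamma_U$-action by $c^{p^{n_U}}$, are the right structural observations. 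The injectivity of $c\mapsto c^{p^{n}}$ on $1+p\mathbb{Z}_p$ (via the logarithm) is what guarantees each eigenvalue excludes at most one $c$, and the countability of the family $\{U\}$ follows, as you indicate, from the existence of a cofinal countable chain of open normal subgroups in any compact $p$-adic Lie group.
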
 
\par
Next, we recall the definition of $G$-Euler characteristic: 
\begin{defn}
Let $G$ be a compact $p$-adic Lie group without any element of order $p$. For a finitely generated $\Lambda_{\mathcal O}(G)$-module $M$, we say that the $G$-Euler characteristic of $M$ exists if the homology groups $H_i(G,M)$ are finite for every $i \geq 0$. When the $G$-Euler characteristic of $M$ exists, then set $  \chi(G,M) :=  \prod_i (\#H_i(G,M))^{(-1)^i}$
\end{defn}

Extending Theorem \ref{mainin} of \cite{jo}, the following theorem was established in \cite{js}:
\begin{theorem}\label{euler-gen}
Let  $G$ be a compact $p$-adic Lie group without any element of order $p$ and let $H$ 
be its closed subgroup such that $G/H$ is isomorphic to $\Gamma$. Let $M$ be a $\La_\mathcal O(G)$-module which is in $\mathfrak M_H(G)$ i.e. $M/{M(p)}$ is finitely generated over $\La_\mathcal O(G)$. Then there exists a continuous character $\rho: \Gamma \lra \Z_p^\times$, such that  $\chi(U, M (\rho))$ exists for every open normal subgroup $U$ of $G$.\end{theorem}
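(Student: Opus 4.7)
The plan is to reduce to the twisting lemma of \cite{jo} (Theorem~\ref{mainin}) by splitting $M$ along its $p$-primary torsion, and then to upgrade the finiteness of coinvariants afforded by that theorem to the finiteness of \emph{all} higher homology groups via a Hochschild--Serre argument together with an ``avoid a countable bad set'' choice of the twist.

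First, I would use the short exact sequence $0 \to M(p) \to M \to M/M(p) \to 0$ in $\mathfrak M_H(G)$. Since $M(p)$ is finitely generated over the Noetherian ring $\La_{\mathcal{O}}(G)$, it is annihilated by $p^n$ for some $n \geq 1$, while $N := M/M(p)$ is, by the definition of $\mathfrak M_H(G)$, finitely generated over $\La_{\mathcal{O}}(H)$. By multiplicativity of Euler characteristics in short exact sequences it suffices to treat the two pieces separately. For the $p$-torsion piece, for \emph{any} character $\rho$ the twisted module $M(p)(\rho)$ remains $p^n$-torsion and finitely generated over $\La_{\mathcal{O}}(G)$, and a standard computation using a finite free $\La_{\mathcal{O}}(U)$-resolution of $\mathcal{O}$ shows that each $H_i(U, M(p)(\rho))$ is a finitely generated $\mathcal{O}$-module killed by $p^n$, hence finite; so $\chi(U, M(p)(\rho))$ exists unconditionally. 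Thus the real content lies in the torsion-free-in-$p$ piece $N$.

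Next I would analyze $N(\rho)$ via the Hochschild--Serre spectral sequence
\[
E_2^{i,q} = H_i\bigl(\Gamma_U,\, H_q(U\cap H, N(\rho))\bigr) \Rightarrow H_{i+q}(U, N(\rho)),
\]
where $\Gamma_U := U/(U\cap H)$ is open in $\Gamma \cong \Z_p$, hence itself $\cong \Z_p$, with topological generator $\gamma_U = \bar\gamma^{n_U}$ for a fixed topological generator $\bar\gamma$ of $\Gamma$ and some $n_U \geq 1$. Because $\rho$ factors through $\Gamma$, it acts trivially on $U \cap H$, so $H_q(U\cap H, N(\rho)) = T_q(\rho)$ with the obvious twisted $\Gamma_U$-action, where $T_q := H_q(U\cap H, N)$ is a finitely generated $\mathcal{O}$-module vanishing for $q > \dim H$. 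Since $\Gamma_U \cong \Z_p$ has cohomological dimension one, the finiteness of each $H_i(U, N(\rho))$ (for $i \geq 0$) reduces to the simultaneous finiteness of $T_q(\rho)_{\Gamma_U}$ and $T_q(\rho)^{\Gamma_U}$ for every $q$, which in turn is equivalent to requiring that $\rho(\gamma_U)^{-1}$ is \emph{not} an eigenvalue of the action of $\gamma_U$ on the finite-dimensional $\mathcal{K}$-vector space $V_q := T_q \otimes_{\mathcal{O}} \mathcal{K}$.

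The main obstacle is to find a single character $\rho$ satisfying this avoidance condition for all open normal subgroups $U \subseteq G$ and all $q \geq 0$ simultaneously. Writing $\rho(\gamma_U) = \rho(\bar\gamma)^{n_U}$, the forbidden values of $\rho(\bar\gamma)$ for each pair $(U,q)$ form the finite set of $n_U$-th roots of the inverses of the (finitely many) eigenvalues of $\gamma_U$ on $V_q$. There are only countably many open normal subgroups of $G$ and for each $U$ only finitely many non-zero $T_q$, so the total forbidden set is a countable subset of $\overline{\mathcal{K}}^\times$. On the other hand, $1 + p\Z_p$ is uncountable and contains only countably many elements algebraic over $\Q$, so one may pick $\rho(\bar\gamma) \in 1 + p\Z_p$ outside this bad locus; the resulting character $\rho : \Gamma \to \Z_p^\times$ is the one required. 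The essential enhancement beyond Theorem~\ref{mainin} lies in the higher-$q$ control: the cited theorem handles $q=0$ alone, whereas the existence of a full Euler characteristic demands simultaneous avoidance on every $T_q$, and verifying that the enlarged forbidden set remains countable (hence avoidable within $1 + p\Z_p$) is the key step that extends the twisting lemma to the whole category $\mathfrak M_H(G)$.
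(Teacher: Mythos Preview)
The paper does not give its own proof of this statement; it is quoted directly from \cite{js} with the sentence ``Extending Theorem~\ref{mainin} of \cite{jo}, the following theorem was established in \cite{js}.'' So there is nothing in the present paper to compare against beyond the twisting lemma (Theorem~\ref{mainin}) that it does state.

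Your argument is correct and is precisely the natural extension of Theorem~\ref{mainin}: handle $M(p)$ trivially (finitely generated over $\La_{\mathcal O}(U)$ and $p^n$-torsion, hence all $H_i(U,-)$ finite), and for $N=M/M(p)$ apply Hochschild--Serre for $U\cap H\trianglelefteq U$, using that each $T_q=H_q(U\cap H,N)$ is a finitely generated $\mathcal O$-module (because $N$ is finitely generated over $\La_{\mathcal O}(H)$ and $U\cap H$ is open in $H$), so that finiteness of $H_i(U,N(\rho))$ reduces to the non-vanishing of $\det(\gamma_U\cdot\rho(\gamma_U)-1)$ on each $V_q^U$. The countable-avoidance step is exactly as in \cite{jo}, now run over all pairs $(U,q)$.

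One small correction: your remark that ``$1+p\Z_p$ contains only countably many elements algebraic over $\Q$'' is irrelevant and mildly misleading. The eigenvalues you must avoid live in $\overline{\mathcal K}=\overline{\Q_p}$ and need not be algebraic over $\Q$; all that matters is that the forbidden set is a \emph{countable} subset of $\overline{\Q_p}$ (countably many $U$, finitely many $q$, finitely many eigenvalues each, finitely many $n_U$-th roots), while $1+p\Z_p$ is uncountable. The pure cardinality argument already does the job.
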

By applying  Theorem \ref{euler-gen}, we deduce the following corollary.
\begin{corollary}\label{twist-summary}
We assume that $\mathrm{Sel}^{\mathrm{Gr}}_{A^*(1)} (K_\infty )^\vee$ is in $\mathfrak M_H(G)$. Then there exists a continuous character $\rho: \Gamma \lra \Z_p^\times$, such that   both $\Big(\mathrm{Sel}^{\mathrm{Gr}}_{A^*(1)} (K_\infty )^\vee (\rho)\Big)_U$ and $H_1\Big(U, \mathrm{Sel}^{\mathrm{Gr}}_{A^*(1)} (K_\infty )^\vee (\rho)\Big)$ are finite.
 \end{corollary}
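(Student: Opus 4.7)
The plan is to derive this corollary as an immediate consequence of Theorem \ref{euler-gen} applied to the module $M := \mathrm{Sel}^{\mathrm{Gr}}_{A^\ast(1)}(K_\infty)^\vee$. Before doing so, I would verify that the hypotheses of Theorem \ref{euler-gen} are satisfied in the present setting: by condition (K) of Definition \ref{definition:setting} the compact $p$-adic Lie group $G = \mathrm{Gal}(K_\infty/K)$ has no element of order $p$; by condition (G) the closed normal subgroup $H = \mathrm{Gal}(K_\infty/K_{\mathrm{cyc}})$ of $G$ satisfies $G/H \cong \Gamma$; and $M$ is assumed to belong to $\mathfrak M_H(G)$ in the very statement of the corollary. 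Hence Theorem \ref{euler-gen} applies to $M$.

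Invoking Theorem \ref{euler-gen} then produces a continuous character $\rho \colon \Gamma \longrightarrow \Z_p^\times$ with the property that, for every open normal subgroup $U$ of $G$, the $U$-Euler characteristic $\chi(U, M(\rho))$ is defined. By the definition of the Euler characteristic recalled in the paragraph immediately preceding Theorem \ref{euler-gen}, existence of $\chi(U, M(\rho))$ is equivalent to the finiteness of $H_i(U, M(\rho))$ for every $i \geq 0$. Extracting this finiteness in degrees $i = 0$ and $i = 1$ gives respectively that $(M(\rho))_U = H_0(U, M(\rho))$ and $H_1(U, M(\rho))$ are finite, which is precisely the content of the corollary.

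There is no genuine technical obstacle in the argument as presented, since the substantive input has been entirely absorbed into Theorem \ref{euler-gen}, which in turn rests on the twisting lemma of \cite{jo} together with its refinement in \cite{js}. The only point worth emphasising is that the character $\rho$ produced by Theorem \ref{euler-gen} works simultaneously for every open normal subgroup $U$ of $G$, which is exactly what is needed to match the uniform-in-$U$ formulation of the corollary rather than obtaining a $U$-dependent twist.
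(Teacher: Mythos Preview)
Your proof is correct and follows exactly the approach the paper takes: the paper simply states that the corollary is obtained ``by applying Theorem \ref{euler-gen}'' without further detail, and you have spelled out precisely this application, checking the hypotheses and extracting the $i=0$ and $i=1$ cases of the finiteness of $H_i(U,M(\rho))$.
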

Next we prove the following lemma:
\begin{lemma}\label{base-change-ext1}
Let $M$ be a finitely generated $\La_{\mathcal{O}}(G)$-module and assume that $H_1(U,M)$ is finite for an open normal subgroup $U$ of $G$. Then we have a $\mathcal{O}[ G/U]$-module isomorphism $\mathrm{Ext}^1_{\La_{\mathcal{O}}(G)} ( M,  \mathcal{O}[ G/U]) \cong \mathrm{Ext}^1_{\mathcal{O}[ G/U]} ( M_U,  \mathcal{O}[ G/U])$.
\end{lemma}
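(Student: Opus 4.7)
\medskip

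The plan is to derive the isomorphism from the change-of-rings spectral sequence associated to the surjection $\La_{\mathcal{O}}(G) \twoheadrightarrow \mathcal{O}[G/U]$. The key input is the adjunction isomorphism
$$
\mathrm{Hom}_{\La_{\mathcal{O}}(G)}(M, N) \;\cong\; \mathrm{Hom}_{\mathcal{O}[G/U]}(M \otimes_{\La_{\mathcal{O}}(G)} \mathcal{O}[G/U],\, N)
$$
valid for any $\mathcal{O}[G/U]$-module $N$ viewed as an $\La_{\mathcal{O}}(G)$-module by inflation. Since $M \otimes_{\La_{\mathcal{O}}(G)} \mathcal{O}[G/U] = M_U$, taking $N = \mathcal{O}[G/U]$ and deriving both sides yields the Grothendieck spectral sequence
$$
E_2^{p,q} = \mathrm{Ext}^p_{\mathcal{O}[G/U]}\bigl(\mathrm{Tor}_q^{\La_{\mathcal{O}}(G)}(M, \mathcal{O}[G/U]),\, \mathcal{O}[G/U]\bigr) \;\Longrightarrow\; \mathrm{Ext}^{p+q}_{\La_{\mathcal{O}}(G)}(M, \mathcal{O}[G/U]).
$$

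Next, I would identify the Tor terms. Using that $\mathcal{O}[G/U] \cong \La_{\mathcal{O}}(G) \otimes_{\La_{\mathcal{O}}(U)} \mathcal{O}$ and that $\La_{\mathcal{O}}(G)$ is free (hence flat) as a right $\La_{\mathcal{O}}(U)$-module, a standard change-of-rings computation gives
$$
\mathrm{Tor}_q^{\La_{\mathcal{O}}(G)}(M, \mathcal{O}[G/U]) \;\cong\; \mathrm{Tor}_q^{\La_{\mathcal{O}}(U)}(M, \mathcal{O}) \;=\; H_q(U, M).
$$
In particular $E_2^{0,1} = \mathrm{Hom}_{\mathcal{O}[G/U]}(H_1(U,M), \mathcal{O}[G/U])$ and $E_2^{p,0} = \mathrm{Ext}^p_{\mathcal{O}[G/U]}(M_U, \mathcal{O}[G/U])$.

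Finally, I would extract the five-term exact sequence in low degrees,
$$
0 \to \mathrm{Ext}^1_{\mathcal{O}[G/U]}(M_U, \mathcal{O}[G/U]) \to \mathrm{Ext}^1_{\La_{\mathcal{O}}(G)}(M, \mathcal{O}[G/U]) \to \mathrm{Hom}_{\mathcal{O}[G/U]}(H_1(U,M), \mathcal{O}[G/U]) \to \mathrm{Ext}^2_{\mathcal{O}[G/U]}(M_U, \mathcal{O}[G/U]),
$$
and observe that the third term vanishes under our hypothesis: since $H_1(U,M)$ is finite and $\mathcal{O}[G/U]$ is a free $\mathcal{O}$-module (hence $\mathcal{O}$-torsion-free), every $\mathcal{O}[G/U]$-linear map from $H_1(U,M)$ to $\mathcal{O}[G/U]$ is zero. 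The edge map is therefore an isomorphism, and its $\mathcal{O}[G/U]$-linearity (and hence $\mathcal{O}[G/U]$-module structure on the target) is built into the spectral sequence, giving the desired conclusion. The only delicate point is verifying the flatness of $\La_{\mathcal{O}}(G)$ over $\La_{\mathcal{O}}(U)$ used to identify Tor with group homology; this follows because $G$ is a disjoint union of finitely many $U$-cosets and hence $\La_{\mathcal{O}}(G)$ is free of finite rank over $\La_{\mathcal{O}}(U)$, so no further hypothesis is needed.
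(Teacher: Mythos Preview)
Your proof is correct. The paper takes a more hands-on route: it fixes a projective presentation $0 \to R \xrightarrow{\theta} P \to M \to 0$ over $\La_{\mathcal{O}}(G)$, notes that applying $U$-coinvariants yields $0 \to R_U/\mathrm{Image}(H_1(U,M)) \xrightarrow{\theta_U} P_U \to M_U \to 0$, and then computes both $\mathrm{Ext}^1$ groups directly as the cokernels of the induced maps on $\mathrm{Hom}$, using the adjunction $\mathrm{Hom}_{\La_{\mathcal{O}}(G)}(-,\mathcal{O}[G/U]) \cong \mathrm{Hom}_{\mathcal{O}[G/U]}((-)_U,\mathcal{O}[G/U])$. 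The same key vanishing you use, namely $\mathrm{Hom}_{\mathcal{O}[G/U]}(H_1(U,M),\mathcal{O}[G/U])=0$ since $H_1(U,M)$ is finite and $\mathcal{O}[G/U]$ is $\mathcal{O}$-torsion-free, then forces the two cokernels to agree.

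Your spectral-sequence argument is the conceptual packaging of exactly this computation: the paper's direct approach is the five-term exact sequence of your change-of-rings spectral sequence, unwound by hand in degree one via a chosen syzygy. Your approach has the advantage of generalizing to higher $\mathrm{Ext}$ groups with no extra work, while the paper's avoids invoking any spectral-sequence machinery.
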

\begin{proof} To simplify the notation through this proof, we write $\La:= \La_{\mathcal{O}}(G)$. By the definition of Ext functor, 
$M$ fits into an exact sequence $0 \lra R \stackrel{\theta}{\lra} P \lra M \lra 0$ where $P$ is a projective $\La_{\mathcal{O}}(G)$-module and $R$ is a $\La_{\mathcal{O}}(G)$-module. This gives rise to a second exact sequence 
$0 \lra \frac{R_U}{\text{Image}(H_1(U,M))} \stackrel{\theta_U}{\lra} P_U \lra M_U \lra 0.$ 

By our assumption, we have $\mathrm{Hom}_{O[G/U]}({\text{Image}(H_1(U,M))}, O[G/U]) =0$. Then we can compute the extension groups appearing in the statement of this lemma as follows: 
\small 
\begin{align*}
& 
\mathrm{Ext}^1_\La(M, O[G/U]) \cong \frac{\mathrm{Hom}_\La(R, O[G/U])}{\theta\big(\mathrm{Hom}_\La(P, O[G/U])\big)} \cong  \frac{\mathrm{Hom}_{O[G/U]}(R_U, O[G/U])}{\theta_U\big(\mathrm{Hom}_{O[G/U]}(P_U, O[G/U]))},
\\ 
& \mathrm{Ext}^1_{O[G/U]}(M_U, O[G/U]) \cong \frac{\mathrm{Hom}_{O[G/U]}(\frac{R_U}{\text{Image}(H_1(U,M))}, O[G/U])}{\theta_U\big(\mathrm{Hom}_{O[G/U]}(P_U, O[G/U])\big)} \cong  \frac{\mathrm{Hom}_{O[G/U]}(R_U, O[G/U])}{\theta\big(\mathrm{Hom}_{O[G/U]}(P_U, O[G/U]))}. 
\end{align*}
\normalsize
This completes the proof of the lemma.
 \end{proof}

Finally, we recall following  well known result  in homological algebra (see for example \cite[Proposition 6.1]{zab} for the proof): 
\begin{proposition}\label{ext-groups-in-k0}
Let $M$ be a module in $\M_H(G)$. Then $\mathrm{Ext}^i_{\La_{\mathcal{O}}(G)} (M ,  \La_{\mathcal{O}}(G) ) \in \M_H(G)$ for every $i \geq 1$ and we have the following equality in $\ K_0(\M_H(G))$.
\begin{small}{$$ [M] = \underset{1 \leq i \leq \mathrm{dim } G +1}{\sum} (-1)^{i+1} [\mathrm{Ext}^i_{\La_{\mathcal{O}}(G)} (M ,  \La_{\mathcal{O}}(G) )].$$}\end{small}
In particular, if we have $ [\mathrm{Ext}^i_{\La_{\mathcal{O}}(G)} (M ,  \La_{\mathcal{O}}(G) )]  = 0$ in $K_0(\M_H(G))$  for every $i \geq 2$, then we have 
\begin{small}{$$[M] = [\mathrm{Ext}^1_{\La_{\mathcal{O}}(G)} (M ,  \La_{\mathcal{O}}(G) )] \quad \text{in} \quad K_0(\M_H(G)).$$}\end{small}  
\end{proposition}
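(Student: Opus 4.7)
My plan is to exploit the finite global dimension of $\La_{\mathcal{O}}(G)$ together with a duality argument applied to a finite projective resolution of $M$. Since $G$ has no element of order $p$, the ring $\La_{\mathcal{O}}(G)$ has finite global dimension equal to $d+1$, where $d = \dim G$ as a $p$-adic Lie group. Consequently, every $M \in \mathfrak{M}_H(G)$ admits a projective resolution of finite length
$$0 \lra P_{d+1} \lra P_d \lra \cdots \lra P_0 \lra M \lra 0$$
with each $P_i$ a finitely generated projective (and, up to adding free summands, stably free) left $\La_{\mathcal{O}}(G)$-module. I would apply $\Hom_{\La_{\mathcal{O}}(G)}(-,\La_{\mathcal{O}}(G))$ to obtain a bounded cochain complex $Q^\bullet$ of finitely generated projective right $\La_{\mathcal{O}}(G)$-modules, whose cohomology in degree $i$ is precisely $\mathrm{Ext}^i_{\La_{\mathcal{O}}(G)}(M, \La_{\mathcal{O}}(G))$.

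The first step is to verify that $\mathrm{Ext}^i_{\La_{\mathcal{O}}(G)}(M, \La_{\mathcal{O}}(G)) \in \mathfrak{M}_H(G)$ for every $i \geq 1$. Because $M \in \mathfrak{M}_H(G)$, the module $M$ is torsion over $\La_{\mathcal{O}}(G)$ (its $p$-torsion part $M(p)$ is torsion tautologically, and the quotient $M/M(p)$ is finitely generated over $\La_{\mathcal{O}}(H)$, which has lower Krull dimension), and therefore $\Hom_{\La_{\mathcal{O}}(G)}(M, \La_{\mathcal{O}}(G))=0$. For $i \geq 1$, one uses the exact sequence $0 \to M(p) \to M \to M/M(p) \to 0$: for $M(p)$, the $\mathrm{Ext}^i$ groups remain $p$-primary; for $M/M(p)$, the Ext groups remain finitely generated over $\La_{\mathcal{O}}(H)$ by a base-change/Shapiro-type argument (cf. Lemma \ref{jannsen-lemma}). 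The long exact sequence of Ext then places each $\mathrm{Ext}^i(M, \La_{\mathcal{O}}(G))$ inside $\mathfrak{M}_H(G)$.

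For the identity in $K_0(\mathfrak{M}_H(G))$, I would argue via K-theoretic Euler characteristics in a suitable triangulated/localized setting. The standard fundamental sequence
$$K_1(\La_{\mathcal{O}}(G)) \lra K_1(\La_{\mathcal{O}}(G)_{S^*}) \stackrel{\delta}{\lra} K_0(\mathfrak{M}_H(G)) \lra 0$$
lets us lift $[M]$ to the class in $K_1(\La_{\mathcal{O}}(G)_{S^*})$ represented by the alternating product of the differentials in the resolution $P_\bullet$ after localization at $S^*$ (which is exact since $P_\bullet$ becomes acyclic — all cohomology is $S^*$-torsion). Dualizing, the localized complex $Q^\bullet \otimes \La_{\mathcal{O}}(G)_{S^*}$ is likewise acyclic, and its alternating product of differentials represents $\sum_i(-1)^{i+1}[\mathrm{Ext}^i(M, \La_{\mathcal{O}}(G))]$ under $\delta$. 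Because each $P_i$ is stably free, $[P_i]$ and $[\Hom(P_i, \La_{\mathcal{O}}(G))]$ correspond to the same element of $K_1(\La_{\mathcal{O}}(G))$ (ranks match), so the two lifts differ by an element coming from $K_1(\La_{\mathcal{O}}(G))$, which vanishes modulo $\mathrm{image}(K_1(\La_{\mathcal{O}}(G)))$. Chasing signs through the cochain vs.\ chain conventions yields the stated formula.

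The principal obstacle is the careful K-theoretic bookkeeping on projectives: neither $P_i$ nor $\Hom(P_i, \La_{\mathcal{O}}(G))$ lies in $\mathfrak{M}_H(G)$, so the identity $[M] = \sum(-1)^i[P_i]$ is meaningless in $K_0(\mathfrak{M}_H(G))$ on its own. The entire argument must therefore be executed inside $K_1(\La_{\mathcal{O}}(G)_{S^*})$ before pushing down via $\delta$, and one must keep track of the involution on $K_1$ induced by $\Hom(-, \La_{\mathcal{O}}(G))$ (and the identification of left/right $K_0(\mathfrak{M}_H(G))$ recalled in the Notation section) to pin down the precise sign $(-1)^{i+1}$. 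Once this localization-and-sign analysis is carried out, the formula drops out as the equality of two representatives in $K_1(\La_{\mathcal{O}}(G)_{S^*})$ modulo $K_1(\La_{\mathcal{O}}(G))$.
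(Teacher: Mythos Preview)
The paper does not supply its own proof of this proposition: it simply records it as a ``well known result in homological algebra'' and refers the reader to \cite[Proposition~6.1]{zab}. So there is no in-paper argument to compare against; your sketch is essentially the standard route that the cited reference follows (finite projective resolution, dualize, compute Euler characteristics in the relative $K$-group $K_0(\La_{\mathcal O}(G),\La_{\mathcal O}(G)_{S^*})\cong K_0(\mathfrak M_H(G))$).

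Two small comments. For the membership $\mathrm{Ext}^i_{\La_{\mathcal O}(G)}(M,\La_{\mathcal O}(G))\in\mathfrak M_H(G)$, the cleaner argument is simply that $\mathfrak M_H(G)$ coincides with the category of finitely generated $S^*$-torsion modules, and $S^*$-torsion is preserved under $\mathrm{Ext}^i(-,\La_{\mathcal O}(G))$ because localization at $S^*$ is flat; invoking Lemma~\ref{jannsen-lemma} is not quite to the point, since that lemma concerns open subgroups rather than the closed subgroup $H$. For the $K_0$-identity, your final paragraph correctly identifies where the real work lies: one must compute both Euler characteristics inside $K_0(\La_{\mathcal O}(G),\La_{\mathcal O}(G)_{S^*})$, using that $P_\bullet$ and $\Hom(P_\bullet,\La_{\mathcal O}(G))$ define the same class there (via the left/right identification recalled in the Notation section), and then read off the cohomology on each side. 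This is exactly what the reference does.
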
 
We are now ready to prove Theorem 2.

\begin{proof}[Proof of Theorem  \ref{function-equation-thm} (Functional Equation)] We claim the following equality  in $K_0(\M_H(G))$
\begin{equation}\label{duality}
 [\big( \mathrm{Sel}^{\mathrm{Gr}}_A (K_\infty )^\vee \big )^\iota ] + 
[(E_1^{A^\ast(1)})^\iota] = [\mathrm{Ext}^1_{\La_{\mathcal{O}}(G)} (  \mathrm{Sel}^{\mathrm{Gr}}_{A^\ast(1)} (K_\infty )^\vee  ,  \La_{\mathcal{O}}(G) )].
\end{equation}

First, we will complete the proof of  Theorem \ref{function-equation-thm} assuming \eqref{duality}. By applying  
Proposition \ref{ext-groups-in-k0} to $M=\mathrm{Sel}^{\mathrm{Gr}}_{A^\ast(1)} (K_\infty )^\vee$ and by using \eqref{duality}, we deduce the following equality in $K_0(\M_H(G)):$
\begin{multline}\label{general-fnal-eqn}
 [\big(\mathrm{Sel}^{\mathrm{Gr}}_A (K_\infty )^\vee)^\iota] + \big[(E^{A^\ast(1)}_1)^\iota\big]
\\ = \big[\mathrm{Sel}^{\mathrm{Gr}}_{A^\ast(1)} (K_\infty )^\vee \big] + \underset{i \geq 2 }{\sum} (-1)^{i} [\mathrm{Ext}^i_{\La_{\mathcal{O}}(G)} ( \mathrm{Sel}^{\mathrm{Gr}}_{A^\ast(1)} (K_\infty )^\vee   ,  \La_{\mathcal{O}}(G) )].
\end{multline}
By our assumption (3) in Theorem \ref{function-equation-thm}, either the condition \ref{proposition:vanishing_higher_ext}(a) or the condition  \ref{proposition:vanishing_higher_ext}(b) 
is satisfied.  Then by Proposition \ref{proposition:vanishing_higher_ext}, $[\mathrm{Ext}^i_{\La_{\mathcal{O}}(G)} ( \mathrm{Sel}^{\mathrm{Gr}}_{A^\ast(1)} (K_\infty )^\vee   ,  \La_{\mathcal{O}}(G) )] =0$ for $i \geq 3$. Moreover by assumption (4) of Theorem 2, either (A$_p$) or (Van$_p$) is satisfied. Then   from Proposition \ref{proposition:vanishing_higher_ext}, we can deduce $[\mathrm{Ext}^2_{\La_{\mathcal{O}}(G)} ( \mathrm{Sel}^{\mathrm{Gr}}_{A^\ast(1)} (K_\infty )^\vee   ,  \La_{\mathcal{O}}(G) )] =0$ as well. From the above discussion,  we deduce from  \eqref{general-fnal-eqn},
\begin{equation}\label{final-fnal-eqn}
 [\big(\mathrm{Sel}^{\mathrm{Gr}}_A (K_\infty )^\vee\big)^\iota] + \big[(E^{A^\ast(1)}_1)^\iota\big]
 = \big[\mathrm{Sel}^{\mathrm{Gr}}_{A^\ast(1)} (K_\infty )^\vee  \big].
\end{equation}
Twisting this by $\iota$, Theorem \ref{function-equation-thm} is an immediate consequence of \eqref{final-fnal-eqn}. \par 
In the rest of the proof, we are left to establish the claim in \eqref{duality}.
By our assumption in Theorem \ref{function-equation-thm}, $\mathrm{Sel}^{\mathrm{Gr}}_{A^*(1)} (K_\infty )^\vee$ is in $\mathfrak M_H(G)$.
Thus by Corollary \ref{twist-summary}, there exists a continuous character 
$\rho: G \lra \Gamma \lra \Z_p^\times $ such that  $\Big(\mathrm{Sel}^{\mathrm{Gr}}_{A^*(1)} (K_\infty )^\vee(\rho)\Big)_U$ and $H_1\Big(U, \mathrm{Sel}^{\mathrm{Gr}}_{A^*(1)} (K_\infty )^\vee(\rho)\Big)$ are both finite for any open normal subgroup $U$ of $G$.
Since the character $\rho$ is trivial on $G_{K_\infty}$, we deduce that $\mathrm{Sel}^{\mathrm{Gr}}_{A^*(1)} (K_\infty )^\vee (\rho) \cong \mathrm{Sel}^{\mathrm{Gr}}_{{A}_\rho^*(1)} (K_\infty )^\vee$. Thus, $\big(\mathrm{Sel}^{\mathrm{Gr}}_{{A}_\rho^*(1)} (K_\infty )^\vee\big)_U$ and $H_1\big(U, \mathrm{Sel}^{\mathrm{Gr}}_{{A}_\rho^*(1)} (K_\infty )^\vee\big)$ are both finite. By our hypothesis (1) in  Theorem \ref{function-equation-thm}, the kernel and the cokernel of the  natural restriction map $\mathrm{Sel}^{\mathrm{Gr}}_{{A}_\rho^*(1)} (K_U)  \stackrel{\text{res}^{A^*(1)}_{\rho, U}} \lra  \mathrm{Sel}^{\mathrm{Gr}}_{{A}_\rho^*(1)} (K_\infty )^U $ are both finite for any $U$ in $\mathcal U$. 
Hence we have 
\begin{equation}\label{twist-invariant-finite}
\mathrm{Sel}^{\mathrm{Gr}}_{{A}_\rho^*(1)} (K_U)  \text{ is finite for any open subgroup $U \subset G$}.
\end{equation}
\par 
\noindent Since the Bloch-Kato Selmer group $\mathrm{Sel}^{\mathrm{BK}}_{{A}_\rho^*(1)}
 (K_U)$ is a subgroup of $\mathrm{Sel}^{\mathrm{Gr}}_{{A}_\rho^*(1)} (K_U)$ 
by definition, \eqref{twist-invariant-finite} 
implies that 
\begin{equation}\label{twist-invariant-finite2}
\mathrm{Sel}^{\mathrm{BK}}_{{A}_\rho^*(1)} (K_U) \text{  is finite for any open subgroup $U \subset G$}. 
\end{equation}
Then, by the generalized Cassels-Tate pairing 
established for Bloch-Kato Selmer groups by Flach (cf. \cite[Theorem 1, Theorem 3]{flach}, \cite[Theorem 3.1.1]{pr}),  we have an isomorphism 
\begin{equation}\label{pairing}
\mathrm{Sel}^{\mathrm{BK}}_{A_\rho} (K_U )^\vee \cong \mathrm{Sel}^{\mathrm{BK}}_{A_\rho^\ast(1)} (K_U ), 
\end{equation}
for any $U$. Using the isomorphism in \eqref{pairing}, together with the natural restriction map and the natural inclusion of a Bloch-Kato Selmer group into a Greenberg Selmer group, we get an $\mathcal{O}[G/U]$-linear map $\mathrm{Sel}^{\mathrm{Gr}}_{A_\rho} (K_U )^\vee  \lra \mathrm{Sel}^{\mathrm{Gr}}_{A_\rho^\ast(1)} (K_\infty)^U$ by the composition as follows: 
\begin{equation}\label{pairing-control}
\mathrm{Sel}^{\mathrm{Gr}}_{A_\rho} (K_U )^\vee \rightarrow \mathrm{Sel}^{\mathrm{BK}}_{A_\rho} (K_U )^\vee 
\overset{\sim}{\rightarrow} \mathrm{Sel}^{\mathrm{BK}}_{A_\rho^\ast(1)} (K_U )
\rightarrow  \mathrm{Sel}^{\mathrm{Gr}}_{A_\rho^\ast(1)} (K_U) \rightarrow \mathrm{Sel}^{\mathrm{Gr}}_{A_\rho^\ast(1)} (K_\infty)^U.
\end{equation}

Taking the inverse limit with respect to the norm map over $U \in \mathcal U$, we get a $\La_{\mathcal{O}}(G)$-linear map 
\begin{equation}\label{pairing-limit2}
\mathrm{Sel}^{\mathrm{Gr}}_{A_\rho} (K_\infty )^\vee 
\stackrel{\phi^1_\rho} \lra  \underset{U \in \mathcal U}{\varprojlim} ~ \mathrm{Sel}^{\mathrm{Gr}}_{A_\rho^\ast(1)} (K_\infty)^U.  
\end{equation} where $\phi^1_\rho$ is by the composition as follows:
\begin{multline*}\label{pairing-limit}
\mathrm{Sel}^{\mathrm{Gr}}_{A_\rho} (K_\infty )^\vee \rightarrow \mathrm{Sel}^{\mathrm{BK}}_{A_\rho} (K_\infty )^\vee 
\overset{\sim}{\rightarrow}
 \underset{U \in \mathcal U}{\varprojlim} \mathrm{Sel}^{\mathrm{BK}}_{A_\rho^\ast(1)} (K_U ) 
\\ 
 \rightarrow \underset{U \in \mathcal U}{\varprojlim}  \mathrm{Sel}^{\mathrm{Gr}}_{A_\rho^\ast(1)} (K_U)  \rightarrow \underset{U \in \mathcal U}{\varprojlim} ~\mathrm{Sel}^{\mathrm{Gr}}_{A_\rho^\ast(1)} (K_\infty)^U . 
\end{multline*}
\par 
As $\mathrm{Sel}^{\mathrm{Gr}}_{A_\rho^\ast(1)} (K_\infty )^U$  is finite for any $U$, we have 

\begin{align*}
\underset{U \in \mathcal U}{\varprojlim}~ \big(\mathrm{Sel} ^{\mathrm{Gr}}_{A_\rho^\ast(1)} (K_\infty )^U \big)^\iota 
& 
= \underset{U \in \mathcal U}{\varprojlim}~ \text{Tor}_{\mathcal{O}}\big(\mathrm{Sel} ^{\mathrm{Gr}}_{A_\rho^\ast(1)} (K_\infty )^U \big)^\iota 
 \\ 
& 
\overset{\sim}{\longrightarrow} 
\underset{U \in \mathcal U}{\varprojlim}~ \text{Ext}^1_{\mathcal{O}}\Big(\mathrm{Sel} ^{\mathrm{Gr}}_{A_\rho^\ast(1)} (K_\infty )^\vee_U , 
\mathcal{O} \Big) \text{ (see \cite[Line 05, Page~732]{pr})}
\\ 
& \overset{\sim}{\longrightarrow} 
\underset{U \in \mathcal U}{\varprojlim} ~\mathrm{Ext}^1_{\mathcal{O}[G/U]} \Big( \mathrm{Sel}^{\mathrm{Gr}}_{A_\rho^\ast(1)} (K_\infty )^\vee_U,  \mathcal{O}[ G/U] \Big) \text{ (see Lemma \ref{jannsen-lemma})}  \\ 
& 
\overset{\sim}{\longrightarrow} 
\underset{U \in \mathcal U}{\varprojlim} ~\mathrm{Ext}^1_{\La_{\mathcal{O}}(G)} \Big( \mathrm{Sel}^{\mathrm{Gr}}_{A_\rho^\ast(1)} (K_\infty )^\vee ,  \mathcal{O}[ G/U] \Big)  \text{ (by Lemma \ref{base-change-ext1})} \\ 
& 
\overset{\sim}{\longrightarrow}  \mathrm{Ext}^1_{\La_{\mathcal{O}}(G)} \Big( \mathrm{Sel}^{\mathrm{Gr}}_{A_\rho^\ast(1)} (K_\infty )^\vee ,  \La_{\mathcal{O}}(G) \Big). 
\end{align*}
Thus, we obtain a $\La_{\mathcal{O}}$-linear isomorphism
\begin{equation}\label{limit-ext1}
\underset{U \in \mathcal U}{\varprojlim} ~\big( \mathrm{Sel}^{\mathrm{Gr}}_{A_\rho^\ast(1)} (K_\infty )^U\big )^\iota \stackrel{\phi^2_\rho}{\cong} \mathrm{Ext}^1_{\La_{\mathcal{O}}(G)} \Big( \mathrm{Sel}^{\mathrm{Gr}}_{A_\rho^\ast(1)} (K_\infty )^\vee  ,  \La_{\mathcal{O}}(G) \Big).
\end{equation}
By composing $\phi^2_\rho$ in \eqref{limit-ext1} with $\phi^1_\rho$ of  \eqref{pairing-limit2} twisted by $\iota$,  we have a $\La_{\mathcal{O}}(G)$-linear map 
\begin{equation}\label{equation:ext1}
 \big(\mathrm{Sel}^{\mathrm{Gr}}_{A_\rho} (K_\infty )^\vee\big)^\iota 
\stackrel{\phi_\rho}
\longrightarrow \mathrm{Ext}^1_{\La_{\mathcal{O}}(G)} \Big( \mathrm{Sel}^{\mathrm{Gr}}_{A_\rho^\ast (1)} (K_\infty )^\vee ,  \La_{\mathcal{O}}(G) \Big). 
\end{equation}
\par
\noindent Now using the first vanishing assertion of Lemma \ref{lemma:triviality_of_C}, we have the following equality: 
\begin{equation}
[\mathrm{Sel}^{\mathrm{Gr}}_{A_\rho} (K_\infty )^\vee]= [\mathrm{Sel}^{\mathrm{BK}}_{A_\rho} (K_\infty )^\vee]  
\text{ in $K_0(\mathfrak M_H(G))$.}
\end{equation} 
Also, we have the following equality by taking the limit of \eqref{pairing} for $U \in \mathcal{U}$:
\begin{equation}
 [\mathrm{Sel}^{\mathrm{BK}}_{A_\rho} (K_\infty )^\vee]  = [\underset{U \in \mathcal U}{\varprojlim} \mathrm{Sel}^{\mathrm{BK}}_{A_\rho^\ast(1)} (K_U )]
\text{ in $K_0(\mathfrak M_H(G))$.}
\end{equation} 
Moreover, by the second vanishing assertion of Lemma \ref{lemma:triviality_of_C}, we deduce 
\begin{equation}
[\underset{U \in \mathcal U}{\varprojlim} \mathrm{Sel}^{\mathrm{BK}}_{A_\rho^\ast(1)} (K_U )]= [ \underset{U \in \mathcal U}{\varprojlim}  \mathrm{Sel}^{\mathrm{Gr}}_{A_\rho^\ast(1)} (K_U)] 
\text{ in $K_0(\M_H(G))$.} 
\end{equation} 
Using these observations in \eqref{equation:ext1}, we deduce
\begin{align*}\label{equation:ext2}
& [\text{Ker}(\phi_\rho) ]=\Big[  \underset{U \in \mathcal U}{\varprojlim}~\big(\text{Ker}(\text{res}^{A^\ast(1)}_{\rho, U})\big)^\iota\Big],\\
&  [\text{Coker}(\phi_\rho) ]=\Big[\underset{U \in \mathcal U}{\varprojlim}~ \big(\text{Coker}(\text{res}^{A^\ast(1)}_{\rho, U})\big)^\iota\Big]  \text{ in } K_0(\M_H(G)).
\end{align*}
As $\rho$ is trivial on $G_{K_\infty}$, we obtain by applying $-\otimes_{\mathcal{O}} \rho^{-1}$ to \eqref{equation:ext1}
\begin{equation}\label{equation:ext3}
\big( \mathrm{Sel}^{\mathrm{Gr}}_{A} (K_\infty )^\vee \big)^\iota
\stackrel{\phi}\longrightarrow \mathrm{Ext}^1_{\La_{\mathcal{O}}(G)} \Big( \mathrm{Sel}^{\mathrm{Gr}}_{A^\ast (1)} (K_\infty )^\vee  ,  \La_{\mathcal{O}}(G) \Big). 
\end{equation}
Thus we have 
\begin{equation}\label{equation:ext4}
[\text{Ker}(\phi) ]=\Big[  \underset{U \in \mathcal U}{\varprojlim}~\big(\text{Ker}(\text{res}^{A^\ast(1)}_{U})\big)^\iota\Big]~\text{ and }~ [\text{Coker}(\phi) ]=\Big[\underset{U \in \mathcal U}{\varprojlim}~ \big(\text{Coker}(\text{res}^{A^\ast(1)}_{ U})\big)^\iota\Big]  \text{ in } K_0(\M_H(G)).
\end{equation}

\begin{enumerate}
\item  First, we assume the condition 2(a) in Theorem \ref{function-equation-thm} holds. Then, it follows that $ \underset{U \in \mathcal U}{\varprojlim}~\text{Ker}(\text{res}^{A^\ast(1)}_{U})$ is finite in cardinality. Hence by Lemma \ref{h-level-vanishing}(a), we deduce from \eqref{equation:ext4} that 
\begin{equation}\label{kerzero}
[\text{Ker}(\phi)] =0 ~ \text{ in }~ K_0(\M_H(G)).
\end{equation}
\par 
Moreover, from \eqref{equation:ext4}, the condition 2(a) in Theorem \ref{function-equation-thm} implies that $[\text{Coker}(\phi) ]$ and $\big[(E_0^{A^\ast(1)})^\iota \big]$ in $K_0(\mathfrak M_H(G))$ differ by the class of a module which is finite in cardinality. Thus again using Lemma \ref{h-level-vanishing}(a),  we deduce from \eqref{equation:ext4} that $[\text{Coker}(\phi)] =
\big [(E_0^{A^\ast(1)})^\iota \big]$ in $K_0(\M_H(G))$. 
Further, the  assumption (4) in Theorem \ref{function-equation-thm} says that either (A$_p$) of Definition \ref{set2b} or (Van$_p$) of Definition \ref{definition:setting} holds. By Proposition \ref{cor-error-term-over-p}, 
we have $\big[(E_0^{A^\ast(1)})^\iota\big ] =\big[(E_1^{A^\ast(1)})^\iota\big]$ in $K_0(\M_H(G))$. The claim in \eqref{duality}  now follows  from \eqref{kerzero} and \eqref{equation:ext3}.

\item  Next, we assume the condition 2(b) in Theorem \ref{function-equation-thm} holds. The condition 2(b) says that 
$ \underset{U \in \mathcal U}{\varprojlim}~\text{Ker}(\text{res}^{A^\ast(1)}_{U})$ is a finite generated $\Z_p$-module. Then due to our assumption (Van) of Definition \ref{definition:setting}, we deduce from \eqref{equation:ext4}, $[\text{Ker}(\phi)] =0$. Also, in this case from \eqref{equation:ext4}, $[\mathrm{Coker}(\phi)]$ and $\big[(E_0^{A^\ast(1)})^\iota \big]$ in $K_0(\mathfrak M_H(G))$ differ by the class of a finitely generated $\Z_p$-module. 
   Thanks to the assumption (Van) of Definition \ref{definition:setting}, which is part of hypothesis 2(b) of Theorem \ref{function-equation-thm}, we get from \eqref{equation:ext4} that $[\text{Coker}(\phi)] = \big[(E_0^{A^\ast(1)})^\iota ]$ in $K_0(\M_H(G))$. 
   
   Finally, we again obtain $\big[(E_0^{A^\ast(1)})^\iota\big ] =\big[(E_1^{A^\ast(1)})^\iota\big]$ in $K_0(\M_H(G))$ by Proposition \ref{cor-error-term-over-p}. Thus the claim in \eqref{duality} holds also in this case.
\end{enumerate}
\par 
This completes the proof of Theorem \ref{function-equation-thm}.
 \end{proof}
\par

\medskip
  
{\it Proof of Remark \ref{remark0.2}:} Note we have   $[\mathrm{Sel}^{\mathrm{Gr}}_{A} (K_\infty )^\vee] = [\mathrm{Sel}^{\mathrm{BK}}_{A} (K_\infty )^\vee] $ in $K_0(\M_H(G))$ due to the first vanishing result, for trivial $\rho$ in Lemma \ref{lemma:triviality_of_C} given by $[ 
(\underset{U}{\varinjlim}C^{A}_U)^\vee ] 
=0 $ in $K_0(\M_H(G))$.  In addition,  by the proof of Lemma \ref{lemma:triviality_of_C}, we also have $[ 
(\underset{U}{\varinjlim}C^{A^*(1)}_U)^\vee ] 
=0 $ in $K_0(\M_H(G))$ due to our assumption that either (A$_p$) or (Van$_p$) holds. As a result of this, the equality $[\mathrm{Sel}^{\mathrm{Gr}}_{A^\ast(1)} (K_\infty )^\vee] = [\mathrm{Sel}^{\mathrm{BK}}_{A^\ast(1)} (K_\infty )^\vee] $ in $K_0(\M_H(G))$ holds true. Applying the involution $\iota$ and using the functional equation of the Greenberg Selmer group, we finally deduce that 
  $$[\mathrm{Sel}^{\mathrm{BK}}_{A} (K_\infty )^\vee] + [E^{A^*(1)}_1] = [(\mathrm{Sel}^{\mathrm{BK}}_{A^\ast(1)} (K_\infty )^\vee)^\iota]  \text{ in } K_0(\M_H(G)). \qed$$ 
 
\section{{}{Compatibility of the algebraic and  the conjectural analytic functional equation}}\label{s6}
In this section, we explain how the functional equation of the conjectural $p$-adic $L$-function looks like and then we check the compatibility of this conjectural functional equation with the algebraic functional equation. 
{}{We will consider a special case of $V=V_{f,p}(\frac{k}{2})$ for a normalized eigen elliptic cuspform $f$ of even weight $k\geq 2$ and level $\Gamma_0 (N)$ whose $p$-th Fourier coefficient $a_p(f)$ is a $p$-adic unit such that  $N$ is square-free and the conductor $N_f$ of $f$ is not divisible by $p$ as discussed in Example 3 of Section \ref{s5}. Here $V_{f,p}$ is the $p$-adic Galois representation for $f$.} We take a lattice $T \subset V$ and we set $A =T\otimes \mathbb{Q}_p /\mathbb{Z}_p$. 
We set $K_\infty$ to be a false-Tate curve extension  in the setting of Example \ref{example:2}. Recall that $K=\Q $, $K_\infty= \Q(\mu_{p^\infty}, a^{1/p^\infty})$, $G= \mathrm{Gal}(K_\infty/\Q) \cong \Z_p^\times \rtimes \Z_p$, $H= \mathrm{Gal}(K_\infty/\Q_\cyc) \cong \Z_p^\times $ and $\Gamma=G/H = \mathrm{Gal}(\Q_\cyc /\Q)$. Recall from Section \ref{s5}, $$P_0:= \{ q \text{ prime in } \Q: q \mid a \text { but } q \nmid p\}.$$ 
Also, {}{we define $P_1$ and $P_2$ to be the subsets of $P_0$ given as in Example 3 of} Section \ref{s5}.

{}{Let $\eta$ be an Artin representation whose representation space over $\mathbb{C}$ is denoted by $W_\eta$}.  
As a special case of the general definition of complex $L$-function  given in  \eqref{most-general-complex-l-function} in Appendix, 
{}{
we have $L(f, \eta,s):= \underset{q }{\prod}P_q(f, \eta, q^{-s})^{-1}$ where we choose any prime $\ell \neq q$ and define $ P_q(f,\eta,T)=\text{det}\Big(1-\text{Frob}^{-1}_qT\ \vert \ 
\big(V_{f,\ell}  \otimes_{\Q_\ell} W_{\eta ,\ell} \big)^{I_q}\Big)$ where $V_{f,\ell}$ is the $\ell$-adic Galois representation for $f$ and $W_{\eta ,\ell}$ is an Artin representation which is isomorphic to $W_\eta$ but defined over an 
$\ell$-adic field.} 
\par 
{}{Let us fix complex periods $\Omega_+(f), \Omega_-(f) \in \mathbb{C}^\times$ 
such that $\dfrac{L(f,\chi,\tfrac{k}{2})}{\Omega_{\mathrm{sgn}(\chi)}(f)} \in \mathbb{Q}_f [\chi]$ 
for any Dirichlet character $\chi$ where $\mathrm{sgn}(\chi)$ is the signature of $\chi (-1)$. Here $\mathbb{Q}_f $ is the Hecke field of $f$ 
which is a finite extension of $\mathbb{Q}$ obtained by adjoining Fourier coefficients of $f$ to $\mathbb{Q}$. 
A priori, the periods $\Omega_+(f), \Omega_-(f) $ are defined only up to multiplication of elements of $\mathbb{Q}_f$. However, we can normalize them so that 
$\Omega_+(f), \Omega_-(f) $ are defined up to multiplication of units of the ring of integers of $\mathbb{Q}_f$ localized at a prime over $p$.}
\par 
As a special case of Conjecture \ref{p-adic-lfun-existence-conjecture} when {}{$V=V_{f,p}(\frac{k}{2})$} and $K_\infty$ as above, the conjectural $p$-adic $L$-function {}{$\mathcal{L}_p (V_{f,p}(\frac{k}{2}) ) =\mathcal{L}_p (V_{f,p}(\frac{k}{2}) ; \{ \Omega_\pm (f) \}) \in K_1(\La_\mathcal O(G)_{S^*})$\footnote{The $p$-adic $L$-function $\mathcal{L}_p (V_{f,p}(\frac{k}{2}) )$ essentially depends on the choice of 
periods $\Omega_+(f), \Omega_-(f)$. However, below, we remove the dependence from the notation to ease the notation.}} 
depending on the fixed periods {}{exists and it is characterized} by 
{}{
\begin{equation}\label{p-adiclfn-ellipticcurve}
\eta^\ast (\mathcal{L}_p (V_{f,p}(\tfrac{k}{2})) )=
 \epsilon_p (\eta) \alpha_p^{-C_p(\eta)} \times 
\frac{P_p(\eta^\ast , \alpha_p^{-1} )}{P_p(\eta , \beta_p^{-1})} 
\times 
\frac{L_{P}(f,\eta,\tfrac{k}{2})}{\Omega_+(f)^{d_+ (\eta)} \Omega _-(f)^{d_-(\eta)}}
\end{equation}
}
{}{where $\eta^\ast$ denotes the contragadient representation of $\eta$}. Here $\mathcal O$ is the ring of integers of some finite extension of $\Q_p$\footnote{Note that, according to the work of Fukaya-Kato \cite[Theorem 2.2.26]{fk} on Tamagawa number conjecture, it is suggested in \cite[Page 203]{cfksv} that the coefficient ring of Iwasawa algebra for the conjectural $p$-adic $L$-function be enlarged to a finite extension $\mathcal O$ even 
{}{when $\mathbb{Q}_f =\Q$}.}, $p^{C_p(\eta)}$ is the $p$-part of the conductor $\eta$ at 
 $p$, $\epsilon_p(\eta)$ is the local $\epsilon$ factor of $\eta$ at $p$ and $P$ is the set of primes of $\Q$ which are 
infinitely ramified in $K_\infty/\Q$ and {}{$L_{P}(f,\eta,\tfrac{k}{2}):= \underset{q \not\in P}{\prod}P_q(f, \eta, q^{-\tfrac{k}{2}})^{-1}$}. 
One has that $P= P_0 \cup \{p\}$ (see \cite[Lemma 3.9]{hv}).
\par 
{}{
If the weight $k$ is larger than two, the complex $L(f,\eta,s)$ has critical points other than $s=\frac{k}{2}$ and 
the interpolation property of the $p$-adic $L$-function $\mathcal{L}_p (V_{f,p}(\tfrac{k}{2}))$ covers 
more specializations than \eqref{p-adiclfn-ellipticcurve} corresponding to other critical points. However, since we assume that 
$f$ is ordinary, the interpolation property \eqref{p-adiclfn-ellipticcurve} is enough to characterize 
$\mathcal{L}_p (V_{f,p}(\tfrac{k}{2}))$.
}
\par 
On the other hand, recall that the twisted complex $L$-function of {}{$f$} has the following conjectural 
functional equation, 
{}{
\begin{equation}\label{qdi2ud90890-9-0bkgckm}
\Lambda (f, \eta,s ) = \omega(f, \eta)\Lambda (f, \eta^\ast,k-s), 
\end{equation}
where  $\Lambda(f, \eta,s )$ is the completed $L$-function which is obtained by multiplying certain $\Gamma$-factor to $L(f, \eta,s)$. 
}
The conjectural functional equation of complex $L$-function in \eqref{qdi2ud90890-9-0bkgckm} is a special case of Conjecture \ref{conjecture:Hasse-Weil}  when {}{$V=V_{f,p}(\tfrac{k}{2})\otimes W_{\eta ,p}$}.

Then putting {}{$s =\frac{k}{2}$} in  \eqref{qdi2ud90890-9-0bkgckm}  and using the definition of  {}{$\Lambda(f, \eta,\frac{k}{2} )$}, we  deduce the following equation: {}{
\begin{equation}\label{l-value-at-1}
L(f, \eta,\tfrac{k}{2}) = \omega(f, \eta) L(f, \eta^\ast,\tfrac{k}{2}).
\end{equation}
}

Recall that $P$ is the set of primes of $\Q$ which are 
infinitely ramified in $K_\infty/\Q$. By \eqref{p-adiclfn-ellipticcurve}, we have 
{}{
\begin{align*}
\eta^\ast (\mathcal{L}_p (V_{f,p}(\tfrac{k}{2})) )& =
 \epsilon_p (\eta) \alpha_p^{-C_p(\eta)} \times 
\frac{P_p(\eta^\ast , \alpha_p^{-1} )}{P_p(\eta , \beta_p^{-1})} 
\times 
\frac{L(f,\eta,\tfrac{k}{2})}{\Omega_+(f)^{d_+ (\eta)} \Omega _-(f)^{d_-(\eta)}} \underset{q \in P}{\prod} P_q(f,\eta,q^{-\frac{k}{2}}) , 
\\ 
  \eta(\mathcal{L}_p (V_{f,p}(\tfrac{k}{2})) ) & =
 \epsilon_p (\eta^\ast) \alpha_p^{-C_p(\eta^\ast)} \times 
\frac{P_p(\eta , \alpha_p^{-1} )}{P_p(\eta^\ast , \beta_p^{-1})} 
\times 
\frac{L(f,\eta^\ast,\tfrac{k}{2})}{\Omega_+(f)^{d_+ (\eta^\ast)} \Omega _-(f)^{d_-(\eta^\ast)}} \underset{q \in P}{\prod} P_q(f,\eta^\ast,q^{-\frac{k}{2}}) .
\end{align*}
}  
 Since we have $d_\pm(\eta) = d_\pm(\eta^\ast)$ and $C_p(\eta)= C_p(\eta^\ast)$, the following identity is obtained
by using \eqref{l-value-at-1} and the last two expressions above:
{}{
\begin{multline}\label{fn-equation-for-p-adic-l-function1}
\frac{\eta^\ast(\mathcal{L}_p (V_{f,p}(\tfrac{k}{2})) )}{\prod_{q \in P_0} P_q(f,\eta, q^{-\tfrac{k}{2}}) }
\\ = \frac{\eta(\mathcal{L}_p (V_{f,p}(\tfrac{k}{2})) )}{\prod_{q \in P_0} P_q(f,\eta^\ast, q^{-\tfrac{k}{2}}) } \omega(f, \eta) \frac{\epsilon_p(\eta)}{\epsilon_p(\eta^\ast)}\frac{P_p(f,\eta,p^{-\tfrac{k}{2}})}{P_p(f,\eta^\ast,p^{-\tfrac{k}{2}})}\frac{P_p(\eta^\ast , \alpha_p^{-1} )}
{P_p(\eta , \alpha_p^{-1})}\frac{P_p(\eta^\ast , \beta_p^{-1} )}{P_p(\eta , \beta_p^{-1})}
\end{multline}
}
In \eqref{fn-equation-for-p-adic-l-function1}, we have used $P_0=P\setminus \{ p\}$. Further, as we have 
{}{
$\eta(\mathcal{L}_p (V_{f,p}(\tfrac{k}{2})) ^\iota) = \eta^\ast (\mathcal{L}_p (V_{f,p}(\tfrac{k}{2})))$}, 
we get the following identity  from \eqref{fn-equation-for-p-adic-l-function1}: 
{}{
\begin{multline}\label{761}
\frac{\eta(\mathcal{L}_p (V_{f,p}(\tfrac{k}{2}))^\iota )}{\prod_{q \in P_0} P_q(f,\eta, q^{-1}) }
\\ = \frac{\eta(\mathcal{L}_p (V_{f,p}(\tfrac{k}{2})))}{\prod_{q \in P_0} P_q(f,\eta^\ast, q^{-\frac{k}{2}}) } \omega(f, \eta) \frac{\epsilon_p(\eta)}{\epsilon_p(\eta^\ast)}\frac{P_p(f,\eta,p^{-\frac{k}{2}})}{P_p(f,\eta^\ast,p^{-\frac{k}{2}})}\frac{P_p(\eta^\ast , \alpha_p^{-1} )}{P_p(\eta , \alpha_p^{-1})}\frac{P_p(\eta^\ast , \beta_p^{-1} )}{P_p(\eta , \beta_p^{-1})}.
\end{multline}
}
Note that $\frac{\epsilon_p(\eta)}{\epsilon_p(\eta^\ast)}$ is a $p$-adic unit. Now we discuss that the factors at $p$  in \eqref{761}. First of all, note that if $\eta$ is the trivial Artin representation of $G$, then the factors at prime $p$ in \eqref{761} are the same for $\eta$ and $\eta^\ast$.

First, we check that the contributions of factors at $p$ is trivial in this particular case of a false-Tate extension. 
Recall that, by the $\ell$-independence of an Euler factor, we have 
{}{
\begin{align*}
& P_p (f, \eta, p^{-\frac{k}{2}}) = \mathrm{det} (1-\mathrm{Frob}_p^{-1} p^{-\frac{k}{2}} \mid (V_{f,\ell}(\tfrac{k}{2}) \otimes W_{\eta ,\ell} )^{I_p}), \\ 
& P_p (\eta, \alpha_p^{-1})= \mathrm{det} (1-\mathrm{Frob}_p^{-1} \alpha_p^{-1} \mid ( W_{\eta ,\ell} )^{I_p}), \\
& P_p (\eta, \beta_p^{-1})= \mathrm{det} (1-\mathrm{Frob}_p^{-1} \beta_p^{-1} \mid ( W_{\eta ,\ell} )^{I_p}),
\end{align*}
for any prime $\ell \neq p
 $}
 where $W_{\eta ,\ell}$ is defined in the paragraph preceeding \eqref{p-adiclfn-ellipticcurve}. 
 Note that the inertia invariant subspace $W_{\eta ,\ell}^{I_p}$ is trivial for any non-trivial Artin representation 
$\eta$ of $G$.  {}{The representation  $V_{f,\ell}(\tfrac{k}{2})$ is unramified at $p$ since the conductor $N_f$ of $f$ is not divisible 
by $p$. Hence the inertia invariant space 
$\big(V_{f,\ell}(\tfrac{k}{2})\otimes W_{\eta ,\ell} \big)^{I_p}$ is trivial. Thus we have $P_p(f, \eta, p^{-\frac{k}{2}})=P_p(\eta, \alpha_p^{-1}) =P_p(\eta, \beta_p^{-1}) =1$ 
for any non-trivial Artin representation $\eta$ of $G$}. 
 By combining the discussions above with \eqref{761}, we have proved the following result. 
\begin{proposition}\label{772}
{}{Assume Conjecture \ref{conjecture:algebraicity} and Conjecture \ref{p-adic-lfun-existence-conjecture} for $V=V_{f,p}(\tfrac{k}{2})$} and $G=\mathrm{Gal}(K_\infty/\Q)$ where $K_\infty $ is a false-Tate curve extension as considered in Example \ref{example:2} and 
Section \ref{s5}. 
\par 
{}{Then the $p$-adic $L$-function $\mathcal{L}_p (V_{f,p}(\tfrac{k}{2}) ) \in K_1 (\La_\mathcal O(G)_{S^\ast} )$ satisfies the 
following interpolation property: 
\begin{equation}\label{771}
\eta(\mathcal{L}_p (V_{f,p}(\tfrac{k}{2}))^\iota )
= 
\omega(f, \eta)
\eta(\mathcal{L}_p (V_{f,p}(\tfrac{k}{2}))) \prod_{q \in P_0=P\setminus \{ p\}} 
\frac{P_q(f,\eta, q^{-\frac{k}{2}})}{P_q(f,\eta^\ast, q^{-\frac{k}{2}}) } 
\end{equation}
}
for every Artin representation $\eta$ of $G$. Here  $S$ and $S^*$ are given by  \eqref{nimoprtantatal} and $\mathcal O$ is the ring of integers of a certain finite extension of $\Q_p$. 
\end{proposition}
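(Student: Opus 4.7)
The plan is to chase equation \eqref{p-adiclfn-ellipticcurve} applied to both $\eta$ and $\eta^\ast$ and combine the two expressions via the conjectural functional equation \eqref{qdi2ud90890-9-0bkgckm} of the twisted complex $L$-function at the central critical value $s = \tfrac{k}{2}$. Since the $\Gamma$-factors in the completion $\Lambda(f,\eta,s)$ depend only on $d_\pm(\eta)$ and since $d_\pm(\eta) = d_\pm(\eta^\ast)$, the functional equation collapses to \eqref{l-value-at-1}. Using the tautology $\eta(\mathcal{L}_p(V_{f,p}(\tfrac{k}{2}))^\iota) = \eta^\ast(\mathcal{L}_p(V_{f,p}(\tfrac{k}{2})))$ together with $C_p(\eta) = C_p(\eta^\ast)$ and the cancellation of the period factors $\Omega_\pm(f)^{d_\pm(\eta)}$ between the $\eta$ and $\eta^\ast$ sides, I recover precisely equation \eqref{761}, which relates $\eta(\mathcal{L}_p^\iota)$ and $\eta(\mathcal{L}_p)$ up to $\omega(f,\eta)$, a product of Euler factors at $q \in P_0$, and a residual contribution at $p$.

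The whole issue therefore reduces to showing that the residual $p$-part
\[
\frac{\epsilon_p(\eta)}{\epsilon_p(\eta^\ast)} \cdot \frac{P_p(f,\eta,p^{-k/2})}{P_p(f,\eta^\ast,p^{-k/2})} \cdot \frac{P_p(\eta^\ast,\alpha_p^{-1})}{P_p(\eta,\alpha_p^{-1})} \cdot \frac{P_p(\eta^\ast,\beta_p^{-1})}{P_p(\eta,\beta_p^{-1})}
\]
is a $p$-adic unit (and thus trivial, since evaluations at Artin characters are defined only up to a $p$-adic unit). I will dispose of this in two cases, exploiting the specific false-Tate geometry. For the trivial Artin representation we have $\eta = \eta^\ast$ and every factor in the displayed expression is manifestly equal to $1$. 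For any nontrivial irreducible Artin representation $\eta$ of $G = \mathrm{Gal}(K_\infty/\mathbb{Q})$, the extension $K_\infty = \mathbb{Q}(\mu_{p^\infty}, a^{1/p^\infty})$ is infinitely ramified at $p$ (i.e., $p \in P$), so the image of the inertia subgroup $I_p$ inside $G$ is infinite. Hence any $I_p$-fixed vector in a finite-dimensional Artin representation must factor through a finite quotient of $I_p$'s image, which forces $W_{\eta,\ell}^{I_p} = 0$ for nontrivial irreducible $\eta$. Since the conductor $N_f$ of $f$ is coprime to $p$, the representation $V_{f,\ell}(\tfrac{k}{2})$ is unramified at $p$, so $(V_{f,\ell}(\tfrac{k}{2}) \otimes W_{\eta,\ell})^{I_p} = 0$ as well, giving $P_p(f,\eta,p^{-k/2}) = P_p(\eta, \alpha_p^{-1}) = P_p(\eta,\beta_p^{-1}) = 1$, and analogously for $\eta^\ast$. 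The ratio $\epsilon_p(\eta)/\epsilon_p(\eta^\ast)$ is well known to be a $p$-adic unit, so the residual $p$-part is trivial up to a $p$-adic unit.

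Inserting these cancellations into \eqref{761} yields precisely the interpolation identity \eqref{771}, which is the conclusion of the proposition. I anticipate the only technical care needed is in the nontrivial Artin representation case, where one must argue cleanly that the image of $I_p$ in $G$ is genuinely infinite; this follows from the description of decomposition and inertia at $p$ in the false-Tate extension recalled in Example \ref{example:2}, but extending the argument to reducible $\eta$ requires decomposing into irreducible constituents and then using the trivial-constituent case separately (where that constituent's contribution to the local Euler factors matches on both sides of \eqref{761} by direct inspection, as the trivial-representation case above shows).
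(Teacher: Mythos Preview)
Your proof is correct and follows essentially the same approach as the paper's own argument preceding the proposition: derive \eqref{761} from the interpolation property \eqref{p-adiclfn-ellipticcurve} applied to $\eta$ and $\eta^\ast$ together with the complex functional equation \eqref{l-value-at-1}, then eliminate the residual factors at $p$ by observing $W_{\eta,\ell}^{I_p}=0$ for nontrivial $\eta$ and that $\epsilon_p(\eta)/\epsilon_p(\eta^\ast)$ is a $p$-adic unit. One small caveat: the implication ``image of $I_p$ in $G$ is infinite $\Rightarrow W_{\eta,\ell}^{I_p}=0$ for nontrivial irreducible $\eta$'' is not valid in general (consider $G=\mathbb{Z}_p^2$ with $I_p$ the first factor and $\eta$ a character of the second); what is really used here is that inertia at $p$ surjects onto $G/H\cong\mathrm{Gal}(\mathbb{Q}(\mu_{p^\infty})/\mathbb{Q})$ by total ramification of the cyclotomic tower, combined with the explicit description of irreducibles of $G$ --- but the paper asserts the vanishing with the same brevity you do.
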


\begin{proposition}\label{proposition:coincidence_evaluation_errorterm}
Let us consider the setting of Section \ref{s5} where $K_\infty $ is a false-Tate curve extension. 
We take a lattice $T$ of $V=V_{f,p}(\tfrac{k}{2})$ and we set $A =T\otimes \mathbb{Q}_p /\mathbb{Z}_p$.  
{}{
Let $\xi_{E_1} \in K_1({\La_\mathcal O(G)}_{{S^\ast}})$ be the characteristic element of the preimage of the class 
$[E_1] \in K_0(\mathfrak M_{H}(G))$ of the error term 
$$
E_1= E_1^{A} \cong \underset{q \in P_1 \cup P_2}{\bigoplus} \mathrm{Ind}^{G}_{G_{q}}  T(-1) 
$$ 
}
of the functional equation of Selmer groups and $\mathcal O$ is the ring of integers of a certain finite extension of $\Q_p$
\par 
Then, for every Artin representation $\eta$ of $G$, we have the equality: 
{}{
$$
 \eta (\xi_{E_1}) = \displaystyle{\prod_{q \in P_0} }
\dfrac{P_q(f,\eta, q^{-\frac{k}{2}})}{P_q(f,\eta^\ast, q^{-\frac{k}{2}})} 
$$
}
modulo multiplication by $p$-adic units. 
\end{proposition}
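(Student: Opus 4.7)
The strategy is to reduce $\eta(\xi_{E_1})$ to a product of local contributions indexed by primes $q$ and identify each contribution with the Euler factor ratio at $q$. The first step uses the explicit computation of the exceptional divisor from Example~3 of Section~\ref{s5}, which gives
\[
[E_1] \;=\; \sum_{q \in P_1 \cup P_2} \bigl[\mathrm{Ind}_{G_q}^G T(-1)\bigr] \quad \text{in} \quad K_0(\M_H(G)).
\]
Lifting this equality to $K_1(\La_{\mathcal O}(G)_{S^*})$ (modulo the image of $K_1(\La_{\mathcal O}(G))$, whose elements act by $p$-adic units on any Artin-character evaluation), we obtain
\[
\eta(\xi_{E_1}) \equiv \prod_{q \in P_1 \cup P_2} \eta\bigl(\xi_{\mathrm{Ind}_{G_q}^G T(-1)}\bigr) \pmod{p\text{-adic units}}.
\]

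Next, for each $q \in P_1 \cup P_2$, the plan is to apply Shapiro-type Frobenius reciprocity in the $K_1$ formalism: the characteristic element of $\mathrm{Ind}_{G_q}^G T(-1)$ evaluated at $\eta$ equals the characteristic element of $T(-1)$ viewed as a $\La_{\mathcal O}(G_q)$-module, evaluated at the restricted character $\eta|_{G_q}$. The core local input is the standard formula expressing, for an $\mathcal O$-finitely generated Galois module $M$, the evaluation of its characteristic element in $K_1(\La_{\mathcal O}(G_q)_{S_q^*})$ at an Artin character as a ratio of determinants of $1-\mathrm{Frob}_q^{-1}$ acting on $(M \otimes W_\eta)^{I_q}$ versus $(M \otimes W_{\eta^*})^{I_q}$. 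Applied to $M = T(-1)$ and using $T \subset V_{f,p}(\tfrac{k}{2})$, this ratio is precisely $P_q(f,\eta,q^{-k/2}) / P_q(f,\eta^*,q^{-k/2})$ modulo $p$-adic units: the extra power of $q$ introduced by the Tate twist $T \mapsto T(-1)$ affects numerator and denominator symmetrically, and the resulting global constants cancel up to $p$-adic units in the ratio.

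Finally, extending the product from $P_1 \cup P_2$ to all of $P_0$ is handled by showing that for $q \in P_0 \setminus (P_1 \cup P_2)$ both $P_q(f,\eta,q^{-k/2})$ and $P_q(f,\eta^*,q^{-k/2})$ are $p$-adic units, so the ratio contributes trivially. This is exactly the content of the dichotomy defining $P_1$ (split multiplicative reduction in $\mathbb{Q}(\mu_{p^\infty})$) and $P_2$ (good reduction with a mod-$p$ Frobenius eigenvalue congruence at $q$): these are precisely the conditions under which the Euler factor at $q$ can acquire positive $p$-adic valuation. The main technical obstacle is the concrete computation of the local characteristic element $(\eta|_{G_q})(\xi_{T(-1)})$ in the non-commutative setting. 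To carry it out, I would construct an explicit finite projective resolution of $T(-1)$ over $\La_{\mathcal O}(G_q)$ (which is tractable because the false-Tate extension satisfies $(\mathrm{Sol}_p)$ of Definition~\ref{definition:setting} and $G_q$ is an explicit two-dimensional solvable $p$-adic Lie group), and then track the determinant on the twisted complex $\eta|_{G_q} \otimes^{\mathbf L}_{\La_{\mathcal O}(G_q)} T(-1)$ to verify that the $q$-power normalizations reproduce those of the Euler factor ratio exactly, modulo $p$-adic units.
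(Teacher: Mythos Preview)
Your high-level structure is correct and matches the paper: reduce $\eta(\xi_{E_1})$ to a product over $q \in P_1 \cup P_2$ of local evaluations $\eta(\xi^q_{T(-1)}) \in K_1(\La_{\mathcal O}(G_q)_{S_q^*})$ via the induction/restriction compatibility, then match each local factor with the Euler-factor ratio, and finally observe that primes in $P_0 \setminus (P_1 \cup P_2)$ contribute only $p$-adic units. However, two substantive steps are missing or incorrect.

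\textbf{The ``standard formula'' you invoke is not standard, and is not the shape you claim.} There is no general identity in this non-commutative setting expressing $\eta(\xi_M)$ as a ratio of $\det(1-\mathrm{Frob}_q^{-1}\mid (M\otimes W_\eta)^{I_q})$ against the same for $\eta^*$. What the paper actually does is write down the exact sequence
\[
0 \to (h_q-1)\bigl(\La_{\mathcal O}(G_q)^\sharp/(\mathrm{Frob}_q-1)\bigr)\otimes C \to \bigl(\La_{\mathcal O}(G_q)^\sharp/(\mathrm{Frob}_q-1)\bigr)\otimes C \to C \to 0
\]
for each rank-one unramified $C$, compute the annihilator ideals of the two cyclic modules explicitly (this is Lemma~\ref{lemma3003}: the generators are $\mathrm{Frob}_q - x$ and $\mathrm{Frob}_q - \frac{(h_q)^q-1}{h_q-1}x$), and then evaluate the ratio $\xi^q_M/\xi^q_N$ at $\eta$. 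For $q\in P_2$ one splits $T\otimes\mathcal O \cong C_{\alpha_q}\oplus C_{\beta_q}$; for $q\in P_1$ one uses the two-step filtration on $T(-1)$ to reduce to $\mathbb{Z}_p\otimes\delta_q$ and $\mathbb{Z}_p(-1)\otimes\delta_q$. Your proposed ``explicit finite projective resolution of $T(-1)$'' is too vague to replace this; the specific cyclic presentations are what make the determinants computable.

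\textbf{You never use the classification of Artin representations of $G$, and the higher-dimensional case needs real work.} Every irreducible $\eta$ is either a one-dimensional $\psi$ of $\mathrm{Gal}(\mathbb{Q}(\mu_{p^\infty})/\mathbb{Q})$ or of the form $\theta_n\psi$ with $\theta_n$ induced from a character of order $p^n$. For $\eta=\theta_n\psi$ the paper shows $(W_\eta)^{I_q}=0$, so the analytic ratio is $1$; on the algebraic side one must show $\theta_n\psi(\xi^q_{C_{\alpha_q}})=\theta_n\psi(\xi^q_{C_{\beta_q}})=1$, which requires computing the determinant of $\theta_n\psi(\mathrm{Frob}_q - \tfrac{(h_q)^q-1}{h_q-1}x)$ as a $[\mathbb{Q}_q(\mu_{p^n}):\mathbb{Q}_q]\times[\mathbb{Q}_q(\mu_{p^n}):\mathbb{Q}_q]$ matrix and checking it equals the determinant of $\theta_n\psi(\mathrm{Frob}_q - x)$. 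This uses that $\theta_n(\mathrm{Frob}_q)$ is a permutation matrix and that $\prod_g \phi_n(\tfrac{(h_q^g)^q-1}{h_q^g-1})=1$. Your proposal contains no hint of this computation, yet it is essential: without it you have only handled the abelian characters $\psi$, not arbitrary Artin representations of $G$.
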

Before proving Proposition \ref{proposition:coincidence_evaluation_errorterm}, we give some preparations. 
 
Let $\theta_n$ denote the Artin representation of $G= \mathrm{Gal}(K_\infty/\Q)$ is obtained by inducing a character $\phi_n$ of exact order $p^n$ of $\text{Gal}(\Q(\mu_{p^n}, a^{1/p^n})/\Q(\mu_{p^n} ))$ to $\text{Gal}(\Q(\mu_{p^n}, a^{1/p^n})/\Q)$. Then $\theta_n$ is irreducible, and every irreducible Artin representation $\eta$ of $G$ is either of the form $\eta = \psi$ or of the form $\eta= \theta_n\psi$ for an integer $n\geq 1$, 
where $\psi$ is a 1-dimensional character of $\text{Gal}(\Q(\mu_{p^\infty} )/\Q)$ (see \cite[Page 212]{dd}). 
\par 
For $q \in P_2$, Let $C$ be a free $\mathcal{O}$-module of finite rank equipped with a continuous unramified action 
of $G_{\mathbb{Q}_q}$. We recall that $H_q:= H \cap G_q$ is completely ramified at $q$ and $\Gamma_q: = G_q/{H_q}$ is unramified at $q$. We identify $H_q$ with 
inertia subgroup and we note that $G_{q}/{H_q}$ is topologically generated by $\mathrm{Frob}_q $. Hence, 
$G_{q}$ acts naturally on $C$ through the quotient $G_{q}/{H_q}$, which allows us to regard $C$ as a $\La_{\mathcal{O}}(G_{q})$-module. 
With the above explanation, we have a characteristic element $\xi^q_{C}$ of $P$ in $K_1(\Lambda_{\mathcal{O}}(G_{q})_{S_{q}^\ast})$. 
This especially gives us {}{$\xi^q_{T (-1)} \in K_1(\Lambda_{\mathcal{O}}(G_{q})_{S_{q}^\ast})$}
for every $q \in P_2$ since {}{$T (-1)$} is unramified for every $q \in P_2$. Here $\mathcal{O}$ can be taken to be the ring of integers of a $p$-adic field which contains the eigenvalues of the action of $\mathrm{Frob}_q$ on {}{$T$}.  
When $q \in P_1$, the action of $G_{\mathbb{Q}_q}$ on {}{$T (-1)$} factors through the quotient $G_{\mathbb{Q}_q} 
\twoheadrightarrow G_{q}$. This allows us to regard {}{$T (-1)$} as a $\Lambda (G_{q})$-module and we have a  
characteristic element {}{$\xi^q_{T (-1)}$ of $T (-1)$} in $K_1(\Lambda(G_{q})_{S_{q}^\ast})$ 
also for $q \in P_1$.

\par 
Let $q \in P_2$. Let us denote by 
$\Lambda (G_{q})^\sharp$ a free $\Lambda_{\mathcal{O}} (G_{q})$-module of rank one on which the decomposition group 
$G_{\mathbb{Q}_q}$ acts on $\Lambda_{\mathcal{O}} (G_{q})$ by the tautological action through the surjection $G_{\mathbb{Q}_q}\twoheadrightarrow G_{q}$. 
Let $h_q$ be a topological generator of $H_q$. 
We have the exact sequence of $\Lambda_{\mathcal{O}} (G_{q}) [G_{\mathbb{Q}_q}] $-modules: 
\begin{multline}\label{ex3001}
0 \lra  (h_q -1)
\left(\Lambda_{\mathcal{O}} (G_{q})^\sharp /(\mathrm{Frob}_q -1)\Lambda_{\mathcal{O}} (G_{q})^\sharp \right) 
\otimes_{\Lambda_{\mathcal{O}} (G_{q})}C 
\\ 
\lra \left(\Lambda_{\mathcal{O}} (G_{q})^\sharp /(\mathrm{Frob}_q -1)\Lambda_{\mathcal{O}} (G_{q})^\sharp \right) 
\otimes_{\Lambda _{\mathcal{O}}(G_{q})}C \lra 
C \lra 0.
\end{multline}
Let us denote the first term and the second term of the above exact sequence by $M$ and $N$ respectively. 
Then $M, N $ are in $K_0(\mathfrak M_{H}(G))$ and $\xi^q_{C} \in K_1(\Lambda_{\mathcal{O}}(G_{q})_{S_{q}^\ast})$ 
is equal to $\dfrac{\xi^q_M}{\xi^q_N} \in K_1(\Lambda_{\mathcal{O}}(G_{q})_{S_{q}^\ast})$ modulo multiplication by an element of 
$K_1(\Lambda_{\mathcal{O}}(G_{q}))$. Let us calculate the evaluations of $\xi^q_M$ and $\xi^q_N$ at characters $\psi$ and $\theta_n \psi$.

We need the following lemma.
\begin{lemma}\label{lemma3003}
Let $C$ be a free $\mathcal{O}$-module of rank one equipped with a continuous unramified action 
of $G_{\mathbb{Q}_q}$. Assume that the eigenvalue of $\mathrm{Frob}_q $ acting on $C$ is $x$.  
\begin{enumerate}
\item 
Let $M= (h_q -1)
\left(\Lambda (G_{q})^\sharp /(\mathrm{Frob}_q -1)\Lambda (G_{q})^\sharp \right) 
\otimes_{\Lambda (G_{q})}C 
$. Then we have 
$\psi ( \xi^q_M ) = \psi (\mathrm{Frob}_q ) -qx$
modulo a multiplication by unit in $\mathcal{O}^\times$. 
\item 
Let $N= 
\left(\Lambda (G_{q})^\sharp /(\mathrm{Frob}_q -1)\Lambda (G_{q})^\sharp \right) 
\otimes_{\Lambda (G_{q})}C 
$. Then we have 
$\psi ( \xi^q_N) = \psi (\mathrm{Frob}_q ) -x $ modulo a multiplication by a unit in $\mathcal{O}^\times$. 
\end{enumerate}
\end{lemma}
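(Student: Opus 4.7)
The plan is to identify $N$ (and then $M = (h_q-1)N$) as explicit cyclic left $\Lambda_{\mathcal{O}}(G_q)$-modules, and then to evaluate $\psi(\xi^q_{?})$ by reducing to a substitution formula on the $H_q$-coinvariants, where everything becomes commutative.

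First, I would exploit the structure of $G_q$. Since $q \in P_2$ and $q \neq p$, the cyclotomic extension $\mathbb{Q}(\mu_{p^\infty})$ is unramified at $q$, so the inertia of $G_q$ is $H_q = H \cap G_q$ and $G_q \cong H_q \rtimes \langle \mathrm{Frob}_q\rangle_{\mathrm{top}}$. The decisive arithmetic input from the false-Tate setup is that conjugation by $\mathrm{Frob}_q$ on the topological generator $h_q$ of $H_q \cong \mathbb{Z}_p$ is given by
\[
\phi(h_q) \;:=\; \mathrm{Frob}_q\, h_q\, \mathrm{Frob}_q^{-1} \;=\; h_q^{\,q},
\]
reflecting $\chi_{\mathrm{cyc}}(\mathrm{Frob}_q) = q$. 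Thus $\Lambda_{\mathcal{O}}(G_q)$ is the skew power series ring $\Lambda_{\mathcal{O}}(H_q)\langle\langle \mathrm{Frob}_q\rangle\rangle$ with relation $\mathrm{Frob}_q \cdot h_q = h_q^{\,q} \cdot \mathrm{Frob}_q$.

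Next, I would unwind the definition of $N$ to show that it admits the cyclic presentation $N \cong \Lambda_{\mathcal{O}}(G_q)/\Lambda_{\mathcal{O}}(G_q)(\mathrm{Frob}_q - x)$ as a left $\Lambda_{\mathcal{O}}(G_q)$-module: once $\Lambda_{\mathcal{O}}(G_q)^\sharp/(\mathrm{Frob}_q - 1)\Lambda_{\mathcal{O}}(G_q)^\sharp$ is identified with $\Lambda_{\mathcal{O}}(H_q)$ via the skew expansion, tensoring with $C$ over $\Lambda_{\mathcal{O}}(G_q)$ picks up the eigenvalue $x$ of $\mathrm{Frob}_q$ and produces a left cyclic module annihilated precisely by $\mathrm{Frob}_q - x$. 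For $M = (h_q-1)N$, the key commutation identity
\[
(\mathrm{Frob}_q - x[q]_h)(h_q - 1) \;=\; (h_q - 1)[q]_h\,(\mathrm{Frob}_q - x),
\]
where $[q]_h := 1 + h_q + h_q^{2} + \cdots + h_q^{q-1} = (h_q^{\,q}-1)/(h_q - 1)$, together with $M = \Lambda_{\mathcal{O}}(G_q)\cdot (h_q-1)\overline{1}$, gives the presentation
\[
M \;\cong\; \Lambda_{\mathcal{O}}(G_q)/\Lambda_{\mathcal{O}}(G_q)(\mathrm{Frob}_q - x[q]_h).
\]

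Finally, I evaluate at $\psi$. Since $\psi$ factors through $\Gamma = G/H$, it is trivial on $H_q$, so $\psi(h_q) = 1$ and hence $\psi([q]_h) = 1 + 1 + \cdots + 1 = q$. Passing to $H_q$-coinvariants, $N_{H_q}$ and $M_{H_q}$ become rank-one free $\mathcal{O}$-modules on which $\mathrm{Frob}_q$ acts by multiplication by $x$ and $qx$ respectively. For such cyclic modules over the commutative ring $\Lambda_{\mathcal{O}}(\Gamma_q) \cong \mathcal{O}[[\mathrm{Frob}_q - 1]]$, the characteristic element is the annihilator and its $\psi$-evaluation is the substitution $\mathrm{Frob}_q \mapsto \psi(\mathrm{Frob}_q)$, yielding $\psi(\xi^q_N) = \psi(\mathrm{Frob}_q) - x$ and $\psi(\xi^q_M) = \psi(\mathrm{Frob}_q) - qx$ modulo $\mathcal{O}^\times$.

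The main obstacle is to clean up the bookkeeping around the $\sharp$-twist and the non-commutative tensor product so that the cyclic presentations above are manifest, and then to verify that the $K_1$-class $\xi^q_{?}$ is compatible with the specialization through $H_q$-coinvariants followed by $\psi$. Both points are standard but technical and are handled by invoking the functoriality of the Fukaya--Kato--Venjakob evaluation pairing along the ring homomorphisms $\Lambda_{\mathcal{O}}(G_q) \twoheadrightarrow \Lambda_{\mathcal{O}}(\Gamma_q) \twoheadrightarrow \mathcal{O}[\psi]$ induced by quotienting out $H_q$ and applying $\psi$.
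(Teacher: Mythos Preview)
Your proof is correct and follows essentially the same approach as the paper: both identify $N$ and $M$ as cyclic $\Lambda_{\mathcal{O}}(G_q)$-modules with annihilators $\mathrm{Frob}_q - x$ and $\mathrm{Frob}_q - x\cdot\frac{h_q^{\,q}-1}{h_q-1}$ respectively, and then evaluate at $\psi$ using $\psi(h_q)=1$ so that $\psi\big(\tfrac{h_q^{\,q}-1}{h_q-1}\big)=q$. Your write-up is in fact more explicit than the paper's (which dispatches the annihilator of $M$ with ``by simple calculation''), since you supply the commutation identity $(\mathrm{Frob}_q - x[q]_h)(h_q-1) = (h_q-1)[q]_h(\mathrm{Frob}_q - x)$ coming from $\mathrm{Frob}_q\,h_q\,\mathrm{Frob}_q^{-1}=h_q^{\,q}$.
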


\begin{proof}[Proof of Lemma \ref{lemma3003}]
Since the second statement is easier, we start from the proof of the assertion (2). In this case $N$ is a cyclic 
$\Lambda (G_{q})$-module whose annihilator ideal is a principal ideal generated by $\mathrm{Frob}_q  -x \in \Lambda_{\mathcal{O}} (G_{q})$.      
So $\xi^q_N \in K_1 (\Lambda_{\mathcal{O}} (G_{q})_{S^\ast_q})$ has a representative in $\mathrm{Frob}_q  -x \in  (\Lambda_{\mathcal{O}} (G_{q})_{S^\ast_q})^\times$. This implies that 
$$
\psi (\xi^q_N ) = \psi (\mathrm{Frob}_q  -x ) = \psi (\mathrm{Frob}_q ) -x ,  
$$ 
modulo a multiplication by a unit in $\mathcal{O}^\times$, which proves the assertion (2).
\par 
For the assertion (1), be simple calculation, we can check that the annihilator ideal of $M$ is a principal ideal generated by 
$\mathrm{Frob}_q  -\frac{(h_q)^q -1 }{h_q -1}x \in \Lambda_{\mathcal{O}} (G_{q})$.  Since $\psi (h_q) =1$, we have 
$\psi (\frac{(h_q)^q -1 }{h_q -1}) =q$ which implies the equality of the assertion (1) modulo a multiplication by a unit in $\mathcal{O}^\times$.  
\end{proof}

\begin{proof}[Proof of Proposition \ref{proposition:coincidence_evaluation_errorterm}]
In order to show the desired equality, we start by calculating the analytic local terms {}{$\displaystyle{\prod_{q \in P_0} }
\dfrac{P_q(f,\eta, q^{-\frac{k}{2}})}{P_q(f,\eta^\ast, q^{-\frac{k}{2}})}$}. 
\par 
We can check that, when $\eta$ is of the form 
$\eta= \theta_n\psi$ for an integer $n\geq 1$, $(W_\eta )^{I_q}$ is trivial for any $q \in P_0$. Since $p \not=2$, 
we have {}{$\displaystyle{\prod_{q \in P_0} }
\dfrac{P_q(f,\theta_n\psi , q^{-\frac{k}{2}})}{P_q(f,(\theta_n\psi )^\ast, q^{-\frac{k}{2}})}=1$}. When $\eta$ is of the form 
$\eta= \psi$, we have 
{}{
\begin{equation}\label{equation:eulerfactor_at_q}
\dfrac{P_q(f,\psi, q^{-\frac{k}{2}})}{P_q(f,\psi^\ast, q^{-\frac{k}{2}})}=
\begin{cases}
\frac{1- \delta_q (\mathrm{Frob}_q )
\psi^\ast (\mathrm{Frob}_q ) q^{-1}}{1-\delta_q (\mathrm{Frob}_q )\psi (\mathrm{Frob}_q ) q^{-1}} & q \in P_1 , \\
\frac{(1-\alpha_q \psi^\ast (\mathrm{Frob}_q ) q^{-1})(1-\beta_q \psi^\ast (\mathrm{Frob}_q ) q^{-1})}{(1-\alpha_q \psi (\mathrm{Frob}_q ) q^{-1})(1-\beta_q  \psi (\mathrm{Frob}_q ) q^{-1})} & q \in P_2 , \\ 
1 & q \in  P_0 \setminus (P_1 \cup P_2)
\end{cases}
\end{equation}
}
where $\alpha_q$ and $\beta_q$ are eigenvalues of the action of $\mathrm{Frob}_q$ on {}{$V_{f,p}(\tfrac{k}{2})$} 
and {}{$\delta_q $ is a unramified quadratic character of $\Q^\times_q$ which is attached to the special representation 
$\pi_{f,q}$ at $q$ (see Example 3 of Section \ref{s5})}.  
\par 
Now we pass to the algebraic side. 
We have {}{$E_1 \cong \underset{q \in P_1 \cup P_2 }{\bigoplus} \mathrm{Ind}^{G_0}_{G_{0,q}}  T (-1)
$}. 
Recall that, $[E_1]$ denotes the image of $E_1$ in $K_0(\mathfrak M_{H_0}(G_0))$ and the characteristic element $\xi_{E_1}$ denotes any 
preimage of $[E_1]$ in $K_1(\La_\mathcal{O} (G)_{{S^*}})$ via the surjection from $K_1(\La_\mathcal{O} (G)_{{S^*}}) \lra K_0(\mathfrak M_{H}(G))$ (cf. \eqref{fundamental-exact-k-theory3}). Here $\mathcal O$ is the ring of integers of a finite extension of $\Q_p$ containing the eigenvalues of the action of $\mathrm{Frob}_q$ on {}{$V_{f,p}(\tfrac{k}{2})$}, for all $q \in P_2$. Thus $\xi_{E_1}$ is defined up to an element of $K_1(\La_\mathcal{O} (G))$. We have $
 K_1(\La_\mathcal{O} (G)_{{S}}) \cong  \frac{\La_\mathcal{O} (G)^\times_{{S}}}{[\La_\mathcal{O} (G)^\times_{{S}}, \La_\mathcal{O} (G_0)^\times_{{S}}]} $ 
  by \cite[Theorem 4.4]{cfksv} and there is a surjective map  
 \begin{equation}\label{equationexampleb211}
  \frac{\La_\mathcal{O} (G)^\times_{{S^*}}}{[\La_\mathcal{O} (G)^\times_{{S^*}}, \La_\mathcal{O} (G)^\times_{{S^*}}]} \rightarrow K_1(\La_\mathcal{O} (G)_{{S^*}}) \rightarrow 1.
  \end{equation}  
Let us recall that the following diagram, where vertical maps are induced by natural inclusion, commute: $$ 
\begin{CD}
K_1(\La_\mathcal{O} (G_{q})_{S_{q}^\ast}) @>>> K_0(\mathfrak M_{H_{q}}(G_{q})) \\ 
@VVV @VVV \\ 
K_1(\La_\mathcal{O} ( G)_{S^\ast}) @>>>  K_0(\mathfrak M_{H}(G)). \\ 
\end{CD}
$$ 
Hence, the evaluation at $\eta$ of the characteristic element $\xi_{E_1} \in K_1(\La_\mathcal{O} (G)_{{S^\ast}})$ is the product of the evaluations at $\eta$ of the characteristic elements {}{$\xi^q_{T (-1)} \in K_1(\La_\mathcal{O}(G_{q})_{{S^\ast_{q}}})$} at primes in $P_1$ and $P_2$.   
In order to complete the proof of the proposition, it suffices to show the equalities  
{}{
\begin{equation}\label{equation:evaluation_local_algebraic_exceptionaldivisor}
\begin{cases}
\theta_n\psi (\xi^q_{T (-1)}) = 1 & q \in P_1\cup P_2, \\
\psi (\xi^q_{T (-1)}) = \frac{1-\delta_q (\mathrm{Frob}_q )\psi^\ast (\mathrm{Frob}_q ) q^{-1}}{1-\delta_q (\mathrm{Frob}_q )\psi (\mathrm{Frob}_q ) q^{-1}} & q \in P_1, \\
\psi (\xi^q_{T (-1)} )= \frac{(1-\alpha_q \psi^\ast (\mathrm{Frob}_q ) q^{-1})(1-\beta_q \psi^\ast (\mathrm{Frob}_q ) q^{-1})}{(1-\alpha_q \psi (\mathrm{Frob}_q ) q^{-1})(1-\beta_q  \psi (\mathrm{Frob}_q ) q^{-1})} & q \in P_2,
\end{cases}
\end{equation}
}
modulo multiplications by a $p$-adic unit where $\delta_q$ is as defined in 
\eqref{equation:eulerfactor_at_q}. 
In the rest of the proof, we prove the equation \eqref{equation:evaluation_local_algebraic_exceptionaldivisor}. 
\par 
Let $q \in P_2$. Note that we have $\alpha_q \beta_q =q^{-1}$.
Since $\mathrm{Frob}_q$ acts on $\mathbb{Z}_p (-1)$ by $q^{-1}$, the action of $\mathrm{Frob}_q$ on {}{$V_{f,p}(\tfrac{k}{2})$} has eigenvalues $ \beta^{-1}_q=\frac{\alpha_q}{q},\alpha^{-1}_q=\frac{\beta_q} {q}$. For a prime $q \in P_2$,  $\mathcal{O}$ contains $\alpha_q$ and $\beta_q$, by definition.  
Then the unramified $G_{\mathbb{Q}_q}$-module {}{$T$ has a decomposition 
$T \otimes_{\mathbb{Z}_p} \mathcal{O} \cong C_{\alpha_q} \oplus C_{\beta_q}$} where $ C_{\alpha_q}$ and $C_{\beta_q}$ are rank one $\mathcal{O}$-modules 
on which $\mathrm{Frob}_q$ acts by multiplication of $\beta^{-1}_q $ and $\alpha^{-1}_q$ respectively. 
By definition, the characteristic element $\xi^q_{T_pE(-1)}$ is equal to $\xi^q_{C_{\alpha_q}} \xi^q_{C_{\beta_q}}$. 
When $\eta$ is of the form $\eta= \psi$, we obtain:
{}{ 
\begin{align*}
\eta (\xi^q_{T (-1)}) & = \psi (\xi^q_{C_{\alpha_q}}) \psi (\xi^q_{C_{\beta_q}}) \\ 
& = 
\frac{(\psi (\mathrm{Frob}_q)-\beta^{-1}_q)
(\psi (\mathrm{Frob}_q)-\alpha^{-1}_q)}{(\psi(\mathrm{Frob}_q)-q\beta^{-1}_q)
(\psi(\mathrm{Frob}_q)-q\alpha^{-1}_q)}\\ 
& = 
\frac{(1-\alpha_q \psi^\ast (\mathrm{Frob}_q)q^{-1})
(1-\beta_q\psi^\ast (\mathrm{Frob}_q)q^{-1})}{(1- \alpha_q\psi(\mathrm{Frob}_q)q^{-1})
(1- \beta_q\psi(\mathrm{Frob}_q)q^{-1})} \times q^{-1} 
\end{align*}
}
by applying Lemma \ref{lemma3003}. 
Let us consider the case where $\eta$ is of the form $\eta= \theta_n \psi$, In this case, we also have 
{}{
$$
\eta (\xi^q_{T (-1)})  = \theta_n \psi (\xi^q_{C_{\alpha_q}}) \theta_n \psi (\xi^q_{C_{\beta_q}}) .
$$
}
Hence it suffices to prove that $\theta_n \psi (\xi^q_{C_{\alpha_q}}) =1$ and $\theta_n \psi (\xi^q_{C_{\beta_q}}) 
=1$. 
By the definition of $\theta_n$ explained after the statement of Proposition \ref{proposition:coincidence_evaluation_errorterm}, 
$\theta_n$ is an induced representation 
of a character $\phi_n$ of 
$\mathrm{Gal}(\Q_q (\mu_{p^n}, m^{1/p^n})/\Q_q (\mu_{p^n} ))$ to $\mathrm{Gal}(\Q_q (\mu_{p^n}, m^{1/p^n})/\Q_q )$ which is realized on 
the space 
\begin{equation}\label{equation:expression_induced_representation}
W=V_{\phi_n} \otimes_{\mathcal{K}} V_\psi \otimes_{\mathcal{K}} \mathcal{K} [\mathrm{Gal}(\Q_q (\mu_{p^n})/\Q_q )]^\sharp 
\end{equation}
where $V_{\phi_n}$ is a representation space of $\phi_n$ which is a one-dimensional vector space over the coefficient field 
$\mathcal{K}$ on which $\mathrm{Gal}(\Q_q (\mu_{p^n}, m^{1/p^n})/\Q_q (\mu_{p^n} ))$ acts and 
$V_\psi$ is a representation space of $\phi_n$ which is a one-dimensional vector space over $\mathcal{K}$ on which $\mathrm{Gal}(\Q_q (\mu_{p^n} )/\Q_q)$ acts.  
Let us calculate $\theta_n \psi (\xi^q_{C_{\alpha_q}}) $. In this situation, 
an element $g\in G_{q}$, $\theta_n \psi (g)$ is represented by a matrix of 
degree $[\mathbb{Q}_q (\mu_{p^n}):\mathbb{Q}_q]$ which is regarded as an element of $\mathrm{End}_{\mathcal{K}}(W \otimes_{\mathcal{O}}
C_{\alpha_q})$. 
Note also that the evaluation of elements in $K_1(\La_\mathcal{O}(G_{q})_{S_q^\ast})$ at Artin representation $\eta$ of $G$
is defined \cite[\S 3]{cfksv} through the Morita equivalence $K_1(M_n ( - )) =K_1( -  ) $ which is known to 
be obtained by taking the determinant of the matrix. 
As in the proof of Lemma \ref{lemma3003}, we calculate 
the ratio of the determinant of the matrix in $\mathrm{End}_{\mathcal{K}}(W\otimes_{\mathcal{O}}
C_{\alpha_q})$ 
induced by an annihilator $\mathrm{Frob}_q -\frac{(h_q)^q-1}{h_q -1} \in \Lambda_\mathcal{O} (G_{q})$  
of 
the cyclic module 
$$(h_q -1)
\left(\Lambda_\mathcal{O} (G_{q})^\sharp \Big/ (\mathrm{Frob}_q -1)\Lambda_\mathcal{O} (G_{q})^\sharp \right) 
=\left(\Lambda_\mathcal{O} (G_{q})^\sharp \Big/
(\mathrm{Frob}_q -\frac{(h_q)^q-1}{h_q -1})\Lambda_\mathcal{O} (G_{q})^\sharp   \right)
$$ and the determinant of the matrix in $\mathrm{End}_{\mathcal{K}}(W\otimes_{\mathcal{O}}
C_{\alpha_q})$
induced by an annihilator $\mathrm{Frob}_q -1 \in \Lambda_\mathcal{O} (G_{q})$ 
of the cyclic module
$ 
\left(\Lambda_\mathcal{O} (G_{q})^\sharp /(\mathrm{Frob}_q -1)\Lambda_\mathcal{O} (G_{q})^\sharp \right) $. 
The matrix representing $\theta_n \psi (\mathrm{Frob}_q)$ is a permuting matrix the basis labeled by elements of 
$\mathrm{Gal}(\mathbb{Q}_q (\mu_{p^n})/\mathbb{Q}_q)$ twisted by the eigenvalues of these elements acting on $P_\alpha$.  
The matrix representing $\theta_n (\frac{(h_q)^q -1 }{h_q -1})$ is a diagonal matrix whose diagonal entries are given by $\phi_n  (\frac{(h^g_q)^q -1 }{h^g_q -1})$ when $g$ runs through 
elements of $\mathrm{Gal}(\mathbb{Q}_q (\mu_{p^n})/\mathbb{Q}_q)$. Note that the product of   
$\phi_n (\frac{(h^g_q)^q -1 }{h^g_q -1})$ for $g \in \mathrm{Gal}(\mathbb{Q}_q (\mu_{p^n})/\mathbb{Q}_q)$ equals to $1$. 
Now the determinants of two matrices which appear in the calculation of $\theta_n \psi (\xi^q_{C_{\alpha_q}})$ are 
both equal to $(\psi (\mathrm{Frob}_q) \alpha_q )^{[\Q_q (\mu_{p^n} ) :\Q_q] }-1$,  
which implies that the ratio is equal to $1$, hence we have $\theta_n \psi (\xi^q_{C_{\alpha_q}})=1$. 
Since we prove $\theta_n \psi (\xi^q_{C_{\beta_q}})=1$ exactly in the same way, we finally obtain:
{}{ 
$$
\eta (\xi^q_{T (-1)})  = \theta_n \psi (\xi^q_{C_{\alpha_q}}) \theta_n \psi (\xi^q_{C_{\beta_q}}) =1.
$$ 
}
\par 
Let $q \in P_1$. Then the action of $G_{\mathbb{Q}_q}$ on {}{$T (-1)$} is not unramified, but 
we can define {}{$\xi^q_{T (-1)} \in K_1(\Lambda_{\mathcal{O}}(G_{q})_{S_{q}^\ast})$} as explained earlier, with $\mathcal{O}$ as before. 
Since we have an exact sequence 
{}{
\begin{multline*} 
0 \longrightarrow \left(\Lambda_\mathcal{O} (G_{q})^\sharp /(\mathrm{Frob}_q -1)\Lambda_\mathcal{O} (G_{q})^\sharp \right) 
\otimes_{\Lambda_\mathcal{O} (G_{q})}\mathbb{Z}_p \otimes \delta_q 
\\ 
\longrightarrow \left(\Lambda_\mathcal{O} (G_{q})^\sharp /(\mathrm{Frob}_q -1)\Lambda_\mathcal{O} (G_{q})^\sharp \right) 
\otimes_{\Lambda_\mathcal{O} (G_{q})}T (-1)
\\ 
\longrightarrow \left(\Lambda_\mathcal{O} (G_{q})^\sharp /(\mathrm{Frob}_q -1)\Lambda_\mathcal{O} (G_{q})^\sharp \right) 
\otimes_{\Lambda_\mathcal{O} (G_{q})}\mathbb{Z}_p (-1) \otimes \delta_q 
\longrightarrow 
0 
\end{multline*}
}
in the category $\mathfrak M_{H}(G)$, we calculate 
{}{$\eta (\xi^q_{T (-1)})$} to be $\eta (\xi^q_{\mathbb{Z}_p \otimes \delta_q })\eta (\xi^q_{\mathbb{Z}_p(-1)\otimes \delta_q })$.  
The calculation of $\eta (\xi^q_{\mathbb{Z}_p\otimes \delta_q })$ and $\eta (\xi^q_{\mathbb{Z}_p(-1)\otimes \delta_q })$ for 
$q \in P_1$ goes exactly in the same way as that of $\eta (\xi^q_{C_{\alpha_q}})$ and  $\eta (\xi^q_{C_{\beta_q}})$ 
for $q \in P_2$ and this completes the proof of Proposition \ref{proposition:coincidence_evaluation_errorterm}. 
\end{proof}

For any $M$ in $\mathfrak M_H(G)$ and for any Artin representation $\eta$ of $G$, we introduce a notation  $\eta([M]):= \eta(\xi_M)$, which is well defined up to a $p$-adic unit.

\begin{theorem}\label{1100}
{}{Let us assume the setting above of 
an ordinary normalized eigen elliptic cuspform $f$ of even weight $k\geq 2$ and square-free level $\Gamma_0 (N)$ 
with a false-Tate extension $K_\infty /\mathbb{Q}$}. 
{}{We take a lattice $T$ of $V=V_{f,p}(\tfrac{k}{2})$ and we set $A =T\otimes \mathbb{Q}_p /\mathbb{Z}_p$}. 
Assume that a conjectural $p$-adic $L$-function {}{$\mathcal{L}_p(V_{f,p}(\tfrac{k}{2})) \in K_1(\La_\mathcal{O}(G)_{{S^*}})$} 
with the interpolation property \eqref{p-adiclfn-ellipticcurve} 
exists. Recall from Section \ref{s5} that  
{}{$E_1^{A} \cong \underset{q \in P_1 \cup P_2}{\bigoplus} \mathrm{Ind}^{G}_{G_{q}}  T (-1) $} 
is the error term of the algebraic functional equation.
Then, for any Artin representation $\eta$ of $\mathrm{Gal}(K_\infty /\mathbb{Q})$, 
{}{
$$
\eta (\xi_{E_1^{A}}) = \displaystyle{\prod_{q \in P_0} }
\dfrac{P_q(f,\eta, q^{-\frac{k}{2}})}{P_q(f,\eta^\ast, q^{-\frac{k}{2}})} 
$$
}
modulo multiplication by a  $p$-adic unit.  
Hence, for any Artin representation $\eta$ of $\mathrm{Gal}(K_\infty /\mathbb{Q})$, we have the equality: 
{}{
\begin{equation}\label{equation:evaluated_functional_equation}
\frac{\eta \left( \left[ \mathrm{Sel}^{\mathrm{BK}}_A (K_\infty )^\vee  \right]\right) }{\eta \left( 
\left[ \left( \mathrm{Sel}^{\mathrm{BK}}_{A} (K_\infty )^\vee \right) ^\iota \right] \right)} 
= \frac{\eta \left( \mathcal{L}_p(V_{f,p}(\tfrac{k}{2})) \right)}{\eta \left( \mathcal{L}_p(V_{f,p}(\tfrac{k}{2}))^\iota\right)}
\end{equation}
}
modulo multiplication by a $p$-adic unit.
\end{theorem}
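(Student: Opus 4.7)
The plan is to deduce Theorem \ref{1100} by putting together three ingredients already established: the algebraic functional equation for the Bloch--Kato Selmer group (Remark \ref{remark0.2}), the explicit evaluation of the algebraic error term on Artin representations (Proposition \ref{proposition:coincidence_evaluation_errorterm}), and the analytic functional equation for the conjectural $p$-adic $L$-function (Proposition \ref{772}). The first assertion of the theorem is literally the content of Proposition \ref{proposition:coincidence_evaluation_errorterm}, so nothing further is needed for that part.

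For the second assertion, the first step is to exploit the self-duality of the Galois representation. Because $f$ has trivial Nebentypus and even weight $k$, the Tate twist normalizes the Poincar\'e pairing so that $V_{f,p}(\tfrac{k}{2})^\ast(1)\cong V_{f,p}(\tfrac{k}{2})$; consequently $A^\ast(1)\cong A$ and hence $E_1^{A^\ast(1)}\cong E_1^A$. Under this identification Remark \ref{remark0.2} reads
\[
\bigl[\mathrm{Sel}^{\mathrm{BK}}_A(K_\infty)^\vee\bigr] + \bigl[E_1^{A}\bigr]
= \bigl[(\mathrm{Sel}^{\mathrm{BK}}_A(K_\infty)^\vee)^\iota\bigr]
\quad \text{in } K_0(\mathfrak{M}_H(G)).
\]
Evaluating at any Artin representation $\eta$ (which turns the additive structure of $K_0(\mathfrak M_H(G))$ into a multiplicative one on characteristic elements) yields
\[
\frac{\eta\bigl([\mathrm{Sel}^{\mathrm{BK}}_A(K_\infty)^\vee]\bigr)}
{\eta\bigl([(\mathrm{Sel}^{\mathrm{BK}}_A(K_\infty)^\vee)^\iota]\bigr)}
= \eta(\xi_{E_1^{A}})^{-1}
= \prod_{q\in P_0}\frac{P_q(f,\eta^\ast,q^{-k/2})}{P_q(f,\eta,q^{-k/2})}
\]
modulo $p$-adic units, where the last equality is the first assertion of the theorem (i.e.\ Proposition \ref{proposition:coincidence_evaluation_errorterm}).

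On the analytic side, Proposition \ref{772} rearranges to
\[
\frac{\eta(\mathcal{L}_p(V_{f,p}(\tfrac{k}{2})))}
{\eta(\mathcal{L}_p(V_{f,p}(\tfrac{k}{2}))^\iota)}
= \omega(f,\eta)^{-1}\prod_{q\in P_0}\frac{P_q(f,\eta^\ast,q^{-k/2})}{P_q(f,\eta,q^{-k/2})}.
\]
Comparing the last two displayed equalities, the ratio of algebraic quantities and the ratio of analytic quantities differ only by the global root number $\omega(f,\eta)$, so it remains to verify that $\omega(f,\eta)$ is a $p$-adic unit. This is the only serious point to check and I expect it to be the main (but still routine) obstacle: $\omega(f,\eta)$ factors as a finite product of local $\varepsilon$-factors together with the archimedean one, each of which is an algebraic number of complex absolute value one whose $p$-adic valuation is zero (local $\varepsilon$-factors at $\ell\ne p$ are algebraic units, the archimedean factor is a root of unity, and the contribution at $p$ is a $p$-adic unit because $f$ is $p$-ordinary with $N_f$ prime to $p$ and the primes of $K_\infty/\mathbb{Q}$ above $p$ are tamely controlled in the false-Tate tower). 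Once this $p$-adic unit property is in hand, dividing the two displayed identities gives the required equality \eqref{equation:evaluated_functional_equation} modulo multiplication by a $p$-adic unit, completing the proof.
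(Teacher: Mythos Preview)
Your proposal is correct and follows essentially the same route as the paper's proof: invoke the algebraic functional equation, apply Proposition~\ref{proposition:coincidence_evaluation_errorterm} for the first assertion and to identify $\eta(\xi_{E_1^A})$ with the Euler-factor ratio, then compare against the analytic functional equation of Proposition~\ref{772}. Two small points are worth noting. First, you make the self-duality $A^\ast(1)\cong A$ explicit, whereas the paper uses it silently when writing \eqref{776} with $A$ on both sides; and you cite the Bloch--Kato form (Remark~\ref{remark0.2}) directly, which is slightly cleaner since the statement concerns $\mathrm{Sel}^{\mathrm{BK}}$. Second, you flag the root number $\omega(f,\eta)$ as the residual issue; the paper handles this in exactly the same way, simply absorbing $\omega(f,\eta)$ into the phrase ``modulo multiplication by a $p$-adic unit'' when passing from Proposition~\ref{772} to \eqref{last-equ-p}, without further comment. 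So your treatment is, if anything, more transparent on that point.
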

\begin{proof}
The first assertion of the theorem  has been established in the proof of Proposition \ref{proposition:coincidence_evaluation_errorterm}. So we prove only the second part. 
Recall our second main theorem (Theorem 2) applied to this setting of an ordinary normalized eigen elliptic cuspform over the false-Tate curve extension  
implies the following 
{}{
 \begin{equation}\label{776}
\left[ \mathrm{Sel}^{\mathrm{Gr}}_{A} (K_\infty )^\vee  \right]  + \Bigl[ 
E_1^{A}\Bigr] = 
\left[\left( \mathrm{Sel}^{\mathrm{Gr}}_{A} (K_\infty )^\vee \right) ^\iota \right] \text{ in }  K_0 (\mathfrak{M}_H (G)).
\end{equation}
}
 Clearly, \eqref{776} can be reformulated  as follows:
 {}{
\begin{equation}\label{7751}
 \xi_{\mathrm{Sel}^{\mathrm{Gr}}_{A} (K_\infty )^\vee} \xi_{E_1^{A}}= \xi_{( \mathrm{Sel}^{\mathrm{Gr}}_{A} (K_\infty )^\vee)^\iota}  
\end{equation}
}
as elements of {}{$K_1(\La_\mathcal{O}(G)_{{S^*}})$} up to an element of  {}{$K_1(\La_\mathcal{O}(G))$}.  On the other hand, from Proposition \ref{772}, we have
{}{
$$
\eta(\mathcal{L}_p (V_{f,p}(\tfrac{k}{2})) ^\iota ) =\eta (\mathcal{L}_p (V_{f,p}(\tfrac{k}{2}))) \prod_{q \in P_0} 
\frac{P_p(f,\eta, p^{-\frac{k}{2}})}{P_p(f,\eta^\ast, p^{-\frac{k}{2}})}
$$
}
modulo multiplication by a $p$-adic unit.  Now by Proposition \ref{proposition:coincidence_evaluation_errorterm}, we know 
{}{$
\eta (\xi_{E_1^{A}})=
 \prod_{q \in P_0} \dfrac{P_q(f,\eta, q^{-\frac{k}{2}})}{P_q(f,\eta^\ast, q^{-\frac{k}{2}})},
$} 
up to a $p$-adic unit. Thus we have 
{}{\begin{equation}\label{last-equ-p}
\eta(\mathcal{L}_p (V_{f,p}(\tfrac{k}{2})) )= \eta(\mathcal{L}_p (V_{f,p}(\tfrac{k}{2}))^\iota)  \eta (\xi_{E_1^{A}})
\end{equation}
}
up to a $p$-adic unit. Evaluating \eqref{7751} at $\eta$ and then comparing it with \eqref{last-equ-p}, we obtain \eqref{equation:evaluated_functional_equation} up to a unit in a ring of integers of a finite extension of $\Q_p$. This completes the proof of the theorem. \end{proof}

\begin{rem}
{}{
The results of this section are a generalization of the corresponding results of Z\'abr\'adi. More precisely, the compatibility between the algebraic and the conjectural analytic functional equation over the false-Tate curve extension for an elliptic curve $E$ over $\Q$ with good, ordinary reduction at $p$ was established in \cite[Prop. 7.3]{zab}. 
With the same assumption on $E$, the compatibility over $\Q(E_{p^\infty})$ is discussed  in \cite[Proposition 7.2]{z2}.
}
\end{rem}
\begin{rem}
{}{
 Fukaya-Kato has a formulation of the conjectural functional equation of analytic $p$-adic zeta function \cite[Theorem 4.4.7]{fk}. 
Though they do not study the  functional equation on the algebraic side, their  functional equation of $p$-adic zeta function combined with 
their main conjecture in \cite[Conjecture 4.2.2]{fk}, which generalizes the main conjecture of \cite{cfksv} (see Conjecture \ref{773}), implies  
a functional equation in the algebraic side.  \par 
We remark that the algebraic object in the main conjecture of Fukaya-Kato is Selmer complex $SC(T,T^0)$ rather than the Selmer group. However, 
as remarked in \cite[4.5.3, Page 82]{fk}, we have the equality  $[SC(T,T^0)]= [\mathrm{Sel}^{\mathrm{Gr}}_{E_{p^\infty}} (K_\infty )^\vee]$ in  $K_0(\mathfrak M_H(G))$ in the setting and hypotheses of Theorem \ref{1100}. 
Thus the conjectural  functional equation of Fukata-Kato 
coincides with the conjectural  functional equation in Proposition \ref{772} or with \cite[Equation 7.12]{zab} and via the Iwasawa main conjecture in \ref{773}, shows the compatibility of our algebraic functional equation (as well as of \cite{zab}) with that of  \cite[Theorem 4.2.7]{fk}}.
\end{rem}

\appendix 
\section{Conjectural existence of analytic $p$-adic $L$-functions}\label{appendix}
In this section, we state a conjectural existence of analytic $p$-adic $L$-function for a given 
Galois representation $V$. For technical reasons, we restrict ourselves to the situation $K=\mathbb{Q}$ 
in the analytic side. Also, in order to talk about the algebraic part of special values of Hasse--Weil $L$-functions, 
we fix embeddings $\overline{\mathbb{Q}}\hookrightarrow \mathbb{C}$ and $\overline{\mathbb{Q}}\hookrightarrow 
\overline{\mathbb{Q}}_p$ simultaneously. 
\par 
Let $V$ be a $p$-adic Galois representation of $G_\mathbb{Q}$ which satisfies the condition (Geom) stated in 
Introduction. Then the $L$-function $L(s,V)$ given by the following Euler product is convergent on $\mathrm{Re}(s) > \dfrac{w}{2} +1$: 
\begin{equation}\label{most-general-complex-l-function}
L(s,V) = \prod_{q \not=p } \mathrm{det} (1-\mathrm{Frob}^{-1}_q q^{-s} ;V^{I_q })^{-1} 
\times \mathrm{det} (1-\varphi p^{-s} ; D_{\mathrm{pst}} (V)^{^{I_p}})^{-1} 
\end{equation}
where $w$ is the weight of motive whose $p$-adic \'{e}tale realization gives $V$, $D_{\mathrm{pst}} (V)$ is a potentially 
stable filtered module of Fontaine on which the operator $\varphi$ is acting. 
It is known that each Euler factor is a polynomial whose coefficients are in $\overline{\mathbb{Q}}$ and 
the Euler product is absolutely convergent on $\mathrm{Re}(s) >1+\dfrac{w}{2}$ due to Deligne. 
\par 
We recall the following well-known conjecture which is a folklore. 
\begin{conj}\label{conjecture:Hasse-Weil} 
Let $V \cong \mathcal{K}^d$ be a $p$-adic Galois representation of $G_\mathbb{Q}$ which satisfies the condition \rm{(Geom)} stated in  
Introduction. 
Then the following statements hold. 
\begin{enumerate}
\item[(1)]
The $L$-function $L(s,V)$ is meromorphically continued to the whole $\mathbb{C}$-plane with at most 
finitely many poles.  
Further, if $V$ does not contain any direct summand which is isomorphic to Tate twists of the trivial representation $\mathbb{Q}_p$, 
the $L$-function $L(s,V)$ is holomorphic on the whole $\mathbb{C}$-plane.  
\item[(2)] 
We have the following functional equation: 
$$
L(s, V) = a( V )^{\frac{w+1}{2} -s}
\epsilon (V ) L(1-s ,V^\ast (1) )
$$
where $a(V)$ (resp. $\epsilon (V)$) is the Artin conductor (resp. epsilon factor) for $V$ respectively, which we do not 
explain here. 
\end{enumerate}
\end{conj}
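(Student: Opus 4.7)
The plan is to reduce the analytic statements about $L(s,V)$ to the known analytic theory of automorphic $L$-functions, via an automorphy (modularity) theorem for $V$. Concretely, one seeks to attach to $V$ a (possibly reducible) isobaric automorphic representation $\Pi$ of $\mathrm{GL}_d(\mathbb{A}_\mathbb{Q})$ such that $L(s,V)$ coincides, Euler factor by Euler factor, with the standard $L$-function $L(s,\Pi)$. Once this matching is achieved, meromorphic continuation of $L(s,V)$ to $\mathbb{C}$ and the functional equation in (2) follow from Godement--Jacquet together with the Langlands--Shahidi / Rankin--Selberg machinery, while the assertion about holomorphy reduces to the statement that $\Pi$ is cuspidal whenever $V$ contains no trivial Tate-twist summand (a cuspidal automorphic $L$-function on $\mathrm{GL}_d$ being entire).

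The first step would be to exploit the condition (Geom) to write $V$ as the $p$-adic \'etale realization of a pure motive $M$ of homogeneous weight $w$, and then decompose $V$ up to semisimplification into absolutely irreducible constituents $V_i$. For each $V_i$ one tries to apply the best available automorphy lifting theorem --- for instance Taylor--Wiles--Kisin patching and its descendants (Barnet-Lamb--Gee--Geraghty, Calegari--Geraghty, Allen--Calegari--Caraiani--Gee--Helm--Le Hung--Newton--Scholze--Taylor--Thorne, \textit{et al.}) --- to produce an automorphic representation $\pi_i$ attached to $V_i$. At the level of $L$-functions, multiplicativity of the Euler factors and the isobaric sum $\Pi = \boxplus_i \pi_i$ would then yield the identification $L(s,V)=L(s,\Pi)$ up to the ramified primes, where local-global compatibility of the Langlands correspondence (known for many families after work of Harris--Taylor, Taylor--Yoshida, Shin, Caraiani) takes care of the remaining factors.

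The principal obstacle, and the reason the statement is formulated only as a conjecture, is precisely this automorphy step for a \emph{general} geometric $V$: outside of well-understood families (abelian varieties of $\mathrm{GL}_2$-type, symmetric powers of elliptic curves, Hilbert modular forms, compatible systems arising from suitable Shimura varieties, and certain regular self-dual representations), neither modularity nor even potential modularity of $V$ is known. A realistic intermediate target would be to establish \emph{potential} automorphy of $V$ over some finite Galois extension $F/\mathbb{Q}$; combined with Brauer's induction theorem expressing the trivial character of $\mathrm{Gal}(F/\mathbb{Q})$ as a virtual sum of induced characters from solvable subgroups, this still yields meromorphic continuation and the predicted functional equation \eqref{conjecture:Hasse-Weil}(2), though generally not holomorphy.

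The last delicate point concerns the absence of poles. After establishing (potential) automorphy, poles of $L(s,V)$ correspond to occurrences of trivial Tate twists inside $V$, equivalently, to appearances of $|\det|^{w/2}$ as a constituent of $\Pi$ at the relevant point; cuspidality of each $\pi_i$ together with Jacquet--Shalika then forces $L(s,V)$ to be entire when the hypothesis on $V$ is satisfied. Verifying that no such trivial constituent sneaks in, at the finitely many ramified primes, would be the main calculational step once automorphy is in hand, and can typically be done using Deligne's weight--monodromy theorem for the motive $M$.
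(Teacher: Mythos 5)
The statement you are trying to prove is not a theorem of the paper at all: it is Conjecture \ref{conjecture:Hasse-Weil}, which the authors explicitly introduce with the words ``We recall the following well-known conjecture which is a folklore'' and for which they supply no proof. It is merely an input to the later conjectures (Conjecture \ref{conjecture:algebraicity} and Conjecture \ref{p-adic-lfun-existence-conjecture}) and, via hypothesis (ii) of those, to Theorem \ref{theorem3}. So there is no argument in the paper against which to measure yours, and a correct submission here would simply have observed that the statement is conjectural and is assumed, not proved.

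As for your proposal itself, you have written a reasonable survey of the standard strategy (automorphy or potential automorphy of the constituents of $V$, Brauer induction, Godement--Jacquet and Jacquet--Shalika for the analytic properties of the resulting isobaric $L$-function), and you correctly identify the fatal gap: automorphy of a general geometric representation $V$ satisfying only $\mathrm{(Geo)}$ is wide open, and even potential automorphy is known only for restricted families (essentially regular, self-dual or polarizable compatible systems, and some low-dimensional cases). Since you yourself concede that ``the reason the statement is formulated only as a conjecture is precisely this automorphy step,'' what you have produced is a research program, not a proof, and it cannot be completed with current technology. Two smaller points: the functional equation in the paper relates $L(s,V)$ to $L(1-s,V^\ast(1))$ with the conductor raised to the power $\frac{w+1}{2}-s$, so even under potential automorphy you would need to track the normalization of the isobaric sum against the motivic weight $w$ carefully; and the holomorphy assertion in part (1) requires not just cuspidality of each $\pi_i$ but the identification of exactly which constituents correspond to Tate twists of $\mathbb{Q}_p$, which again presupposes the full strength of local-global compatibility that is not available in general.
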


\begin{conj}[Deligne]\label{conjecture:algebraicity} 
Let $V\cong \mathcal{K}^{2d}$ be a $p$-adic Galois representation of $G_\mathbb{Q}$ of even dimension which satisfies the condition \rm{(Geom)} stated in  
Introduction. 
For simplicity, we assume that $V$ does not contain any Artin representations as its subquotient. 
Further we assume the following conditions 
\begin{enumerate}
\item[(i)] The motive which corresponds to $V$ is critical in the sense of Deligne\cite{deligne}. 
\item[(ii)] Conjecture \ref{conjecture:Hasse-Weil} holds true for $V$. 
\end{enumerate}

Then there exist two constants $\Omega_+ (V) , \Omega_- (V)$ such that, for any Artin representation $W$ of $G_{\mathbb{Q}}$ with values in 
$\mathcal{K}$ such that the motive which corresponds to $V \otimes W$ is critical in the sense of Deligne and 
Conjecture \ref{conjecture:Hasse-Weil} holds true for $V\otimes W$, we have 
$$
\dfrac{L(0 ,V\otimes_{\mathcal{K}} W)}{\Omega_+(V)^{{dd_+ (W)} } \Omega _-(V)^{{d d_- (W)}}} 
\in \overline{\mathbb{Q}} 
$$  
where $d_+ (W)$ (resp. $d_- (W)$) is the dimension of the eigenspace of complex conjugate with eigenvalue $+1$ (resp. $-1$). 
\end{conj}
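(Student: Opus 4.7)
The plan is to follow the strategy Deligne introduced in \cite{deligne} where this conjecture was formulated, realizing $V$ motivically and reducing the algebraicity of twisted $L$-values to the comparison between Betti and de Rham realizations. First I would use the hypothesis $(\mathrm{Geo})$ of Definition \ref{s22ax} to realize $V$ as the $p$-adic \'etale realization of a pure motive $M$ of homogeneous weight $w$ over $\mathbb{Q}$; to this motive one attaches the Betti realization $M_B$ (a $\mathbb{Q}$-vector space equipped with an involution $F_\infty$ coming from complex conjugation) and the algebraic de Rham realization $M_{\mathrm{dR}}$ (a filtered $\mathbb{Q}$-vector space), related by a comparison isomorphism $M_B \otimes_{\mathbb{Q}} \mathbb{C} \cong M_{\mathrm{dR}} \otimes_{\mathbb{Q}} \mathbb{C}$. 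The criticality hypothesis on $M$ says precisely that an appropriate step of the Hodge filtration is complementary to one of the $F_\infty$-eigenspaces, so one can define the Deligne periods $c^{\pm}(M) \in \mathbb{C}^\times/\overline{\mathbb{Q}}^\times$ as the determinants of the comparison map restricted to the $\pm 1$-eigenspace of $F_\infty$ and projected onto the complementary quotient of $M_{\mathrm{dR}}$. The natural candidates are then $\Omega_{\pm}(V) := c^{\pm}(M)$.

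With these periods in hand, the twisted algebraicity statement would follow from Deligne's conjecture applied to the auxiliary motive $M \otimes [W]$, where $[W]$ is the Artin motive attached to the Artin representation $W$. The key computation is the multiplicativity of Deligne periods under tensoring with an Artin motive, which yields, modulo $\overline{\mathbb{Q}}^\times$, the identity
\[
c^{\pm}(M \otimes [W]) \equiv c^{+}(M)^{d_{\pm}(W)} \, c^{-}(M)^{d_{\mp}(W)}.
\]
Raising to the power corresponding to $\dim M = 2d$ would match the exponents $d \cdot d_{\pm}(W)$ appearing in the denominator of the claimed rationality, and the algebraicity would follow from the critical hypothesis for $M \otimes [W]$ and the holomorphy assertion in Conjecture \ref{conjecture:Hasse-Weil} for the twisted $L$-function $L(s, V \otimes W)$, evaluated at the critical point $s = 0$. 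The reduction itself is essentially formal once one has the motivic realization in place.

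The hard part is of course that Deligne's conjecture is a central open problem in arithmetic geometry and is presently known only in restricted situations, for instance when $M$ is attached to an abelian variety with sufficient complex multiplication (Blasius, Deligne), a Hecke character (Deligne, Harder), or an automorphic representation accessible by an integral representation of its $L$-function (Shimura, Harder, Harris, and others). The viable strategy in such cases is to express $L(0, V \otimes_{\mathcal{K}} W)$ as a period integral of an automorphic form via Rankin--Selberg convolution or Eisenstein series, and to identify this integral with $\overline{\mathbb{Q}} \cdot \Omega_{+}(V)^{d \cdot d_+(W)} \Omega_{-}(V)^{d \cdot d_-(W)}$ using the rationality of the underlying automorphic form on the relevant Shimura variety; in the setting of the modular form $f$ of Theorem \ref{1100} this is exactly what is behind the periods $\Omega_{\pm}(f)$ used in \eqref{p-adiclfn-ellipticcurve}. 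The principal obstruction to a proof in full generality under only the hypothesis $(\mathrm{Geo})$ is that one has no automorphic or geometric handle on an arbitrary pure motive, and in particular no direct access to its periods.
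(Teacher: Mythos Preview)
The statement you were asked to prove is a \emph{conjecture}, not a theorem: the paper records it in Appendix~\ref{appendix} as Conjecture~\ref{conjecture:algebraicity} (Deligne) and offers no proof, because none is known in the stated generality. The paper uses it only as a standing hypothesis feeding into Conjecture~\ref{p-adic-lfun-existence-conjecture} and the discussion of Section~\ref{s6}. So there is no ``paper's own proof'' to compare against.

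Your proposal is not a proof either, and you are aware of this: you correctly observe that Deligne's conjecture is open and list some special cases (CM abelian varieties, Hecke characters, automorphic settings accessible via integral representations) where it is known. What you have written is a reasonable outline of \emph{why} the conjecture is plausible --- define $\Omega_\pm(V)$ as the Deligne periods $c^\pm(M)$, use multiplicativity of periods under twist by Artin motives, and reduce to Deligne's original formulation for $M\otimes[W]$ --- together with an honest acknowledgment that this reduction leads straight back to the open problem. That is an accurate summary of the situation, but it should not be presented as a proof proposal. The appropriate response here is simply to note that the statement is conjectural and that the paper assumes it rather than proves it.
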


Based on the conjectural existence of $L$-values and periods, the conjectural existence of 
analytic $p$-adic $L$-function was formulated by \cite[Conj. 5.7]{cfksv} (for elliptic curves) and \cite[Thm 4.2.26]{fk} (for general motives). 
\begin{conj}\label{p-adic-lfun-existence-conjecture}
Let $V\cong \mathcal{K}^{2d}$ be a $p$-adic Galois representation of $G_\mathbb{Q}$ of even dimension which satisfies the condition \rm{(Geom)} stated in  
Introduction. For simplicity, we assume that $V$ does not contain any Artin representations as its subquotient. Let $\mathcal O$  be the  ring of integers of a finite extension of $\Q_p$ and define
\begin{equation}\label{nimoprtantatal}
S = \left\{ 
f \in \La_{\mathcal{O}}(G) \ \middle| \ \frac{\La_{\mathcal{O}}(G)}{\La_{\mathcal{O}}(G)f} \text{ is a finitely generated } 
\La_{\mathcal{O}}(H) \text{-module} \right\}.
\end{equation}
Set $S^* = \underset{n \geq 0}{\cup}p^n S$. Then $S$ and $S^*$ are left and right Ore set in $\La_{\mathcal{O}}(G)$ \cite[\S2, \S3]{cfksv}. In particular the localization $\La_{\mathcal{O}}(G)_{S^*}$ is well-defined and we have an exact sequence of $K$-groups
\begin{equation}\label{fundamental-exact-k-theory3}
K_1(\La_{\mathcal{O}}(G)) \lra K_1(\La_{\mathcal{O}}(G)_{S^*}) \stackrel{\delta_G}{\lra} K_0(\La_{\mathcal{O}}(G), \La_{\mathcal{O}}(G)_{S^*})= K_0(\mathfrak M_H(G)) \lra 0.
\end{equation}
Given $M \in \mathfrak M_H(G)$, we denote by $[M]$ its class in $K_0(\mathfrak M_H(G))$ and the preimage of $[M]$ in $K_1(\La_{\mathcal{O}}(G)_{S^*})$ by $\xi_M$. Note that, under the assumption that $G$ has no element of order $p$, the surjectivity of $\delta_G$ was proved in \cite[Proposition 3.4]{cfksv}.

 Also given an element $\xi \in K_1(\La_{\mathcal{O}}(G)_{S^*})$ and an Artin representation $\eta$ of $G$, \cite[\S3]{cfksv} associates a canonical evaluation map which gives rise to an element $\eta(\xi) \in \bar{\Q}_p \cup \{\infty\}$.

Further we assume the following conditions 
\begin{enumerate}
\item[(i)] The motive which corresponds to $V$ is critical in the sense of Deligne. 
\item[(ii)] Conjecture \ref{conjecture:Hasse-Weil} and Conjecture \ref{conjecture:algebraicity} hold true for $V$. 
\item[(iii)] The representation $V$ is crystalline at $p$ in the sense of Fontaine.  
\item[(iv)] The representation $V$ is of Panchishkin type at $p$ in the sense that 
$V$ has a $G_{\mathbb{Q}_p}$-stable filtration $\mathrm{F}^+_p V \subset V$ of dimension $d$ 
such that the Hodge-Tate weights of $\mathrm{F}^+_p V$ (resp. $V/\mathrm{F}^+_p V$) are all negative 
(resp. non-negative). 
\end{enumerate}
For any non-trivial Artin representation $\eta$ of $G$ and for any prime number $q$, we define the 
Euler factor at $q$ as follows:  
 $$
 P_q (\eta, X) = 
 \begin{cases}\mathrm{det} (1-\mathrm{Frob}_q^{-1} X \mid (W_\eta )^{I_q}) & q\not=p \\  
 \mathrm{det} (1-\varphi X \mid D_{\mathrm{crys}}(W_\eta )) & q =p \\ 
 \end{cases}
 $$ 

Then, we have an analytic function $\mathcal{L}_p (V) \in K_1 (\La_{\mathcal{O}} (G)_{S^\ast} )$, where $\mathcal O$ is the ring of integers of a certain finite extension of $\Q_p$,  such that 
\begin{multline}\label{equation:interpolation_property}
\eta^\ast (\mathcal{L}_p (V )) \\ = 
\epsilon_p (W^\ast_\eta )^{-d}
( \alpha^{(1)} \cdots \alpha^{(d)} )^{-C_p(\eta) }\times 
\prod^d_{i=1} \frac{P_p(\eta^\ast , (\alpha^{(i)} )^{-1} )}{P_p(\eta , (\beta^{(i)})^{-1})}\times 
\dfrac{L_{P}(0 ,V\otimes_{\mathcal{K}} W_\eta )}{\Omega_+(V)^{{dd_+ (W_\eta )} } \Omega _-(V)^{{d d_- (W_\eta )}}} 
\end{multline}
for any non-trivial Artin representation $\eta$ of $G$, where 
$\epsilon_p (W^\ast_\eta )$ is the local epsilon factor at $p$ for 
$W^\ast_\eta$, 
$C_p(\eta)$ is the $p$-order of the conductor of $\eta$, 
$\alpha^{(1)} ,\ldots ,\alpha^{(d)}$ (resp. $\beta^{(1)} ,\ldots ,\beta^{(d)}$) is the eigenvalues of $\varphi$-operator acting on $D_{\mathrm{cris}} (F^+_p V)$ (resp. $D_{\mathrm{cris}} (V/F^+_p V)$).  {}{Here $\eta^\ast$ denotes the contragadient representation of $\eta$. Also } $P$ denotes the set of primes $q$ of $\Q$  such that the image of $I_{\mathbb{Q}_q}$ in $G$ is infinite and 
$L_{P} (s ,V\otimes_{\mathcal{K}} W^\ast_\eta )$ means the $L$-function which is obtained by removing  the Euler factors at every  prime $q \in P$ from the $L$-function $L (s ,V\otimes_{\mathcal{K}} W^\ast_\eta )$.
\end{conj}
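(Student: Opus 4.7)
\textbf{Proof proposal for Conjecture \ref{p-adic-lfun-existence-conjecture}.}

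The plan is to build $\mathcal{L}_p(V)$ by assembling a compatible system of ``partial $p$-adic $L$-functions'' at each intermediate layer $K_U$ using known cyclotomic machinery, and then gluing these into a single class in $K_1(\Lambda_{\mathcal O}(G)_{S^*})$ via the Fukaya--Kato formalism for non-commutative Iwasawa algebras. The overall shape follows the strategy used in the abelian case by Coates, Perrin-Riou and Panchishkin, lifted to the non-commutative situation as in \cite{cfksv} and \cite{fk}.

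First I would work at each finite layer. For each open normal $U \subset G$, the field $K_U$ is a finite extension of $\Q$, and restricting $V$ to $G_{K_U}$ yields a Galois representation that is still of Panchishkin type at primes above $p$, crystalline there, and critical. Using Perrin-Riou's big exponential on the Panchishkin filtration $F_p^+V$ (equivalently, the Amice--Vishik--Panchishkin distributional construction), produce a cyclotomic $p$-adic $L$-function $\mathcal{L}_p^U(V) \in K_1\bigl(\Lambda_{\mathcal O}(\Gamma_{\cyc,U})_{S^*}\bigr)$ whose finite-order specializations recover the critical values predicted by Conjecture \ref{conjecture:Hasse-Weil} and Conjecture \ref{conjecture:algebraicity}, normalized against the motivic periods $\Omega_\pm(V)$. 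At this stage the modified Euler factor at $p$ appearing in \eqref{equation:interpolation_property} arises naturally from the comparison between the Bloch--Kato exponential and the Panchishkin decomposition of $D_{\mathrm{cris}}(V)$.

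Second I would package the layers. Since the norm maps on Iwasawa algebras intertwine with restriction of characters, the collection $\{\mathcal{L}_p^U(V)\}_U$ should be norm-compatible; verifying this compatibility uses the trace relations for Beilinson--Kato style Euler systems attached to $V$, whose image under Perrin-Riou's map is expected to coincide with $\mathcal{L}_p^U(V)$. Granting such norm compatibility, the Fukaya--Kato formalism (together with the $K$-theoretic localization sequence \eqref{fundamental-exact-k-theory3}) produces a canonical class $\mathcal{L}_p(V) \in K_1(\Lambda_{\mathcal O}(G)_{S^*})$. The evaluation map $\eta \mapsto \eta(\mathcal{L}_p(V))$ of \cite[\S 3]{cfksv} is then, by construction, computed layer-by-layer: for a given Artin representation $\eta$ factoring through $G/U$ one recovers $\eta^*(\mathcal{L}_p(V))$ from $\mathcal{L}_p^U(V)$ and a character-sum over $G/U$, and the interpolation formula \eqref{equation:interpolation_property} would follow from the Euler-factor and $\epsilon$-factor computations of Perrin-Riou's explicit reciprocity law applied to $V \otimes W_\eta$.

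The main obstacles will be threefold. First, one needs a sufficiently rich supply of motivic or Euler-system input to control algebraicity and $p$-integrality of the family $\{L_P(0,V \otimes W_\eta)/\Omega_+(V)^{dd_+(W_\eta)}\Omega_-(V)^{dd_-(W_\eta)}\}_\eta$ uniformly in $\eta$; Deligne's Conjecture \ref{conjecture:algebraicity} yields algebraicity of each value but not the uniform $p$-adic bounds needed to land in $K_1(\Lambda_{\mathcal O}(G)_{S^*})$ rather than in a larger distributional completion. Second, the norm compatibility across layers requires non-commutative trace relations that, for a general motive of Panchishkin type, are still only partially understood even at the level of classical Euler systems. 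Third, the correct formulation of the modifying Euler factor at ramified primes in $P$ and at $p$ must match \eqref{equation:interpolation_property} on the nose: this is where the Panchishkin filtration, the operator $\varphi$ on $D_{\mathrm{cris}}(F_p^+V)$, and the local $\epsilon$-factor $\epsilon_p(W_\eta^*)$ must interact compatibly with Perrin-Riou's big exponential. In practice I would proceed by first positing the existence of a suitable Kato-type Euler system for $V$ as an auxiliary hypothesis and deriving $\mathcal{L}_p(V)$ from it, thereby reducing the conjecture to the (still deep, but more structural) problem of constructing such an Euler system in the generality of arbitrary $p$-ordinary motives.
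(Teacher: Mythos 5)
The statement you are attempting to prove is Conjecture \ref{p-adic-lfun-existence-conjecture}, and the paper offers no proof of it: it is stated in Appendix \ref{appendix} as an open conjecture, attributed in formulation to \cite[Conj. 5.7]{cfksv} (for elliptic curves) and \cite[Thm 4.2.26]{fk} (for general motives), and the paper's Theorem \ref{theorem3} explicitly \emph{assumes} the existence of $\mathcal{L}_p(V_{f,p}(\tfrac{k}{2}))$ with the interpolation property rather than establishing it. So there is no ``paper's own proof'' to compare against; any purported proof would be a major new result, not a reconstruction.

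Your proposal is a reasonable sketch of the standard strategy one would try (cyclotomic layers via Perrin-Riou's big exponential on the Panchishkin filtration, norm-compatible gluing through the localization sequence \eqref{fundamental-exact-k-theory3}, evaluation via \cite[\S 3]{cfksv}), but it is not a proof, and you largely say so yourself in your list of obstacles. The decisive gaps are exactly the ones you name: (1) Deligne's conjecture gives algebraicity of individual values $L_P(0, V\otimes W_\eta)/\Omega_\pm(V)^{\cdots}$ but no uniform $p$-adic integrality across all $\eta$, which is what is needed to land in $K_1(\Lambda_{\mathcal O}(G)_{S^*})$ rather than some larger space of distributions; (2) the norm-compatible family you invoke is precisely an Euler system for $V$ over the non-abelian tower, whose existence for general $p$-ordinary motives is itself an open problem at least as deep as the conjecture you are trying to prove; and (3) even granting a norm-compatible family at finite levels, passing from an element of $\varprojlim_U K_1(\Lambda_{\mathcal O}(\widetilde\Gamma_U)_{S^*_U})$ to a genuine class in $K_1(\Lambda_{\mathcal O}(G)_{S^*})$ is not automatic — this is the content of the Fukaya--Kato ``$K_1$-gluing'' problem, which in the non-commutative setting requires verifying congruences among the abelian $p$-adic $L$-functions (as in the work on the main conjecture for totally real fields), and no mechanism for producing those congruences is indicated. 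Your final move, positing a Kato-type Euler system as an auxiliary hypothesis, makes the circularity explicit: you would be proving the conjecture conditionally on an input whose existence is itself conjectural and essentially equivalent in difficulty. This is a legitimate research program, but it should not be presented as a proof of the statement.
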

We can also state the main conjecture in our setting assuming Conjectures  \ref{conjecture:Hasse-Weil},  \ref{conjecture:algebraicity} and \ref{p-adic-lfun-existence-conjecture}. 
\begin{conj}[Iwasawa Main Conjecture]\label{773} 
Assume Conjectures \ref{conjecture:Hasse-Weil},  \ref{conjecture:algebraicity} and \ref{p-adic-lfun-existence-conjecture}. Then  via the natural map
from $ K_1 (\La_{\mathcal{O}} (G)_{S^\ast} ) \stackrel{\delta}{\lra} K_0(\M_H(G))$, the image of $\mathcal{L}_p (V) $ in $K_0(\M_H(G))$, $\delta(\mathcal{L}_p (V))$ has the property, $$\delta(\mathcal{L}_p (V)) =[\mathrm{Sel}^{\mathrm{BK}}_{A} (K_\infty )^\vee] \in ~ K_0(\M_H(G)) $$\end{conj}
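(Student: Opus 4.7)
\medskip

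\noindent \textbf{Proof Proposal for Conjecture \ref{773}.} The plan is to adopt the descent strategy pioneered by Burns, Venjakob, Kato, and brought to maturity by Kakde and Ritter--Weiss for attacking the non-commutative Main Conjecture via reduction to abelian subquotients. The starting point is the ``$K$-theoretic localization'' result which asserts that an element of $K_1(\Lambda_{\mathcal{O}}(G)_{S^\ast})/\mathrm{Image}(K_1(\Lambda_{\mathcal{O}}(G)))$ is detected, up to an explicit set of congruences, by its images in $K_1(\Lambda_{\mathcal{O}}(U^{\mathrm{ab}})_{S^\ast})$ as $U$ ranges over open subgroups of $G$ and $U^{\mathrm{ab}}$ denotes the maximal abelian quotient. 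Thus it suffices to (i) show that the abelian images of $\mathcal{L}_p(V)$ and of $\xi_{\mathrm{Sel}^{\mathrm{BK}}_{A}(K_\infty)^\vee}$ agree for each such $U$, and (ii) verify that both sides satisfy the same $K$-theoretic congruences predicted by Kakde's description of $K_1$.

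\smallskip

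\noindent For step (i), for each open $U \subset G$ the abelian image of $\xi_{\mathrm{Sel}^{\mathrm{BK}}_{A}(K_\infty)^\vee}$ under the norm/transfer map is, by Shapiro's lemma applied to the cyclotomic deformation $\widetilde{T} = T \otimes \Lambda(G)^\sharp$ together with a descent argument (using finite generation of the Selmer group over $\Lambda_{\mathcal{O}}(H)$ modulo $p$-torsion, which is the content of $\mathfrak{M}_H(G)$), the characteristic element of a Selmer group of a twisted representation over the cyclotomic $\Z_p$-extension of the fixed field $K_U^{\mathrm{ab}}$. The equality of this abelian image with the corresponding abelian image of $\mathcal{L}_p(V)$ (which by the interpolation property \eqref{equation:interpolation_property} is characterized by its evaluations at Artin characters factoring through $U^{\mathrm{ab}}$) is precisely a statement of the \emph{cyclotomic} Iwasawa Main Conjecture for the Panchishkin/ordinary Galois representation $V$ twisted by Artin characters of $U^{\mathrm{ab}}/H_U^{\mathrm{ab}}$. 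For $V$ coming from modular forms or elliptic curves one input is divisibility in the cyclotomic Main Conjecture due to Kato, together with the opposite divisibility of Skinner--Urban; for general motives of Panchishkin type this is still hypothetical but is itself a well-posed problem reduced to existing Iwasawa-theoretic technology.

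\smallskip

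\noindent For step (ii), both sides of the conjectural equality must satisfy the explicit ``Kakde-type'' congruences among abelian images. On the analytic side the existence of $\mathcal{L}_p(V)$ as a single element of $K_1(\Lambda_{\mathcal{O}}(G)_{S^\ast})$ forces these congruences essentially by construction, once Conjecture \ref{p-adic-lfun-existence-conjecture} is granted. On the algebraic side the congruences must be verified directly for the characteristic elements of the descended Selmer groups: this is done by exploiting Artin formalism of Selmer groups, the global duality sequence combined with our Control Theorem (Theorem \ref{propisition:control_theorem}) to track the error terms $E_1^{A_\rho}$ at intermediate levels, and the vanishing of higher Ext groups (Proposition \ref{proposition:vanishing_higher_ext}) that ensures the characteristic class is governed by the first Ext only. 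As a consistency check, the compatibility between the algebraic and analytic functional equations established in Theorem \ref{theorem3} can be viewed as a \emph{necessary condition} for the Main Conjecture and confirms that the two sides transform identically under $\iota$; the exceptional divisor contributes identically on both sides.

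\smallskip

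\noindent \emph{Main obstacle.} The hard part will be the verification of the Kakde congruences on the algebraic side for general motives, which for abelian extensions corresponds to constructing a suitable norm-coherent system of characteristic elements and proving the requisite trace compatibilities. For the Tate motive this is Burns--Kakde, for elliptic curves with good ordinary reduction over false-Tate or $GL_2$-extensions partial results exist via Euler-system inputs, but extending this to a general Panchishkin $V$ requires developing the appropriate Euler system (generalized Beilinson--Kato or Beilinson--Flach classes) at the non-commutative level and controlling their non-triviality; this remains out of reach by current methods.
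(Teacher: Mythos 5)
The statement you are trying to prove is stated in the paper as a \emph{conjecture} (the Iwasawa Main Conjecture, Conjecture \ref{773}); the paper offers no proof of it and does not claim one. Indeed, the whole point of Theorem \ref{theorem3} is to verify a consequence of this conjecture \emph{without} assuming it, precisely because the conjecture itself is open. So there is no proof in the paper to compare yours against, and what you have written is not a proof either: it is a research program in the style of Burns--Kakde--Ritter--Weiss, and you yourself concede in the final paragraph that the decisive step ``remains out of reach by current methods.'' A proposal whose main obstacle is acknowledged to be unresolved is a plan of attack, not a proof.

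To be concrete about where the argument breaks down. In step (i) you invoke the cyclotomic Main Conjecture for $V$ twisted by Artin characters of $U^{\mathrm{ab}}$; for a general Panchishkin motive this is open (even for modular forms the full equality of Kato's divisibility with the Skinner--Urban divisibility requires hypotheses you have not imposed, and the paper's setting allows arbitrary $V$ satisfying $\mathrm{(Pan)}$). In step (ii) the Kakde-type congruences on the algebraic side have been verified essentially only for the Tate motive over totally real fields; no construction of the required norm-coherent system of characteristic elements exists for the Selmer groups considered here. There is also a structural gap you pass over: the reduction of $K_1(\Lambda_{\mathcal{O}}(G)_{S^\ast})$ to abelian subquotients via explicit congruences is available (after Kakde and Ritter--Weiss) for one-dimensional admissible $p$-adic Lie extensions, whereas the extensions in this paper --- the false-Tate extension (dimension $2$) and $\mathrm{GL}_2(\mathbb{Z}_p)$-extensions (dimension $4$) --- are of higher dimension, so the very first reduction you rely on is not known to hold in the form you use it. Finally, citing Theorem \ref{theorem3} as a consistency check is circular as evidence: it is a necessary condition, established independently in the paper, and contributes nothing toward the equality $\delta(\mathcal{L}_p(V)) = [\mathrm{Sel}^{\mathrm{BK}}_{A}(K_\infty)^\vee]$ itself.
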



\end{document}